\documentclass[a4paper, 10pt]{amsart}

\usepackage{preamble}


\setcounter{tocdepth}{2}

\makeatletter

\renewcommand{\tocsection}[3]{%
\indentlabel{\@ifnotempty{#2}{\parbox[b]{3ex}{\bfseries\ignorespaces#1 #2}}}\bfseries#3} 

\renewcommand{\tocsubsection}[3]{%
\indentlabel{\@ifnotempty{#2}{\hspace{1.3em}\parbox[b]{5ex}{\ignorespaces#1 #2}}}#3}

\renewcommand{\tocsubsubsection}[3]{%
\indentlabel{\@ifnotempty{#2}{\hspace{3.9em}\parbox[b]{5ex}{\ignorespaces#1 #2}}}#3}

\makeatother



\newcommand{\locadjL}[1]{\mathbin{
    \begin{tikzpicture}[baseline=-0.8ex]
	\coordinate (source) at (0ex,0ex);
	\coordinate (target) at (3ex,0ex);
	\draw[-stealth] ([yshift=0.7ex]source) to node[above]{\scriptsize $#1$}([yshift=0.7ex]target);
    \draw[left hook-stealth] ([yshift=-0.85 ex]target) to ([yshift=-0.85 ex]source);
	\node at (1.5ex,0  ex) {$\scriptscriptstyle \perp$};
    \end{tikzpicture}
}}


\newcommand{\colim}{\operatornamewithlimits{colim}}

\newcommand{\id}{\mathrm{id}}
\newcommand{\im}{\mathrm{im}}
\newcommand{\cofib}{\mathrm{cofib}}
\newcommand{\fib}{\mathrm{fib}}
\newcommand{\map}{\mathrm{map}}
\newcommand{\Map}{\mathrm{Map}}
\newcommand{\Fun}{\mathrm{Fun}}
\newcommand{\FunEx}{\Fun^{\Ex}}
\newcommand{\FunL}{\Fun^{L}}
\newcommand\FunInt{\underline{\Fun}^{\Ex}}
\newcommand\catop[1]{{#1}^{\mathrm{op}}}
\newcommand{\Ar}{\mathrm{Ar}}
\newcommand{\Ind}{\mathrm{Ind}}
\newcommand{\End}{\mathrm{End}}
\newcommand{\Ex}{\mathrm{ex}}
\newcommand{\Exc}{\mathrm{Exc}}
\newcommand{\Nat}{\mathrm{Nat}}
\newcommand{\Eq}{\mathrm{Eq}}

\newcommand{\TwAr}{\mathrm{TwAr}}
\newcommand{\Un}{\mathrm{Un}}
\newcommand{\fgt}{\mathrm{fgt}}
\newcommand{\Perf}{\mathrm{Perf}}
\newcommand{\Mod}{\mathrm{Mod}}
\newcommand{\sqz}{\mathrm{sqz}}

\newcommand{\Spaces}{\mathrm{Sps}}
\newcommand{\SpacesFinStar}{\Spaces^{\fin}_*}
\newcommand{\Sp}{\mathrm{Sp}}
\newcommand{\Spfin}{\Sp^{\fin}}
\newcommand{\LoopInfty}{\Omega^\infty}
\newcommand{\OmegaInfty}{\Omega^\infty}
\newcommand{\SigmaInftyP}{\Sigma^\infty_+}
\newcommand{\gen}{\mathrm{gen}}
\newcommand{\PgcSp}{\mathrm{PgcSp}}
\newcommand{\PgcSpGen}{\PgcSp^{\gen}}
\newcommand{\Alg}{\mathrm{Alg}}
\newcommand{\Bimod}{\mathrm{Bimod}}

\newcommand{\St}{\mathrm{St}}
\newcommand{\Lace}{\mathrm{Lace}}
\newcommand{\Pair}{\mathrm{Pair}}
\newcommand{\baBar}{\mathrm{Bar}}

\newcommand{\nexcPart}[1]{\mathrm{P}_{#1}}
\newcommand{\fbwnexcPart}[1]{\mathrm{P}^{\mathrm{tgt}}_{#1}}


\newcommand{\Kth}{\mathrm{K}}
\newcommand{\THH}{\mathrm{THH}}
\newcommand{\TC}{\mathrm{TC}}
\newcommand{\uTHH}{\mathrm{uTHH}}
\newcommand{\TR}{\mathrm{TR}}

\newcommand\Klace{\Kth^{\lace}}

\newcommand\LaceEq{\Lace^{\simeq}}
\newcommand\SpLaceEq{\SigmaInftyP\LaceEq}


\newcommand{\Free}{\mathrm{Free}}
\newcommand{\ev}{\mathrm{ev}}
\newcommand{\tr}{\mathrm{tr}}

\newcommand\E{\mathbb{E}}
\newcommand\Z{\mathbb{Z}}
\newcommand\bS{\mathbb{S}}

\newcommand\Acal{\mathcal{A}}
\newcommand\Bcal{\mathcal{B}}
\newcommand\Ccal{\mathcal{C}}
\newcommand\Dcal{\mathcal{D}}
\newcommand\Ecal{\mathcal{E}}
\newcommand\Fcal{\mathcal{F}}
\newcommand\Gcal{\mathcal{G}}
\newcommand\Ical{\mathcal{I}}
\newcommand\Jcal{\mathcal{J}}
\newcommand\Kcal{\mathcal{K}}
\newcommand\Mcal{\mathcal{M}}
\newcommand\Pcal{\mathcal{P}}
\newcommand\Scal{\mathcal{S}}
\newcommand\Zcal{\mathcal{Z}}

\newcommand{\cat}[1]{\mathrm{#1}}

\newcommand{\Cat}{\cat{Cat}}
\newcommand{\CAT}{\cat{CAT}}
\newcommand{\CatEx}{\cat{Cat}^{\Ex}}

\newcommand{\PrCat}[1]{\cat{Pr}^{\mathrm{#1}}}
\newcommand{\PrExCat}[1]{\cat{Pr}^{\mathrm{#1}}_{\Ex}}

\newcommand{\tangentT}[1][]{
    \ifthenelse{ \equal{#1}{} }{
        \mathcal{T}
    }{
        \mathcal{T}_{#1}
    }
}
\newcommand{\TCatEx}[1][]{
    \ifthenelse{ \equal{#1}{} }{
        \mathcal{T}\CatEx
    }{
        \mathcal{T}_{#1}\CatEx
    }
}


\newcommand{\tgt}{\mathrm{tgt}}

\newcommand{\lace}{\mathrm{lace}}

\newcommand{\add}{\mathrm{add}}
\newcommand{\cyc}{\mathrm{cyc}}

\newcommand{\op}{{^{\mathrm{op}}}}
\newcommand{\BiMod}{\mathrm{BiMod}}
\newcommand{\Om}{\Omega}

\newcommand{\Sig}{\Sigma}
\newcommand{\lax}{\mathrm{lax}}
\newcommand{\opl}{\mathrm{opl}}

\newcommand{\st}{\mathrm{st}}
\newcommand{\cont}{\mathrm{cont}}

\newcommand{\adj}{\mathrel{\substack{\longrightarrow \\[-.6ex] \stackrel{\upvdash}{\longleftarrow}}}}
\newcommand{\cocolon}{\nobreak \mskip6mu plus1mu \mathpunct{}\nonscript\mkern-\thinmuskip {:}\mskip2mu \relax}

\let\core\relax
\newcommand{\core}{(-)^\simeq}

\newcommand{\fin}{\mathrm{fin}}
\newcommand{\mon}{\mathrm{mon}}
\newcommand{\surj}{\mathrm{srj}}
\newcommand{\ad}{\mathrm{ad}}
\renewcommand{\L}{\mathrm{L}}
\newcommand{\Spaf}{\mathrm{Sp}^{\fin}}
\newcommand{\CatL}{\cat{Cat}^{\lace}}
\newcommand{\Catb}{\Cat^{\mathrm{b}}}
\newcommand{\cof}{\mathrm{cof}}

\newcommand{\cart}{\mathrm{cart}}
\newcommand{\nil}{\mathrm{nil}}
\newcommand{\bex}{\mathrm{fex}}
\newcommand{\vloc}{\mathrm{vloc}}

\title[The linear approximation of algebraic K-theory]{Trace methods for stable categories I:\\ The linear approximation of algebraic K-theory}

\author{Yonatan Harpaz, Thomas Nikolaus, Victor Saunier}

\begin{document}
\begin{abstract}
    We study algebraic K-theory and topological Hochschild homology in the setting of bimodules over a stable category, a datum we refer to as a laced category. We show that in this setting both K-theory and THH carry universal properties, the former defined in terms of additivity and the latter via trace properties. We then use these universal properties in order to construct a trace map from laced K-theory to THH, and show that it exhibits THH as the first Goodwillie derivative of laced K-theory in the bimodule direction, generalizing the celebrated identification of stable K-theory by Dundas-McCarthy, a result which is the entryway to trace methods.
\end{abstract}

\maketitle

\tableofcontents

\section{Introduction}

In \cite{DundasMcCarthy}, Dundas-McCarthy solved a conjecture of Goodwillie identifying \textit{stable K-theory} with topological Hochschild homology as defined by Bökstedt, up to a shift. This result opened trace methods, once limited to the rational world, to integral considerations, and is the stepping stone to the celebrated Dundas-Goodwillie-McCarthy theorem \cite{DGMBook}, stating that for every map $f:A\to B$ of connective ring spectra such that $\pi_0(f)$ is surjective with nilpotent kernel, the associated square of spectra
\[
    \begin{tikzcd}
        \Kth(A)\arrow[r]\arrow[d] & \TC(A)\arrow[d] \\
        \Kth(B)\arrow[r] & \TC(B)
    \end{tikzcd}
\]
is cartesian, where $\TC$ denotes topological cyclic homology, an invariant constructed from the cyclotomic structure on $\THH$. This theorem, uncovering the local structure of K-theory, has been instrumental for a flurry of results, the most recent being the computation of $\Kth(\Z/p^n\Z)$ by Antieau-Krause-Nikolaus \cite{AntieauKrauseNikolaus} and the recent disproof of the telescope conjecture by Burklund-Hahn-Levy-Schlank \cite{BurklundHahnLevySchlank}. It allows one to access the K-theory of complicated rings by building on foundational computations in K-theory — such as those by Quillen or Suslin \cite{QuillenFiniteFields, Suslin} for respectively finite and algebraically closed fields — and computations of the more amenable $\TC$ via the machinery of spectral sequences and the theory of cyclotomic spectra. \\

In a parallel development, the seminal work of Quillen in the 70's and Waldhausen in the 80's made it evident that algebraic K-theory is best viewed not as an invariant of rings, but rather as one of categories, or better yet, of higher categories. This approach has enjoyed tremendous growth in the last decade after Barwick~\cite{BarwickHigher}, working with Waldhausen $\infty$-categories, and Blumberg-Gepner-Tabuada~\cite{BlumbergGepnerTabuada}, working with stable $\infty$-categories, showed that the higher categorical setting grants algebraic K-theory a universal property. The latter of the two approaches is the one we adopt in the present paper, and to simplify terminology we will simply call (stable) $\infty$-categories (stable) categories.

This universal property opens a paradigm shift for K-theory: it is no longer an object defined by involved constructions but a functor universally performing an operation on stable categories.
Coming back to the Dundas-McCarthy theorem, 
one could hope to understand the identification of stable K-theory with topological Hochschild homology from the point of view of universal properties. According to Dundas himself in \cite[Section 3.2]{DundasHistory}, although the proof of \cite{DundasMcCarthy} is following a radically different philosophy, premises of such a program were envisioned by Schwänzl, Staffeldt and Waldhausen in \cite{SchwanzlStaffeldtWaldhausen}. \\

The goal of this article is precisely to fulfill this vision: we will prove that stable K-theory and topological Hochschild homology coincide generally for all stable categories and all choices of coefficients. In particular, we will recover the result of Dundas-McCarthy by plugging the category of compact $R$-module spectra for a connective ring spectrum $R$. Moreover, our proof will run at a high-level of abstraction, by providing universal properties for both functors and then showing that the two properties for which they are universal end up coinciding. Let us now run through an exposition of our methods. 

Stable K-theory $\Kth^S$ of a ring spectrum $R$ with coefficients in a $R$-bimodule $M$ is usually defined as the exact approximation of the functor $M\mapsto \Kth(R\oplus M\varepsilon)$, the K-theory of the square-zero extension of $R$ by $M$. Dundas and McCarthy proved in \cite{DundasMcCarthy} the identification $\Kth^S(R, M)\simeq\THH(R, \Sigma M)$. Alternatively, to define stable K-theory, one could consider the exact approximation of $\Kth(\End(R, M))$, the K-theory of the category $\End(R, M)$ of $M$-parameterized $R$-endomorphisms, whose objects are compact $R$-module spectra $N$ equipped with a $R$-linear map $N\to M\otimes_R N$. If $R$ and $M$ are both connective then there is an equivalence of categories between $\Perf(R\oplus M)$ and $\End(R, \Sigma M)$, yielding an identification of this two points of view. This idea already features in Dundas-McCarthy's proof, and we note that passing along this equivalence removes the shift from the identification of stable $\Kth$-theory with $\THH$. \\

Let us then recast that story in the categorical world: for a presentable category $\Ecal$, its tangent bundle $\tangentT \Ecal$ is defined as the category $\Exc(\SpacesFinStar, \Ecal)$ of excisive functors from finite pointed spaces into $\Ecal$. There is a bicartesian fibration $\fgt\colon\tangentT \Ecal\to\Ecal$ classifying the functor $X\mapsto\Sp(\Ecal_{/X})$, sending an object $X$ to the stabilization of the overcategory $\Ecal_{/X}$. The collection of functors $\OmegaInfty_{/X}\colon\Sp(\Ecal_{/X})\to\Ecal_{/X}$ upgrades to a functor:
\[
    \begin{tikzcd}
        \sqz\colon\tangentT\Ecal\arrow[r] & \Ar(\Ecal)
    \end{tikzcd}
\]
which universally characterizes $\tangentT\Ecal$. For $\Ecal=\Alg_{\E_1}(\Sp)$, the category of $\E_1$-ring spectra, Lurie has shown that the tangent bundle of $\Ecal$ identifies with the category of pairs $(R,M)$ with $R$ a $\E_1$-ring spectra and $M$ a $R$-bimodule, such that the above functor is indeed the square-zero extension functor $(R,M)\mapsto [R\oplus M \to R]$. 

Let us now consider the case $\Ecal = \CatEx$, the category of stable categories and exact functors between them. Recall that a bimodule on a stable category $\Ccal$ is a bi-exact functor $M\colon \Ccal\op \times \Ccal \to \Sp$, or equivalently, an exact functor $M\colon \Ccal \to \Ind(\Ccal)$, where $\Ind(\Ccal) := \FunEx(\Ccal\op,\Sp)$ is the Ind-completion of $\Ccal$. Somewhat abusively, we will pass freely between these two points of view without changing the notation.

Denote by $\BiMod(\Ccal)$ the category of $\Ccal$-bimodules. 
Given a bimodule $M\in \BiMod(\Ccal)$, there is a category $\Lace(\Ccal,M)$ whose points are $M$-laced objects in $\Ccal$, 
i.e., pairs $(X, f)$ where $X$ is an object of $\Ccal$ and $f\colon X\to M(X)$ is a morphism in $\Ind(\Ccal)$. A central example is as follows: if $M$ is an $R$-bimodule for $R$ a ring spectrum, then $M\otimes_R-\colon\Perf(R)\to\Mod(R)$ is a $\Perf(R)$-bimodule. In this case, we have an identification:
$$
    \Lace(\Perf(R), M\otimes_R-)\simeq\End(R, M)
$$
In general, we may view $\Lace(\Ccal,M)$ as an object of $\CatEx_{/\Ccal}$ by remembering the obvious forgetful functor. Our first result is the following:

\begin{thm}
    Let $\Ccal$ be a stable category, then the functor
    \[ \Lace(\Ccal,-)\colon \BiMod(\Ccal) \to \CatEx_{/\Ccal} \]
    exhibits $\BiMod(\Ccal)$ as the stabilization of $\CatEx_{/\Ccal}$. In particular, it induces an equivalence 
    \[ \BiMod(\Ccal) \simeq \Sp(\CatEx_{/\Ccal}) \] 
    under which the functor $\Lace(\Ccal,-)$ identifies with $\Om^{\infty}_{/\Ccal}$.
\end{thm}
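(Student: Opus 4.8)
The plan is to verify the universal property of the tangent bundle / stabilization directly, using the characterization of $\tangentT\Ecal = \Sp(\Ecal)$ via the square-zero functor $\sqz\colon \tangentT\Ecal \to \Ar(\Ecal)$ mentioned in the excerpt. Concretely, I would show that $\Lace(\Ccal,-)\colon \BiMod(\Ccal) \to \CatEx_{/\Ccal}$ satisfies the three defining features: (1) $\BiMod(\Ccal)$ is stable; (2) the functor is right adjoint to something, or at least preserves limits and is "excisive" in the appropriate sense; (3) it's the terminal such functor, i.e. it realizes $\Om^\infty_{/\Ccal}$. Let me think about how each piece goes.

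First I would recall that $\BiMod(\Ccal) = \FunEx(\Ccal\op\times\Ccal,\Sp) = \FunEx(\Ccal,\Ind\Ccal)$ is a stable category (it's a functor category into a stable category, with exactness a limit-closed condition), so the first claim is immediate. Then the heart of the matter is to identify $\BiMod(\Ccal)$ with $\Sp(\CatEx_{/\Ccal})$. My approach would be to construct the comparison functor not by hand but via the universal property of stabilization: $\Sp(\CatEx_{/\Ccal})$ is the limit of the tower $\cdots \to \CatEx_{*/\Ccal//\Ccal} \to \CatEx_{*/\Ccal//\Ccal}$ along the loop functor $\Om$ — here the "pointed objects over and under $\Ccal$" are split square-zero-type extensions $\Ccal \to \Dcal \to \Ccal$. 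So I would first analyze this category of retractive objects: given $[\Ccal \xrightarrow{s} \Dcal \xrightarrow{p} \Ccal]$ with $ps = \id$, the fiber of $p$ (or rather the kernel, since everything is stable) is a stable category $\Kcal$ with a $\Ccal$-action making it a module, and conversely. The key structural input is that $\Lace(\Ccal, M)$ for $M$ a bimodule is precisely the "split square-zero extension" $\Ccal \ltimes M$ in $\CatEx$, whose category of objects is pairs $(X, f\colon X \to M(X))$; the forgetful and section functors are evident, and I'd need to check $\Om$ of such an extension recovers $\Lace(\Ccal, \Om M) = \Lace(\Ccal, M[-1])$, which reduces to a computation of loop objects in $\Ar$-type categories.

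The cleanest route, and the one I would actually pursue, is to exhibit $\Lace(\Ccal,-)$ as $\Om^\infty_{/\Ccal}$ by checking it is the right adjoint in an adjunction realizing the stabilization, and that it is conservative on connective-type objects / detects equivalences after the relevant reductions — but more robustly, I would lean on the $\sqz$-characterization: there is a functor $\Sp(\CatEx_{/\Ccal}) \to \Ar(\CatEx)_{/[\id_\Ccal]}$, and I would show $\Lace(\Ccal, M) \to \Ccal$ together with its canonical section assemble to a natural transformation $\BiMod(\Ccal) \to \Ar(\CatEx)$ satisfying the defining universal property (stable source, sends $M$ to a square-zero extension of $\Ccal$, initial among such). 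Verifying this amounts to: (a) identifying the tangent category $\tangentT_\Ccal\CatEx = \Sp(\CatEx_{/\Ccal})$ with $\Sp$ of the relevant slice, then (b) computing $\Sp(\CatEx_{/\Ccal})$ by recognizing that a spectrum object is a compatible sequence of stable categories over $\Ccal$, which unwinds — via the fact that $\CatEx$ is compactly generated and the relevant slice is itself nicely behaved — to a single bimodule's worth of data. The equivalence $\Lace(\Perf R, M\otimes_R -) \simeq \End(R,M)$ together with Lurie's identification of $\tangentT\Alg_{\E_1}$ serves as a consistency check and a guide for the argument.

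\textbf{The main obstacle} I anticipate is step (b): genuinely computing $\Sp(\CatEx_{/\Ccal})$ and showing the stabilization tower collapses after one step, i.e. that the loop functor on $(\CatEx_{/\Ccal})_*$ is already an equivalence onto its image in the relevant sense, or equivalently that split square-zero extensions of $\Ccal$ are automatically "$\Ccal$-linear" in a way that makes the bimodule data the whole story. This requires understanding the structure of a retractive stable category $[\Ccal \rightleftarrows \Dcal]$: the kernel $\Kcal = \fib(p)$ is a stable category equipped with two commuting $\Ccal$-actions (left and right, from the ambient structure), and I would need to show that $\Om$ in this category corresponds to the shift $\Kcal \mapsto \Kcal[-1]$ of module categories, and that the associated spectrum object is determined by a single bimodule $M \in \FunEx(\Ccal,\Ind\Ccal)$ — the subtlety being keeping track of Ind-completion / size issues and the exactness conditions, and ruling out extra data (like a genuine action rather than a linear one) that would make $\Sp(\CatEx_{/\Ccal})$ larger than $\BiMod(\Ccal)$. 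I expect this to be handled by a careful analysis of the Goodwillie tower / excisive approximation, using that $\Ccal \mapsto \CatEx_{/\Ccal}$ is itself suitably excisive or that the relevant slice categories are tangent categories in Lurie's sense, so that the abstract machinery of \cite{HigherAlgebra} applies once the bookkeeping is set up.
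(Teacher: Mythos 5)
Your high-level plan — reduce to computing $\Sp(\CatEx_{/\Ccal})$ and identify it with $\BiMod(\Ccal)$ — is the right shape, and you correctly identify this computation as the crux. The observation that $\Lace(\Ccal,-)$ plays the role of a split square-zero extension and that $\Om$ on $(\CatEx_{/\Ccal})_*$ should correspond to the shift $M \mapsto \Om M$ is also accurate and used in the paper. However, the specific mechanism you propose for extracting a bimodule from a retract diagram $\Ccal \xrightarrow{s} \Dcal \xrightarrow{p} \Ccal$ does not work, and this is not merely a bookkeeping worry but a conceptual gap.

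You propose to look at ``the kernel $\Kcal = \fib(p)$'' as a stable category with a $\Ccal$-biaction, and hope that the spectrum object is encoded by this kernel together with its module structure. But the Verdier kernel of $p\colon \Lace(\Ccal,M) \to \Ccal$ is \emph{always} the zero category: an object $(X, \alpha\colon X \to M(X))$ lies in the kernel if and only if $X \simeq 0$, which forces $\alpha = 0$. So for the very objects you care about — the split square-zero extensions — the Verdier kernel sees nothing; the bimodule $M$ lives at the level of mapping spectra, not at the level of a kernel subcategory. (Relatedly, $p$ is not a Verdier projection unless $M = 0$, even though it is a split surjection on objects.) This is why your anticipated worry of ``extra data making $\Sp(\CatEx_{/\Ccal})$ too large'' has the direction backwards: the kernel throws away \emph{all} the data. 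The correct decategorified kernel is the one extracted by the paper's functor $Q_\Ccal\colon \CatEx_{\Ccal//\Ccal} \to \BiMod(\Ccal)$, which sends $\Ccal \xrightarrow{f} \Dcal \xrightarrow{r} \Ccal$ to the biexact functor
\[ (x,y) \mapsto \fib\bigl[\map_{\Dcal}(f(x),f(y)) \xrightarrow{r_*} \map_{\Ccal}(x,y)\bigr], \]
i.e.\ one takes fibers of \emph{mapping spectra}, not a kernel of categories. The paper then shows that $Q^{\Sp}_\Ccal := Q_\Ccal \circ \Om^\infty$ is an equivalence, and that $Q^{\Sp}_\Ccal \circ \Lace^{\Sp}_\Ccal \simeq \Om$ on $\BiMod(\Ccal)$, from which the theorem follows.

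The proof that $Q^{\Sp}_\Ccal$ is an equivalence is also not the routine tower computation you sketch; it uses two ingredients worth knowing. First, one exploits the orthogonal factorization system on $\CatEx$ by (stably surjective, fully-faithful) functors: the inclusion $\Cat^{\st-\surj}_{\Ccal//\Ccal} \hookrightarrow \CatEx_{\Ccal//\Ccal}$ has a right adjoint given by taking stable image, and a general lemma about adjunctions whose unit and counit become equivalences after finitely many loops (a variant of Harpaz–Nuiten–Prasma) shows this induces an equivalence on stabilizations. Second, one uses Lurie–Barr–Beck to identify $(\Cat^{\st-\surj})_{\Ccal/}$ with the category of colimit-preserving monads on $\Ind(\Ccal)$, so that the stabilization of $\Cat^{\st-\surj}_{\Ccal//\Ccal}$ can be computed by Lurie's Theorem 7.3.4.13 (the tangent bundle of $\E_1$-algebras). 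The consistency check you mention — compatibility with Lurie's $\tangentT\Alg_{\E_1}$ — is thus not merely a guide but enters as the decisive input. Without the monadicity argument you would have to identify spectrum objects in $\CatEx_{\Ccal//\Ccal}$ ``by hand,'' which, as you rightly sense, is where the difficulty lives; the Verdier-kernel route does not cross that gap.
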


Letting $\Ccal$ vary, the above theorem assembles into an equivalence
\[ \TCatEx \simeq \int_{\Ccal \in \CatEx} \BiMod(\Ccal) \]
between the tangent bundle of $\CatEx$ and the unstraightening of the functor $\Ccal\mapsto\BiMod(\Ccal)$, which explicitly is the category of pairs $(\Ccal,M)$ consisting of a stable category $\Ccal$ equipped with a $\Ccal$-bimodule $M$. This may be considered as a stable categorical counterpart of Lurie's identification of $\tangentT\Alg_{\E_1}(\Sp)$ in terms of rings equipped with a bimodule, see \cite[Theorem 7.3.4.13]{HA}. 

Let us fix some terminology: we call such pairs $(\Ccal, M)\in\TCatEx$ \textit{laced categories}. 
Laced functors $(f, \alpha)\colon(\Ccal, M)\to(\Dcal, N)$ are arrows in the tangent bundle $\TCatEx$; explicitly, this is the datum of an exact functor $f\colon\Ccal\to\Dcal$ and a natural transformation $\alpha\colon M\Rightarrow N\circ(\catop{f}\times f)$.

The following definition generalizes what is usually known as K-theory of parameterized endomorphisms. In particular, for $M=\id$, this is the K-theory of endomorphisms.

\begin{defi}
    Let $(\Ccal,M)$ be a laced category. We define the \textit{laced K-theory} of $(\Ccal, M)$ to be the spectrum $\Klace(\Ccal, M):=\Kth(\Lace(\Ccal, M))$. 
\end{defi}

Our first main result is that $\Klace$ enjoys a universal property of a similar flavour to the one of \cite{BlumbergGepnerTabuada}, whose results imply that $\SigmaInftyP\core\to\Kth$ is the initial natural transformation whose target is an \textit{additive invariant}, where $\core$ denotes the largest subgroupoid of a category. The precise meaning of additivity for functors on the tangent bundle is worked out in \S\ref{lace-additivity}. 

\begin{thm} \label{KlaceIsUnivAdditiveIntro}
    The natural transformation $\SpLaceEq\Rightarrow\Klace$ of functors $\TCatEx\to\Sp$ exhibits laced K-theory as the initial additive invariant under $\SpLaceEq$.
\end{thm}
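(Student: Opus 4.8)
The plan is to mimic the strategy of Blumberg–Gepner–Tabuada for ordinary K-theory, but carried out fibrewise over $\CatEx$ inside the tangent bundle. First I would make precise the notion of an \emph{additive invariant} on $\TCatEx$: a functor $F\colon\TCatEx\to\Sp$ (or more generally to a stable target) which is reduced, preserves the relevant fibrewise zero objects, and sends ``split laced Poincaré–Verdier'' sequences — i.e. sequences of laced categories lying over a split exact sequence in $\CatEx$ and which are split exact in the bimodule direction — to (split) cofibre sequences. The key structural input is Theorem~1 (the first unnumbered theorem): it identifies $\BiMod(\Ccal)\simeq\Sp(\CatEx_{/\Ccal})$ with $\Lace(\Ccal,-)\simeq\Om^\infty_{/\Ccal}$, and assembles to $\TCatEx\simeq\int_{\Ccal}\BiMod(\Ccal)$. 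This lets me describe the split-exact structure on $\Lace(\Ccal,M)$ concretely: a direct sum decomposition $M\simeq M_1\oplus M_2$ of bimodules induces a decomposition of the laced category, and an object of $\Lace(\Ccal,M)$ equipped with a filtration gives rise to an additivity-type cofibre sequence $\Lace(\Ccal,M_1)\to\Lace(\Ccal,M)\to\Lace(\Ccal,M_2)$, exactly parallel to the Waldhausen $S_\bullet$-additivity theorem.

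Next I would set up the comparison. Since $\Lace(\Ccal,M)$ is an honest stable category functorially in $(\Ccal,M)$, ordinary K-theory $\Kth\circ\Lace = \Klace$ is defined, and because $\Kth$ itself is the initial additive invariant on $\CatEx$ under $\SigmaInftyP\core$ (by Blumberg–Gepner–Tabuada), precomposition with $\Lace(\Ccal,-)$ shows $\Klace$ receives a map from $\SigmaInftyP(\Lace(\Ccal,M))^\simeq = \SpLaceEq$ and that $\Klace$ is an additive invariant on $\TCatEx$ — here one must check that split exact sequences of \emph{laced} categories are sent by $\Lace$ to split exact sequences of stable categories, which follows from the explicit model above together with the fact that $\Lace(\Ccal,-)$, being $\Om^\infty_{/\Ccal}$ of a stable fibration, preserves the relevant (co)limits. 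So $\SpLaceEq\Rightarrow\Klace$ is a natural transformation into an additive invariant; the content is \emph{initiality}.

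For initiality I would argue as follows. Let $G$ be any additive invariant under $\SpLaceEq$, with structure map $\SpLaceEq\Rightarrow G$. I want a unique factorization $\Klace\Rightarrow G$. The strategy is to reduce to the universal property of $\Kth$ on $\CatEx$ by ``integrating out'' the bimodule. One route: restrict everything along the zero-section $\CatEx\hookrightarrow\TCatEx$, $\Ccal\mapsto(\Ccal,0)$; there $\Lace(\Ccal,0)=\Ccal$, $\SpLaceEq$ becomes $\SigmaInftyP\core$, $\Klace$ becomes $\Kth$, and an additive invariant on $\TCatEx$ restricts to one on $\CatEx$, so BGT gives the factorization over the zero-section. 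Then one propagates off the zero-section using the fibrewise-stable structure: every laced category $(\Ccal,M)$ sits in a functorial cofibre sequence relating it to $(\Ccal,0)$ via the additivity sequence attached to the ``free'' filtration (the split square-zero picture), and additivity of both $\Klace$ and $G$ forces the factorization to extend uniquely. Concretely I would exhibit $\Klace$ as the left Kan extension (or a suitable colimit) of its restriction to the zero-section along the inclusion, within the subcategory of additive invariants — this is the tangent-bundle analogue of the statement that an additive invariant is determined by its values on split-exact data, and it is where Theorem~1's identification of the fibres as honest \emph{stabilizations} does the real work, since it guarantees enough split sequences.

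The main obstacle I expect is precisely this last step: proving that additive invariants on $\TCatEx$ are generated under the relevant colimits by their restriction to the zero-section, equivalently that the natural transformation $\SpLaceEq\Rightarrow\Klace$ is initial and not merely that $\Klace$ is \emph{an} additive invariant receiving it. This requires a genuine ``fibrewise group-completion / $S_\bullet$-construction'' argument internal to the tangent bundle — one must build an additivization functor for invariants on $\TCatEx$, show $\Klace$ is its value on $\SpLaceEq$, and check the universal mapping property. Making the fibrewise additivity theorem precise (identifying which sequences of laced categories count as ``split exact'', and verifying the $S_\bullet$-style splitting in the bimodule variable) is the technical heart; once that is in place, the formal reduction to \cite{BlumbergGepnerTabuada} along the zero-section is routine.
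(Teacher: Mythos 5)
Your proposal correctly identifies the broad shape of the argument — reduce to Blumberg–Gepner–Tabuada on $\CatEx$, using that $\Klace = \Kth\circ\Lace$ and that $\Lace$ behaves well with respect to the additive structure. But the mechanism you propose for the initiality step is not the paper's, and as you yourself flag, it is where your argument has a genuine hole.

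Two problems. First, your working model of the ``split laced sequences'' one wants additivity for is off. You describe them as direct sum decompositions $M \simeq M_1 \oplus M_2$ in the bimodule variable over a fixed $\Ccal$, inducing a decomposition of $\Lace(\Ccal,M)$. That is not the paper's notion. A \emph{laced semi-orthogonal decomposition} $(\Acal,N) \subseteq (\Ccal,M) \supseteq (\Bcal,P)$ decomposes the \emph{underlying stable category}, with the bimodule restricted along the inclusions, subject to the vanishing condition $M(A,B) \simeq 0$ for $A\in\Acal$, $B\in\Bcal$. Additivity for $\Klace$ and the existence of the trace are all driven by this category-level notion, not by a fibrewise splitting of $M$.

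Second, and more importantly, your route to initiality — restrict along the zero-section $Z\colon\Ccal\mapsto(\Ccal,0)$, invoke BGT there, then ``propagate'' via additivity sequences connecting $(\Ccal,M)$ to $(\Ccal,0)$ — is not the argument, and I do not see how to make it work. The zero-section $Z$ is left adjoint to the \emph{projection} $\fgt\colon\CatL\to\CatEx$, not to $\Lace$, and $\Klace = \Lace^*\Kth$, not $\fgt^*\Kth$. So restricting along $Z$ does not produce an adjunction on $\Nat$-spaces involving $\Klace$. The paper's key observation, which your proposal misses, is that the correct section to use is $\L\colon\Ccal\mapsto(\Ccal,\map_\Ccal)$, and that $\L\dashv\Lace$. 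This gives $\Nat(\Klace,F)=\Nat(\Lace^*\Kth,F)\simeq\Nat(\Kth,F\circ\L)$ and likewise $\Nat(\SpLaceEq,F)\simeq\Nat(\SigmaInftyP\core,F\circ\L)$. Since $\L$ carries semi-orthogonal decompositions to laced ones (Example~\ref{LpreservesSOD}), $F\circ\L$ is additive whenever $F$ is, and BGT finishes the argument in one stroke — no ``fibrewise group-completion'' or left Kan extension off the zero-section needed. The propagation step you identify as ``the technical heart'' is indeed where the gap is, but the fix is a different observation, not a harder version of what you wrote.
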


If $\Ccal$ is a stable category, stable K-theory $\Kth^S(\Ccal, -)$ is the exact approximation of $\Klace(\Ccal, -)$ in the sense of Goodwillie calculus. These approximations behave well in families so that they assemble into a functor, which we still call stable K-theory
$$
    \fbwnexcPart{1}\Klace\colon \TCatEx\longrightarrow\Sp
$$
which maps $(\Ccal, M)$ to the value at $M$ of the exact approximation of $\Klace(\Ccal, -)$ (the $\tgt$ stands for tangentially).
It follows from the previous theorem that stable K-theory is also given by a universal property under $\SpLaceEq$, namely the composite 
\[
    \begin{tikzcd}
        \SpLaceEq\arrow[r] & \Klace\arrow[r] & \fbwnexcPart{1}\Klace
    \end{tikzcd}
\]
exhibits its target as the initial additive \textit{tangentially exact} invariant. Here, a functor $F\colon \TCatEx\to\Sp$ is said to be tangentially exact if the restriction to each fibre $F(\Ccal, -)\colon \BiMod(\Ccal)\to\Sp$ is exact. \\

Let us now pass to topological Hochschild homology. There are multiple ways of defining $\THH$ for laced categories, let us pick the simpler one to express, which is akin to writing $R\otimes_{\catop{R}\otimes R}M$ for ring spectra. If $(\Ccal, M)$ is a laced category, we let $\THH$ to be the following coend:
\[
    \THH(\Ccal, M):=\int^{X\in\Ccal} M(X, X).
\]
By the Bousfield-Kan formula, this coend can be shown to coincide with the usual cyclic bar construction with many objects, used notably in \cite[Section 10]{BlumbergGepnerTabuada}. In fact, in the laced world, this cyclic bar construction is an instance of a more general phenomenon, which we now explain. 

First, we note that the category $\TCatEx$ has all small limits and colimits. In fact, we will show that it is presentable, and compactly-generated by two objects. Using Day convolution, it inherits a closed symmetric monoidal structure which makes the bicartesian fibration $\fgt\colon\TCatEx\to\CatEx$ into a monoidal functor. 

If $(\Ccal, M)$ is a laced category, we let $(\Ccal, M)^{([1], *)}$ denote the new laced category whose underlying category is the arrow category $\Ccal^{[1]}$ and whose bimodule is given as
\[
(X_0 \to X_1, Y_0 \to Y_1) \mapsto M(X_1,Y_0) \ .
\]
As the notation suggests, this construction is an instance of a cotensor. More specifically, $\TCatEx$ is tensored and cotensored over $\Cat^b$, the unstraightening of the functor $\Ical\mapsto\Fun(\catop{\Ical}\times\Ical, \Spaces)$ and the category descibed above is the 
cotensor  by $([1], *)$, where $\ast\colon\catop{[1]}\times[1]\to\Spaces$ is the terminal functor. We write $d_0, d_1\colon (\Ccal, M)^{([1], *)}\to(\Ccal, M)$ for the laced functors induced by the faces $d_0, d_1\colon [0]\to[1]$. 

The laced functors $d_0$ and $d_1$ are generating examples of \textit{strict trace equivalences}. Indeed, we can consider the objects of the mapping space
\[
    \Map_{\TCatEx}\left( (\Ccal, M), (\Dcal, N)^{([1],*)} \right)
\]
as homotopies between their boundary points, we call such maps \textit{trace homotopies}. A strict trace equivalence is then a laced functor which has an inverse up to trace homotopy. The projection $s_0\colon [1]\to[0]$ shows that $d_0, d_1$ are strict trace equivalences with a common inverse induced by $s_0$, and more generally, the collection of maps $(\Ccal, M)^{([m], *)}\to(\Ccal, M)^{([n], *)}$ generates as a saturated class the collection of strict trace equivalences, in the sense that inverting such maps is equivalent to inverting all strict trace equivalences. 

We say that a functor $F\colon \TCatEx\to\Ecal$ is \textit{trace-like} if it inverts strict trace equivalences. If $\Ecal$ is presentable, we let
\[
    \cyc(F)(\Ccal, M):=\left|F\left( (\Ccal, M)^{([n], *)} \right)\right|
\]
denote the cyclic bar construction of $F$. The above characterization implies that the natural transformation $F\Rightarrow\cyc(F)$ induced by the inclusion of the 0-simplices has the following universal property:

\begin{prop}
    The natural transformation $F\Rightarrow\cyc(F)$ exhibits its target as the initial trace-like functor under $F$.
\end{prop}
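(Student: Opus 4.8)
The plan is to realize the statement as a reflective Bousfield localization. Write $\mathcal{F} = \Fun(\TCatEx, \Ecal)$ and let $\mathcal{F}^{\tr} \subseteq \mathcal{F}$ be the full subcategory spanned by the trace-like functors. By the characterization of trace equivalences recalled above, a functor is trace-like precisely when it inverts the comparison maps $(\Ccal,M)^{([m],*)} \to (\Ccal,M)^{([n],*)}$; since $\Ecal$ is presentable this exhibits $\mathcal{F}^{\tr}$ as a reflective localization of $\mathcal{F}$, and, by the universal property of the reflector, ``the initial trace-like functor under $F$'' is exactly the value of the reflector at $F$, with structure map the localization unit. It therefore suffices to show that $\cyc(F)$, together with $F \Rightarrow \cyc(F)$, has the two properties characterizing that unit: \textbf{(i)} $\cyc(F)$ is trace-like for every $F$; and \textbf{(ii)} for every trace-like $G$, precomposition along $F \to \cyc(F)$ is an equivalence $\Map_{\mathcal{F}}(\cyc(F), G) \xrightarrow{\sim} \Map_{\mathcal{F}}(F, G)$.

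The easy half of (ii) is the case where $\cyc$ is fed a trace-like functor: if $G$ is trace-like, every face and degeneracy map of the simplicial object $[n] \mapsto G\bigl((\Ccal,M)^{([n],*)}\bigr)$ is $G$ applied to a trace equivalence, hence an equivalence; since a simplicial object all of whose structure maps are invertible is equivalent to a constant one and $\Delta^{\mathrm{op}}$ is weakly contractible, its geometric realization is $G(\Ccal,M)$, with the comparison being the inclusion of $0$-simplices, i.e. the unit. Thus $G \xrightarrow{\sim} \cyc(G)$ for trace-like $G$. Granting (i), it then follows that $(\cyc, \id \Rightarrow \cyc)$ is an idempotent monad on $\mathcal{F}$ (each $\cyc(F)$ is trace-like, so $\eta_{\cyc(F)}$ is an equivalence by the previous sentence), whose local objects are, combining the last two sentences, precisely the trace-like functors; hence $\cyc$ is the reflector onto $\mathcal{F}^{\tr}$ and $F \to \cyc(F)$ is its unit, which is exactly (ii). (The explicit homotopy inverse to precomposition sends $\psi\colon F\to G$ to $\eta_G^{-1}\circ\cyc(\psi)$, the triangle identities coming from naturality of $\id\Rightarrow\cyc$ and $\cyc\eta = \eta\cyc$.)

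This reduces everything to (i): that $\cyc(F)$ inverts the comparison maps $(\Ccal,M)^{([m],*)} \to (\Ccal,M)^{([n],*)}$, and this is the step I expect to be the main obstacle. Using that $\TCatEx$ is cotensored over $\Catb$ compatibly with the Day monoidal structure, iterated cotensors are cotensors by products, $\bigl((\Ccal,M)^{([m],*)}\bigr)^{([k],*)} \simeq (\Ccal,M)^{([k]\times[m],*)}$, so
\[
    \cyc(F)\bigl((\Ccal,M)^{([m],*)}\bigr) \;\simeq\; \colim_{[k]\in\Delta^{\mathrm{op}}} F\bigl((\Ccal,M)^{([k]\times[m],*)}\bigr),
\]
the realization in the $[k]$-direction of the bisimplicial object $W_{k,m} := F\bigl((\Ccal,M)^{([k]\times[m],*)}\bigr)$ of $\Ecal$, and the comparison map attached to a simplicial operator $[n]\to[m]$ is the one coming levelwise from $[k]\times[n]\to[k]\times[m]$. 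One must show that the resulting simplicial object $[m]\mapsto |\,[k]\mapsto W_{k,m}\,|$ is constant. I would argue this by exploiting that $W$ is built from the product functor $\Delta\times\Delta\to\Catb$, hence is symmetric under interchanging the two simplicial directions, together with the cofinality of the diagonal $\Delta^{\mathrm{op}}\to\Delta^{\mathrm{op}}\times\Delta^{\mathrm{op}}$: this lets one compare the two iterated realizations of $W$, pass to the diagonal $[k]\mapsto W_{k,k}$, and then collapse the doubled simplicial variable along $[k]\times[k]\to[k]$, identifying $|\,[k]\mapsto W_{k,m}\,|$ with $\colim_{[k]} F\bigl((\Ccal,M)^{([k],*)}\bigr) = \cyc(F)(\Ccal,M)$ naturally in $[m]$. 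The care needed here — making this identification coherent rather than merely objectwise, and tracking how the bimodule is twisted inside $(\Ccal,M)^{([m],*)}$ — is where the genuine work of the proposition lies.
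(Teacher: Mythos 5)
Your overall strategy — show that $F\Rightarrow\cyc(F)$ is the unit of a reflective localization onto the trace-like functors, verified by (i) $\cyc(F)$ is trace-like and (ii) $\eta_G$ is an equivalence for trace-like $G$, then invoke the localization criterion of \cite[Proposition 5.2.7.4]{HTT} — is exactly the paper's, and your ``easy half'' (if $G$ is trace-like then $\eta_G\colon G\to\cyc(G)$ is an equivalence, since each structure map of $G((\Ccal,M)^{([\bullet],*)})$ is $G$ of a trace equivalence) is correct. However, two of the substantive ingredients are not established.

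\textbf{The identity $\cyc\eta\simeq\eta\cyc$.} Your triangle-identity check for the explicit inverse $\psi\mapsto\eta_G^{-1}\circ\cyc(\psi)$ requires $\cyc(\eta_F)\simeq\eta_{\cyc(F)}$ in the arrow category, and you cite this as if it were formal. It is not: naturality of $\eta$ only gives $\cyc(\eta_F)\circ\eta_F\simeq\eta_{\cyc(F)}\circ\eta_F$. The identification $\cyc(\eta_F)\simeq\eta_{\cyc(F)}$ is the first, and arguably the key, claim in the paper's proof. It is proven by observing that $\cyc(\cyc(F))$ is the realization of the bisimplicial object $([m],[n])\mapsto F((\Ccal,M)^{([m]\times[n],*)})$ (using the iterated-cotensor formula you quote), that $\eta_{\cyc(F)}$ and $\cyc(\eta_F)$ are the maps induced by the two coordinate inclusions into $\Delta\times\Delta$, and that post-composing with the flip involution of $\Delta\times\Delta$ interchanges them. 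Without this, HTT~5.2.7.4 condition (3) — which requires both $\eta_{LC}$ and $L\eta_C$ to be equivalences — is only half verified.

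\textbf{Trace-like-ness of $\cyc(F)$.} Your sketched symmetry-plus-diagonal argument does not go through. Cofinality of the diagonal $\Delta^{\op}\to\Delta^{\op}\times\Delta^{\op}$ identifies the \emph{total} colimit of $W_{k,m}=F((\Ccal,M)^{([k]\times[m],*)})$ with the diagonal colimit $\colim_k W_{k,k}$, and the flip symmetry identifies $W_{k,m}\simeq W_{m,k}$; neither of these says anything about the partial realizations $|W_{\bullet,m}|$ for fixed $m$ being constant in $m$, nor is it clear what ``collapse along $[k]\times[k]\to[k]$'' means (there is no natural such map). The correct argument, given in the paper as a separate proposition, is a genuinely different one: first, by the test criterion (a functor is trace-like iff it inverts $d_0\colon(\Ccal,M)^{([1],*)}\to(\Ccal,M)$), it is enough to treat $m=1$; then, since the simplicial map $F((\Ccal,M)^{([1]\times[\bullet],*)})\to F((\Ccal,M)^{([\bullet],*)})$ has a one-sided inverse induced by the projections, one reduces to showing the composite $[1]\times[\bullet]\to[\bullet]\to[1]\times[\bullet]$ in $\Cat$ is simplicially homotopic to the identity, which is done by exhibiting an explicit simplicial homotopy $h_i\colon[1]\times[n+1]\to[1]\times[n]$. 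You are right that this step is where the real work lies, but the diagonal trick is not the tool — the explicit simplicial homotopy (or some other contraction argument) is what is needed.

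Incidentally, your reduction requires the target to be presentable, while the theorem only assumes $\Ecal$ has geometric realizations; and you are asserting the abstract existence of a reflector onto trace-like functors before exhibiting it, which is exactly what still needs to be shown.
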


Let us introduce the following unstable version of $\THH$, denoted $\uTHH(\Ccal, M)$ by taking the coend of the functor $\OmegaInfty M\colon \catop{\Ccal}\times\Ccal\to\Spaces$. It holds that $\uTHH(\Ccal, M)\simeq\cyc(\LaceEq)$, so that there is a natural transformation $\LaceEq\to\uTHH$ exhibiting unstable THH as the initial trace-like functor under $\LaceEq$. The colimit comparison map for $\OmegaInfty$ provides a natural transformation $\uTHH\to\OmegaInfty\THH(\Ccal, M)$ and we are able to show:

\begin{thm}
    The induced natural transformation $\eta\colon \SigmaInftyP\uTHH\Rightarrow\THH(\Ccal, M)$ exhibits its target as the initial tangentially exact invariant under its source. 
    
    As a consequence, the composite $\SpLaceEq\Rightarrow\THH$ exhibits topological Hochschild homology as the initial trace-like tangentially exact invariant under its source.
\end{thm}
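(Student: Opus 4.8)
The plan is to deduce the final assertion formally from the first, and to prove the first by identifying $\eta$ with the fiberwise-exact approximation map, i.e.\ by showing $\THH\simeq\fbwnexcPart{1}(\SigmaInftyP\uTHH)$ with $\eta$ the canonical map. I would begin with three routine checks on $\THH$. Writing the coend as a colimit over the twisted arrow category, $\THH(\Ccal,M)\simeq\colim_{\TwAr(\Ccal)}M$, exactness of $\THH(\Ccal,-)\colon\BiMod(\Ccal)\to\Sp$ is immediate since coends are colimits, evaluation is exact and $\BiMod(\Ccal)$ is stable; thus $\THH$ is fiberwise-exact, and the same description makes it finitary and Morita invariant, hence an invariant. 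That $\THH$ is trace-like follows, by the generating property of the cotensors $(\Ccal,M)^{([m],*)}\to(\Ccal,M)^{([n],*)}$, from the Fubini identity $\int^{X_0\to X_1\in\Ccal^{[1]}}M(X_1,X_0)\simeq\int^{X_1\in\Ccal}\int^{X_0\in\Ccal_{/X_1}}M(X_1,X_0)\simeq\int^{X_1}M(X_1,X_1)$, each slice $\Ccal_{/X_1}$ having a terminal object, together with its evident variants for general $[n]$.

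The heart of the argument is the identification $\THH\simeq\fbwnexcPart{1}(\SigmaInftyP\uTHH)$. Since $\SigmaInftyP$ is a left adjoint it commutes with the coend defining $\uTHH$, so fiberwise $\SigmaInftyP\uTHH(\Ccal,M)\simeq\colim_{\TwAr(\Ccal)}\SigmaInftyP\OmegaInfty M$. Now $\fbwnexcPart{1}$ is, over each $\Ccal$, a left Bousfield localization onto the exact functors $\BiMod(\Ccal)\to\Sp$, hence commutes with this colimit; and the reduced linearization of $M\mapsto\SigmaInftyP\OmegaInfty M(X,Y)$ is $M\mapsto M(X,Y)$, via the counit $\SigmaInftyP\OmegaInfty\Rightarrow\id$ (Freudenthal). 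Therefore $\fbwnexcPart{1}(\SigmaInftyP\uTHH(\Ccal,-))\simeq\colim_{\TwAr(\Ccal)}(M\mapsto M(X,Y))\simeq\THH(\Ccal,-)$, with approximation map the colimit of the counits, which is precisely $\eta_\Ccal$. Since $\eta$ is a natural transformation of functors on $\TCatEx$ whose target is fiberwise-exact, it factors through $\fbwnexcPart{1}(\SigmaInftyP\uTHH)\to\THH$ (using that $\fbwnexcPart{1}$ is built to behave well in families), and this map is a fiberwise equivalence by the computation, hence an equivalence since equivalences of functors are detected on objects. As $\THH$ is moreover an invariant, it is then the initial fiberwise-exact invariant under $\SigmaInftyP\uTHH$, which is the first assertion.

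The final assertion is now formal. By the Proposition, $\SpLaceEq\Rightarrow\cyc(\SpLaceEq)$ is the initial trace-like functor under $\SpLaceEq$, and $\cyc(\SpLaceEq)\simeq\SigmaInftyP\uTHH$ because $\SigmaInftyP$ commutes with the realization defining $\cyc$ and $\uTHH\simeq\cyc(\LaceEq)$. Combining this with the first assertion and the checks that $\THH$ is a trace-like fiberwise-exact invariant: given any trace-like fiberwise-exact invariant $C$ receiving a map from $\SpLaceEq$, trace-likeness of $C$ forces a unique factorization through $\SpLaceEq\Rightarrow\SigmaInftyP\uTHH$, and fiberwise-exactness plus invariance of $C$ then force a further unique factorization through $\SigmaInftyP\uTHH\Rightarrow\THH$; composing, $\SpLaceEq\Rightarrow\THH$ is initial among trace-like fiberwise-exact invariants under its source.

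I expect the real work to lie in the middle paragraph: the assertion that polynomial approximation commutes with the coend rests on $\fbwnexcPart{1}$ being a genuine localization rather than a termwise operation, and the passage from the fiberwise identification to one natural in $\Ccal$ requires the ``approximation in families'' formalism already used to define $\fbwnexcPart{1}\Klace$. Should that formalism not apply directly, the fallback is to establish $\THH\simeq\fbwnexcPart{1}(\SigmaInftyP\uTHH)$ first on laced categories of the form $(\Perf R,M)$ — where $\eta$ recovers the classical Dundas--McCarthy comparison of $\uTHH$ with $\THH$ — and then propagate along Morita invariance and finitariness.
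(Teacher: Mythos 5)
Your proof of the first assertion is essentially the paper's: you identify $\THH$ with the fibrewise exact approximation of $\SigmaInftyP\uTHH$ by pulling the approximation through the coend and using that the exact approximation of $\SigmaInftyP\OmegaInfty$ is the identity. The paper packages the latter into Lemma~\ref{StableSigmaLoop} (the map $\SigmaInftyP\OmegaInfty F\Rightarrow F$ is the exact approximation for any exact $F$ into a stabilization) and then obtains the fibrewise statement exactly as you do, via commutation of $\nexcPart{1}$ with colimits and Lemma~\ref{FiberwiseDerivative}. Your invocation of ``Freudenthal'' is a reasonable shorthand for Lemma~\ref{StableSigmaLoop}, though the paper's proof is a small diagram chase with the $\SigmaInftyP\dashv\OmegaInfty$ adjunction rather than a connectivity estimate. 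So far, same road.

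Where you deviate is in establishing that $\THH$ is trace-like, which you need for the second assertion. You propose a direct ``Fubini for coends'' identification
\[
\int^{X_0\to X_1\in\Ccal^{[1]}} M(X_1,X_0)\;\simeq\;\int^{X_1\in\Ccal}\int^{X_0\in\Ccal_{/X_1}} M(X_1,X_0)\;\simeq\;\int^{X_1} M(X_1,X_1),
\]
and its $[n]$-analogues. This is true in spirit (it is a shadow of the trace invariance established later in \S\ref{sec:trace-like}), but the first step is not literal coend Fubini: $\Ccal^{[1]}$ is not a product, the bimodule $M(X_1,Y_0)$ mixes source and target across the two variables, and the colimit over $\TwAr(\Ccal^{[1]})$ does not naively iterate through the fibres of $t\colon\Ccal^{[1]}\to\Ccal$. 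Making this precise would amount to (a version of) Construction~\ref{cons:prototypical} and Proposition~\ref{prop:proto-fine}, which the paper develops later and for a different purpose. The paper's route here is cheaper and cleaner: since $\uTHH=\cyc(\LaceEq)$ is trace-like (Proposition~\ref{CycIsTRLike}) and fibrewise exact approximation visibly preserves trace-like functors --- the formula $\fbwnexcPart{1}\Fcal(\Ccal,M)=\colim_n\Om^n\overline{\Fcal}(\Ccal,\Sig^nM)$ sends trace equivalences to trace equivalences levelwise (Corollary~\ref{GDPreserveTRL}) --- one gets trace-likeness of $\THH$ for free. So your route is plausible but under-justified and strictly harder than needed; the paper's argument derives the same fact from ingredients you already use. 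The final paragraph of your proposal, deducing the second assertion from the two universal properties, is correct and matches the paper's logic.
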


In spirit, this universal property of $\THH$ it shares many of the ideas that led to the universal property of K-theory in \cite{BlumbergGepnerTabuada}; 
indeed, just like the $S_\bullet$-construction, the $\cyc$ construction is the geometric realization of a simplicial object whose faces are generators of arrows one wants to control (strict trace equivalences for $\THH$, split-Verdier projection for K-theory --- see \cite[Lemma 2.2.8]{HermKII}). 
On the other hand, it exemplifies the difference between K-theory and $\THH$: the universal property of the latter is to be found in a world with coefficients whereas the former is already universal as an absolute invariant, before the introduction of coefficients. \\

Now that we have characterized stable K-theory and $\THH$ as universal objects under the same functor, namely $\SpLaceEq$, the result we are after is a formal consequence of the following theorem:

\begin{thm}
    Let $F\colon \TCatEx\to\Ecal$ be a tangentially exact functor with target a stable category. Then, the following are equivalent:
    \begin{enumerate}
        \item[(i)] $F$ is additive
        \item[(ii)] $F$ is trace-like
    \end{enumerate}
    In particular, $\THH$ is additive.
\end{thm}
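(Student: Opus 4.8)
The plan is to prove that, for a fiberwise-exact $F$ with stable target, each of (i) and (ii) is equivalent to the condition that $F$ inverts the projection $d_1\colon(\Ccal,M)^{([1],*)}\to(\Ccal,M)$ for every laced category $(\Ccal,M)$ (equivalently $d_0$). The bridge between the two conditions is the split exact sequence of laced categories
\[
    (\Ccal,0)\longrightarrow(\Ccal,M)^{([1],*)}\xrightarrow{\ d_1\ }(\Ccal,M),
\]
whose middle term has underlying category $\Ar(\Ccal)$ with bimodule $(X_0\to X_1,Y_0\to Y_1)\mapsto M(X_1,Y_0)$, whose quotient map $d_1=\mathrm{ev}_0$ is a split Verdier projection, and whose fiber is the full subcategory of arrows out of $0$ — on which the first vertex of every object vanishes, so that the bimodule restricts to $M(X_1,0)=0$. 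More generally, the two outer faces $d_0,d_n\colon(\Ccal,M)^{([n],*)}\to(\Ccal,M)^{([n-1],*)}$ of the simplicial laced category $[n]\mapsto(\Ccal,M)^{([n],*)}$ sit in split exact sequences of the same shape, again with zero bimodule on the fiber. The basic observation is that a fiberwise-exact functor annihilates any laced category with trivial bimodule, since $F(\Ccal,-)\colon\BiMod(\Ccal)\to\Ecal$ is exact and therefore preserves the zero object.

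\textbf{(i) $\Rightarrow$ (ii).} Assume $F$ is additive. Applying $F$ to $(\Ccal,0)\to(\Ccal,M)^{([n],*)}\xrightarrow{d_0}(\Ccal,M)^{([n-1],*)}$ and using $F(\Ccal,0)=0$ shows that $F(d_0)$ is an equivalence for every $n\geq1$ and every $(\Ccal,M)$. In a simplicial object all of whose zeroth faces are equivalences, all faces and degeneracies are equivalences: $d_0s_0=\id$ makes each $s_0$ one, $d_0s_j=s_{j-1}d_0$ then makes each degeneracy one, and $d_is_i=\id$ makes each face one. Hence $F$ inverts every face and degeneracy of the simplicial laced category $[n]\mapsto(\Ccal,M)^{([n],*)}$, and so every map of the form $(\Ccal,M)^{([m],*)}\to(\Ccal,M)^{([n],*)}$; since these generate the trace equivalences as a saturated class, $F$ is trace-like.

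\textbf{(ii) $\Rightarrow$ (i).} Assume $F$ is trace-like, so that it inverts the trace equivalence $d_1\colon(\Ccal,M)^{([1],*)}\to(\Ccal,M)$ for every $(\Ccal,M)$. By the analysis of additivity in \S\ref{lace-additivity}, additivity of a fiberwise-exact functor is detected on the universal split laced exact sequences, and — the sub-bimodule being handled by exactness on the fibers — these reduce to the sequences $(\Ccal,0)\to(\Ccal,M)^{([1],*)}\xrightarrow{d_1}(\Ccal,M)$ displayed above; on them $F$ is additive because $F(\Ccal,0)=0$ and $F(d_1)$ is an equivalence. Thus $F$ is additive. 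In particular $\THH$, being fiberwise-exact and trace-like by the preceding theorem, is additive.

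\textbf{Main obstacle.} The weight of the argument lies in the reduction used for (ii) $\Rightarrow$ (i): that a fiberwise-exact invariant is additive as soon as it inverts the cotensor projections $d_1$. Unwound, this is a laced additivity theorem. Given a split exact sequence $(\Ccal_0,M_0)\to(\Ccal_1,M_1)\to(\Ccal_2,M_2)$, the plan is first to decompose $M_1$ using the recollement of $\BiMod(\Ccal_1)$ provided by the underlying split Verdier sequence, peeling off the off-diagonal gluing bimodules by exactness of $F$ on $\BiMod(\Ccal_1)$ to reduce to $M_1$ pulled back from $M_2$, and then to resolve the underlying split Verdier sequence by the canonical two-step filtration $LX\to X\to RX$ of objects of $\Ccal_1$, with $LX$ coming from $\Ccal_0$ and $RX$ from $\Ccal_2$. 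The subtlety absent from the non-laced additivity theorem is that the ``cofiber'' leg of this filtration is not itself a laced functor for the naive bimodule — the obstruction being the contravariance of a bimodule in its first variable — so the comparison must be arranged so that only the source and total-object legs, which are trace equivalences and hence inverted by $F$, are tested against $F$, the remaining term being absorbed by fiberwise-exactness. It is this bookkeeping, rather than any conceptual leap, that constitutes the bulk of the work.
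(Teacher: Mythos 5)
Your direction (i) $\Rightarrow$ (ii) is correct and matches the paper's argument (Proposition~\ref{prop:reduced-additive-is-trace-like}): one applies $F$ to the laced semi-orthogonal decomposition $(\Ccal,0) \subseteq (\Ccal,M)^{([1],*)} \supseteq (\Ccal,M)$ of Example~\ref{CotensorPointSOD}, uses fiberwise exactness to kill $F(\Ccal,0)$, and invokes Proposition~\ref{TestSampleTrLike}. Your extra verification that inverting $d_0$ forces all faces and degeneracies to be inverted is already packaged in that proposition, but the content is the same.

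The direction (ii) $\Rightarrow$ (i) is where the gap is. Your proof body asserts that ``additivity of a fiberwise-exact functor is detected on the universal split laced exact sequences,'' citing \S\ref{lace-additivity}, and then says these ``reduce to'' the sequences $(\Ccal,0) \to (\Ccal,M)^{([1],*)} \to (\Ccal,M)$. But \S\ref{lace-additivity} only states the equivalence of additivity with extension-splitting for the \emph{non-laced} category $\CatEx$ (quoting \cite{HebestreitLachmannSteimle}); no laced analogue is established there, and the laced definition of additivity is stated outright as preservation of \emph{all} laced semi-orthogonal decompositions. So ``detected on the cotensor sequences'' is precisely the claim in question, re-asserted rather than proved, and the ``reduction'' sentence does no work. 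Moreover, even granting a laced extension-splitting reduction, the natural extension-splitting sequence is the one built from $(\Ccal,M)^{([1],\Map)}$ (Example~\ref{ex:cotensor-decompose}), which has $(\Ccal,M)$ at both ends --- not the $(\Ccal,M)^{([1],*)}$ sequence you use, whose fiber has zero bimodule.

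Your ``Main obstacle'' paragraph correctly identifies that the substance lies in a laced additivity argument and sketches roughly the right shape, but this is left as a plan, not carried out; and the sketch drifts slightly from what actually works. The paper's proof (Proposition~\ref{prop:trace-like-is-additive}) does not invoke a recollement of $\BiMod(\Ccal)$ or peel $M$ down to a pullback from $\Bcal$. Instead it uses Lemma~\ref{TraceEquivalencesFromAdjunction}: from the (co)reflection adjunctions $i \dashv q$ and $p \dashv j$ one gets trace equivalences $(\Acal,N) \to (\Ccal, M\circ(\id^{\op}\times iq))$ and $(\Bcal,P) \leftarrow (\Ccal, M\circ(\id^{\op}\times jp))$; the three bimodules $M\circ(\id^{\op}\times iq) \to M \to M\circ(\id^{\op}\times jp)$ form an exact triangle in the single fiber $\BiMod(\Ccal)$ because $iq \to \id \to jp$ is exact; fiberwise exactness sends this bottom row to an exact sequence in $\Ecal$, and trace-likeness transports the conclusion up to the original $(\Acal,N) \to (\Ccal,M) \to (\Bcal,P)$. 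Your observation that the ``cofiber leg'' is not a laced functor is exactly why the side legs of that diagram are trace equivalences rather than equalities, but you would need to exhibit those trace equivalences explicitly (via the adjunction lemma) and check the compatibility of the diagram, which you have not done.
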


The implication $(ii) \Rightarrow (i)$ is obtained by adapting a classical argument, notably found in \cite[Section 5.2]{Kaledin}, where it is presented as the gist of the proof of the localization theorem for Hochschild homology proven by \cite{Keller}. A close variant of this argument also appears in \cite[Theorem 3.4]{HoyoisScherotzkeSibilla}. Comparing universal properties, we then obtain our main result:

\begin{cor}[Stable K-theory is THH]\label{cor:main}
    There is a natural, canonical transformation $\Klace\Rightarrow\THH$ which exhibits its target as the initial tangentially exact invariant under its source. In particular, $\THH$ is stable K-theory.
\end{cor}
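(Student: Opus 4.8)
=== PROOF PROPOSAL ===

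The plan is to deduce Corollary \ref{cor:main} by chaining together the universal properties already established in the excerpt, using the equivalence between additivity and trace-likeness (for fiberwise-exact functors) as the crucial bridge. First I would recall that laced K-theory $\Klace$ receives the initial additive invariant under $\SpLaceEq$ (Theorem \ref{KlaceIsUnivAdditiveIntro}), and that $\THH$ receives the initial trace-like fiberwise-exact invariant under $\SpLaceEq$. The key intermediate object is stable K-theory $\fbwnexcPart{1}\Klace$, which by the discussion following Theorem \ref{KlaceIsUnivAdditiveIntro} is the initial \emph{fiberwise-exact} additive invariant under $\SpLaceEq$. Applying the equivalence $(i)\Leftrightarrow(ii)$ for fiberwise-exact functors into a stable category, ``fiberwise-exact additive invariant'' and ``fiberwise-exact trace-like invariant'' are the same notion, so $\fbwnexcPart{1}\Klace$ and $\THH$ solve \emph{the same} universal problem under $\SpLaceEq$, hence are canonically equivalent by uniqueness of initial objects.

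The next step is to promote this to a transformation out of $\Klace$ itself rather than out of $\SpLaceEq$. Since $\THH$ is fiberwise-exact, and (by the final Theorem) additive, and we have the natural transformation $\SpLaceEq\Rightarrow\THH$, the universal property of $\Klace$ as the initial additive invariant under $\SpLaceEq$ produces a canonical factorization $\Klace\Rightarrow\THH$ compatible with the maps from $\SpLaceEq$. Concretely, one applies the defining property of Theorem \ref{KlaceIsUnivAdditiveIntro} to the additive invariant $\THH$. This gives the asserted natural transformation $\Klace\Rightarrow\THH$; its compatibility with $\fbwnexcPart{1}\Klace$ follows because the Goodwillie approximation $\Klace\Rightarrow\fbwnexcPart{1}\Klace$ is, by construction, the universal map from $\Klace$ to a fiberwise-exact invariant, so $\Klace\Rightarrow\THH$ necessarily factors through $\fbwnexcPart{1}\Klace$, and the resulting map $\fbwnexcPart{1}\Klace\Rightarrow\THH$ is the equivalence identified in the previous paragraph.

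Finally I would verify the stated universal property of $\Klace\Rightarrow\THH$: that it exhibits $\THH$ as the initial fiberwise-exact invariant under $\Klace$. This is a two-step formal argument. Given any fiberwise-exact invariant $G$ with a map $\Klace\Rightarrow G$, precompose with $\SpLaceEq\Rightarrow\Klace$; since $G$ is fiberwise-exact, the Goodwillie-approximation universal property forces $\Klace\Rightarrow G$ to factor uniquely through $\fbwnexcPart{1}\Klace$, and under the equivalence $\fbwnexcPart{1}\Klace\simeq\THH$ this exhibits $\THH$ as initial. One then checks the factorizations are coherent, i.e. that the space of factorizations is contractible, which again reduces to the contractibility statements packaged in the universal properties already proven. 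The phrase ``in particular, $\THH$ is stable K-theory'' is then just the equivalence $\fbwnexcPart{1}\Klace\simeq\THH$ read off along the way.

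I expect the main obstacle to be bookkeeping the \emph{coherence} of all these identifications rather than any single conceptual difficulty: one must ensure that the equivalence $\fbwnexcPart{1}\Klace\simeq\THH$ obtained from comparing universal problems is compatible with the structural maps from $\SpLaceEq$ and $\LaceEq$ on both sides (the trace-like map $\LaceEq\to\uTHH$ versus the additive map into $\Klace$), so that the resulting $\Klace\Rightarrow\THH$ is genuinely canonical and not merely abstractly existent. This is where the precise formulations in \S\ref{lace-additivity} of what it means to be ``initial'' — presumably as an adjoint/localization statement rather than a bare existence claim — will do the real work, making the contractibility of mapping spaces between such initial objects automatic.
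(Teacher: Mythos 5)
Your proof is correct and follows essentially the same route as the paper's: both deduce the trace map $\Klace\Rightarrow\THH$ from the universal property of $\Klace$ once $\THH$ is known to be additive (via trace-like $\Rightarrow$ additive for fibrewise-exact targets), then identify $\THH$ with the fibrewise-exact approximation $\fbwnexcPart{1}\Klace$ using that this approximation preserves additivity. The coherence worries you flag at the end are indeed handled automatically because the paper's universal properties are packaged as Bousfield localization statements (adjoint functors on functor categories), so all factorizations come with contractible spaces of choices.
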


The present paper is a precursor for our upcoming work \cite{HarpazNikolausSaunier}, where we 
show that $\THH$ lifts to a functor $\TCatEx\to\PgcSpGen$, a genuine version of the category of polygonic spectra introduced in \cite{KrauseMcCandlessNikolaus}, and that this structure restricts in particular to a genuine cyclotomic structure on $\THH(\Ccal, \map)$; in particular, we are able to recover the cyclotomic trace from K-theory to topological cyclic homology from those results, in a higher categorical version of Schlichtkrull's ideas~\cite{SchlichtkrullCyclotomicTrace}. We then deduce that the Goodwillie-Taylor tower of laced K-theory is given by the limit of the truncation tower of $\TR(\Ccal, M)$, in particular recovering the main result of Lindenstrauss-McCarthy in \cite{LindenstraussMcCarthyTaylorTower}. In fact, those statements will fit in a general framework applicable to any Verdier localising invariant on $\CatEx$ in place of K-theory. We note that one of the shortcomings of the formalism of the present paper is that it does not explain or allow to construct the $S^1$-action on $\THH$; this part will also be covered by the forthcoming paper.

This paper and its successor \cite{HarpazNikolausSaunier} grew out of a manuscript of the second author \cite{MFO} prepared for the 2018 Arbeitsgemeinschaft: Topological Cyclic Homology in Oberwolfach. The second author would like to apologize for the extended delay in bringing these ideas to publication.

Finally, let us note that in his thesis \cite{RamziDM}, Maxime Ramzi obtains a similar result to Corollary~\ref{cor:main}, although his proof is quite different and more in line with the program of \cite{SchwanzlStaffeldtWaldhausen}. 
We also note that Sam Raskin has observed similar phenomena to the one we describe here \cite{Raskin}.

\subsection{Conventions}
As we have done throughout most of this introduction, we will use the higher categorical framework of $\infty$-categories developed by Lurie in \cite{HTT, HA, SAG}, and we will omit the $\infty$ in front of our categories and the homotopy in front of our (co)limits. We denote $\Spaces$ the category of spaces, and $\Sp$ the category of spectra; we let $\Cat$ be the category of (small) categories and $\CatEx$ be the subcategory of stable categories and exact functors between them. If $\Ccal$ is stable, $\Ind(\Ccal)$ denotes its Ind-construction, given explicitly by $\FunEx(\catop{\Ccal}, \Sp)$; we will write $\map_\Ccal$ for the mapping spectra of $M$ and $\Map_\Ccal$ for the mapping space. Moreover, $\Kth$ will stand for the \textit{connective} K-theory functor from $\CatEx$ to spectra. 

\subsection{Acknowledgements}
Many ideas in this series of papers are originally due to Kaledin and we would like to acknowledge this.
We wish to thank Christian Ausoni, Bjorn Dundas, David Gepner, Fabian Hebestreit, Lars Hesselholt, Kaif Hilman, Dimitry Kaledin, Dominik Kirsten, Christian Kremer, Jordan Levin, Zhoufang Mao, Jonas McCandless, Maxime Ramzi, Sam Raskin, John Rognes, and Peter Scholze for fruitful discussions, exchanges and invitations. 
The first and third authors were supported by the European Research Council as part of the project \emph{Foundations of Motivic Real K-Theory} (ERC grant no. 949583).
The second author was funded by the Deutsche Forschungsgemeinschaft (DFG, German Research Foundation) – Project-ID 427320536 – SFB 1442, as well as under Germany’s Excellence Strategy EXC 2044 390685587, Mathematics M\"unster: Dynamics–Geometry–Structure. He would also like to thank the Mathematische Forschunsinsitut Oberwolfach for hosting the 2018 Arbeitsgemeinschaft: Topological Cyclic Homology which prompted the work on this subject.
The third author was further supported by the German Research Foundation through the research centre ‘Integral structures in Geometry and Representation theory’ (grant no. 491392403 - TRR 358) at the University of Bielefeld.


\section{Laced categories} 

Let us begin by briefly recalling some facts about tangent categories and the tangent bundle of a sufficiently nice category, which were introduced in \cite[\S 7.3]{HA}. Let $\Ecal$ be an category with finite limits and denote by $\SpacesFinStar$ the category of finite pointed spaces, that is, the smallest full subcategory of $\Spaces_*$ containing $S^0$ and stable under finite colimits. There is a cartesian fibration
\[
    \begin{tikzcd}
        \pi\colon \Exc(\SpacesFinStar, \Ecal)\ar[r] & \Ecal\vspace{-0.5em}
    \end{tikzcd}
\]
given by the evaluation at $\ast \in \SpacesFinStar$, where the left-hand-side category is the category of excisive functors $\SpacesFinStar\to\Ecal$. Since $\ast$ is terminal in $\SpacesFinStar$ the fibre of $\pi$ over some $X\in\Ccal$ is naturally equivalent to the category $\Exc_*(\SpacesFinStar, \Ecal_{/X})$ of \emph{reduced} excisive functors to $\Ecal_{/X}$, which is a model for 
the stabilization of $\Ecal_{/X}$ or, equivalently, for the stabilization of the category $(\Ecal_{/X})_* = \Ecal_{X//X}$ of pointed objects in $\Ecal_{/X}$, which are objects over $X$ equipped with a section. In addition, If $\Ecal$ is presentable then the fibres of $\pi$ are again presentable and $\pi$ is also a cocartesian fibration since all the cartesian transition functors have left adjoints.

Following the philosophy of Goodwillie calculus, stable categories can be considered as the linear objects of higher category theory, and so, viewing $\Ecal_{/X}$ as a type of a neighbourhood of $X$ in $\Ecal$, its linearization $\tangentT[X]\Ecal:=\Sp(\Ecal_{/X})$ is called the \textit{tangent category of $\Ecal$ at $X$}. Furthering the analogy with manifolds, the cartesian fibration assembling the various tangent categories is called the \textit{tangent bundle of $\Ecal$}, and denoted by 
\[ \tangentT\Ecal:=\Exc(\SpacesFinStar, \Ecal).\] 
Evaluation along $S^0\to*$ provides a functor $\sqz$, the square-zero extension functor, given by:
\begin{align*}
    \sqz\colon\tangentT\Ecal& \longrightarrow \Ecal^{\Delta^1} := \Fun(\Delta^1,\Ecal) \\
         (X, M)& \longmapsto [\OmegaInfty_{/X}M\to X]
\end{align*}
which characterizes the tangent bundle as the stable envelope, in the sense of \cite[Section 7.3.1]{HA}, of the target projection $t\colon\Ar(\Ecal)\to\Ecal$. 
A classical and important example is the tangent bundle of the category of $\E_{\infty}$-ring spectra: by \cite[Theorem 7.3.4.14]{HA}, it is the cartesian fibration classified by the functor $\Alg_{\E_{\infty}}(\Sp)\op \to \Cat$ associating to an $\E_{\infty}$-ring spectrum $R$ its category of $R$-modules. 
Under this equivalence, $\sqz$ is the usual square-zero extension functor $(R,M) \mapsto R \oplus M$ of $\E_{\infty}$-ring spectra. 
If instead of $\Alg_{\E_\infty}(\Sp)$ we take $\Alg_{\E_1}(\Sp)$ then the same holds with the notion of an $R$-module replaced by that of an $R$-\emph{bimodule}.

Our goal in this section is to study the tangent bundle of $\CatEx$, the category of stable categories and exact functors between them. 
The main idea is that the tangent bundle of $\CatEx$ admits a description similar to the one just recalled for $\E_1$-rings, 
obtained by extending the notion of a bimodule from ring spectra to stable categories. 

\subsection{Bimodules on a stable category}

Recall that for an $\infty$-category $\Ccal$, its Ind-completion $\Ind(\Ccal)$ is given by the smallest full subcategory of $\Fun(\Ccal\op,\Spaces)$ containing the image of the Yoneda embedding and closed under filtered colimits. If $\Ccal$ admits finite colimits then we may also identify $\Ind(\Ccal)$ with the $\infty$-category of left exact functors $\Ccal\op \to \Spaces$ (\cite[Corollary 5.3.5.4]{HTT}), and if $\Ccal$ is furthermore is stable then this is equivalent,  
by the universal property of $\Sp$, to the $\infty$-category of exact functors $\Ccal\op \to \Sp$. In particular, $\Ind(\Ccal)$ is again stable and admits small colimits. Furthermore, the Yoneda embedding $\Ccal \to \Ind(\Ccal)$ exhibits $\Ind(\Ccal)$ as the universal such recipient of an exact functor from $\Ccal$.

For a stable category $\Ccal$, the notion of a \emph{$\Ccal$-bimodule} can be defined as either one of the following four equivalent notions:
\begin{enumerate}
    \item An exact functor $\Ccal\op \otimes \Ccal \to \Sp$.
    \item A biexact functor $\Ccal\op \times \Ccal \to \Sp$.
    \item An exact functor $\Ccal \to \Ind(\Ccal)$.
    \item A colimit-preserving functor $\Ind(\Ccal) \to \Ind(\Ccal)$.
\end{enumerate}

The passage between the first two types of functors is via the defining property of the tensor product in $\CatEx$. The passage between the second and third is by restricting the currying equivalence $\Fun(\Ccal\op\times \Ccal,\Sp) \simeq \Fun(\Ccal,\Fun(\Ccal\op,\Sp))$ to functors satisfying the relevant exactness conditions, and the passage between the third and the fourth is via the universal property of $\Ind(\Ccal)$. In the present paper it will be useful to keep in mind all four forms a $\Ccal$-bimodule can take, and we will freely shift between them, often without indicating it explicitly in the notation. We will write $\BiMod(\Ccal)$ for the (stable) category of $\Ccal$-bimodules.

\begin{ex} \label{ExampleRingSp1}
    Let $R$ be an $\E_1$-ring spectrum, and denote by $\Perf(R)$ the category of perfect $R$-modules, i.e., compact objects of $\Mod_R$ (to fix ideas, let us consider that modules always mean left modules). Then any $R$-bimodule $M$ determines a bimodule on the stable category $\Perf(R)$ in the form of an exact functor
    \[
        F_M\colon \Perf(R) \to \Mod(R) \quad\quad N\mapsto M \otimes_R N.
    \]
    The association $M\mapsto F_M$ upgrades to an equivalence of categories  
    \[ \BiMod_R\to\FunEx(\Perf(R),\Mod(R)) = \BiMod(\Perf(R))\]
    with inverse given by $F\mapsto F(R)$, where $F(R)$ is viewed as an $R\op$-module in $R$-modules using the identification $\End_R(R) = R\op$. 
\end{ex}

We now come to one of the principal notions of the present paper.
\begin{defi}
A \emph{laced category} is a pair $(\Ccal,M)$ where $\Ccal$ is a stable category and $M$ is a $\Ccal$-bimodule. Given two laced categories $(\Ccal,M),(\Dcal,N)$, a laced functor $(\Ccal,M) \to (\Dcal,N)$ is a pair $(f,\eta)$ where $f\colon \Ccal \to \Dcal$ is an exact functor and $\eta\colon M \Rightarrow N \circ (f\op \times f)$ is a natural transformation.
\end{defi}
The collection of laced categories and laced 
functors between them forms a category, which we denote by $\CatL$. One way to construct it is to note that the association $\Ccal \mapsto \BiMod(\Ccal)$ is contravariantly functorial in $\Ccal$ via restriction (when bimodules are viewed via one of the first two definitions above). One may then define $\CatL$ to be the total category of the cartesian fibration
\[ \pi\colon \CatL \to \CatEx \]
classified by $\Ccal \mapsto \BiMod(\Ccal)$. 
In \S\ref{subsec:tangent} below we will prove that $\CatL$ is naturally equivalent (over $\CatEx$) to the tangent bundle $\tangentT\CatEx$ of $\CatEx$. 
Before that, we dedicate the rest of the present subsection to study the basic properties of $\CatL$.

\begin{prop}\label{prop:limits-and-colimits}
    The category $\CatL$ has all small limits and colimits and the canonical functor $\pi\colon \CatL\to\CatEx$ preserves both.
\end{prop}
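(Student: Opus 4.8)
The plan is to exploit the fact that $\pi \colon \CatL \to \CatEx$ is the cartesian fibration classified by $\Ccal \mapsto \BiMod(\Ccal)$, together with the standard criterion (\cite[Corollary 4.3.1.11]{HTT} and its dual, or the discussion in \cite[\S 4]{HTT} on (co)limits in (co)cartesian fibrations) that a cartesian fibration $p \colon \mathcal{E} \to \mathcal{B}$ admits $I$-indexed limits and $p$ preserves them provided (a) $\mathcal{B}$ has $I$-indexed limits, (b) each fiber $\mathcal{E}_b$ has $I$-indexed limits, and (c) the cartesian transition functors preserve $I$-indexed limits; and dually for colimits, where one additionally needs the fibration to be cocartesian (i.e. a bicartesian fibration) so that one may also use the cocartesian transition functors, which must preserve $I$-indexed colimits.

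First I would check the base: $\CatEx$ has all small limits and colimits. Limits are computed as in $\Cat$ (a limit of stable categories and exact functors is again stable), and $\CatEx$ is presentable — for instance it is $\omega_1$-compactly generated, being equivalent to the category of small idempotent-complete stable categories up to the usual size subtleties, or one can cite that $\CatEx$ admits small colimits and the forgetful functor to $\Cat$ preserves filtered ones. Next, each fiber $\BiMod(\Ccal) = \FunEx(\Ccal\op \otimes \Ccal, \Sp)$ is a stable category with all small limits and colimits, computed pointwise in $\Sp$; this is immediate from the four equivalent descriptions given above, since $\Sp$ is stable and bicomplete and exact functors out of a fixed stable category are closed under pointwise (co)limits. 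Then I would verify that $\pi$ is in fact a \emph{bicartesian} fibration: the cartesian transition functor along $f \colon \Ccal \to \Dcal$ is restriction $f^* \colon \BiMod(\Dcal) \to \BiMod(\Ccal)$, $N \mapsto N \circ (f\op \times f)$, which preserves all small limits and colimits since they are computed pointwise; and $f^*$ admits a left adjoint $f_!$ (left Kan extension, which lands in exact functors after the appropriate exact-localization, or equivalently the extension-of-scalars functor $f_! M = \Ind(\Dcal) \otimes_{\Ind(\Ccal)} M \otimes_{\Ind(\Ccal)} \Ind(\Dcal)$ in the colimit-preserving-endofunctor picture), so $\pi$ is also cocartesian. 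The cocartesian transition functors $f_!$ preserve all small colimits, being left adjoints.

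With these ingredients in hand, the cited (co)limit criteria for (co)cartesian fibrations yield at once that $\CatL$ has all small limits and colimits and that $\pi$ preserves them. Concretely, a (co)limit of a diagram in $\CatL$ is built by first taking the (co)limit $\Ccal$ of the underlying diagram of stable categories in $\CatEx$, transporting each bimodule along the structure maps to land over $\Ccal$ — using cartesian lifts for limits and cocartesian lifts for colimits — and then taking the (co)limit of the resulting diagram inside the single fiber $\BiMod(\Ccal)$.

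The only genuine point requiring care, and hence the main obstacle, is establishing that $\pi$ is cocartesian, i.e. producing the left adjoint $f_!$ to restriction $f^*$ and checking it is well-defined at the level of \emph{exact} functors (the naive pointwise left Kan extension of a biexact functor along $f\op \times f$ need not be biexact). I would handle this either by working in the equivalent model of colimit-preserving endofunctors of $\Ind(\Ccal)$ — where $f$ induces an adjunction $\Ind\Ccal \rightleftarrows \Ind\Dcal$ and $f_!$ is conjugation/induction, manifestly colimit-preserving — or by invoking that $\FunEx(-,\Sp)$ carries colimits of stable categories to limits and using the resulting adjoint functor theorem input. Once cocartesianness is secured, everything else is bookkeeping with the standard fibrational (co)limit lemmas.
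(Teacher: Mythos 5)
Your proof is correct and takes essentially the same route as the paper: verify bicompleteness of the base $\CatEx$ and of the fibres $\BiMod(\Ccal)$, check that $\pi$ is a bicartesian fibration whose cartesian transition functors preserve limits and whose cocartesian ones preserve colimits, and then invoke the standard fibrational (co)limit criteria from \cite{HTT} (Corollary 4.3.1.11 and Proposition 4.3.1.5). The paper handles the one point you flag as requiring care --- that $\pi$ is indeed cocartesian with cocartesian transition given by left Kan extension --- simply by citing \cite[Lemma 1.4.1]{HermKI}, whereas you sketch the argument via the colimit-preserving-endofunctor model; both are fine.
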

\begin{proof}
Recall that $\CatEx$ admits small limits and colimits, see \cite[6.1.1]{HermKI}. Now the functor $\pi\colon \CatL\to\CatEx$ is a bicartesian fibration with cartesian transition functors given by restriction and cocartesian transition functors given by left Kan extension (see~\cite[Lemma 1.4.1]{HermKI}). The fibres of $\pi$, which are the various bimodule categories, admit all small colimits, which the cocartesian transition functors automatically preserves (being left adjoints), and all small limits, which the cartesian transition functors automatically preserve (being right adjoints). Combining \cite[Corollary 4.3.1.11 and Proposition 4.3.1.5.(2)]{HTT} we now conclude that $\CatEx$ admits limits and colimits and that these are preserved by $\pi$.
\end{proof}

\begin{rmq}\label{rmq:explicit}
    The results of \cite{HTT} used in the proof of Proposition~\ref{prop:limits-and-colimits} also indicate the procedure to compute a colimit in $\CatL$. First, compute the colimit of the underlying diagram of stable categories, and denote by $\Ccal \in \CatEx$ the result. Then, left Kan extend every bimodule so that they become bimodules over $\Ccal$. This yields a diagram in $\BiMod(\Ccal)$ whose colimit $M$ yields a laced category $(\Ccal, M)$ which is the wanted colimit. 
    
    To compute a limit, the process is the same but using restriction of bimodules instead of left Kan extensions.
\end{rmq}

\begin{cons}\label{cons:laced-objects}
    Let $\Ccal$ be a stable category $M \in \BiMod(\Ccal)$ a bimodule.
    The underlying space valued functor $\Omega^{\infty}M \colon \Ccal\op \times \Ccal \to \Spaces$ can be unstraightened, first contravariantly in the first coordinate and then covariantly in the second, to yield a bifibration
    \[ 
        \begin{tikzcd}
            \Ccal & \ar[l] \int^{X \in \Ccal}_{Y \in \Ccal} \Omega^{\infty}M(X,Y) \ar[r] & \Ccal ,
        \end{tikzcd}
    \]
    see, e.g., \cite[\S 7.1]{HermKI}. We then define $\Lace(\Ccal,M)$ to be the pullback of this bifibration along the diagonal, that is, the category sitting in the fibre square
    \[
        \begin{tikzcd}
            \Lace(\Ccal,M) \ar[r]\ar[d] & \int^{X \in \Ccal}_{Y \in \Ccal} \Om^{\infty} M(X,Y) \ar[d] \\
        \Ccal \ar[r] & \Ccal \times \Ccal
        \end{tikzcd}
    \]
    We note that the left vertical map is neither a cartesian nor a cocartesian fibration in general. We may identify objects in $\Lace(\Ccal,M)$ with pairs $(X,\alpha)$ where $X$ is an object of $\Ccal$ and $\alpha\in \Om^{\infty}M(X,Y)$. We refer to these as \emph{laced objects} in $(\Ccal,M)$. 
\end{cons}

While not immediately visible from this description, the category $\Lace(\Ccal,M)$ is stable. To see this, it is useful to pass to an equivalent description of $\Lace(\Ccal,M)$ which arises from viewing $\Ccal$-bimodules as exact functors $M\colon \Ccal \to \Ind(\Ccal)$. Unwinding the correspondence between the two definitions of a $\Ccal$-bimodule, we see that the total object $\int^{X \in \Ccal}_{Y \in \Ccal} \Om^{\infty} M(X,Y)$ can be identified with the pullback along $j \times M:\Ccal \times \Ccal \rightarrow \Ind(\Ccal) \times \Ind(\Ccal)$ of the arrow bifibration 
$$ 
    \begin{tikzcd}
        \Ind(\Ccal)^{\Delta^{\{0\}}} &\arrow[l, "s"'] \Ind(\Ccal)^{\Delta^1} \arrow[r, "t"] & \Ind(\Ccal)^{\Delta^{\{1\}}}
    \end{tikzcd}
$$
We hence conclude that $\Lace(\Ccal,M)$ also sits in a fibre square of the form
\begin{equation}\label{eq:lace}
    \begin{tikzcd}
        \Lace(\Ccal,M) \ar[r]\ar[d] & \Ind(\Ccal)^{\Delta^1} \ar[d, "{(s, t)}"] \\
        \Ccal \ar[r, "{(j,M)}"] & \Ind(\Ccal) \times \Ind(\Ccal),
    \end{tikzcd}
\end{equation}
from which we see that $\Lace(\Ccal,M)$ is stable. 

\begin{ex} \label{SqZeroVSLace}
    Let $R$ be an $\E_1$-ring spectrum and $M$ a bimodule, and denote $F_M$ the associated $\Perf(R)$-bimodule of Example \ref{ExampleRingSp1}. Then, $\Lace(\Perf(R), F_M)$ is the category of compact modules $N$ equipped with a natural transformation $N\to M\otimes_{R} N$. It is often called the category of $M$-parametrized endomorphisms. 

    Now suppose that $R, M$ are connective. Then, $\Lace(\Perf(R), F_{\Sigma M})$ is generated by a single object, namely the pair $(R, 0\colon R\to \Sigma M)$. To see this, note that the inclusion $\Lace(\Perf(R),F_M)) \to \Lace(\Mod(R),F_M)$ takes values in compact objects and hence extends to a fully-faithful colimit preserving embedding
    \[ \Ind(\Lace(\Perf(R),F_M)) \to \Lace(\Mod(R),F_M) .\]
    In addition, if $N$ is a compact $R$-module equipped with an $M$-parameterized endomorphism $T\colon N \to \Sigma M \otimes_A N$ then $T$ is nilpotent, that is, the colimit 
    $$
        \colim\left(N \to \Sigma M \otimes_A N \to \Sigma^2 M \otimes_A M \otimes_A N\to \hdots\right)
    $$ 
    vanishes, as can be seen by connectivity considerations. The above functor hence takes values in the full subcategory of $\Lace^{\nil}(\Mod(R),F_M)\subseteq \Lace(\Mod(R),F_M)$ spanned by the nilpotent $M$-parameterized endomorphisms.
    
    Suppose $N$ is a $R$-module (not necessarily compact) equipped with a nilpotent $M$-parameterized endomorphism $T\colon N \to \Sigma M \otimes_A N$ in the above sense and such that:
    $$
        \map\left((R,0),(N,T)\right)\simeq 0
    $$
    then $T$ must be an equivalence, since this mapping spectra is also $\fib(T)$. This implies that $N = 0$ by the nilpotency assumption.  
    This means that $(R,0)$ is a compact generator of $\Lace^{\nil}(\Mod(R),F_M)$ and since $(R,0)$ is contained in $\Lace(\Perf(R),F_M)$ we conclude that the latter is exactly the full subcategory of compact objects in $\Lace^{\nil}(\Mod(R),F_M)$.
    
    Consequently, $\Lace(\Perf(R), F_{\Sigma M})$ is equivalent to $\Perf(A)$, where $A$ is the endomorphism ring spectrum of $0\colon R\to \Sigma M$. But this endomorphism ring spectrum is none other than the square-zero extension $R\oplus M$, so that we recover a special case of a result in \cite{Barkan}. When $M$ is compact, this can also be viewed as an instance of Koszul duality between the square-zero extension $R \oplus M$ and the cofree coalgebra generated by $\Sigma M$.
    
    In particular, we already have at the level of categories the result of Dundas-McCarthy in \cite{DundasMcCarthy}, comparing $\Kth(M\oplus R)$ and $\Kth(\Lace(\Perf(R), F_{\Sigma M}))$. 
\end{ex}

Our next step is constructing an explicit left adjoint to the functor $\Lace\colon \CatL \to \CatEx$ of Construction~\ref{cons:laced-objects}.
For this, let us now introduce an alternative approach to the construction of $\CatL$.
To begin, one first forms the oplax arrow category $\Ar^{\opl}(\PrExCat{L}) := \Fun^{\opl}(\Delta^1,\PrExCat{L})$ of the 2-category $\PrExCat{L}$ whose objects are the stable presentable categories and whose morphisms are the colimits preserving functors (equivalently, the left adjoint functors). Explicitly, the objects of $\Ar^{\opl}(\PrCat{L})$ are given by arrows $F\colon \Ccal \to \Dcal$ in $\PrExCat{L}$, and the morphisms from $F\colon \Ccal \to \Dcal$ to $F'\colon \Ccal'\to \Dcal'$ are given by oplax squares
\[
    \begin{tikzcd}
        \Ccal \ar[d, "F"]\ar[r] & \Ccal' \ar[d, "F'"]   \\
        \Dcal \ar[r] \ar[ur, Rightarrow, shorten >= 2.8ex, shorten <= 2.8ex] & \Dcal'. 
    \end{tikzcd}
\]
Formally, the definition of $\Ar^{\opl}(\PrExCat{L})$ can be made using general 2-categorical constructions of functor categories and oplax transformations between them (for example, see~\cite{GagnaHarpazLanariGray, GagnaHarpazLanariFibrations}). Alternatively, in this particular case one can also identify $\Ar^{\opl}(\PrExCat{L})$ with the full subcategory of $\CAT_{/\Delta^1}$ consisting of the cartesian fibrations $\Mcal \to \Delta^1$ whose fibres are stable and presentable and whose cartesian monodromy is colimit preserving. 

Now the fibre of the projection 
\[ \Ar^{\opl}(\PrExCat{L}) \xrightarrow{(s,t)} \PrExCat{L} \times \PrExCat{L} \]
over $(\Ccal,\Dcal)$ is given by $\Fun^L(\Ccal,\Dcal)$; more precisely, $(s,t)$ is exactly the orthofibration classified by the functor 
\[ \Fun^L(-,-)\colon (\PrExCat{L})\op \times \PrExCat{L} \longrightarrow \Cat \]
see \cite[Theorem 7.21]{HaugsengHebestreitLinskensNuiten}. We note that if we restrict this bifibration in each variable to the subcategory $\Pr^{\cont}_{\st} \subseteq \PrExCat{L}$ containing all objects and just the strongly continuous morphisms in $\PrExCat{L}$, i.e., those whose right adjoint is also colimit preserving, then the covariant dependence in the second entry is also contravariant (by post-composing with the right adjoint), and the contravariant dependence in the first entry is 
also covariant (bu pre-composing with the right adjoint). 

The base change of $(s,t)$ to $\Pr^{\cont}_{\st} \times \Pr^{\cont}_{\st}$ is then not just an orthofibration, but also a cartesian and a cocartesian fibration. Such strongly continuous morphisms arise, for example, from any exact functor $f\colon \Ccal \to \Dcal$ between small stable categories upon taking Ind-completion. In particular, the base change of $(s,t)$ along $(\Ind,\Ind)$ is a model for the cartesian and cocartesian fibration $P \to \CatEx \times \CatEx$ classified by the functor $(\Ccal,\Dcal) \mapsto \Fun^L(\Ind(\Ccal),\Ind(\Dcal))$, and we get that the category $\CatL$ of laced categories sits in a pullback square of the form
\begin{equation}\label{eq:categorical-lace}
    \begin{tikzcd}[column sep=50pt]
        \CatL\ar[d]\ar[r] & \Ar^{\opl}(\PrExCat{L}) \ar[d] \\
        \CatEx \ar[r, "{(\Ind,\Ind)}"] & \PrExCat{L} \times \PrExCat{L} .
    \end{tikzcd}
\end{equation}
This yields a construction of $\CatL$ as a fibre product, rather than via unstraightening. We point out the resemblance between the squares~\eqref{eq:categorical-lace} and~\eqref{eq:lace}, or rather, \eqref{eq:lace} can be considered as a once decategorified version of~\eqref{eq:categorical-lace}.

We now exploit the above description of $\CatL$ to construct a left adjoint to $\Lace$. For this consider the natural transformation of diagrams
\begin{equation}\label{eq:transformation-of-diagrams}
    \begin{tikzcd}[row sep = 10pt]
        & \PrExCat{L} \ar[dd, "\id"] & & & \Ar^{\opl}(\PrExCat{L}) \ar[dd, "{(s, t)}"] \\
        && \Rightarrow && \\
        \CatEx \ar[r] & \PrExCat{L} & & \CatEx \ar[r] & \PrExCat{L} \times \PrExCat{L}
    \end{tikzcd}
\end{equation}
which implements the diagonal map everywhere, that is, on the top right corner it is given by restriction along $\Delta^1 \to \Delta^0$, on the bottom right corner by restriction along $\Delta^0 \coprod \Delta^0 \to \Delta^0$, and on the bottom left corner it is just the identity. We then write
$$ 
    \begin{tikzcd}
        \L \colon \CatEx\simeq\CatEx \times_{\PrExCat{L}} \PrExCat{L} \arrow[r]& \CatEx \times_{(\PrExCat{L} \times \PrExCat{L})}\Ar^{\opl}(\PrExCat{L}) \simeq\CatL 
    \end{tikzcd}
$$
for the induced functor on fibre products. Unwinding the definitions, $\L$ is given by the formula $\Ccal \mapsto (\Ccal,\id)$ (or, if we consider bimodules as biexact functors, by the formula $\Ccal \mapsto (\Ccal,\map_{\Ccal})$).

\begin{prop}\label{prop:left-adj-to-lace}
The functor $\L$ defined above is left adjoint to the functor $\Lace$ of Construction~\ref{cons:laced-objects}.
\end{prop}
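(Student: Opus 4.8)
The plan is to verify the adjunction through its unit, using the standard recognition criterion for adjunctions (see \cite[\S5.2.2]{HTT}): I will produce a natural transformation $u\colon\id_{\CatEx}\Rightarrow\Lace\circ\L$ and then show that, for every $\Ccal\in\CatEx$ and every laced category $(\Dcal,N)$, the map
\[
    \Map_{\CatL}(\L\Ccal,(\Dcal,N)) \longrightarrow \Map_{\CatEx}(\Ccal,\Lace(\Dcal,N)), \qquad (f,\eta)\longmapsto \Lace(f,\eta)\circ u_\Ccal,
\]
is an equivalence; this will exhibit $\L$ as left adjoint to $\Lace$.

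To construct the unit, note that since $\L\Ccal=(\Ccal,\map_\Ccal)$, the object $\Lace\L\Ccal=\Lace(\Ccal,\map_\Ccal)$ is, by the pullback square~\eqref{eq:lace}, the fibre product $\Ccal\times_{\Ind(\Ccal)\times\Ind(\Ccal)}\Ind(\Ccal)^{\Delta^1}$ formed along $(j_\Ccal,j_\Ccal)$, where $j_\Ccal\colon\Ccal\to\Ind(\Ccal)$ is the Yoneda embedding. The exact functors $\id_\Ccal$ and $Z\mapsto\id_Z$ (the latter from $\Ind(\Ccal)$ to $\Ind(\Ccal)^{\Delta^1}$, precomposed with $j_\Ccal$) have the same image in $\Ind(\Ccal)\times\Ind(\Ccal)$, so they induce an exact functor $u_\Ccal\colon\Ccal\to\Lace(\Ccal,\map_\Ccal)$, explicitly $X\mapsto(X,\id_{j_\Ccal X})$; naturality in $\Ccal$ is inherited from that of $j$, of the diagonal, of the identity-arrow inclusion and of $\Ind$. (Under the identification of $\Lace(\Ccal,\map_\Ccal)$ with the category of endomorphisms in $\Ccal$, this is the inclusion of the identity endomorphisms.)

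For the mapping-space computation, first observe that, since $\pi\colon\CatL\to\CatEx$ is a cartesian fibration whose cartesian transition functors are given by restriction of bimodules, the source $\Map_{\CatL}(\L\Ccal,(\Dcal,N))$ is fibred over $\Map_{\CatEx}(\Ccal,\Dcal)$ with fibre over a functor $f$ equal to $\Map_{\BiMod(\Ccal)}\bigl(\map_\Ccal,\,N\circ(f\op\times f)\bigr)$. Applying the limit-preserving functor $\Map_{\CatEx}(\Ccal,-)$ to the pullback~\eqref{eq:lace} defining $\Lace(\Dcal,N)$ exhibits the target $\Map_{\CatEx}(\Ccal,\Lace(\Dcal,N))$ as fibred over $\Map_{\CatEx}(\Ccal,\Dcal)$ with fibre over $f$ equal to $\Map_{\FunEx(\Ccal,\Ind(\Dcal))}(j_\Dcal\circ f,\,N\circ f)$, the space of natural transformations from $j_\Dcal f$ to $Nf$, where $N$ is now read as the exact functor $\Dcal\to\Ind(\Dcal)$. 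Tracing the definition of $u_\Ccal$ and the functoriality of $\Lace$ through~\eqref{eq:lace}, the map above is, over each $f$, the composite of two standard equivalences: currying identifies $\BiMod(\Ccal)\simeq\FunEx(\Ccal,\Ind(\Ccal))$, carrying $\map_\Ccal$ to $j_\Ccal$ and $N\circ(f\op\times f)$ to $f^*\circ N\circ f$, with $f^*\colon\Ind(\Dcal)\to\Ind(\Ccal)$ restriction along $f\op$; and the adjunction $\Ind(f)\dashv f^*$ --- which is exact on both sides and satisfies $\Ind(f)\circ j_\Ccal\simeq j_\Dcal\circ f$ --- induces by post-composition an adjunction $\FunEx(\Ccal,\Ind(\Ccal))\rightleftarrows\FunEx(\Ccal,\Ind(\Dcal))$ under which transformations $j_\Ccal\Rightarrow f^*Nf$ correspond to transformations $j_\Dcal f\Rightarrow Nf$. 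Since all of these identifications are natural in $f$, the fibrewise equivalences assemble into an equivalence of spaces over $\Map_{\CatEx}(\Ccal,\Dcal)$, which is the claimed equivalence.

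The mathematics here is light --- it is essentially currying together with the adjunction $\Ind(f)\dashv f^*$ --- so the bulk of the effort, and the place where I expect trouble, is bookkeeping: keeping straight the four incarnations of a $\Ccal$-bimodule and, more delicately, upgrading the ``fibrewise over $f$'' statements to honest equivalences of spaces over $\Map_{\CatEx}(\Ccal,\Dcal)$. The latter forces one to verify naturality in $f$ (and in $(\Dcal,N)$) of currying, of the adjunction $\Ind(f)\dashv f^*$, and of the identification $\Ind(f)\circ j_\Ccal\simeq j_\Dcal\circ f$ --- the last being the mate of the tautological comparison map. A tempting structural alternative is to read off the adjunction directly from the pullback presentation~\eqref{eq:categorical-lace} of $\CatL$ and the description of $\L$ via the transformation of diagrams~\eqref{eq:transformation-of-diagrams}, assembling adjoints of the constituent maps; but the constant functor into the oplax arrow category $\Ar^{\opl}(\PrExCat{L})$ has no evident adjoint, so this route would still have to import the lax-equalizer description of $\Lace$ by hand, and I expect the mapping-space argument to be the more straightforward one to carry out in detail.
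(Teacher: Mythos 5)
Your proposal is correct, and it takes a genuinely different route from the paper. The paper proves this entirely at the structural level: it observes that the natural transformation of diagrams~\eqref{eq:transformation-of-diagrams} defining $\L$ consists componentwise of left adjoints (restriction along $\Delta^1 \to \Delta^0$, $\Delta^0 \sqcup \Delta^0 \to \Delta^0$, and the identity) whose right adjoints are the corresponding oplax limits, invokes the fact that these assemble into a \emph{lax} transformation and hence induce a right adjoint $\Psi$ on lax fibre products, and then passes from lax to strict fibre products by introducing a full subcategory $\Zcal$ of the lax fibre product where the genuine fibre product is small and checking that $\CatEx \hookrightarrow \Zcal$ admits a right adjoint $\Phi$; the desired right adjoint to $\L$ is $\Phi \circ \Psi$. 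Your approach instead verifies the adjunction directly: build the unit $X \mapsto (X, \id_{jX})$ and show the induced map of mapping spaces is a fibrewise equivalence over $\Map_{\CatEx}(\Ccal,\Dcal)$, using on one side that $\pi\colon\CatL\to\CatEx$ is a cartesian fibration with restriction monodromy and on the other the pullback presentation~\eqref{eq:lace} of $\Lace$; the fibrewise identification is currying plus the adjunction $\Ind(f)\dashv f^*$ together with $\Ind(f)\circ j_\Ccal \simeq j_\Dcal \circ f$. The trade-off is as you anticipate: your route avoids the 2-categorical infrastructure (lax natural transformations, lax fibre products, oplax limits) at the cost of more bookkeeping --- notably the coherence of the fibrewise identification in $f$ and in $(\Dcal,N)$, and the fact that the comparison map is a priori a map over $\Map(\Ccal,\Dcal)$ (which follows from $u_\Ccal$ being a section of the projection $\Lace(\Ccal,\map_\Ccal)\to\Ccal$ and $\widetilde{\Lace}$ commuting with the projection to $\CatEx$, neither of which you say explicitly). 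The structural approach, conversely, produces the relative adjunction over $\CatEx$ essentially for free, which the paper needs in the very next proposition; your version would have to re-derive that compatibility.
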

\begin{proof}
Each component of the natural transformation~\eqref{eq:transformation-of-diagrams} admits a right adjoint in the form of the corresponding oplax limit (indexed over $\Delta^1$, $\Delta^0 \coprod \Delta^0$ or $\Delta^0$); indeed, this is exactly the defining property of oplax limits, where we note that for $\Delta^0$ and $\Delta^0 \coprod \Delta^0$ there is no difference between oplax limits and limits (or between oplax natural transformations and natural transformations). To avoid confusion, let us note that these right adjoints do not assemble to a natural transformation in general, only to a \emph{lax} transformation 
\[
\begin{tikzcd}[row sep = 10pt]
& \PrExCat{L} \ar[dd] & & & \Ar^{\opl}(\PrExCat{L}) \ar[dd] \\
&& \stackrel{\lax}{\Leftarrow} && \\
\CatEx \ar[r] & \PrExCat{L} & & \CatEx \ar[r] & \PrExCat{L} \times \PrExCat{L} ,
\end{tikzcd}
\]
which, in turn, yields an induced right adjoint on the level of lax fibre products
\[ 
\begin{tikzcd}
\CatEx \times^{\lax}_{\PrExCat{L}} \PrExCat{L} & \ar[l, "\Psi"'] \CatEx \times^{\lax}_{\PrExCat{L} \times \PrExCat{L}} \Ar^{\opl}(\PrExCat{L}).
\end{tikzcd}
\] 
Here, the left hand side can be described as the category of diagrams
\[ 
\begin{tikzcd}
& \Ecal \ar[d, "f"] \\
\Ccal \ar[r, "g"] & \Dcal
\end{tikzcd}
\]
of stable categories and exact functors such that in addition $\Ccal$ is small, $\Ecal$ and $\Dcal$ are presentable, and $f$ is colimit preserving. Let $\Zcal \subseteq \CatEx \times^{\lax}_{\PrExCat{L}} \PrExCat{L}$ be the full subcategory spanned by those diagrams as above whose limit (that is, the fibre product of $f$ and $g$) is small. Then the composed functor 
\[ 
\begin{tikzcd}
\CatEx \times^{\lax}_{\PrExCat{L}} \PrExCat{L} & \ar[l, "\Psi"'] \CatEx \times^{\lax}_{\PrExCat{L} \times \PrExCat{L}} \Ar^{\opl}(\PrExCat{L}) & \ar[l] \CatEx \times_{\PrExCat{L} \times \PrExCat{L}} \Ar^{\opl}(\PrExCat{L}) \ar[ll,"\Psi'"',bend right = 20]
\end{tikzcd}
\] 
takes values in $\Zcal$, since presentable categories are locally small. At the same time, the functor
\[
\begin{tikzcd}
\CatEx = \CatEx \times_{\PrExCat{L}} \PrExCat{L} \ar[r] & \CatEx \times^{\lax}_{\PrExCat{L}} \PrExCat{L}
\end{tikzcd}
\]
also takes values in $\Zcal$, and the inclusion $\CatEx \hookrightarrow \Zcal$ admits a right adjoint $\Phi\colon \Zcal \to \CatEx$ given by realizing the fibre product. We hence conclude that the induced functor
\[
\begin{tikzcd}
\CatEx = \CatEx \times_{\PrExCat{L}} \PrExCat{L} \ar[r, "\L"] & \CatEx \times_{\PrExCat{L} \times \PrExCat{L}} \Ar^{\opl}(\PrExCat{L}) = \CatL
\end{tikzcd}
\]
admits a right adjoint by composing $\Psi'$ and $\Phi$. Unwinding the definitions, for a laced category $(\Ccal,F\colon \Ind(\Ccal) \to \Ind(\Ccal))$, the stable category $\Phi(\Psi'(\Ccal,F))$ naturally sits in a diagram 
\[ 
\begin{tikzcd}
\Phi(\Psi'(\Ccal,F))\ar[d]\ar[r] & \lim^{\opl}(F) \ar[r]\ar[d] & \Ind(\Ccal)^{\Delta^1} \ar[d] \\
\Ccal \ar[r, "{(j,j)}"] & \Ind(\Ccal) \times \Ind(\Ccal) \ar[r, "\id \times F"] & \Ind(\Ccal)^{\Delta^{\{0\}}} \times \Ind(\Ccal)^{\Delta^{\{1\}}}
\end{tikzcd}
\]
in which both squares are pullback, and so we obtain an identification $\Phi(\Psi(\Ccal,F)) = \Lace(\Ccal,F)$.
\end{proof}

Finally, let us conclude this section by the following result:

\begin{thm} \label{PresentableTCatEx}
    The category $\CatL$ is presentable and generated under colimits by the  
    objects $(\Spfin, 0)$ and $(\Spfin, \id)$, which are furthermore compact.
\end{thm}
\begin{proof}
    We begin by checking that the proposed generators are indeed compact. The object $(\Spfin, 0)$ corepresents the composite of $\fgt\colon\CatL\to\CatEx$ and $\core\colon\CatEx\to\Spaces$, the first is colimit preserving by Proposition~\ref{prop:limits-and-colimits} and the second is a composite of the forgetful functor $\CatEx \to \Cat$, which preserves filtered colimits by \cite[Proposition 1.1.4.6]{HA}, and the functor $\Cat \to \Spaces$ represented by the compact object $\ast \in \Cat$; therefore $(\Spfin, 0)$ is compact. 
    
    For $(\Spfin, \id)$, we have to show that $\LaceEq$ commutes with filtered colimits. Since we already argued that $\core$ is filtered colimits preserving it will suffice to show that $\Lace\colon \CatEx \to \Cat$ is so. Recall from Construction \ref{cons:laced-objects} that there is a pullback square of spaces
    $$
        \begin{tikzcd}
            \Lace(\Ccal, M)\arrow[r]\arrow[d] & \int^{X \in\Ccal}_{Y\in \Ccal}\OmegaInfty M(X, Y)\arrow[d, "p"] \\
            \Ccal\arrow[r, "\Delta"] & \Ccal\times\Ccal
        \end{tikzcd}
    $$
    Since pullbacks of commute with filtered colimits in $\Cat$, it suffices to show the other three terms in the above square preserve filtered colimits as functors in $\Ccal$. For the bottom two terms this is provided by \cite[Proposition 1.1.4.6]{HA}. 
    As for the top right corner, recall that there is a commutative square
    $$
        \begin{tikzcd}
            \CatL\arrow[r]\arrow[d] & \int^{\Ical \in\Cat} \Fun(\Ical\op\times \Ical, \Spaces)\arrow[d] \\
            \CatEx\arrow[r] & \cat{Cat} \ ,
        \end{tikzcd}
    $$
    such that the vertical arrows are cocartesian fibrations and the top horizontal arrow, given by postcomposition by the loop finitary $\OmegaInfty\colon\Sp\to\Spaces$, preserves cocartesian arrows (since these are computed via a left Kan extension involving filtered comma categories and $\OmegaInfty$ preserves filtered colimits). Now the lower horizontal map preserves filtered colimits by \cite[Proposition 1.1.4.6]{HA} and the induced map on vertical fibres preserves filtered colimits, so that the bottom horizontal functor 
    $$
        \CatL\to\int^{\Ical\in\Cat} \Fun(\Ical\times \Ical\op, \Spaces) 
    $$ 
    also preserves filtered colimits. But this functor precisely sends $(\Ccal, M)$ to $(\Ccal, \OmegaInfty M\colon \Ccal\op\times\Ccal\longrightarrow\Spaces)$. Via unstraightening, this datum is equivalently encoded via the fibration $p\colon \int^{X \in \Ccal}_{Y \in \Ccal}\OmegaInfty M(X, Y)\to\Ccal\times\Ccal$ so that we have the wanted claim. \\
    
    We recall that a cocomplete category $\Ccal$ is generated under colimits by a collection of compact objects $\{X_i\}$ if $\Map(X_i, -)\colon\Ccal\to\Spaces$ jointly detect equivalences by \cite[Corollary 2.5]{Yanovski} and the remark that follows; moreover, if the $X_i$ form a set, then $\Ccal$ is presentable and compactly generated.

    It is a folklore result that $\CatEx$ is compactly generated by $\Spfin$, a proof of which is recorded in \cite{KrauseNikolausPuetzstueckSheavesOnManifolds}. 
    Therefore, given a laced functor $(f, \eta)\colon(\Ccal, M)\to(\Dcal, N)$ such that $\Lace(f, \eta)$ and $f$ induce equivalences of cores, then both $f$ and $\Lace(f, \eta)$ are equivalences of stable categories. But given the pullback square of Construction \ref{cons:laced-objects}, the fibre at $X\in\Ccal$ of
    $$
        \begin{tikzcd}
            \Fun_{\CatL}((\Spfin, \id), (\Ccal, M)) = \LaceEq(\Ccal, M)\arrow[r] & \Ccal^{\simeq} = \Fun_{\CatL}((\Spfin, 0), (\Ccal, M))
        \end{tikzcd}
    $$
    is precisely the space $\OmegaInfty M(X, X)$. The above argument then implies that $\eta\colon M\to N\circ(\catop{f}\times f)$ induces an equivalence on the diagonal. Since $M(X, Y)$ is a retract of $M(X\oplus Y, X\oplus Y)$, we deduce that $\eta$ is an equivalence at every point, which concludes.
\end{proof}

\subsection{Laced categories as the tangent bundle of \texorpdfstring{$\CatEx$}{CatEx}}
\label{subsec:tangent}%

Our goal in the present subsection is to prove that $\CatL$ is a model for the tangent bundle of $\CatEx$. Recall the functor $\L\colon \CatEx \to \CatL$ of Proposition~\ref{prop:left-adj-to-lace}. By its construction, it fits into a commutative square
\[
\begin{tikzcd}
\CatEx \ar[r,"\L"]\ar[d,equal] &\CatL \ar[d,"\pi"] \\
\CatEx \ar[r,equal] & \CatEx,
\end{tikzcd}
\]
which we may view as a natural transformation between the vertical arrows in $\CAT$. Since the individual components of this transformation are left adjoints, \cite[Theorem 4.6]{HaugsengLax} tells us that this transformation is itself a left adjoint when considered as a lax natural transformation, that is, as a morphism 
$\Ar^{\lax}(\CAT) = \Fun^{\lax}(\Delta^1,\CAT)$. In particular, its right adjoint will be a 
a lax natural transformation of the form
\[
    \begin{tikzcd}
        \CatEx \ar[d,equal] & \CatL \ar[dl,Rightarrow, shorten >= 2.8ex, shorten <= 2.8ex]\ar[l,"\Lace"']\ar[d,"\pi"] \\
        \CatEx \ar[r,equal] & \CatEx.
    \end{tikzcd}
\]
Explicitly, the 2-cell in the above square is given by the canonical projection $\Lace(\Ccal,M) \to \Ccal$ sending a laced object $(X,\alpha)$ to its underlying object $X$.
Passing to lax limits on both vertical arrows we obtain an induced adjunction
\begin{equation}\label{eq:first-adj}
    \begin{tikzcd}
        (\CatEx)^{\Delta^1} \arrow[r, shift left=2] & \arrow[l, shift left=2, "\perp"'] \CatL \times_{(\CatEx)^{\Delta^{\{0\}}}} (\CatEx)^{\Delta^1} 
    \end{tikzcd}
\end{equation}
whose left adjoint sends an arrow $f\colon \Ccal \to \Dcal$ to the tuple $((\Ccal,\id),f)$ and the right adjoint sends a tuple $((\Ccal,M),f\colon \Ccal \to \Dcal)$ to the composed arrow $\Lace(\Ccal,M) \to \Ccal \to \Dcal$. Let us now point out that the projection $\pi\colon \CatL \to \CatEx$ is not only a cartesian fibration, but also a cocartesian fibration, where for an exact functor $f\colon \Ccal \to \Dcal$ the cocartesian monodromy $\BiMod(\Ccal) \to \BiMod(\Dcal)$, which is left adjoint to the cartesian monodromy $\BiMod(\Dcal) \to \BiMod(\Ccal)$, is given here by left Kan extension along $f\op \times f\colon \Ccal\op \times \Ccal \to \Dcal\op \times \Dcal$. By~\cite[Lemma 2.27]{AyalaFrancis} we consequently have an adjunction
\begin{equation}\label{eq:second-adj}
    \begin{tikzcd}
        \CatL \times_{(\CatEx)^{\Delta^{\{0\}}}} (\CatEx)^{\Delta^1} \arrow[r, shift left=2] & \arrow[l, shift left=2, "\perp"'] \CatL
    \end{tikzcd}
\end{equation}
where the right adjoint is given by $(\Ccal,M) \mapsto ((\Ccal,M),\id_{\Ccal})$ and the left adjoint 
sends a pair $((\Ccal,M),f\colon \Ccal \to \Dcal)$ to the image $(f\op \times f)_!M$ of $M \in \BiMod(\Ccal)$ under the cocartesian monodromy along $f$. Composing~\eqref{eq:first-adj} and~\eqref{eq:second-adj} we hence obtain an adjunction
$$
    \begin{tikzcd}
          \widetilde{\L}\colon (\CatEx)^{\Delta^1}\arrow[r, shift left=2] & \arrow[l, shift left=2, "\perp"']\CatL \cocolon \widetilde{\Lace}
    \end{tikzcd}
$$
where the left adjoint $\widetilde{\L}$ sends an arrow $f\colon \Ccal \to \Dcal$ to the image of $\L(\Ccal) = (\Ccal,\id)$ under the cocartesian monodromy $(f\op \times f)_!\colon \BiMod(\Ccal) \to \BiMod(\Dcal)$, while the right adjoint $\widetilde{\Lace}$ sends a laced category $(\Ccal,M)$ to the exact functor $\Lace(\Ccal,M) \to \Ccal$ given by $(X,\alpha) \mapsto X$. 

\begin{prop}
    The adjunction $\widetilde{\L} \dashv \widetilde{\Lace}$ fits into a commutative diagram 
    \begin{equation}\label{eq:lace-to-ar}
        \begin{tikzcd}
            (\CatEx)^{\Delta^1} \ar[rr,phantom,"\perp"] \ar[rr, "\widetilde{\L}", bend left = 10]\ar[dr,"t"'] &&  \ar[dl,"\pi"]\ar[ll,bend left =10,"\widetilde{\Lace}"]\CatL \\
            & \CatEx & 
        \end{tikzcd}
    \end{equation}
    that is, both adjoints commute with the projection to $\CatEx$. In addition, this is an adjunction relative to $\CatEx$ in the sense of~\cite[Definition 7.3.2.2]{HA}, the right adjoint $\widetilde{\Lace}$ preserves cartesian edges and the left adjoint $\widetilde{\L}$ preserves cocartesian edges.
\end{prop}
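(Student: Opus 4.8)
The plan is to derive all three assertions from the two constituent adjunctions \eqref{eq:first-adj} and \eqref{eq:second-adj} together with standard facts about relative adjunctions between (co)cartesian fibrations. To begin, I would record that each of \eqref{eq:first-adj} and \eqref{eq:second-adj} takes place over $\CatEx$: the category $(\CatEx)^{\Delta^1}$ maps to $\CatEx$ via the target functor $t$, the intermediate category $\CatL \times_{(\CatEx)^{\Delta^{\{0\}}}} (\CatEx)^{\Delta^1}$ maps to $\CatEx$ by evaluating its $(\CatEx)^{\Delta^1}$-coordinate at $\{1\}$, and $\CatL$ maps to $\CatEx$ via $\pi$. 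From the explicit formulas recalled above, each of the four functors commutes with the relevant projection --- for instance $\widetilde{\L}(f\colon \Ccal\to\Dcal)$ has underlying stable category $\Dcal = t(f)$, while $\widetilde{\Lace}(\Ccal,M)$ is an arrow with target $\Ccal = \pi(\Ccal,M)$. Since the two descriptions of the projection on the intermediate category agree, composing \eqref{eq:first-adj} and \eqref{eq:second-adj} shows that $\widetilde{\L}$ and $\widetilde{\Lace}$ commute with $t$ and $\pi$, which is exactly the commutativity of \eqref{eq:lace-to-ar}.

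Next I would promote this to a relative adjunction over $\CatEx$. The adjunction \eqref{eq:second-adj} is relative over $\CatEx$ by \cite[Lemma 2.27]{AyalaFrancis}, which for the cocartesian fibration $\pi$ produces precisely the relative adjunction transporting along arrows of the base. For \eqref{eq:first-adj}, relativity over $\CatEx$ follows from its construction: it is obtained by passing to lax limits over $\Delta^1$ in the lax transformation of vertical arrows, a construction that lives over $\CatEx$, and the constituent lax squares are adjunctions carrying the identity $2$-cell on the $\CatEx$-side, so \cite[Theorem 4.6]{HaugsengLax} yields a unit and counit projecting to equivalences in $\CatEx$. Unwinding this, the unit of $\widetilde{\L}\dashv\widetilde{\Lace}$ at $f\colon\Ccal\to\Dcal$ is the square of exact functors with $\id_\Dcal$ along the bottom, and the counit at $(\Ccal,M)$ is a laced functor whose underlying exact functor is $\id_\Ccal$; both therefore project to equivalences. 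As the composite of two adjunctions relative over the same base is again relative over that base, I conclude that $\widetilde{\L}\dashv\widetilde{\Lace}$ is a relative adjunction over $\CatEx$ in the sense of \cite[Definition 7.3.2.2]{HA}. I expect the bookkeeping in this step to be the main obstacle: one must verify that \eqref{eq:first-adj} and \eqref{eq:second-adj} are relative over the \emph{same} copy of $\CatEx$ with \emph{matching} projections, so that their composite inherits a relative structure.

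Finally, for the edge-preservation statements I would observe that both $\pi\colon\CatL\to\CatEx$ and $t\colon(\CatEx)^{\Delta^1}\to\CatEx$ are bicartesian fibrations: for $\pi$ this was recalled above, its cartesian monodromy being restriction of bimodules and its cocartesian monodromy left Kan extension; for $t$ it holds because $\CatEx$ admits pullbacks, the $t$-cartesian edges being the pullback squares and the $t$-cocartesian edges the squares that are equivalences on the source. For any relative adjunction $F\dashv G$ over a base $\Scal$ in which both $F$ and $G$ lie over $\Scal$, if the two structure fibrations are cartesian then $G$ preserves cartesian edges, and dually if they are cocartesian then $F$ preserves cocartesian edges; this is immediate from the mapping-space criterion characterizing (co)cartesian edges, after transporting it across the adjunction equivalences, which are compatible over $\Scal$ precisely because the adjunction is relative. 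Applying this to $\widetilde{\L}\dashv\widetilde{\Lace}$ over $\CatEx$ gives that $\widetilde{\Lace}$ preserves cartesian edges and $\widetilde{\L}$ preserves cocartesian edges. As a consistency check, the cartesian statement can also be read off by hand from the fibre-square description \eqref{eq:lace}: for $g\colon\Ccal\to\Dcal$ the square comparing $\Lace(\Ccal,g^{*}N)$ with $\Lace(\Dcal,N)$ over $g$ is a pullback of stable categories, so $\widetilde{\Lace}$ of the corresponding cartesian edge of $\CatL$ is a $t$-cartesian edge of $(\CatEx)^{\Delta^1}$.
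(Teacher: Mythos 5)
Your proof is correct and reaches the same conclusion, but the edge-preservation part takes a genuinely different route from the paper. The paper first argues that (once the relative adjunction is established) preservation of cocartesian edges by $\widetilde{\L}$ is \emph{equivalent} to preservation of cartesian edges by $\widetilde{\Lace}$, by identifying both with exponentiability of the fibration $\Mcal \to \CatEx \times \Delta^1$ classifying the adjunction (\cite[Proposition 2.23]{AyalaFrancis}); it then verifies the single remaining preservation statement separately for each of the two constituent adjunctions, via an explicit analysis of free cocartesian fibrations for \eqref{eq:first-adj} and by the last clause of \cite[Lemma 2.27]{AyalaFrancis} for \eqref{eq:second-adj}. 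You instead invoke a general principle: for any adjunction $F\dashv G$ relative to $\Scal$ in which the structure maps to $\Scal$ are cartesian (resp.\ cocartesian) fibrations, $G$ automatically preserves cartesian edges (resp.\ $F$ preserves cocartesian edges). This principle is correct — one runs the mapping-space criterion for a ($q$-)cartesian edge $d\to d'$ through the adjunction equivalence $\Map_\Ccal(c,G(d))\simeq\Map_\Dcal(F(c),d)$ and its compatibility with the projections to $\Map_\Scal$, the latter being exactly what relativity supplies via the unit and counit lying over equivalences — and applying it to the composed adjunction yields both preservation statements at once, bypassing the exponentiability detour and the constituent-by-constituent edge analysis. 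The trade-off is that this slick route rests on a general lemma you only sketch; it is worth either writing out the mapping-space argument in full or locating a citable reference for it, since this kind of ``relative adjunctions between bicartesian fibrations automatically preserve the appropriate edges'' statement, while true and elementary, does not appear verbatim in the standard references. Your verification of commutativity and relativity mirrors the paper's argument, and your final consistency check against the pullback square \eqref{eq:lace} is a nice sanity check that correctly identifies $\Lace(\Ccal,g^*N)\simeq\Ccal\times_\Dcal\Lace(\Dcal,N)$.
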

\begin{proof}
    We first note that once it is established that $\widetilde{\L} \dashv \widetilde{\Lace}$ is a relative adjunction, the preservation of cocartesian edges by $\widetilde{\L}$ is equivalent to the preservation of cartesian edges by $\widetilde{\Lace}$.
    
    Indeed, write $\Mcal \to \Delta^1$ for the cartesian and cocartesian fibration classifying this adjunction. The fact that the adjunction is relative to $\CatEx$ implies that we have a well-defined projection $\rho\colon \Mcal \to \CatEx \times \Delta^1$ preserving both cartesian and cocartesian edges over $\Delta^1$. Since the restriction of $\rho$ to $\CatEx \times (\Delta^{\{0\}} \coprod \Delta^{\{1\}})$ is both a cartesian and cocartesian fibration, one concludes that $\rho$ itself is both a locally cartesian and a locally cocartesian fibration. The condition that $\widetilde{\L}$ preserves cocartesian edges is then equivalent to this fibration being cocartesian, while the condition that $\widetilde{\Lace}$ preserves cartesian edges is equivalent to this fibration being cartesian. Both of these are equivalent in this case to the fibration being exponentiable, see~\cite[Proposition 2.23]{AyalaFrancis}. 

    Now since the adjunction $\widetilde{\L} \dashv \widetilde{\Lace}$ was constructed as a composite of two adjunctions, it will suffice to show that each one of these refines to an adjunction relative to $\CatEx$ in which the left adjoint preserves cocartesian edges.     
    For the adjunction~\eqref{eq:first-adj}, let us simply point out that it is obtained from an adjunction in $\Ar^{\lax}(\CAT)$ upon passing to lax limits. There is a hence a natural transformation of adjunctions from~\eqref{eq:first-adj} to the image of this adjunction in $\Ar^{\lax}(\CAT)$ under the target projection $\Ar^{\lax}(\CAT) \to \CAT$. But this image is just the identity adjunction on $\CatEx$, and so we conclude that the adjunction~\eqref{eq:first-adj} refines to a relative adjunction
    \[
        \begin{tikzcd}
            (\CatEx)^{\Delta^1} \ar[rr,phantom,"\perp"] \ar[rr, bend left = 5]\ar[dr,"t"'] &&  \ar[dl,"\tau"]\ar[ll,bend left =5]\CatL \times_{(\CatEx)^{\Delta^{\{0\}}}} (\CatEx)^{\Delta^1}\\
            & \CatEx & 
        \end{tikzcd}
    \]
    where the left diagonal arrow is the target projection and the right diagonal arrow is composite $\CatL \times_{(\CatEx)^{\Delta^{\{0\}}}} (\CatEx)^{\Delta^1}\to (\CatEx)^{\Delta^1} \xrightarrow{t} \CatEx$. Let us also point out that both sides are (free) cocartesian fibrations where the cocartesian edges on the left hand side are the arrows which are sent to equivalences by the source projection $(\CatEx)^{\Delta^1} \to (\CatEx)^{\Delta^{\{0\}}}$, while the the cocartesian arrows on the right hand side are those which are sent to equivalences by the projection to $\CatL$. By construction these projections intertwine the left adjoint above with $\L$, and hence we see that this left adjoint preserves cocartesian edges. 
    
    Now consider the adjunction~\eqref{eq:second-adj}. As the right adjoint in~\eqref{eq:second-adj} visibly preserves the projection to $\CatEx$, by~\cite[Proposition 7.3.2.1]{HA} it will suffice to show that the components of the unit map are sent to equivalences in $\CatEx$. Indeed, this is a general fact about adjunctions arising from cocartesian fibrations in this manner, see~\cite[Lemma 2.27]{AyalaFrancis}. 
    Finally, the last part of \cite[Lemma 2.27]{AyalaFrancis} also stipulates that the left adjoint in question preserves cocartesian edges over $\CatEx$, and so the proof is complete.
\end{proof}

We now come to the main result of the present subsection.
\begin{prop}\label{prop:lace-is-tangent-bundle}
The diagram~\eqref{eq:lace-to-ar} exhibits $\CatL$ as the stable envelope of $(\CatEx)^{\Delta^1}$ over $\CatEx$ in the sense of \cite[Definition 7.3.1.1]{HA}. 
In particular, it exhibits $\CatL$ as the tangent bundle of $\CatEx$.
\end{prop}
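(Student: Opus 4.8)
The plan is to verify the defining properties of the stable envelope from \cite[Definition 7.3.1.1]{HA} directly: we must show that (1) the fibres of $\pi\colon \CatL \to \CatEx$ are stable, (2) the left adjoint $\widetilde{\L}$ is \emph{excisive} in the appropriate fibrewise sense, i.e.\ it carries (fibrewise) pushout squares in $(\CatEx)^{\Delta^1}$ to (fibrewise) pullback squares in $\CatL$ — more precisely, that $\widetilde\L$ exhibits each fibre $\BiMod(\Ccal)$ as the stabilisation of the fibre $(\CatEx)^{\Delta^1}_\Ccal \simeq (\CatEx)_{\Ccal/}$ — and (3) a universal property: precomposition with $\widetilde{\L}$ induces an equivalence between exact-functors-out-of-$\CatL$-over-$\CatEx$ and reduced-excisive-functors-out-of-$(\CatEx)^{\Delta^1}$-over-$\CatEx$. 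The first point is already established: $\BiMod(\Ccal) = \FunEx(\Ccal\op \otimes \Ccal, \Sp)$ is stable, and Proposition~\ref{prop:limits-and-colimits} gives the requisite (co)limits. So the real content is points (2) and (3), and by the fibrewise criterion for stable envelopes it suffices to check the statement fibrewise over each $\Ccal \in \CatEx$.

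Fix $\Ccal$. The fibre of $t\colon (\CatEx)^{\Delta^1} \to \CatEx$ over $\Ccal$ (via the \emph{target} projection, since \eqref{eq:lace-to-ar} uses $t$) is the overcategory $(\CatEx)_{/\Ccal}$, and the fibre of $\pi$ is $\BiMod(\Ccal)$. Tracing through the construction of $\widetilde\L$, the induced functor on fibres is the functor $\Lace(\Ccal,-)$'s left adjoint restricted appropriately; by Proposition~\ref{prop:left-adj-to-lace} and the cocartesian-monodromy adjunction \eqref{eq:second-adj}, the fibrewise left adjoint sends $[f\colon \Dcal \to \Ccal]$ to $(f\op\times f)_!\map_\Dcal \in \BiMod(\Ccal)$, while its right adjoint is precisely $M \mapsto [\Lace(\Ccal,M)\to\Ccal]$. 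So what must be shown is: \emph{$M \mapsto [\Lace(\Ccal,M) \to \Ccal]$ exhibits $\BiMod(\Ccal)$ as $\Sp((\CatEx)_{/\Ccal})$, with the functor identifying with $\Omega^\infty_{/\Ccal}$} — which is exactly the first theorem stated in the introduction. Thus the proof should reduce this proposition to that theorem (or, if the logical order is reversed, prove it here).

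For the reduction, I would argue as follows. By \cite[Example 7.3.1.5]{HA} (or the fibrewise recognition principle for stable envelopes), to exhibit $\CatL \to \CatEx$ as the stable envelope of $t\colon(\CatEx)^{\Delta^1}\to \CatEx$ it is enough to: (a) know $\CatL \to \CatEx$ is a stable fibration, i.e.\ a cartesian and cocartesian fibration with stable fibres and exact transition functors — the cartesian/cocartesian structure was recorded right before \eqref{eq:second-adj}, transition functors are restriction and left Kan extension along $f\op\times f$, both exact; (b) know the left adjoint $\widetilde\L$ preserves cocartesian edges and the diagram \eqref{eq:lace-to-ar} is a relative adjunction — this is the content of the preceding proposition; and (c) verify that fibrewise, $\widetilde\L_\Ccal$ presents $\BiMod(\Ccal)$ as the stabilisation of $(\CatEx)_{\Ccal//\Ccal} = ((\CatEx)_{/\Ccal})_*$, equivalently that $\Lace(\Ccal,-)$ is $\Omega^\infty_{/\Ccal}$. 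Point (c) is the substantive input and is where I expect the main obstacle to lie: one needs to show that $\Lace(\Ccal,-)\colon \BiMod(\Ccal) \to (\CatEx)_{/\Ccal}$ commutes with (in fact creates) finite limits, is reduced (sends the zero bimodule to $\id_\Ccal$, visibly true), and that the comparison $\BiMod(\Ccal) \to \Sp((\CatEx)_{/\Ccal})$ it induces is an equivalence — which amounts to checking that every $n$-excisive / stable object of $(\CatEx)_{/\Ccal}$ is in the essential image and that the functor is fully faithful after stabilisation. The cleanest route is to use the pullback description \eqref{eq:lace}: $\Lace(\Ccal,M)$ is the fibre product of $(j,M)$ and the source-target fibration of $\Ind(\Ccal)^{\Delta^1}$, from which exactness of $\Lace(\Ccal,-)$ in $M$ (limits of bimodules are computed pointwise, pullbacks of categories commute with the relevant limits) and a direct identification of the loop/suspension behaviour can be read off; combined with a compact-generation argument identifying which split-square-zero-like objects over $\Ccal$ arise this way, this should pin down the equivalence with $\Sp((\CatEx)_{/\Ccal})$. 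The final sentence — that this is the tangent bundle — is then immediate from the definition $\tangentT\CatEx := \Exc(\SpacesFinStar,\CatEx)$ together with the characterisation of $\tangentT\Ecal$ as the stable envelope of $t\colon \Ar(\Ecal) \to \Ecal$ recalled at the start of Section~2.
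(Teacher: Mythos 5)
Your reduction to a fibrewise statement is correct and matches the paper's opening move: since $\widetilde{\Lace}$ is a right adjoint preserving cartesian edges, it suffices to show that for each fixed $\Ccal$ the induced functor $\Lace^{\Sp}_{\Ccal}\colon \BiMod(\Ccal)\to \Sp(\CatEx_{/\Ccal})$ is an equivalence. But this is where your proposal effectively stops. You label this fibrewise claim point (c), call it "the substantive input," note it "is exactly the first theorem stated in the introduction," and then gesture at "exactness of $\Lace(\Ccal,-)$ in $M$... combined with a compact-generation argument identifying which split-square-zero-like objects over $\Ccal$ arise this way." That theorem in the introduction \emph{is} this proposition, so you cannot reduce to it; and the sketch you give for an independent proof of (c) is not an argument — exactness of $\Lace(\Ccal,-)$ gives you a well-defined comparison functor to $\Sp(\CatEx_{/\Ccal})$, but says nothing about why it is essentially surjective or fully faithful, and "a compact-generation argument" is left entirely unspecified.

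The paper's actual mechanism for (c) is quite different and carries the real work. It introduces a candidate inverse: the functor $Q_{\Ccal}\colon \CatEx_{\Ccal//\Ccal}\to\BiMod(\Ccal)$ sending a retract diagram $\Ccal \xrightarrow{f}\Dcal\xrightarrow{r}\Ccal$ to $\fib[\map_\Dcal(f(-),f(-))\to\map_\Ccal(-,-)]$, and its stabilization $Q^{\Sp}_\Ccal$. A short computation shows $Q^{\Sp}_\Ccal\circ\Lace^{\Sp}_\Ccal\simeq\Omega$, so it only remains to prove that $Q^{\Sp}_\Ccal$ alone is an equivalence (Proposition~\ref{prop:Q-is-equiv}). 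That proof uses a chain of ideas absent from your proposal: the orthogonal factorization system on $\CatEx$ into stably surjective and fully-faithful functors, together with Lemma~\ref{lmm:HNP-lemma}, to replace $\CatEx_{\Ccal//\Ccal}$ by $\Cat^{\st\text{-}\surj}_{\Ccal//\Ccal}$ without changing the stabilization; the Lurie–Barr–Beck theorem to identify $\Cat^{\st\text{-}\surj}_{\Ccal//\Ccal}$ with augmented colimit-preserving monads on $\Ind(\Ccal)$; and finally Lurie's computation \cite[Theorem 7.3.4.13]{HA} of the tangent category of $\Alg$ to finish. Without some replacement for this circle of ideas your proof does not close, and simply pointing at the pullback square~\eqref{eq:lace} plus compact generation does not supply one.
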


The remainder of this subsection is devoted to the proof of Proposition~\ref{prop:lace-is-tangent-bundle}. For this, note first that since $\widetilde{\Lace}$ is a right adjoint and preserves cartesian edges the statement of Proposition~\ref{prop:lace-is-tangent-bundle} is local: we need to check that for every $\Ccal$, the induced functor $\Lace(\Ccal,-)\colon M \to [\Lace(\Ccal,M) \to \Ccal]$ exhibits $\BiMod(\Ccal)$ as the stabilization of $\CatEx_{/\Ccal}$. Since $\BiMod(\Ccal)$ is stable, this is equivalent to saying that the induced functor 
\[ \Lace^{\Sp}_{\Ccal}\colon \BiMod(\Ccal) \to \Sp(\CatEx_{/\Ccal}) \]
is an equivalence. 

\begin{defi}\label{defi:Q}
For $\Ccal \in \CatEx$, let us write 
\[ Q_{\Ccal}\colon \CatEx_{\Ccal//\Ccal} \to \BiMod(\Ccal) \]
for the functor sending a retract diagram $\Ccal \xrightarrow{f} \Dcal \xrightarrow{r} \Ccal$ to the bimodule 
\[ \Ccal\op \times \Ccal \to \Sp \quad\quad (X,Y) \mapsto \fib[\map_{\Dcal}(f(X),f(Y)) \xrightarrow{r_*} \map_{\Ccal}(X,Y)]. \]
We also write $Q^{\Sp}_{\Ccal}$ for the composite
\[ Q^{\Sp}_{\Ccal}\colon \Sp(\CatEx_{/\Ccal}) = \Sp(\CatEx_{\Ccal//\Ccal})  \xrightarrow{\Omega^{\infty}} \CatEx_{\Ccal//\Ccal} \xrightarrow{Q_{\Ccal}} \BiMod(\Ccal).\] 
\end{defi}

\begin{rmq}\label{rmq:Q}
In Definition~\ref{defi:Q}, under the identification $\BiMod(\Ccal) = \Fun^L(\Ind(\Ccal),\Ind(\Ccal))$, the $\Ccal$-bimodule $\map_{\Dcal} \circ (f\op \times f)$ corresponds to the colimit preserving endofunctor 
\[ \Ind(f)^{\ad} \circ \Ind(f)\colon \Ind(\Ccal) \to \Ind(\Ccal),\] 
where $\Ind(f)^{\ad}\colon \Ind(\Dcal) \to \Ind(\Ccal)$ is the right adjoint of $\Ind(f)$, and hence the $\Ccal$-bimodule
$Q_{\Ccal}(\Ccal \xrightarrow{f} \Dcal \xrightarrow{r} \Ccal)$ corresponds to the endofunctor $\fib[\Ind(f)^{\ad}\circ \Ind(f) \Rightarrow \id]$.
\end{rmq}

\begin{prop}\label{prop:Q-is-equiv}
For every stable category $\Ccal$ the functor $Q^{\Sp}_{\Ccal}$ of Definition~\ref{defi:Q} above is an equivalence.
\end{prop}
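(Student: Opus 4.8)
The plan is to produce an explicit inverse to $Q^{\Sp}_{\Ccal}$, namely the functor $\Lace^{\Sp}_{\Ccal}$ described just before Definition~\ref{defi:Q}, and to check the two composites are equivalent to the identity. Since both sides are stable and all functors involved are exact, it suffices to work at the level of the unstable categories $\CatEx_{\Ccal//\Ccal}$ together with the structure making $\Lace(\Ccal,-)$ and $Q_\Ccal$ adjoint-like. Concretely, the first step is to observe — using Remark~\ref{rmq:Q} — that $Q_\Ccal$ sends a retract $\Ccal \xrightarrow{f}\Dcal\xrightarrow{r}\Ccal$ to the endofunctor $\fib[\Ind(f)^{\ad}\circ\Ind(f)\Rightarrow\id]$ of $\Ind(\Ccal)$, and that $\Lace(\Ccal,M)$, by the pullback square~\eqref{eq:lace}, is the category of pairs $(X,\alpha\colon j(X)\to M(X))$. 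I would then directly identify the counit: given $(\Ccal\xrightarrow{f}\Dcal\xrightarrow{r}\Ccal)$, the fibre $\fib(r)\colon \Dcal \to \Ccal$ (or rather the induced functor on Ind-categories) equips each object of $\Dcal$ with a canonical lace structure for the bimodule $Q_\Ccal(\Dcal)$, producing a comparison functor $\Dcal \to \Lace(\Ccal,Q_\Ccal(\Dcal))$ over $\Ccal$, and hence after stabilising a natural transformation $\Lace^{\Sp}_\Ccal \circ Q^{\Sp}_\Ccal \Rightarrow \id$. The second step is the unit: for a bimodule $M$, applying $Q_\Ccal$ to the split retract $\Ccal \to \Lace(\Ccal,M)\to\Ccal$ yields a bimodule $Q_\Ccal\Lace(\Ccal,M)$, and the tautological lace structure gives a map $M \to Q_\Ccal\Lace(\Ccal,M)$; one checks this is an equivalence after stabilising.

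The heart of the argument is showing these comparison maps become equivalences \emph{after applying $\Omega^\infty$-stabilisation}, i.e. on spectrum objects — they are certainly not equivalences unstably, since $\CatEx_{\Ccal//\Ccal}$ contains far more than split-square-zero-type retracts. Here I would use the standard recognition principle for stabilisations (\cite[\S 1.4.2]{HA}): a reduced functor out of a stable category into $\CatEx_{\Ccal//\Ccal}$ factors through $\Sp(\CatEx_{\Ccal//\Ccal})$ iff it is exact, and $\Sp(\CatEx_{\Ccal//\Ccal})$ is generated under colimits and desuspensions by the image of $\Omega^\infty$ applied to suspension spectra of split retracts. So it is enough to check that $Q^{\Sp}_\Ccal$ is (a) exact and (b) conservative, and that (c) it is essentially surjective, or alternatively to exhibit $\Lace^{\Sp}_\Ccal$ as a left (or right) adjoint and check unit/counit on generators. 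Exactness of $Q^{\Sp}_\Ccal$ is immediate since $\map_\Dcal$, fibres, and the forgetful functors are all exact; conservativity follows because a spectrum object $E \in \Sp(\CatEx_{\Ccal//\Ccal})$ with $Q^{\Sp}_\Ccal(E)=0$ has all its ``derivative'' mapping-spectra fibres vanishing, forcing each $\Omega^\infty(\Sigma^\infty$-shift$)$ to be a retract equivalence onto $\Ccal$, hence $E \simeq 0$ — this is essentially a Goodwillie-calculus computation identifying the first derivative of the identity on $\CatEx_{\Ccal//\Ccal}$.

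The step I expect to be the main obstacle is \textbf{essential surjectivity / the unit being an equivalence}: one must show every $\Ccal$-bimodule $M$ arises, up to equivalence of spectrum objects, as $Q^{\Sp}_\Ccal$ of something, and the natural candidate $\Lace(\Ccal,M)$ is not literally a square-zero extension but only becomes one after stabilisation. The cleanest route is probably to identify $\Sp(\CatEx_{\Ccal//\Ccal})$ with $\Sp(\CatEx_{/\Ccal})$ (using that $\Ccal$ is a zero object of $\CatEx_{\Ccal//\Ccal}$, so pointing is free) and then to note that $\CatEx_{/\Ccal}$ is itself presentable with $\CatEx$-linear structure, so its stabilisation is computed fibrewise; combined with the fact, established in \cite[\S 7.3]{HA} for the tangent-bundle formalism, that $\Sp(\Ecal_{/X})$ for $\Ecal$ compactly generated is controlled by the ``cotangent complex'' of $X$. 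For $\Ecal=\CatEx$ and $X=\Ccal$ this cotangent data is precisely the bimodule category, and matching up the excisive-functor description of $\Sp(\CatEx_{/\Ccal})$ with $\BiMod(\Ccal)$ via $Q_\Ccal$ is where the real content lies. Once the unit and counit are checked on the compact generators of both sides — which reduces to an elementary manipulation of the fibre sequence $\fib[\Ind(f)^{\ad}\Ind(f)\Rightarrow\id]$ — the proposition follows.
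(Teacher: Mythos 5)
Your high-level picture is right — the content is to match $\Sp(\CatEx_{\Ccal//\Ccal})$ against $\BiMod(\Ccal)$ — but the plan has a circularity and is missing the key ideas that actually make the proof go through. A few concrete problems:

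First, the plan of exhibiting $\Lace^{\Sp}_\Ccal$ as a literal two-sided inverse is off by a shift. The paper computes $Q^{\Sp}_{\Ccal}\circ \Lace^{\Sp}_{\Ccal} \simeq \Omega$ on $\BiMod(\Ccal)$ (the fibre $\fib[\map_{\Lace(\Ccal,M)}((x,0),(y,0)) \to \map_\Ccal(x,y)] = \Omega M(x,y)$), so the ``unit'' you propose cannot be the identity. This is harmless in a stable target, but it means your unit/counit strategy as written is already incorrect, and it actually \emph{presupposes} that you know $Q^{\Sp}_\Ccal$ is an equivalence in order to meaningfully compare the composites. In the paper the logical order is the reverse: Proposition~\ref{prop:Q-is-equiv} is proved first and without reference to $\Lace^{\Sp}_\Ccal$; the identification $Q^{\Sp}_\Ccal\circ\Lace^{\Sp}_\Ccal\simeq\Omega$ then \emph{uses} this to deduce that $\Lace^{\Sp}_\Ccal$ is an equivalence.

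Second, your conservativity and essential surjectivity arguments are not actually arguments. The conservativity claim (``all derivative mapping-spectra fibres vanish, so $E\simeq 0$'') needs $Q_\Ccal$ to already detect equivalences after one loop, which is not obvious and is not what you check; and the essential surjectivity sketch appeals to ``$\Sp(\Ecal_{/X})$ for $\Ecal$ compactly generated is controlled by the cotangent complex'' — but $\CatEx$ is not presentable (it consists of small categories), so this does not apply, and even granting it the statement would just be a restatement of the proposition. What is actually needed, and what the paper does, is a concrete reduction: observe that stably surjective functors and fully-faithful functors form an orthogonal factorization system on $\CatEx$, so that the inclusion $\Cat^{\st-\surj}_{\Ccal//\Ccal} \hookrightarrow \CatEx_{\Ccal//\Ccal}$ is a coreflection whose counit becomes an equivalence after one loop; Lemma~\ref{lmm:HNP-lemma} then reduces the problem to $\Sp(\Cat^{\st-\surj}_{\Ccal//\Ccal})$. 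The remaining step is the real input: by the Lurie–Barr–Beck theorem, $\Cat^{\st-\surj}_{\Ccal//\Ccal}$ is equivalent to $\Alg(\Fun^L(\Ind\Ccal,\Ind\Ccal))_{/\id}$ (augmented colimit-preserving monads), and then one invokes \cite[Theorem~7.3.4.13]{HA}, which identifies the tangent bundle of a category of $\E_1$-algebras with bimodules, so that $\Sp$ of the slice of augmented algebras over $\id$ is $\Fun^L(\Ind\Ccal,\Ind\Ccal) = \BiMod(\Ccal)$ via $[p\colon T\Rightarrow\id]\mapsto\fib(p)$, which is exactly $Q_\Ccal$ by Remark~\ref{rmq:Q}. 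Your sketch contains none of these ingredients (the factorization system, the monadicity step, or Lurie's theorem for $\Alg$), and without them the proposal does not close.
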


\begin{proof}[Proof of Proposition~\ref{prop:lace-is-tangent-bundle} given Proposition~\ref{prop:Q-is-equiv}]
Unwinding the definitions, for every stable category $\Ccal$ the composite
\[ Q^{\Sp}_{\Ccal}\circ \Lace^{\Sp}_{\Ccal}\colon \BiMod(\Ccal) \to \BiMod(\Ccal) \]
sends a bimodule $M$ to the bimodule 
\[ (X,Y) \longmapsto \fib\left(\map_{\Lace(\Ccal,M)}((X,0),(Y,0)) \longrightarrow \map_{\Ccal}(X,Y)\right) = \Om M(X,Y) ,\]
that is, $Q^{\Sp}_{\Ccal}\circ \Lace^{\Sp}_{\Ccal}$ is naturally equivalent to the loop functor on $\BiMod(\Ccal)$. In particular, this composite is an equivalence. Since $Q^{\Sp}_{\Ccal}$ is an equivalence by Proposition~\ref{prop:Q-is-equiv} it follows that $\Lace^{\Sp}_{\Ccal}$ is an equivalence. Since $\widetilde{\Lace}$ is a right adjoint and preserve cartesian edges it must therefore exhibit $\CatL$ as the stable envelope of $(\CatEx)^{\Delta^1}$, as desired.
\end{proof}

We now come to the proof of Proposition~\ref{prop:Q-is-equiv}. It will make use of the following lemma, which is an adaptation of~\cite[Corollary 2.4.9]{HarpazNuitenPrasmaTanModCat}.

\begin{lmm}\label{lmm:HNP-lemma}
    Let $L\colon \Ccal \adj \Dcal\colon R$ be an adjunction between pointed categories with finite limits such that the unit $u$ and counit $\nu$ both become equivalences after looping finitely many times (that is, there exists an $n$ such that $\Omega^nu$ and $\Omega^n\nu$ are natural equivalences). Then, the functor induced by $R$ on the stabilisations:
    $$
        \begin{tikzcd}
            \Sp(R)\colon\Sp(\Ccal)\longrightarrow\Sp(\Dcal) 
        \end{tikzcd}
    $$
    is an equivalence.
\end{lmm}
\begin{proof}
    Recall that if $\Ccal$ is a pointed category with finite limits, then, $\Sp(\Ccal)$ is also given by following limit of categories:
    $$
        \begin{tikzcd}
            \Sp(\Ccal):=\lim(...\arrow[r, "\Omega"] & \Ccal \arrow[r, "\Omega"] & \Ccal \arrow[r, "\Omega"] & \Ccal)
        \end{tikzcd}
    $$
    Since $R$ preserves limits, it induces a map of pro-objects, and so a map at the limit $\Sp(R)\colon\Sp(\Dcal)\to\Sp(\Ccal)$. The hypothesis on $L$ and $R$ can be reformulated to say that the following diagram commutes
    $$
        \begin{tikzcd}[row sep=large, column sep=large]
            \Dcal\arrow[r, "R"]\arrow[d, "\Omega^n"'] & \Ccal\arrow[d, "\Omega^n"]\arrow[dl, "\Omega^nL"']\\
            \Dcal\arrow[r, "R"'] & \Ccal
        \end{tikzcd}
    $$
    where the triangular fillers are provided by the unit and the counit of the adjunction, and the filling of the large square is also given by the homotopy witnessing that $R$ commutes with $\Omega^n$. In consequence, $R$ induces an equivalence of pro-object, and thus again an equivalence after taking the limit. 
\end{proof}

\begin{rmq}
    Note that the above proof also shows that $\hat{L}:=\Omega^n L$ induces a map of pro-objects and so passing to the limit, a functor $\hat{L}:\Sp(\Ccal)\to\Sp(\Dcal)$. In particular, the inverse of $\Sp(R)$ is given by $\Sp(L):=\Sigma^n\hat{L}$; under the equivalence $\Sp(\Ccal)\simeq\Exc_*(\SpacesFinStar, \Ccal)$ and suitable hypotheses on $\Ccal, \Dcal$, the explicit formula to compute excisive approximations in Goodwillie calculus shows that $\Sp(L)$ unravels to be
    $$
        \begin{tikzcd}
            \Exc_*(\SpacesFinStar, \Ccal)\arrow[r, "L_*"] & \Fun_*(\SpacesFinStar, \Dcal)\arrow[r, "\nexcPart{1}(-)"] & \Exc_*(\SpacesFinStar, \Dcal)
        \end{tikzcd}
    $$
    with $\nexcPart{1}(-)$ being the first Goodwillie derivative, i.e. the left adjoint to the inclusion.
\end{rmq}

\begin{proof}[Proof of Proposition~\ref{prop:Q-is-equiv}]
Given an exact functor $f\colon \Ccal \to \Dcal$ we write $\im^{\st}(f)$ for the smallest full subcategory of $\Dcal$ containing $f(\Ccal)$ and closed under finite limits and colimits, and refer to it as the \emph{stable image} of $f$. We say that $f$ is \emph{stably surjective} if $\im^{\st}(f) = \Dcal$. The collection of stably surjective functors is closed under composition, and we write $\Cat^{\st-\surj} \subseteq \CatEx$ for the wide subcategory consisting of the stably surjective functors. Observe that the collection of stably surjective functors is part of an orthogonal factorization system on $\CatEx$, whose right class is that of fully-faithful functors. In other words, in any commutative square
\[
    \begin{tikzcd}
        \Acal \ar[r]\ar[d] & \Bcal \ar[d] \\ 
        \Ccal \ar[r]\ar[ur,dotted] & \Dcal
    \end{tikzcd}
\]
in $\CatEx$ whose left vertical arrow is stably surjective and whose right vertical arrow is fully-faithful the space of dotted lifts is contractible, and in addition every morphism $\Ccal \to \Dcal$ admits a factorization $\Ccal \to \Ecal \to \Dcal$ into a stably surjective functor followed by a fully-faithful one. This factorization is then automatically essentially unique, and we see that essentially the only option is taking $\Ecal = \im^{\st}(f)$. 

For a given stable category $\Ccal$, it follows from the formalism of orthogonal factorization systems that the induced functor
\[ \Cat^{\st-\surj}_{\Ccal//\Ccal} \to \CatEx_{\Ccal//\Ccal} \]
is fully-faithful and admits a right adjoint
$\CatEx_{\Ccal//\Ccal} \to \Cat^{\st-\surj}_{\Ccal//\Ccal}$ given by
\[ [\Ccal \xrightarrow{f} \Dcal \to \Ccal] \mapsto [\Ccal \to \im^{\st}(f) \to \Ccal] .\]
Since the inclusion $\im^{\st}(i) \hookrightarrow \Dcal$ is fully-faithful, we have that for every $[\Ccal \xrightarrow{i} \Dcal \xrightarrow{r} \Ccal] \in \CatEx_{\Ccal//\Ccal}$ the induced functor 
\[ \Ccal \times_{\im^{\st}(i)} \Ccal \to \Ccal \times_{\Dcal} \Ccal \]
is an equivalence. It hence follows that the counit of the adjunction $\Cat^{\st-\surj}_{\Ccal//\Ccal} \adj \CatEx_{\Ccal//\Ccal}$ becomes an equivalence after looping one time. At the same time, the unit of this adjunction is an equivalence (since the left adjoint is the inclusion of $\Cat^{\st-\surj}_{\Ccal//\Ccal}$), and hence by Lemma~\ref{lmm:HNP-lemma} the right-adjoint induces an equivalence
\[ \Sp(\Cat^{\st-\surj}_{\Ccal//\Ccal}) \xrightarrow{\simeq} \Sp(\CatEx_{\Ccal//\Ccal}) \]
Finally, let us point out that the functor $Q_{\Ccal} \colon \CatEx_{\Ccal//\Ccal} \to \BiMod(\Ccal)$ sends arrows in $\CatEx_{\Ccal//\Ccal}$ whose middle component is fully-faithful to equivalences, and hence factors essentially uniquely as a composite
\[ \CatEx_{\Ccal//\Ccal} \to \Cat^{\st-\surj}_{\Ccal//\Ccal} \to \BiMod(\Ccal) ,\]
where the second map is just the restriction of $Q_{\Ccal}$ to $\Cat^{\st-\surj}_{\Ccal//\Ccal}$, which we will also denote by $Q_{\Ccal}$. It will hence suffice to show that the composite
\[ \Sp(\Cat^{\st-\surj}_{\Ccal//\Ccal}) \xrightarrow{\Om^{\infty}} \Cat^{\st-\surj}_{\Ccal//\Ccal} \xrightarrow{Q_{\Ccal}} \BiMod(\Ccal)\]
is an equivalence.

By the Lurie-Barr-Beck theorem \cite[Theorem 4.7.3.5]{HA},
for a given stable presentable category $\Ccal$, the operation that associates to a colimit-preserving monad $T \colon \Ccal \to \Ccal$ its category of $T$-algebras yields a fully faithful functor
\begin{align*}
    \begin{tikzcd}[ampersand replacement=\&]
        \Alg(\FunL(\Ccal,\Ccal)) \arrow[r] \& (\PrExCat{L})_{\Ccal/}
    \end{tikzcd} &&
    \begin{tikzcd}[ampersand replacement=\&]
        T \arrow[r, mapsto] \&{[\Ccal \xrightarrow{\Free_T} \Alg_T(\Ccal)]}
    \end{tikzcd}
\end{align*}
whose essential image consists of those left functors $\Ccal \to \Dcal$ among stable presentable categories whose right adjoint $\Dcal \to \Ccal$ is colimit-preserving and conservative. In other words, those whose right adjoint preserves and detects colimits. We note that if $\Ccal \xrightarrow{G} \Dcal \xrightarrow{G'} \Ecal$ are functors between cocomplete categories such that $G'$ preserves and detects colimits then $G$ preserves and detects colimits if and only if $G' \circ G$ preserves and detects colimits. The formation of $T$-algebras as above can hence be rewritten as an equivalence
\begin{align*}
    \begin{tikzcd}[ampersand replacement=\&]
        \Alg(\FunL(\Ccal,\Ccal)) \arrow[r, "\simeq"] \& (\Pr^{\mon}_{\st})_{\Ccal/}
    \end{tikzcd} &&
    \begin{tikzcd}[ampersand replacement=\&]
        T \arrow[r, mapsto] \&{[\Ccal \xrightarrow{\Free_T} \Alg_T(\Ccal)]}
    \end{tikzcd}
\end{align*}
where $\Pr^{\mon}_{\st}$ denotes the category whose objects are the stable presentable categories and whose morphisms are the left functors $\Ccal \to \Dcal$ whose right adjoints preserve and detect colimits. 

We note that if $\Ccal$ is compactly generated and $F\colon \Ccal \to \Dcal$ is a left adjoint functor whose right adjoint $G\colon \Dcal \to \Ccal$ preserves and detects colimits then $\Dcal$ is automatically also compactly generated and $F$ automatically preserves compact objects. On the other hand, if $f\colon \Ccal \to \Dcal$ is an exact functor between stable categories then $\Ind(f)\colon \Ind(\Ccal) \to \Ind(\Dcal)$ always has a colimit preserving right adjoint $\Ind(\Dcal) \to \Ind(\Ccal)$, and this right adjoint is conservative if and only if the stable image $\im^{\st}(f) \subseteq \Dcal$ in the above sense is dense in $\Dcal$. We conclude that if $\Ccal$ is a small stable category then associating to every colimit preserving monad on $\Ind(\Ccal)$ the stable image of the composite $\Ccal \to \Ind(\Ccal) \xrightarrow{\Free_T} \Alg_T(\Ind(\Ccal))$ yields an equivalence
\begin{align*}
    \begin{tikzcd}[ampersand replacement=\&]
        \Alg(\FunL(\Ind(\Ccal),\Ind(\Ccal))) \arrow[r, "\simeq"] \& (\Cat^{\st-\surj})_{\Ccal/}
    \end{tikzcd} &
    \begin{tikzcd}[ampersand replacement=\&]
        T \arrow[r, mapsto] \&{[\Ccal \to \im^{\st}_{\Free_T|_{\Ccal}} \subseteq \Alg_T(\Ind(\Ccal))]}
    \end{tikzcd}
\end{align*}
and hence an equivalence
\begin{align*}
    \begin{tikzcd}[ampersand replacement=\&]
        \Alg(\FunL(\Ind(\Ccal),\Ind(\Ccal)))_{/\id} \arrow[r, "\simeq"] \& (\Cat^{\st-\surj})_{\Ccal//\Ccal}
    \end{tikzcd} &
    \begin{tikzcd}[ampersand replacement=\&]
        T \arrow[r, mapsto] \&{[\Ccal \to \im^{\st}_{\Free_T|_{\Ccal}} \to \Ccal]}
    \end{tikzcd}
\end{align*}
By construction, the inverse of this equivalence sends $\Ccal \xrightarrow{f} \Dcal \to \Ccal$ to $\Ind(f)^{\ad} \circ \Ind(f) \Rightarrow \id$, where $\Ind(f)^{\ad}\colon \Ind(\Dcal) \to \Ind(\Ccal)$ is the right adjoint of $\Ind(f) \colon \Ind(\Ccal) \to \Ind(\Dcal)$.
Now by~\cite[Theorem 7.3.4.13]{HA} the composite
$$
    \begin{tikzcd}
        \Sp(\Alg(\Fun^L(\Ind(\Ccal),\Ind(\Ccal)))_{/\id}) \arrow[r, "\OmegaInfty"] & \Alg(\Fun^L(\Ind(\Ccal),\Ind(\Ccal)))_{/\id} \arrow[d, "{[p\colon T \Rightarrow \id] \mapsto \fib(p)}"] \\
        &\Fun^L(\Ind(\Ccal),\Ind(\Ccal))
    \end{tikzcd}
$$
is an equivalence. We hence conclude that the composed functor
$$
    \begin{tikzcd}
        \Sp(\Cat^{\st-\surj}_{\Ccal//\Ccal}) \arrow[r, "\OmegaInfty"] & \Cat^{\st-\surj}_{\Ccal//\Ccal} \arrow[r] & \Fun^L(\Ind(\Ccal),\Ind(\Ccal))
    \end{tikzcd}
$$
is an equivalence, where the second functor is given by 
\[ [\Ccal \xrightarrow{f} \Dcal \to \Ccal] \longmapsto \fib\left(\Ind(f)^{\ad} \circ \Ind(f) \Rightarrow \id\right)\] 
Under the identification $\Fun^L(\Ind(\Ccal),\Ind(\Ccal)) = \BiMod(\Ccal)$, this last functor is exactly $Q_{\Ccal}$ (see Remark~\ref{rmq:Q}), and so the proof is complete.
\end{proof}

\subsection{The symmetric monoidal structure on \texorpdfstring{$\CatL$}{CatLace}} 
\label{subsec:symmetric-monoida-tangent}

In this subsection we construct a symmetric monoidal structure on $\CatL$ and study its basic properties. To define it, let us recall a few facts about the Lurie tensor product, see~\cite[\S 4.8.1]{HA}.

Fix a class $\Kcal$ of simplicial sets (considered as ``diagram shapes''). Then the category $\Cat^{\Kcal}$ of $\Kcal$-cocomplete categories, that is, categories which admit $K$-colimits for every $K\in\Kcal$, and $\Kcal$-cocontinuous functors between them admits a tensor product, sometimes called the Lurie tensor product, where for two $\Kcal$-cocomplete categories $\Ccal,\Dcal$ their tensor product $\Ccal \otimes \Dcal$ is the universal recipient of a functor $\Ccal \times \Dcal \to \Ccal \otimes \Dcal$ which preserve $\Kcal$-indexed colimits in each variable separately.

For $\Kcal$ the collection of finite simplicial sets the symmetric monoidal structure on $\Cat^{\Kcal} =: \Cat^{\fin}$ is inherited by the full subcategory $\CatEx \subseteq \Cat^{\fin}$ of stable categories. In fact, the inclusion $\CatEx \subseteq \Cat^{\fin}$ is reflective, with left adjoint $\Cat^{\fin} \to \CatEx$ given by tensoring with the category $\Spfin$ of finite spectra. In particular, it is a smashing localisation and so $\CatEx$ inherits the Lurie tensor product in a manner that makes the localisation functor symmetric monoidal and the inclusion $\CatEx \subseteq \Cat^{\fin}$ a tensor ideal. For two stable categories $\Ccal,\Dcal$ the tensor product $\Ccal \otimes \Dcal$ is then the universal recipient of a biexact functor $\Ccal \times \Dcal \to \Ccal \otimes \Dcal$, and the unit of $\CatEx$ is $\Spfin$.

In the case where $\Kcal$ consists of all small simplicial sets and we consider large categories then Lurie shows that the associated tensor product preserves presentable categories, and so one obtains an induced symmetric monoidal structure on $\PrCat{L}$. As in the case of small categories, the inclusion $\PrExCat{L} \subseteq\PrCat{L}$ of stable presentable categories inside all presentable categories is reflective and the left adjoint $\PrCat{L}\to\PrExCat{L}$, given by $\Ccal\mapsto\Sp(\Ccal) = \mathrm{Sp}\otimes\Ccal$, is again a smashing localisation, so that $\PrExCat{L}$ inherits the Lurie tensor product in such a manner that the localisation $\PrCat{L} \to \PrExCat{L}$ is symmetric monoidal and $\PrExCat{L}$ is an ideal in $\PrCat{L}$. The unit of $\PrExCat{L}$ is $\Sp$. 

Finally, if $\Kcal \subseteq \Kcal'$ is a sequence of two classes of categories then the forgetful functor $\Cat^{\Kcal} \to \Cat^{\Kcal'}$ admits a left adjoint $\Pcal^{\Kcal}_{\Kcal'}\colon \Cat^{\Kcal} \to \Cat^{\Kcal'}$, and this left adjoint canonically refines to a symmetric monoidal functor with respect to the Lurie tensor product, see~\cite[Remark 4.8.1.8]{HA}. In the case where $\Kcal$ consists of finite simplicial sets and $\Kcal'$ of all small simplicial sets, this left adjoint coincides with the Ind-completion functor $\Ind(-)$. This functor sends small categories to presentable ones and $\Spaf$ to $\Sp$, and so fits into a square of symmetric monoidal functors
\[
    \begin{tikzcd}
        \Cat^{\fin}\ar[r, "\Ind"]\ar[d, "(-) \otimes \Spaf"'] & \PrCat{L} \ar[d,"(-) \otimes \Sp"]\\
        \CatEx \ar[r,"\Ind"] & \PrExCat{L}
    \end{tikzcd}
\]
in which the horizontal functors are given by Ind completion and the vertical ones by stabilization.

\begin{cons}\label{cons:sm-structure}
    Recall that the category $\CatL$ sits in a fibre square of the form
    \[
        \begin{tikzcd}
            \CatL \ar[rr]\ar[d] && \Ar^{\opl}(\PrExCat{L}) \ar[d] \\
            \CatEx \ar[rr, "{(\Ind,\Ind)}"] && \PrExCat{L} \times \PrExCat{L}.
        \end{tikzcd}
    \]
    Equipping $\CatEx$, $\PrExCat{L}$ and $\Ar^{\opl}(\PrExCat{L})$ with the symmetric monoidal product pointwise induced by the Lurie tensor product on $\CatEx$ and $\PrExCat{L}$ the bottom right part of this square acquires a symmetric monoidal refinement (see~\cite[Remark 4.8.1.8]{HA}), which then induces a symmetric monoidal structure on $\CatL$ such that the projection $\CatL \to \CatEx$ is symmetric monoidal. In addition, as the natural transformation of diagrams~\eqref{eq:transformation-of-diagrams} is compatible by construction with pointwise Lurie tensor products everywhere, we obtain that the functor $\L$ canonically refines to a symmetric monoidal functor
    \[ \L\colon (\CatEx)^{\otimes} \to (\CatL)^{\otimes} .\]
    Consequently, the unit of $(\CatL)^{\otimes}$ is given by $(\Spaf,\id) = \L(\Spaf)$. 
\end{cons}

\begin{rmq}
    Explicitly, the tensor product of two laced categories $(\Ccal,F)$ and $(\Dcal,G)$ is given by $(\Ccal \otimes \Dcal, F \boxtimes G)$, where $F \boxtimes G$ is the endo-functor of $\Ind(\Ccal \otimes \Dcal) = \Ind(\Ccal) \otimes \Ind(\Dcal)$ induced by $F$ and $G$. If we instead view bimodules as exact functors $F\colon \Ccal\op \otimes \Ccal \to \Sp$ and $G\colon \Dcal\op \otimes \Dcal \to \Sp$, then 
    \[ F \boxtimes G\colon (\catop{\Ccal}\otimes\catop{\Dcal})\otimes(\Ccal\otimes\Dcal) = (\catop{\Ccal}\otimes\Ccal)\otimes(\catop{\Dcal}\otimes\Dcal)\to\Sp\] 
    is induced by the biexact functor
        \[
            \begin{tikzcd}
                (\catop{\Ccal}\otimes\Ccal)\times(\catop{\Dcal}\otimes\Dcal)\arrow[r, "F\times G"]
                & \Sp\times\Sp\arrow[r, "\otimes"] & \Sp ,
            \end{tikzcd}
        \]
    where the last map is the tensor (smash) product of spectra.
\end{rmq}

By the adjunction of Proposition~\ref{prop:left-adj-to-lace}, the unit $(\Spfin, \id)$ of $\CatL$ corepresents the functor $\LaceEq$, where $\core$ denotes the core-groupoid functor $\CatEx\to \Spaces$. We can thus extract from the results of \cite{Nikolaus} another universal property for $\LaceEq$:

\begin{lmm} \label{InitialLaxMonoidalisSpLace}
    $\LaceEq$ is the initial lax-monoidal functor $\CatL\to\Spaces$. Consequently, $\SpLaceEq$ is the initial lax-monoidal functor $\CatL\to\Sp$. 
\end{lmm}
\begin{proof}
    The first part is an application of~\cite[Corollary 6.8]{Nikolaus}. The second part follows from point (4) of Corollary 6.9 of loc.\ cit.
\end{proof}

\begin{rmq}
    To avoid confusion, let us point out that for a fixed stable category $\Ccal$, the stable category $\BiMod(\Ccal) = \Fun^L(\Ind(\Ccal),\Ind(\Ccal))$ admits a (non-symmetric) monoidal structure given by composition of endo-functors. This structure is different and unrelated to the one of Construction~\ref{cons:sm-structure}.
\end{rmq}

Our next goal is to show that the symmetric monoidal structure of Construction~\ref{cons:sm-structure} is closed and give an explicit description of the internal mapping objects. 

\begin{defi}
    Fix $(\Ccal, F),(\Dcal, G)$ two laced categories and let $f,g\colon \Ccal\to\Dcal$ be two exact functors. We define the \textit{spectrum of $(F, G)$-linear natural transformations} from $f$ to $g$ by the formula:
    \[
        \Nat^F_G(f, g):=\Nat(F, (\catop{f}\otimes g)^*G),
    \]
    where the right hand side object is the spectrum given by the enrichment in $\Sp$ of the stable category $\BiMod(\Ccal)$. This is exact in both $f$ and $g$, covariant in $g$ and contravariant in $f$, hence defines a bimodule $\Nat^F_G$ on $\FunEx(\Ccal, \Dcal)$. We denote $\FunInt((\Ccal, F),(\Dcal, G))$ the associated laced category.
\end{defi}

Recall that the symmetric monoidal structure on $\CatEx$ is closed with internal mapping object $\FunEx(-,-)$. In particular, there is an exact, natural, evaluation functor $\ev\colon \Ccal\otimes\FunEx(\Ccal, \Dcal)\to\Dcal$ which enjoys the following universal property: for every stable category $\Ecal$ the composed map
\[ \Map_{\CatEx}(\Ecal,\FunEx(\Ccal,\Dcal)) \to \Map_{\CatEx}(\Ccal \otimes \Ecal,\Ccal \otimes \FunEx(\Ccal,\Dcal)) \xrightarrow{\ev_*} \Map_{\CatEx}(\Ccal \otimes \Ecal,\Dcal) \]
is an equivalence of spaces. In the presence of bimodules $F, G$ on $\Ccal$ and $\Dcal$ respectively, we claim that $\ev$ refines to a laced functor 
\[
    (\ev, \eta_{\ev})\colon (\Ccal, F)\otimes\FunInt((\Ccal, F),(\Dcal, G))\to(\Dcal, G).
\]
To obtain the structure map $\eta_{\ev}\colon F\boxtimes\Nat^F_G\Rightarrow(\catop{\ev}\times\ev)^*G$, consider the tensor-hom adjunction
    \[ L_F := F \otimes (-) \colon \Sp \adj \BiMod(\Ccal)\colon \Nat(F,-) =: R_F \]
    associated to $F$, which arises from the structure of $\BiMod(\Ccal)$ as tensored over $\Sp$. Then for every stable category $\Acal$ we have an induced adjunction
\begin{equation}\label{eq:tensor-hom-comp} 
L_F \circ (-) \colon \FunEx(\Acal,\Sp) \adj \FunEx(\Acal,\BiMod(\Ccal))\colon R_F \circ (-).
\end{equation}
    via post-composition. Let now 
\[ G' \colon \FunEx(\Ccal, \Dcal)\op\otimes\FunEx(\Ccal, \Dcal) \to \BiMod(\Ccal)\] 
be the functor associated to
$(\catop{\ev}\times\ev)^*G \in \BiMod(\Ccal \otimes \FunEx(\Ccal,\Dcal))$ via the currying equivalence
    \[ \BiMod(\Ccal \otimes \FunEx(\Ccal, \Dcal)) = \FunEx(\FunEx(\Ccal, \Dcal)\op \otimes \FunEx(\Ccal, \Dcal),\BiMod(\Ccal)).\] 
explicitly, $G'$ corresponds to the biexact functor $(f,g) \mapsto (f\op \times g)^*G \in \BiMod(\Ccal)$, and we have $R_F \circ G' = \Nat^F_G$ essentially by construction. The counit of $L_F \circ (-) \dashv R_F \circ (-)$ then provides a map of the form $L_F \circ R_F \circ G' \Rightarrow G'$, which after passing back via the currying equivalence gives the map
\[ \eta_{\ev}\colon F\boxtimes\Nat^F_G\Rightarrow(\catop{\ev}\times\ev)^*G \]
we use to define the laced enhancement of $\ev$.

\begin{prop}\label{ClosedMonoidalStructure}
    Let $(\Ccal, F)$ be a laced category. Then for every laced category $(\Ecal, H)$ the composed map
    $$
        \begin{tikzcd} \label{eq:composed-map}
            \Map_{\CatL}((\Ecal,H),\FunInt((\Ccal,F),(\Dcal,G)))\arrow[d] \\
            \Map_{\CatL}((\Ccal,F) \otimes (\Ecal,H),(\Ccal,F) \otimes \FunInt((\Ccal,F),(\Dcal,G)))\arrow[d] \\
            \Map_{\CatL}((\Ccal,F) \otimes (\Ecal,H),(\Dcal,G)) 
        \end{tikzcd}
    $$
    is an equivalence of spaces.
\end{prop}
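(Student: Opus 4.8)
The plan is to reduce the claim to a statement about homotopy fibres over $\CatEx$, and then recognise the induced map on fibres as (the inverse of) the adjunction equivalence~\eqref{eq:tensor-hom-comp}.

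First I would observe that the composed map of the proposition lies over the corresponding map of mapping spaces in $\CatEx$. Indeed, both the operation $(\Ccal,F)\otimes(-)$ and post-composition with $(\ev,\eta_{\ev})$ are compatible with the symmetric monoidal projection $\pi\colon\CatL\to\CatEx$, so applying $\pi$ to our composite yields the canonical map
\[ \Map_{\CatEx}(\Ecal,\FunEx(\Ccal,\Dcal)) \longrightarrow \Map_{\CatEx}(\Ccal\otimes\Ecal,\Dcal) \]
exhibiting the closed symmetric monoidal structure on $\CatEx$, which is an equivalence. A map of spaces lying over an equivalence is an equivalence if and only if it is a fibrewise equivalence, so it suffices to fix an exact functor $u\colon\Ecal\to\FunEx(\Ccal,\Dcal)$, with associated $\phi:=\ev\circ(\id_\Ccal\otimes u)\colon\Ccal\otimes\Ecal\to\Dcal$, and to check the map is an equivalence on the fibre over $u$ (resp.\ over $\phi$). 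Since $\pi$ is a cartesian fibration classified by $\Ccal\mapsto\BiMod(\Ccal)$ with restriction as transition functors, the fibre of the source over $u$ is $\Map_{\BiMod(\Ecal)}(H,(u\op\times u)^*\Nat^F_G)$, while the fibre of the target over $\phi$ is $\Map_{\BiMod(\Ccal\otimes\Ecal)}(F\boxtimes H,(\phi\op\times\phi)^*G)$, using that $(\Ccal,F)\otimes(\Ecal,H)=(\Ccal\otimes\Ecal,F\boxtimes H)$.

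Next I would unwind the induced map of fibres. Composing the two laced functors involved — the first, $\id_{(\Ccal,F)}\otimes(u,\theta)$, has underlying functor $\id_\Ccal\otimes u$ and structure map $\id_F\boxtimes\theta$; the second is $(\ev,\eta_{\ev})$ — the formula for composition of laced functors shows that a natural transformation $\theta\colon H\Rightarrow(u\op\times u)^*\Nat^F_G$ is sent to $\big((\id_\Ccal\otimes u)^*\eta_{\ev}\big)\circ(\id_F\boxtimes\theta)$, where we use the base-change identity $F\boxtimes(u\op\times u)^*\Nat^F_G=(\id_\Ccal\otimes u)^*(F\boxtimes\Nat^F_G)$ for $\boxtimes$. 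I would then transport this through the currying equivalence $\BiMod(\Ccal\otimes\Ecal)\simeq\FunEx(\Ecal\op\otimes\Ecal,\BiMod(\Ccal))$ and its analogue over $\FunEx(\Ccal,\Dcal)$: under these, $F\boxtimes H$ corresponds to $L_F\circ H$, the bimodule $(\phi\op\times\phi)^*G$ corresponds to $G'\circ(u\op\otimes u)$ with $G'(f,g)=(f\op\times g)^*G$, the bimodule $\Nat^F_G$ corresponds to $R_F\circ G'$ by its very definition, and — crucially — $\eta_{\ev}$ corresponds to the counit $\varepsilon\colon L_F\circ R_F\circ G'\Rightarrow G'$ of the adjunction~\eqref{eq:tensor-hom-comp}, by construction. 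As $\varepsilon$ is given by post-composition it is compatible with restriction along $u\op\otimes u$, so the map of fibres becomes $\theta\mapsto\varepsilon_{G'\circ(u\op\otimes u)}\circ(L_F\circ\theta)$, which is exactly the inverse of the natural adjunction bijection
\[ \Map_{\FunEx(\Ecal\op\otimes\Ecal,\BiMod(\Ccal))}\big(L_F\circ H,\ G'\circ(u\op\otimes u)\big)\ \simeq\ \Map_{\FunEx(\Ecal\op\otimes\Ecal,\Sp)}\big(H,\ R_F\circ G'\circ(u\op\otimes u)\big) \]
for the adjunction $L_F\circ(-)\dashv R_F\circ(-)$. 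Hence the map of fibres is an equivalence, and the proposition follows.

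The step I expect to be the main obstacle is the bookkeeping in the last paragraph: checking on the nose that the structure map of the composite laced functor is $\big((\id_\Ccal\otimes u)^*\eta_{\ev}\big)\circ(\id_F\boxtimes\theta)$, identifying $\eta_{\ev}$ with the counit $\varepsilon$ after passing through the three relevant equivalences (currying for $\BiMod(\Ccal\otimes\Ecal)$, currying for $\BiMod(\Ccal\otimes\FunEx(\Ccal,\Dcal))$, and the tensor--hom adjunction for $\BiMod(\Ccal)$ over $\Sp$), and verifying the compatibility of $\varepsilon$ with restriction along $u\op\otimes u$. None of these is conceptually deep, but making all the diagrams match requires care, in particular keeping straight where $\Nat^F_G$, $G'$, and $\eta_{\ev}$ sit inside the various curryings.
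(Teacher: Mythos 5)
Your proposal is correct and follows essentially the same route as the paper's own proof: pass to vertical fibres of the cartesian fibration $\pi\colon\CatL\to\CatEx$ over the known equivalence $\Map_{\CatEx}(\Ecal,\FunEx(\Ccal,\Dcal))\simeq\Map_{\CatEx}(\Ccal\otimes\Ecal,\Dcal)$, compute the induced map of fibres as the composite that first applies $F\boxtimes(-)$ and then post-composes with the restriction of $\eta_{\ev}$, and recognise this—after currying $\BiMod(\Ccal\otimes\Ecal)\simeq\FunEx(\Ecal\op\otimes\Ecal,\BiMod(\Ccal))$—as the adjunction equivalence for $L_F\circ(-)\dashv R_F\circ(-)$. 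Your bookkeeping (in particular the base-change identity for $\boxtimes$ and the identification of $\eta_{\ev}$ with the counit $\varepsilon$ after currying) matches the argument the paper carries out, just with a different naming convention for the intermediate functor $G'$ (you keep the global $G'$ on $\FunEx(\Ccal,\Dcal)\op\otimes\FunEx(\Ccal,\Dcal)$ and restrict it, while the paper overloads the symbol for its restriction).
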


\begin{cor}\label{cor:internal}
The symmetric monoidal structure on $\CatL$ is closed with internal mapping objects given by $\FunInt(-,-)$. In particular, the association $((\Ccal,F),(\Dcal,G)) \mapsto \FunInt((\Ccal,F),(\Dcal,G))$ canonically upgrades to a functor $\CatL \times \CatL \to \CatL$.
\end{cor}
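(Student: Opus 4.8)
\textbf{Proof proposal for Corollary~\ref{cor:internal}.}
The corollary is a formal consequence of Proposition~\ref{ClosedMonoidalStructure}, so the plan is to package that proposition into the language of closed monoidal structures and then assemble the pointwise internal mapping objects into a bifunctor. First I would observe that, for a fixed laced category $(\Ccal,F)$, the composite displayed in Proposition~\ref{ClosedMonoidalStructure} is natural not only in the target $(\Dcal,G)$ but also in the source $(\Ecal,H)$: its first leg is induced by tensoring with $(\Ccal,F)$ (using the canonical monoidal structure of $(\Ccal,F)\otimes(-)$), and its second leg by post-composition with the fixed laced functor $(\ev,\eta_{\ev})$ constructed before the proposition, and both operations are manifestly natural in $(\Ecal,H)$. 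Hence Proposition~\ref{ClosedMonoidalStructure} says precisely that for every $(\Dcal,G)$ the presheaf
$(\Ecal,H) \mapsto \Map_{\CatL}((\Ccal,F)\otimes(\Ecal,H),(\Dcal,G))$
on $\CatL$ is corepresented by $\FunInt((\Ccal,F),(\Dcal,G))$. By the corepresentability criterion for the existence of adjoints (cf.\ \cite[\S 5.2.4]{HTT}), the functor $(\Ccal,F)\otimes(-)\colon \CatL \to \CatL$ therefore admits a right adjoint, whose value on $(\Dcal,G)$ is $\FunInt((\Ccal,F),(\Dcal,G))$. Since $(\Ccal,F)$ was arbitrary, the symmetric monoidal structure on $\CatL$ constructed in \S\ref{subsec:symmetric-monoida-tangent} is closed, which is the first assertion.

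For the second assertion I would promote these fibrewise right adjoints to a single functor by the standard calculus of adjunctions in $\infty$-categories. The monoidal structure supplies a functor $\CatL \to \Fun(\CatL,\CatL)$ sending $(\Ccal,F)$ to $(\Ccal,F)\otimes(-)$, and by the previous paragraph it factors through the full subcategory of $\Fun(\CatL,\CatL)$ spanned by the functors admitting a right adjoint. Passing to right adjoints defines an equivalence between (the opposite of) this subcategory and the full subcategory of $\Fun(\CatL,\CatL)$ spanned by the functors admitting a left adjoint (see, e.g., \cite[\S 7.3.2]{HA} together with the functoriality of adjoint data as in \cite{HaugsengHebestreitLinskensNuiten, HaugsengLax}); composing, one gets a functor $\catop{\CatL} \to \Fun(\CatL,\CatL)$, which uncurries to the desired bifunctor, contravariant in the first variable,
\[ \FunInt(-,-)\colon \catop{\CatL}\times\CatL \longrightarrow \CatL . \]
That its value on $((\Ccal,F),(\Dcal,G))$ agrees with the laced category $\FunInt((\Ccal,F),(\Dcal,G))$ introduced before Proposition~\ref{ClosedMonoidalStructure} is exactly the identification of the right adjoint obtained above, together with the remark that, for a fixed exact $f$, the bimodule $\Nat^F_G(f,-)$ is the internal mapping spectrum in $\BiMod(\Ccal)$ and hence visibly natural in $G$ and $g$.

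I do not expect a genuine obstacle here: all the real work is carried out in Proposition~\ref{ClosedMonoidalStructure}, and the only points that deserve care are (i) the naturality of the comparison map in the source variable $(\Ecal,H)$, which is read off from its construction, and (ii) the by-now-routine fact that a compatible family of pointwise right adjoints organizes into a functor, for which one invokes the $\infty$-categorical theory of adjunctions rather than reproving it. Size issues do not intervene, since the corepresentability criterion produces the adjoint object directly without any presentability hypothesis.
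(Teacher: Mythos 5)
Your proposal is correct and matches the route the paper implicitly takes: the paper states the corollary immediately after Proposition~\ref{ClosedMonoidalStructure} without further argument, treating it as the standard deduction that pointwise corepresentability yields a right adjoint to $(\Ccal,F)\otimes(-)$, and that these adjoints assemble into a bifunctor by the general functoriality of adjoint data. The one point worth flagging is a small discrepancy in variance: you correctly land on a bifunctor $\catop{\CatL}\times\CatL\to\CatL$, contravariant in the first slot, whereas the corollary as printed writes $\CatL\times\CatL\to\CatL$; your version is the precise one, and the paper's phrasing should be read with the contravariance understood.
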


Since $\L\colon \CatEx \to \CatL$ is symmetric monoidal we also conclude
\begin{cor}
    The category $\CatL$ is tensored, cotensored and enriched over $\CatEx$. The tensor and cotensor structures are given by
    \[ \Ccal \otimes (\Dcal,G) = (\Ccal,\id) \otimes (\Dcal,G) \quad\text{and}\quad (\Dcal,G)^{\Ccal} = \FunInt((\Ccal,\id),(\Dcal,G)),\]
    while the enrichment is given by
    \[ (\Ccal,F),(\Dcal,G) \mapsto \Lace\FunInt((\Ccal,F),(\Dcal,G))\]
\end{cor}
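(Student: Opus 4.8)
The plan is to deduce the statement entirely formally from Corollary~\ref{cor:internal} and the symmetric monoidal adjunction $\L\dashv\Lace$. Recall from Construction~\ref{cons:sm-structure} that $\L\colon\CatEx\to\CatL$ is symmetric monoidal and sends $\Ccal$ to $(\Ccal,\id)$; being left adjoint to $\Lace$ by Proposition~\ref{prop:left-adj-to-lace}, the functor $\Lace$ is automatically lax symmetric monoidal. Since $\L$ is symmetric monoidal it makes $\CatL$ left-tensored over $\CatEx$ via $\Ccal\otimes(\Dcal,G):=\L(\Ccal)\otimes_{\CatL}(\Dcal,G)=(\Ccal,\id)\otimes(\Dcal,G)$, which is the asserted tensoring. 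By Corollary~\ref{cor:internal} the monoidal product of $\CatL$ is closed, so for each $\Ccal\in\CatEx$ the functor $(\Ccal,\id)\otimes_{\CatL}-$ admits a right adjoint, namely $\FunInt((\Ccal,\id),-)$; thus the $\CatEx$-action on $\CatL$ is closed and the cotensor is $(\Dcal,G)^{\Ccal}=\FunInt((\Ccal,\id),(\Dcal,G))$, as claimed.

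For the enrichment, note that Corollary~\ref{cor:internal} makes $\CatL$ a category enriched in itself, with mapping objects $\FunInt(-,-)$. Transferring this self-enrichment along the lax symmetric monoidal functor $\Lace\colon\CatL\to\CatEx$ --- using that a lax symmetric monoidal functor $\mathcal{W}\to\mathcal{V}$ carries $\mathcal{W}$-enriched categories to $\mathcal{V}$-enriched ones via change of enriching category --- yields a $\CatEx$-enrichment of $\CatL$ with mapping objects $\Lace\,\FunInt((\Ccal,F),(\Dcal,G))$. That this enrichment is the one compatible with the tensoring above follows by comparing universal properties: for $X,Y\in\CatL$ and $\Ccal\in\CatEx$,
\[
\Map_{\CatL}(\Ccal\otimes X,Y)\simeq\Map_{\CatL}(\L(\Ccal),\FunInt(X,Y))\simeq\Map_{\CatEx}(\Ccal,\Lace\,\FunInt(X,Y)),
\]
the first equivalence by Proposition~\ref{ClosedMonoidalStructure} (equivalently Corollary~\ref{cor:internal}) and the second by the adjunction $\L\dashv\Lace$; hence $\Lace\,\FunInt(X,Y)$ corepresents $\Ccal\mapsto\Map_{\CatL}(\Ccal\otimes X,Y)$, exhibiting $\CatL$ as tensored, cotensored and enriched over $\CatEx$ with the stated formulas.

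The only non-formal ingredient is the $\infty$-categorical bookkeeping: one must invoke the standard facts that a closed left-tensored module over a closed symmetric monoidal $\infty$-category canonically refines to an enriched $\infty$-category with the expected mapping objects, and that lax symmetric monoidal functors induce change-of-enrichment functors, and then check that these two descriptions of the enrichment agree and are coherently compatible with the tensor and cotensor. Beyond citing the appropriate foundational results on enriched $\infty$-categories, no new computation is needed past Corollary~\ref{cor:internal}.
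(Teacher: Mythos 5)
Your argument is correct and is essentially the paper's own: the paper simply observes that $\L$ is symmetric monoidal and concludes, and the content of that conclusion is precisely the formal deduction you spell out — closed $\CatEx$-action via $\L$, cotensors from closedness of $\CatL$ (Corollary~\ref{cor:internal}), enrichment by change of base along the lax monoidal $\Lace$, and compatibility from the chain of mapping-space equivalences through the adjunction $\L\dashv\Lace$. You have merely expanded the terse "Since $\L$ is symmetric monoidal we also conclude" into its constituent standard steps.
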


\begin{rmq}
    Since $(\Spfin,\id)$ is the unit of $\CatL$ we have
    \begin{align*} 
        \Map_{\CatL}((\Ccal,F),(\Dcal,G)) & = \Map_{\CatL}((\Spfin,\id),\FunInt((\Ccal,F),(\Dcal,G))) \\
                                          & = \LaceEq\FunInt((\Ccal,F),(\Dcal,G))
    \end{align*}
\end{rmq}

\begin{proof}[Proof of Proposition \ref{ClosedMonoidalStructure}]
    Since $\fgt\colon \CatL\to\CatEx$ is a bicartesian fibration the composed map of~\eqref{eq:composed-map} fits in the top row of a commutative square of spaces 
    \[
        \begin{tikzcd}
            \Map_{\CatL}((\Ecal, H),\FunInt((\Ccal, F), (\Dcal, G)))\arrow[r]\arrow[d] & \Map_{\CatL}((\Ccal, F)\otimes(\Ecal, H), (\Dcal, G))\arrow[d] \\
            \Map_{\CatEx}(\Ecal, \FunEx(\Ccal, \Dcal))\arrow[r, "\simeq"] & \Map_{\CatEx}(\Ccal\otimes\Ecal, \Dcal)
        \end{tikzcd}
    \]
    where the bottom horizontal arrow is an equivalence by the universal property of the underlying exact functor $\ev$.
    Hence, it remains to show that the induced map on vertical fibres is an equivalence.  
    
    Fix an exact functor $\phi\colon \Ecal\to\FunEx(\Ccal, \Dcal)$ and write $\overline{\phi}$ for the associate composite 
    \[\overline{\phi}\colon\Ccal \otimes \Ecal \xrightarrow{\id \otimes \phi} \Ccal \otimes \FunEx(\Ccal,\Dcal) \xrightarrow{\ev} \Dcal,\]  
    so that $\overline{\phi}$ is the image of $\phi$ in the right bottom corner.
    Then the induced map from the vertical fibre over $\phi$ to the vertical fibre over $\overline{\phi}$ is given by the composed map
 	\begin{align*}
 	\Nat(H, (\catop{\phi}\times\phi)^*\Nat^F_G) \to& \Nat(F \boxtimes H, F \boxtimes (\catop{\phi}\times\phi)^*\Nat^F_G) \\
 	\to& \Nat(F\boxtimes H, (\catop{\overline{\phi}}\times\overline{\phi})^*G)
 	\end{align*}
    where the last map is induced by the natural transformation $\eta_{\ev}\colon F\boxtimes\Nat^F_G\Rightarrow(\catop{\ev}\times\ev)^*G$ after restricting along $\id \otimes \phi\colon \Ccal \otimes \Ecal \to \Ccal \otimes \FunEx(\Ccal,\Dcal)$, 
    where we note that $\overline{\phi} = \ev \circ (\id \otimes \phi)$. 
    To see that this map is an equivalence, let us translate it through the currying equivalence 
    \[ \BiMod(\Ccal \otimes \Ecal) = \FunEx(\Ecal\op \otimes \Ecal,\BiMod(\Ccal)).\] 
    Write $G'\colon \Ecal\op \otimes \Ecal \to \BiMod(\Ccal)$ for the exact functor corresponding to $(\catop{\overline{\phi}}\times\overline{\phi})^*G$. Explicitly, $G'$ corresponds to the biexact functor $\Ecal\op \times \Ecal \ni (X,Y) \mapsto (\phi(X) \times \phi(Y))^*G \in \BiMod(\Ccal)$. 
    Let
    \[ L_F \circ (-) \colon \FunEx(\Ecal\op\otimes\Ecal,\Sp) \adj \FunEx(\Ecal\op \otimes \Ecal,\BiMod(\Ccal))\colon R_F \circ (-)\]
    be the adjunction determined by $F$ as in~\eqref{eq:tensor-hom-comp} (for $\Acal = \Ecal\op \otimes \Ecal$). Then the composite
    \[ \Ecal\op \otimes \Ecal \xrightarrow{G'} \BiMod(\Ccal) \xrightarrow{R_F \circ (-)} \Sp \] 
    is the $\Ecal$-bimodule $(\phi\op \times \phi)^*\Nat^F_G$ by construction, and the above composed map can be rewritten as
 	\begin{align*}
 	\Map_{\FunEx(\Ecal\op \otimes \Ecal,\Sp)}(H, R_F \circ G') \to& \Map_{\FunEx(\Ecal\op \times \Ecal,\BiMod(\Ccal))}(L_F \circ H, L_F \circ R_F \circ G') \\
 	\to& \Map_{\FunEx(\Ecal\op \times \Ecal,\BiMod(\Ccal))}(L_F \circ H, G')
 	\end{align*}
    where the second map is given by the counit of $L_F \circ (-) \adj R_F \circ (-)$. We conclude that this composite is indeed an equivalence by adjunction. 
\end{proof}

\subsection{Tensors and cotensors by unstable laced categories}

In this subsection we define an unstable version $\Catb$ of $\CatL$ and show that $\CatL$ is tensored and cotensored over $\Catb$. These operations will be essential in studying trace-like functors on $\CatL$ in \S\ref{sec:trace-like}.

Let $\Pcal\colon \Cat\to\PrCat{L}$ be the functor which associates to a small category $\Ical$ the presentable category $\Pcal(\Ical)$ of space-valued presheaves on $\Ical$. The Yoneda embedding $j\colon \Ical \to \Pcal(\Ical)$ exhibits $\Pcal(\Ical)$ as the universal cocomplete recipient of a functor from $\Ical$, that is, for every cocomplete category $\Ecal$, colimit-preserving functors $\Pcal(\Ical) \to \Ecal$ correspond, via restriction along $j$, to functors $\Ical \to \Ecal$. In addition, by~\cite[Remark 4.8.1.8]{HA}, $\Pcal$ refines to a symmetric monoidal functor, where $\Cat$ carries the cartesian product and $\PrCat{L}$ the Lurie tensor product.

\begin{cons}
    We define $(\Catb)^{\otimes}$ to be the symmetric monoidal category sitting in a fibre square
    \[
        \begin{tikzcd}
            (\Catb)^{\otimes} \ar[r]\ar[d] & \Ar^{\opl}(\PrCat{L})^{\otimes} \ar[d,"{(s,t)}"] \\
            \Cat^{\times} \ar[r, "{(\Pcal,\Pcal)}"] & (\PrCat{L})^{\otimes} \times (\PrCat{L})^{\otimes}
        \end{tikzcd}
    \]
    where the bottom left corner is endowed with the cartesian monoidal structure and the corners on the right with the pointwise Lurie symmetric monoidal structure.
    
    Explicitly, an object of $\Catb$ can be described as a pair $(\Ical,B)$ where $\Ical$ is a small category and $B\colon \Pcal(\Ical) \to \Pcal(\Ical)$ is a colimit preserving functor. Equivalently, $B$ can be described as the data of a functor $\Ical\op \times \Ical \to \Spaces$. We refer to such pairs as \emph{unstable laced categories}. 
    
    As in the case of laced categories, it follows form \cite[Theorem 7.21]{HaugsengHebestreitLinskensNuiten} that the right vertical arrow in the above square is the orthofibration classified by the bifunctor $\Fun^L(-,-)$, and since $\Pcal$ sends every arrow in $\Cat$ to a strongly continuous functor (that is, to an internal left adjoint in $\PrCat{L}$), the left vertical arrow is the cartesian and cocartesian fibration classified by $\Ical \mapsto \Fun(\Ical\op \times \Ical,\Spaces)$. 
\end{cons}

\begin{rmq}
    Explicitly, if $(\Ical,B)$ and $(\Jcal,C)$ are two unstable laced categories then their tensor product is given by $(\Ical \times \Jcal, B \boxtimes C)$, where $B \boxtimes C\colon (\Ical\op \times \Jcal\op) \times (\Ical \times \Jcal) \to \Spaces$ is given by $(B \boxtimes C)(i,j,i',j') = B(i,i') \times C(j,j')$. In fact, this is the cartesian symmetric monoidal structure on $\Catb$. Though we could have more easily defined this structure this way, the path we took was chosen in order to facilitate the construction of a symmetric monoidal functor $\Catb \to \CatL$ below. 
\end{rmq}

Our next step is to construct a symmetric monoidal adjunction $\Catb \adj \CatL$, that is an adjunction where the left adjoint carries a symmetric monoidal structure (and the right adjoint an induced lax symmetric monoidal structure).

For this, recall that the (non-full) inclusion $U\colon \CatEx \to \Cat$ admits a left adjoint $\St\colon \Cat \to \CatEx$ defined as follows. Given a category $\Ical$, the stable category $\St(\Ical)$ is given by the smallest stable subcategory of $\Fun(\Ical\op,\Sp)$ containing the presheaves $\Sig^{\infty}_+\Map_{\Ical}(-,i)$ for every $i \in \Ical$. Since the presheaves $\Sig^{\infty}_+\Map_{\Ical}(-,i)$ constitute a family of compact generators for $\Fun(\Ical\op,\Sp)$ we have in particular that 
\[ 
    \Ind(\St(\Ical)) = \Fun(\Ical\op,\Sp) = \Sp(\Pcal(\Ical)) = \Pcal(\Ical) \otimes \Sp 
\]
is the stabilization of $\Pcal(\Ical)$, and we have a commutative square
\[
    \begin{tikzcd}
        \Cat \ar[r, "\Pcal"]\ar[d,"\St"'] & \PrCat{L} \ar[d, "(-) \otimes \Sp"] \\
        \CatEx \ar[r,"\Ind"] & \PrExCat{L}
    \end{tikzcd}
\]
In addition, the functor $\St$ refines to a symmetric monoidal functor $\Cat^{\times} \to (\CatEx)^{\otimes}$, which is essentially because we can write it as a composite of symmetric monoidal left adjoints 
\[ \Cat^{\times} \xrightarrow{\Pcal_{\fin}} \Cat^{\fin} \xrightarrow{(-) \otimes \Spfin} \CatEx \]
where $\Pcal_{\fin}$ is the free cocompletion functor, which is symmetric monoidal by~\cite[Remark 4.8.1.8]{HA}.
The natural transformation of diagrams of symmetric monoidal categories
\[
    \begin{tikzcd}[row sep = 10pt]
        & \Ar^{\opl}(\PrCat{L})^{\otimes} \ar[dd,"{(s,t)}"]  &&&& \Ar^{\opl}(\PrExCat{L})^{\otimes} \ar[dd] \\
        && \Rightarrow &&& \\
        \Cat^{\times} \ar[r, "{(\Pcal,\Pcal)}"] & (\PrCat{L})^{\otimes} \times (\PrCat{L})^{\otimes} & & (\CatEx)^{\otimes} \ar[rr, "{(\Ind,\Ind)}"] && (\PrExCat{L})^{\otimes} \times (\PrExCat{L})^{\otimes}
    \end{tikzcd}
\]
whose bottom left component is $\St$ and the other two components are induced from the stabilization functor $\Sp \otimes (-)\colon \PrCat{L} \to \PrExCat{L}$ now induces a symmetric monoidal functor
\[ 
    \widetilde{\St} \colon (\Catb)^{\otimes} \to (\CatL)^{\otimes}
\] 
Explicitly, this functor sends $(\Ical,B)$ to $(\St(\Ical),\St(B))$, where 
$$
    \St(B):\St(\Ical)\op \otimes \St(\Ical) = \St(\Ical\op \times \Ical) \longrightarrow \Sp
$$
is the exact functor corresponding to the functor $\SigmaInftyP B\colon \Ical\op \times \Ical \to \Sp$ via the adjunction $\St \dashv U$. In particular, since $\Catb \to \Cat$ and $\CatL \to \CatEx$ are both cartesian fibrations, the adjunctions of $\St \dashv U$ and $\SigmaInftyP \dashv \OmegaInfty$ induce, for an unstable laced category $(\Ical,B)$ and a stable laced category $(\Ccal,M)$, a natural equivalence:
\[ 
    \Map_{\CatL}((\St(\Ical),\St(B)),(\Ccal,M)) \simeq \Map_{\Catb}((\Ical,B),(U(\Ccal),\Omega^{\infty}M)) 
\]
Hence $\widetilde{\St}$ admits a right adjoint $\widetilde{U}\colon \CatL \to \Catb$ given by $(\Ccal,M) \mapsto (U(\Ccal),\Omega^{\infty}M)$ and in particular, $\widetilde{U}$ inherits a lax symmetric monoidal structure and $\widetilde{\St} \dashv \widetilde{U}$ is a symmetric monoidal adjunction.

\begin{cor}
    The category $\CatL$ is tensored, cotensored and enriched over $\Catb$, where the tensor and cotensor operations are given by
    \[ 
        (\Ccal,M)_{(\Ical,B)} := (\Ccal,M) \otimes \widetilde{\St}(\Ical,B) \quad\text{and}\quad (\Ccal,M)^{(\Ical,B)} := \FunInt(\widetilde{\St}(\Ical,B),(\Ccal,M))
    \]
    and the enrichment is given by
    \[ 
        ((\Ccal,M),(\Dcal,N)) \mapsto \widetilde{U}\left(\FunInt((\Ccal,M),(\Dcal,N))\right)
    \]
\end{cor}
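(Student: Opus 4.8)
The plan is to realize $\CatL$ as a \emph{closed} module over $\Catb$ and then to invoke the standard correspondence between closed modules over a monoidal $\infty$-category and $\infty$-categories that are tensored, cotensored and enriched over it. Since $\widetilde{\St}\colon (\Catb)^{\otimes}\to(\CatL)^{\otimes}$ is symmetric monoidal, it exhibits $\CatL$ as an algebra, and in particular as a left module (i.e.\ as left-tensored), over $\Catb$, with action bifunctor
\[
    (\Ical,B)\otimes(\Ccal,M):=\widetilde{\St}(\Ical,B)\otimes_{\CatL}(\Ccal,M);
\]
this is the asserted tensor, the coherences coming for free from symmetric monoidality of $\widetilde{\St}$.

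For the cotensor I would observe that for each fixed $(\Ical,B)$ the action functor $\widetilde{\St}(\Ical,B)\otimes_{\CatL}(-)\colon\CatL\to\CatL$ admits a right adjoint by Corollary~\ref{cor:internal}, namely $\FunInt(\widetilde{\St}(\Ical,B),-)$; this already exhibits the module as closed and supplies $(\Ccal,M)^{(\Ical,B)}:=\FunInt(\widetilde{\St}(\Ical,B),(\Ccal,M))$. It then remains to produce the enrichment, and here I would directly exhibit the morphism objects. For laced categories $(\Ccal,M),(\Dcal,N)$, the presheaf on $\Catb$ sending $(\Ical,B)$ to $\Map_{\CatL}\bigl((\Ical,B)\otimes(\Ccal,M),(\Dcal,N)\bigr)$ is representable: combining the closed structure of Corollary~\ref{cor:internal} with the adjunction $\widetilde{\St}\dashv\widetilde{U}$ gives natural equivalences
\begin{align*}
    \Map_{\CatL}\bigl(\widetilde{\St}(\Ical,B)\otimes_{\CatL}(\Ccal,M),(\Dcal,N)\bigr)
        &\simeq \Map_{\CatL}\bigl(\widetilde{\St}(\Ical,B),\FunInt((\Ccal,M),(\Dcal,N))\bigr) \\
        &\simeq \Map_{\Catb}\bigl((\Ical,B),\widetilde{U}(\FunInt((\Ccal,M),(\Dcal,N)))\bigr),
\end{align*}
so the presheaf is represented by $\widetilde{U}(\FunInt((\Ccal,M),(\Dcal,N)))$. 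Feeding these data back into the general machinery --- the equivalence between closed module structures and enriched, tensored, cotensored structures over a symmetric monoidal base; see~\cite{HA} for the monoidal underpinnings and the literature on enriched $\infty$-categories for the full statement --- upgrades $\CatL$ to a $\Catb$-enriched $\infty$-category with the asserted tensor, cotensor and morphism objects, and the last formula of the corollary is exactly the computation displayed above.

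The main obstacle I anticipate is not any single computation but the coherent packaging: one must ensure that the separately identified tensor, cotensor and hom-objects genuinely assemble into a single enriched $\infty$-category, and that this works for the large, non-presentable categories $\Catb$ and $\CatL$. The points to verify are that the relevant colimits and adjoints exist --- which they do by Proposition~\ref{prop:limits-and-colimits} and Corollary~\ref{cor:internal} --- and that it is precisely the \emph{representability} of the morphism presheaves above, rather than presentability of $\Catb$, that the enrichment machinery consumes; modulo this, everything is formal.
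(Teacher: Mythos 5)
Your proposal is correct and follows essentially the same route the paper takes: the symmetric monoidal functor $\widetilde{\St}$ makes $\CatL$ a $\Catb$-module, the closed structure from Corollary~\ref{cor:internal} supplies the cotensors, and the adjunction $\widetilde{\St}\dashv\widetilde{U}$ combined with closedness represents the morphism presheaves by $\widetilde{U}(\FunInt(-,-))$. The paper treats this as an immediate consequence of the preceding symmetric monoidal adjunction and offers no further argument, so your fuller unpacking — including the observation that it is representability of the hom-presheaves rather than presentability of $\Catb$ that the enrichment machinery requires — is aligned with and slightly more explicit than what the paper records.
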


Let us now take a closer look at the cotensor construction. Given a laced category $(\Ccal,M)$ and an unstable laced category $(\Ical,B)$, the cotensor $(\Ccal,M)^{(\Ical,B)}$ is given by the internal mapping object $\FunInt(\widetilde{\St}(\Ical,B),(\Ccal,M)) = \FunInt((\St(\Ical),\St(B)),(\Ccal,M))$. As constructed explicitly in \S\ref{subsec:symmetric-monoida-tangent}, this has as underlying stable category the functor category
\[ 
    \FunEx(\St(\Ical),\Ccal) = \Fun(\Ical,\Ccal)
\]
and given two diagrams $\phi,\psi\colon \Ical \to \Ccal$ with associated exact functors $\widetilde{\phi},\widetilde{\psi}\colon \St(\Ical) \to \Ccal$, the associated bimodule $\Nat^{\St(B)}_{M}$ on $\FunInt((\St(\Ical),\St(B)),(\Ccal,M))$ is given by the equivalent expressions
\begin{align*}
    \Nat^{\St(B)}_{M}(\widetilde{\phi},\widetilde{\psi}) &:= \Nat(\St(B),(\widetilde{\phi}\op \times \widetilde{\psi})^*M) \\
    &\simeq \Nat(\Sigma^{\infty}_+B, (\phi\op \times \psi)^*M) \\
    &\simeq \Nat(B,(\phi\op \times \psi)^*\OmegaInfty M)
\end{align*}

\begin{ex} \label{CotensorPointEx}
   Suppose $B = \ast\colon \catop{\Ical}\times \Ical\to\Spaces$ is the terminal functor.    Given a laced category $(\Ccal, M)$ we have
    \[
        \Nat(*, (\catop{\phi}\times \psi)^*\Omega^{\infty} M)\simeq \lim_{(i,j)\in \catop{\Ical}\times \Ical}M(\phi(i), \psi(j))
    \]
    In particular, in the case where $\Ical=[n]$ is the linearly order poset with $n$ elements, we get that
    \[
        M^{([n], *)}(\phi, \psi)\simeq \Omega^{\infty}M(\phi(n), \psi(0))
    \]
    Viewing a diagram $\phi\colon [n]\to\Ccal$ as the data of a sequence $X_0 \to ... \to X_n$, the above bimodule is simply $M(X_n, Y_0)$ for two sequences $(X_i)$ and $(Y_i)$.
\end{ex}

\begin{ex} \label{CotensorMapEx}
    Suppose $\Ical=\Delta^1$ and take $B:=\Map_{\Delta^1}$ to be the mapping space bifunctor. Let $\phi\colon \Delta^1 \to \Ccal$ and $\psi\colon \Delta^1 \to \Ccal$ be two arrows in $\Ccal$, corresponding to morphisms $f\colon X \to Y$ and $g\colon X'\to Y'$ in $\Ccal$.
	Given a $\Ccal$-bimodule $M$ we then have an equivalence
    \[
        \Nat(\Map_{\Delta^1}, (\catop{\phi}\times \psi)^* M)\simeq M(X, X')\times_{M(X, Y')}M(Y, Y')
    \]
    where the maps of the pullback are induced by $f$ and $g$ under $M$. 

    More generally, if $\Ical=[n]$ and $B=\Map_{[n]}$ its mapping space, then we have
    \[
        \Nat(\Map_{[n]}, (\catop{\phi}\times \psi)^* M)\simeq \lim_{(i\leq j)\in\catop{\TwAr([n])}} M(\phi(i), \psi(j)),
    \]
    which recovers the above when $n=1$.
\end{ex}


\section{The K-theory of laced categories}
\subsection{Laced semi-orthogonal decompositions}
\label{lace-additivity}%

Recall that a \emph{semi-orthogonal decomposition} of a stable category $\Ccal$ is the datum of a pair of full subcategories $\Acal \subseteq \Ccal \supseteq \Bcal$ satisfying the following two conditions:
\begin{itemize}[align=left]
    \item[(Decomposition)] Every $X\in\Ccal$ fits in an exact sequence $A\to X\to B$ with $A\in\Acal$ and $B\in\Bcal$.
    \item[(Semi-orthogonality)] For every $A\in\Acal$ and $B\in\Bcal$, $\map_{\Ccal}(A, B)\simeq 0$
\end{itemize}
The semi-orthogonality condition ensures that the decomposition of the first condition is unique, and thus functorial. Sending $X$ to the first term in its decomposition sequence then gives a right adjoint to the inclusion of $\Acal$, and sending it to the last term gives a left adjoint to the inclusion of $\Bcal$. 

The data of a semi-orthogonal decomposition is hence equivalent to the a-priori stronger notion a right-split Verdier sequence, i.e., a null-composite sequence
\[
    \begin{tikzcd}
        \Acal\arrow[r, "i"] & \ar[l,bend left, "\perp"']\Ccal\arrow[r, "p"] & \Bcal \ar[l, bend left, "\perp"']
    \end{tikzcd}
\]
such that $i$ is fully-faithful and has a right adjoint and $p$ has a fully-faithful right adjoint, as studied in~\cite[Appendix A.2]{HermKII}. Passing to right adjoints gives a left-split Verdier sequence encoding the same data; the formalism of semi-orthogonal decompositions can then be considered as a symmetric manner to express one-side-split Verdier sequences. 

Note however that the condition of being a semi-orthogonal decomposition remains asymmetric in the couple $(\Acal, \Bcal)$.

\begin{rmq}\label{rmq:recollement}
    A semi-orthogonal decomposition in which the inclusion of $\Acal$ admits both adjoints is also called a stable recollement in~\cite{HA}. It is equivalent to the data of a split Verdier sequence, that is, a Verdier sequence in which the inclusion and the projection admit both adjoints, see~\cite[Appendix A.2]{HermKII}.
\end{rmq}

We now give a definition in the laced setting. By a laced full subcategory $(\Dcal,N) \subseteq (\Ccal,M)$ we will mean a full subcategory $\Dcal \subseteq \Ccal$ equipped with an identification $N = M|_{\Dcal\op \times \Dcal}$. Equivalently, this is the data of a laced functor $(\Dcal,N) \to (\Ccal,M)$ whose underlying exact functor is fully-faithful and such that the structure map $N \to M|_{\Dcal\op \times \Dcal}$ is an equivalence.

\begin{defi}
    Let $(\Ccal, M)$ be a laced category. A laced semi-orthogonal decomposition of $(\Ccal, M)$ is a pair of laced full subcategories $(\Acal, N) \subseteq (\Ccal,M) \supseteq (\Bcal, P)$ satisfying the following conditions: 
    \begin{itemize}[align=left]
        \item[(Underlying)] The underlying pair of stable subcategories $\Acal \subseteq \Ccal \supseteq \Bcal$ is a semi-orthogonal decomposition of $\Ccal$.
        \item[(Laced semi-orthogonality)] For every $A\in\Acal$ and $B\in\Bcal$ we have $M(A, B)\simeq 0$.
    \end{itemize}
\end{defi}

\begin{lmm}\label{lmm:laced-adjoint}
    Let $(\Acal, N) \subseteq (\Ccal,M) \supseteq (\Bcal, P)$ be a laced semi-orthogonal decomposition. Then, the left adjoint $p$ of the inclusion $j\colon \Bcal\to\Ccal$ canonically upgrades to a laced functor $(p, \eta)\colon (\Ccal, M)\to(\Bcal, P)$ in such a way that
    \[
        \begin{tikzcd}
            (\Acal, N)\arrow[r, "{(i, \alpha)}"] & (\Ccal, M)\arrow[r, "{(p, \eta)}"] & (\Bcal, N)
        \end{tikzcd}
    \]
    becomes a fibre-cofibre sequence in $\CatL$.
\end{lmm}
\begin{proof}
To construct a natural transformation $\eta\colon M \Rightarrow P \circ (p\op \times p)$ it suffices to construct its mate
$\hat{\eta}\colon (p\op \times p)_!M \Rightarrow P$. 
We can factor $p\op \times p$ as follows:
$$
    \begin{tikzcd}[column sep=large]
         p\op \times p:\Ccal\op \times \Ccal \arrow[r, "{\id \times p}"] & \Ccal\op \times \Bcal\arrow[r, "{\catop{p} \times \id}"] & \Bcal\op \times \Bcal
    \end{tikzcd}
$$
Left Kan extension is functorial, as the left adjoint of precomposition, hence in two steps, by first left Kan extending along $\id \times p$ and then along $p\op \times \id$. The first step can be implemented by restriction along the inclusion $j\colon \Bcal \hookrightarrow \Ccal$, since $j$ is right adjoint to $p$.

Let us now write $M'\colon \Bcal\op \times \Bcal \to \Sp$ for the left Kan extension of $M(-,j(-))$ along $p\op \times \id$. Since $p\colon \Ccal \to \Bcal$ is a semi-split Verdier projection with kernel $\Acal$, it exhibits $\Bcal$ as the localisation of $\Ccal$ by equivalences modulo $\Acal$, that is, by the arrows whose fibre lies in $\Acal$.
At the same time, by the laced semi-orthogonality assumption $M(i(-),j(-)) = 0$, so that $M(-,j(-))$ is invariant under equivalences modulo $\Acal$ in the first coordinate, and hence descends to a biexact functor $\Bcal\op \times \Bcal \to \Sp$ in an essentially unique manner. More precisely, the unit map
\[ M(-,j(-)) \Rightarrow M'(p(-),-) \] 
is an equivalence. The structure natural transformation $P \xrightarrow{\simeq} M \circ (j\op \times j)$ of $j$ then determines an equivalence $P \simeq M'$, and so we obtain an equivalence $\hat{\eta}\colon (p\op \times p)_!M \simeq P$, which refines $p$ to a laced functor.

To show that the resulting sequence of laced categories is both a fibre and a cofibre sequence, note that on the level of underlying stable categories the sequence
    \[
        \begin{tikzcd}
            \Acal\arrow[r, "i"] & \Ccal\arrow[r, "p"] & \Bcal
        \end{tikzcd}
    \]
is fibre and cofibre, and hence to check its enhancement to $\CatL$ is a fibre-cofibre, it suffices to check that $\alpha\colon N\to M\circ(\catop{i}\times i)$ is an equivalence and that the mate $\widehat{\eta}\colon (\catop{p}\times p)_!M\to N$ of $\eta$ is an equivalence. The former is by assumption and the latter by our construction of $\eta$ above.
\end{proof}

We now describe three types of examples of laced semi-orthogonal decompositions which will prove useful in later parts of the paper.

\begin{ex} \label{LpreservesSOD}
    If $\Acal \subseteq \Ccal \supseteq \Bcal$ is a semi-orthogonal decomposition then $(\Acal, \map_\Acal) \subseteq (\Ccal,\map_{\Ccal}) \supseteq (\Bcal, \map_\Bcal)$ is a laced semi-orthogonal decomposition. Indeed, the fact that $\Acal \subseteq \Ccal \supseteq \Bcal$ are full subcategory inclusions directly implies that $(\Acal, \map_\Acal) \subseteq (\Ccal,\map_{\Ccal}) \supseteq (\Bcal, \map_\Bcal)$ are laced full subcategory inclusions, and the laced semi-orthogonality condition is simply semi-orthogonality of the underlying stable decomposition in this case.
\end{ex}

\begin{ex}\label{ex:cotensor-decompose}
The cotensored laced category $(\Ccal, M)^{([1], \Map)}$ of Example~\ref{CotensorMapEx} can be obtained as the semi-orthogonal decomposition of two copies of $(\Ccal, M)$.
    Indeed, it is a classical fact that $\Ccal^{[1]}$ is the semi-orthogonal decomposition of two copies of $\Ccal$, the inclusions being given by $i\colon X\mapsto(X\to 0)$ and $j\colon X\mapsto \id_X$. As described in Example \ref{CotensorMapEx} the bimodule of $(\Ccal, M)^{([1], \Map)}$ fits in a cartesian square of the form
    \begin{equation}\label{eq:universal-laced-subcat}
        \begin{tikzcd}
            M^{([1], \Map)}(f\colon X\to Y, g\colon X'\to Y')\arrow[r]\arrow[d] & M(Y, Y')\arrow[d, "{M(f,-)}"] \\
            M(X, X')\arrow[r, "{M(-,g)}"] & M(X, Y').
        \end{tikzcd}
    \end{equation}
    Plugging in $Y=Y'=0$ makes the vertical maps into equivalences, and plugging $f, g$ being identities makes all the maps into equivalences, so that in both cases the left vertical map is an equivalence. We consequently obtain equivalences $\alpha\colon M \xrightarrow{\simeq} M^{([1], \Map)} \circ (i\op \times i)$ and $\beta\colon M \xrightarrow{\simeq} M^{([1], \Map)} \circ (j\op \times j)$, yielding laced full subcategory inclusions
\[
\begin{tikzcd}
(\Ccal,M) \ar[r,hook,"{(i,\alpha)}"] & (\Ccal,M)^{([1],\Map)} \ar[r,hookleftarrow,"{(j,\beta)}"] &(\Ccal,M)
\end{tikzcd} 
\]
To see that this is a laced semi-orthogonal decomposition note that if one takes $Y=0$ and $g=\id$ in~\eqref{eq:universal-laced-subcat} then the horizontal maps are equivalences and the top right corner vanishes, so that $M^{([1], \Map)}$ vanishes, yielding laced semi-orthogonality.
\end{ex}

\begin{ex}\label{CotensorPointSOD}
Let us now consider the cotensor $(\Ccal, M)^{([1], *)}$ of example \ref{CotensorPointEx}. The underlying stable category is the same as in Example~\ref{ex:cotensor-decompose}, and the bimodule is given by the formula
    \[
        M^{([1], *)}(f\colon X\to Y, g\colon X'\to Y')\simeq M(Y, X').
    \]
If $Y=Y'=0$ then $M^{([1], *)}$ vanishes whereas it recovers $M$ in the case where $f, g$ are identities, so that we have a pair of laced full subcategory inclusions
\[
\begin{tikzcd}
(\Ccal,0) \ar[r,hook,"{(i,\alpha)}"] & (\Ccal,M)^{([1],\Map)} \ar[r,hookleftarrow,"{(j,\beta)}"] &(\Ccal,M)
\end{tikzcd} 
\]
To see that this is a laced semi-orthogonal decomposition note that $M^{([1], *)}$ in fact vanishes as soon as $Y=0$ (regardless of $X',Y'$) so that we have laced semi-orthogonality.
\end{ex}

\begin{prop} \label{AdjPresSOD}
    The pair of adjoint functors $L\colon \CatEx \adj \CatL\cocolon \Lace$ both preserve semi-orthogonal decompositions.
\end{prop}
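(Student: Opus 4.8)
For $\L$ the statement is essentially Example~\ref{LpreservesSOD}: $\L$ carries a semi-orthogonal decomposition $\Acal \subseteq \Ccal \supseteq \Bcal$ to $(\Acal,\map_\Acal) \subseteq (\Ccal,\map_\Ccal) \supseteq (\Bcal,\map_\Bcal)$, the fully faithfulness of $\Acal,\Bcal \hookrightarrow \Ccal$ makes the structure maps of the induced laced functors into equivalences (so these are laced full subcategory inclusions), and laced semi-orthogonality $\map_\Ccal(A,B) \simeq 0$ for $A\in\Acal$, $B\in\Bcal$ is just the semi-orthogonality of the original decomposition. I would simply spell this out. The substance is the claim for $\Lace$.

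Fix a laced semi-orthogonal decomposition $(\Acal,N) \subseteq (\Ccal,M) \supseteq (\Bcal,P)$ and write $\Acal \xrightarrow{i} \Ccal \xrightarrow{p} \Bcal$ for the underlying right-split Verdier sequence, with $a\colon \Ccal \to \Acal$ the right adjoint of $i$ and $j\colon \Bcal \to \Ccal$ the fully faithful right adjoint of $p$. I would first record that $\Lace$ preserves fully faithful laced functors: the fibre square~\eqref{eq:lace} produces, for every laced category $(\Dcal,L)$ and laced objects $(X,\xi),(Y,\zeta)$, a fibre sequence of mapping spectra
\[
    \map_{\Lace(\Dcal,L)}\big((X,\xi),(Y,\zeta)\big) \longrightarrow \map_\Dcal(X,Y) \longrightarrow L(X,Y),
\]
functorial in laced functors, so a laced functor $(f,\phi)$ with $f$ fully faithful and $\phi$ an equivalence induces equivalences on mapping spectra after applying $\Lace$. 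Applied to the two laced full subcategory inclusions of our decomposition this shows that $\Lace(\Acal,N)$ and $\Lace(\Bcal,P)$ are full subcategories of $\Lace(\Ccal,M)$; and evaluated on $(A,\alpha) \in \Lace(\Acal,N)$ and $(B,\beta) \in \Lace(\Bcal,P)$ it shows $\map_{\Lace(\Ccal,M)}((A,\alpha),(B,\beta)) \simeq 0$, since $\map_\Ccal(A,B) \simeq 0$ by underlying semi-orthogonality and $M(A,B) \simeq 0$ by laced semi-orthogonality. This is the semi-orthogonality half of the decomposition we are after.

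For the decomposition half I would invoke Lemma~\ref{lmm:laced-adjoint}: it refines $p$ to a laced functor $(p,\eta)\colon (\Ccal,M) \to (\Bcal,P)$ and makes $(\Acal,N) \xrightarrow{(i,\alpha)} (\Ccal,M) \xrightarrow{(p,\eta)} (\Bcal,P)$ a fibre--cofibre sequence in $\CatL$. The crucial point is that the laced inclusion $(j,\beta)\colon (\Bcal,P) \hookrightarrow (\Ccal,M)$ is right adjoint to $(p,\eta)$ in $\CatL$. Granting this, I apply $\Lace$: it is a right adjoint (Proposition~\ref{prop:left-adj-to-lace}), hence preserves the fibre sequence, so $\Lace(\Acal,N) = \fib(\Lace(p,\eta))$ in $\CatEx$; and it preserves adjunctions, so $\Lace(p,\eta) \dashv \Lace(j,\beta)$ with $\Lace(j,\beta)$ fully faithful by the previous paragraph. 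Thus $\Lace(\Acal,N) \to \Lace(\Ccal,M) \xrightarrow{\Lace(p,\eta)} \Lace(\Bcal,P)$ is a fibre sequence whose projection has a fully faithful right adjoint, the standard shape of a right-split Verdier sequence: for $(X,\xi) \in \Lace(\Ccal,M)$ the unit $(X,\xi) \to \Lace(j,\beta)\Lace(p,\eta)(X,\xi)$ has its fibre in $\Lace(\Acal,N)$, because the forgetful functor $\Lace(\Ccal,M) \to \Ccal$ is exact (by~\eqref{eq:lace}) and sends this unit to $X \to jp(X)$, whose fibre is $ia(X) \in \Acal$, while $\Lace(\Acal,N)$ is exactly the full subcategory on laced objects whose underlying object lies in $\Acal$. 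This triangle, together with the semi-orthogonality established above, exhibits $\Lace(\Acal,N) \subseteq \Lace(\Ccal,M) \supseteq \Lace(\Bcal,P)$ as a semi-orthogonal decomposition.

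The main obstacle is therefore the adjunction $(p,\eta) \dashv (j,\beta)$ in $\CatL$. I would obtain it by lifting the unit $v\colon \id_\Ccal \Rightarrow jp$ and the (invertible) counit $pj \xrightarrow{\simeq} \id_\Bcal$ of $p \dashv j$ to laced $2$-morphisms and checking the triangle identities. The point is that, $\beta$ being an equivalence and $\eta$ being constructed in Lemma~\ref{lmm:laced-adjoint} as the mate of the canonical equivalence $(\catop{p} \times p)_! M \simeq P$, the composite structure maps of $(j,\beta)\circ(p,\eta)$ and $(p,\eta)\circ(j,\beta)$ are precisely those obtained from $M$ and $P$ by transport along $v$ and the counit; hence $v$ and the counit are laced $2$-morphisms, and the triangle identities reduce to those of $p \dashv j$ since a laced $2$-morphism is determined by its underlying natural transformation together with the structure maps. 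Alternatively, and perhaps more cleanly, this is an instance of lifting an adjunction along the cartesian fibration $\pi\colon \CatL \to \CatEx$: $(j,\beta)$ is a $\pi$-cartesian lift of the right adjoint $j$, $(p,\eta)$ is a lift of $p$, and the adjunction is produced from the universal property of cartesian morphisms in the spirit of~\cite[Lemma 2.27]{AyalaFrancis} and the discussion around~\eqref{eq:first-adj}. Either way, this verification — routine but fiddly — is where the real work of the proof sits.
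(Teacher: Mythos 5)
Your treatment of $\L$ is the paper's (it is exactly Example~\ref{LpreservesSOD}), and your semi-orthogonality argument for $\Lace$ is essentially the paper's as well: both extract from the fibre square~\eqref{eq:lace} a description of $\map_{\Lace(\Ccal,M)}((A,\alpha),(B,\beta))$ as the fibre (equivalently, equalizer) of a pair of maps $\map_{\Ccal}(A,B)\to M(A,B)$ and conclude from the vanishing of both terms.

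For the decomposition half you take a genuinely different and considerably heavier route than the paper, and there is a gap in it. The paper simply constructs the required exact sequence by hand: given $(X,f)\in\Lace(\Ccal,M)$, write the underlying sequence $q(X)\to X\to p(X)$ in $\Ccal$ and observe that $\map(q(X),M(p(X)))=0$ (laced semi-orthogonality) forces the lacing $f\colon X\to M(X)$ to extend essentially uniquely to compatible lacings on $q(X)$ and $p(X)$, yielding an exact sequence in $\Lace(\Ccal,M)$ with outer terms in $\Lace(\Acal,N)$ and $\Lace(\Bcal,P)$. No Verdier machinery or adjunction lifting enters.

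Your route instead passes through the right-split Verdier characterization, Lemma~\ref{lmm:laced-adjoint}, and an adjunction $(p,\eta)\dashv(j,\beta)$ internal to $\CatL$. You flag the first gap: constructing this adjunction requires producing the unit as a laced $2$-morphism and verifying triangle identities, which you outline but do not carry out. But there is a second gap you do not flag: you then invoke that $\Lace$ ``preserves adjunctions.'' A right adjoint $1$-functor of $\infty$-categories has no reason to carry internal adjoint $1$-morphisms to adjoint $1$-morphisms; you would need $\Lace$ to be a $2$-functor compatibly with the $\CatEx$-enrichment of \S\ref{subsec:symmetric-monoida-tangent}, and the paper does not establish this, nor is it formal. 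The honest way to close this is to construct the fully faithful right adjoint of $\Lace(p,\eta)$ directly — and doing so is precisely what the paper's explicit construction of the decomposition sequence amounts to, which turns out to be shorter than setting up the $2$-categorical scaffolding you rely on.
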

\begin{proof}
    For $L$, this is the content of Example \ref{LpreservesSOD}. Now let $(\Acal, N) \subseteq (\Ccal,M) \supseteq (\Bcal, P)$ be a laced semi-orthogonal decomposition. Then we have an induced pair of full subcategory inclusions $\Lace(\Acal,N) \subseteq \Lace(\Ccal, M) \supseteq \Lace(\Bcal,P)$ and we want to show that it constitutes a semi-orthogonal decomposition. 

Let $(X, f)$ be an object of $\Lace(\Ccal, M)$, so that $X\in\Ccal$ and $f\colon X\to M(X)$ is a morphism. Since $(\Acal, \Bcal)$ is an orthogonal decomposition of $\Ccal$, we have an exact sequence $q(X)\to X\to p(X)$ where $p\colon \Ccal\to\Bcal$ and $q\colon \Ccal\to\Acal$ are the corresponding reflection and coreflection functors and we omit the inclusions from the notation. By the laced semi-orthogonality condition we have $\map(q(X),M(p(X))) = 0$, and so the map $X \to M(X)$ extends in an essentially unique manner to a commutative diagram
    \[
        \begin{tikzcd}
            q(X)\arrow[r]\arrow[d] & X\arrow[r]\arrow[d] & p(X)\arrow[d] \\
            M(q(X))\arrow[r] & M(X)\arrow[r] & M(p(X))
        \end{tikzcd}
    \]
in which both rows are fibre sequences. 
At the same time, since $(\Acal,N) \subseteq (\Ccal,M) \supseteq (\Bcal,P)$ are laced full subcategory inclusions this is exactly the datum of an exact sequence in $\Lace(\Ccal, M)$ whose first term is in $\Lace(\Acal, N)$, last term is in $\Lace(\Bcal, P)$ and middle term is $(X, f)$, hence we have decompositions.

    For the semi-orthogonality, note that by the cartesian square~\eqref{eq:lace}, mapping spaces in $\Lace(\Ccal, M)$ can be expressed as equalizers;  
explicitly, we need to show that for every $(A, f)\in\Lace(\Acal, N)$, $(B, g)\in\Lace(\Bcal, P)$, the following spectrum vanishes:
    \[
        \Eq\left(\begin{tikzcd}[column sep=large]\map_\Ccal(A, B)\arrow[r, shift left=1, "M(-)\circ f"]\arrow[r, shift right=1, "M(g)\circ(-)"'] &\map_{\Ind(\Ccal)}(A, M(B))\end{tikzcd}\right)
    \]
    This follows from the fact that both $\map_\Ccal(A, B)$ and $\map(A, M(B))$ vanish, the former by semi-orthogonality of the underlying $(\Acal, \Bcal)$ and the latter by laced semi-orthogonality.
\end{proof}

\subsection{The universal property of laced K-theory}

Let $\Ecal$ be a stable $\infty$-category and $\Fcal \colon \CatEx \to \Ecal$ a functor. We say that $\Fcal$ is \emph{reduced} if it sends the zero stable category to the zero object. For a reduced functor $\Fcal\colon \CatEx \to \Ecal$, the following properties are equivalent (see~\cite[Proposition 2.4]{HebestreitLachmannSteimle}): 
\begin{itemize}
    \item $\Fcal$ sends semi-orthogonal decompositions to direct sums.
    \item $\Fcal$ sends left-split Verdier sequences to exact sequences.
    \item $\Fcal$ sends right-split Verdier sequence to exact sequences.
    \item $\Fcal$ sends split Verdier sequences to exact sequences.
    \item $\Fcal$ is extension splitting, that is, it sends the semi-orthogonal decomposition $\Ccal \subseteq \Ccal^{[1]} \supseteq \Ccal$ (which is actually a recollement, see Remark~\ref{rmq:recollement}) underlying Examples~\ref{ex:cotensor-decompose} and~\ref{CotensorPointSOD}, to a direct sum. 
\end{itemize}

A reduced functor $\Fcal\colon \CatEx \to \Ecal$  
satisfying any of these equivalent conditions is called \emph{additive}. 
A key example of an additive functor is the $\Kth$-theory functor $\Kth\colon \CatEx \to \Sp$. In fact, this functor enjoys a universal property with respect to additive functors, as established in \cite{BlumbergGepnerTabuada}:

\begin{prop}[Blumberg-Gepner-Tabuada] \label{BGTMainThm}
    The natural transformation $\SigmaInftyP\core\Rightarrow\Kth$ of functors $\CatEx\to\Sp$ exhibits algebraic K-theory as the initial additive invariant under $\SigmaInftyP\core$.
\end{prop}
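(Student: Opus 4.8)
The plan is to obtain this statement as a repackaging of the main theorem of \cite{BlumbergGepnerTabuada}, by reducing both universal properties to an evaluation at the free stable category on one object. First I would record that $\Spfin$ is the free stable category on a single generator: evaluation at the sphere $\mathbb{S}$ identifies $\FunEx(\Spfin,\Ccal)$ with $\Ccal$, so that the functor $\core\colon \CatEx\to\Spaces$ is corepresented by $\Spfin$, i.e.\ $\core \simeq \Map_{\CatEx}(\Spfin,-)$ (consistently with the fact, recalled above, that $\Spfin$ is the unit of the Lurie tensor product on $\CatEx$). Combining the adjunction $\SigmaInftyP \dashv \OmegaInfty$ with the Yoneda lemma then yields, for an \emph{arbitrary} functor $\Fcal\colon \CatEx\to\Sp$, a natural equivalence
\[
    \Nat(\SigmaInftyP\core,\Fcal) \;\simeq\; \Map_{\Fun(\CatEx,\Spaces)}(\core,\OmegaInfty\Fcal) \;\simeq\; \OmegaInfty\Fcal(\Spfin),
\]
under which the transformation $\SigmaInftyP\core\Rightarrow\Kth$ of the statement — namely the one adjoint to the canonical map $\Ccal^{\simeq}\to\OmegaInfty\Kth(\Ccal)$ including the objects of $\Ccal$ into its K-theory — corresponds to the class $[\mathbb{S}]\in\OmegaInfty\Kth(\Spfin)$ of the generator.

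Next I would invoke \cite{BlumbergGepnerTabuada}: there is a presentable stable category $\Mcal_{\add}$ of additive motives together with a functor $\mathcal{U}_{\add}\colon \CatEx\to\Mcal_{\add}$ whose precomposition identifies colimit-preserving functors $\Mcal_{\add}\to\Sp$ with additive functors $\CatEx\to\Sp$; moreover $\mathcal{U}_{\add}(\Spfin)$ is a compact object of $\Mcal_{\add}$ and there is a natural equivalence $\Kth(\Ccal)\simeq \map_{\Mcal_{\add}}\big(\mathcal{U}_{\add}(\Spfin),\mathcal{U}_{\add}(\Ccal)\big)$. Hence for an additive $\Fcal\colon \CatEx\to\Sp$, corresponding to a colimit-preserving $\overline{\Fcal}\colon \Mcal_{\add}\to\Sp$, the Yoneda lemma in $\Mcal_{\add}$ supplies a natural equivalence
\[
    \Nat(\Kth,\Fcal) \;\simeq\; \Nat\big(\map_{\Mcal_{\add}}(\mathcal{U}_{\add}(\Spfin),-),\overline{\Fcal}\big) \;\simeq\; \OmegaInfty\overline{\Fcal}(\mathcal{U}_{\add}(\Spfin)) \;\simeq\; \OmegaInfty\Fcal(\Spfin),
\]
under which $\id_{\mathcal{U}_{\add}(\Spfin)}\in\OmegaInfty\Kth(\Spfin)$ again corresponds to $[\mathbb{S}]$.

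Finally I would check that the restriction map $\Nat(\Kth,\Fcal)\to\Nat(\SigmaInftyP\core,\Fcal)$ induced by $\SigmaInftyP\core\Rightarrow\Kth$ intertwines the two equivalences displayed above with the identity of $\OmegaInfty\Fcal(\Spfin)$: tracing through the Yoneda lemmas, both send a transformation $\alpha\colon\Kth\Rightarrow\Fcal$ to $\OmegaInfty\alpha_{\Spfin}([\mathbb{S}])$, using that $\SigmaInftyP\core\Rightarrow\Kth$ is the image of $\id_{\Kth}$ under $\Nat(\Kth,\Kth)\to\Nat(\SigmaInftyP\core,\Kth)$ and that $\id_{\mathcal{U}_{\add}(\Spfin)}$ maps to $[\mathbb{S}]$. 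Thus this restriction map is an equivalence for every additive $\Fcal$, which is exactly the assertion that $\SigmaInftyP\core\Rightarrow\Kth$ exhibits $\Kth$ as the initial additive invariant under $\SigmaInftyP\core$. As the displays make clear, the entire mathematical content is concentrated in the cited theorem of Blumberg–Gepner–Tabuada, i.e.\ in the corepresentability of K-theory by the unit of $\Mcal_{\add}$, which in turn encodes Waldhausen's additivity theorem and the group-completion theorem; accordingly, I expect the only mildly delicate point to be this last compatibility verification, which is a formal diagram chase. (Alternatively, one may bypass $\Mcal_{\add}$ and argue that the additivization functor — left adjoint to the inclusion of additive invariants — is computed by iterating Waldhausen's $S_{\bullet}$-construction, and that applying it to $\SigmaInftyP\core$ reproduces the usual construction of the connective K-theory spectrum.)
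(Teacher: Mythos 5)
The paper itself offers no proof of this proposition; it is simply attributed to \cite{BlumbergGepnerTabuada}. Your Yoneda-style repackaging is therefore a reasonable unpacking rather than a parallel to anything in the text, and its skeleton is sound: $\core \simeq \Map_{\CatEx}(\Spfin,-)$ is correct (this is the same corepresentability the paper uses in Lemma~\ref{InitialLaxMonoidalisSpLace} for the laced version), the adjunction step giving $\Nat(\SigmaInftyP\core,\Fcal)\simeq\OmegaInfty\Fcal(\Spfin)$ is fine, and your final compatibility check correctly identifies both sides with evaluation at $[\mathbb{S}]$.

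The genuine gap lies in the middle display, and it is a matter of conventions rather than a slip in the algebra. BGT's universal property produces the equivalence $\mathcal{U}_{\add}^*\colon\Fun^L(\Mcal_{\add},\Sp)\xrightarrow{\simeq}\Fun^{\add,\omega}(\CatEx^{\perf},\Sp)$ where the right-hand side consists of additive invariants on \emph{idempotent-complete} stable categories which in addition \emph{preserve filtered colimits}. Your equivalence $\Nat(\Kth,\Fcal)\simeq\Nat(\map_{\Mcal_{\add}}(\mathcal{U}_{\add}(\Spfin),-),\overline{\Fcal})$ requires $\Fcal$ to lie in this smaller class: otherwise there is no $\overline{\Fcal}$ at all, and the first arrow in your chain — which is really the full faithfulness of $\mathcal{U}_{\add}^*$, not the Yoneda lemma — is unavailable. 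The paper's notion of additivity (reduced and carrying semi-orthogonal decompositions to sums) imposes no continuity hypothesis and is stated on all of $\CatEx$, not just idempotent-complete objects, so the proposition as stated is strictly stronger than what you have deduced. This is not cosmetic: the application in Theorem~\ref{KlaceIsUnivAdditive} needs to test $\Kth$ against arbitrary additive functors. The parenthetical route you sketch at the end — realizing additivization explicitly via the $S_\bullet$-construction and checking it sends $\SigmaInftyP\core$ to $\Kth$ — is the one that actually proves the statement at this level of generality (it is how the modern references, e.g.\ \cite{HebestreitLachmannSteimle}, establish it), and is also in closer spirit to the cyclic-bar arguments that this paper itself uses to construct $\uTHH$ as a localization of $\LaceEq$. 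If you insist on passing through $\Mcal_{\add}$, you need to either restrict the class of $\Fcal$ or argue separately that additivization is accessible and that the BGT-localization agrees with the accessible-localization left adjoint.
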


We propose the following definition as a laced counterpart of additivity.
  
\begin{defi}
    Let $\Ecal$ be an additive category. We say that a functor $\Fcal\colon \CatL\to\Ecal$ is \textit{additive} if it is reduced and sends laced semi-orthogonal decompositions to direct sum decompositions in $\Ecal$. 
\end{defi}

\begin{defi}
    Write $\Klace\colon \CatL\to\Sp$ for the composite
    \[
    \begin{tikzcd}
        \CatL\arrow[r, "\Lace"] & \CatEx\arrow[r, "\Kth"] & \Sp.
    \end{tikzcd}
    \]
    It comes naturally equipped with a natural transformation $\Sigma^{\infty}_+\LaceEq\to \Klace$.
\end{defi}

By Theorem \ref{AdjPresSOD}, $\Klace$ is additive, since $\Kth\colon \CatEx\to\Sp$ is so and $\Lace$ preserves orthogonal decompositions.

\begin{thm}\label{KlaceIsUnivAdditive}
    The natural transformation $\SpLaceEq\Rightarrow\Klace$ of functors $\CatL\to\Sp$ exhibits laced K-theory as the initial additive invariant under $\SpLaceEq$.
\end{thm}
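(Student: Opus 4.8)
The plan is to deduce the theorem from the Blumberg--Gepner--Tabuada universal property of algebraic $\Kth$-theory (Proposition~\ref{BGTMainThm}) by transporting it along the adjunction $\L\dashv\Lace$ of Proposition~\ref{prop:left-adj-to-lace}. The one genuinely non-formal ingredient is that \emph{both} $\L$ and $\Lace$ preserve semi-orthogonal decompositions, which is Proposition~\ref{AdjPresSOD}; everything else is a formal manipulation of functor categories.

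First I would record the two bookkeeping facts that make $\Lace$ the bridge between $\CatEx$ and $\CatL$. Writing $\Lace^{*}\colon\Fun(\CatEx,\Ecal)\to\Fun(\CatL,\Ecal)$ for precomposition with $\Lace$, we have $\Klace=\Lace^{*}\Kth$ and $\SpLaceEq=\Lace^{*}(\SigmaInftyP\core)$, and the natural transformation of the theorem is $\Lace^{*}$ applied to the Blumberg--Gepner--Tabuada transformation $\SigmaInftyP\core\Rightarrow\Kth$. Moreover $\Klace$ is an additive invariant on $\CatL$: since $\Lace$ preserves semi-orthogonal decompositions and is reduced (it sends the zero object of $\CatL$ to the zero category), the functor $\Lace^{*}$ carries additive invariants on $\CatEx$ to additive invariants on $\CatL$, and $\Kth$ is additive.

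The main step is to run the adjunction in the other direction. Since $\L\dashv\Lace$, precomposition with the right adjoint is left adjoint to precomposition with the left adjoint, so for every target $\Ecal$ we get an adjunction $\Lace^{*}\dashv\L^{*}$ on functor categories, coming formally from the unit $\id\Rightarrow\Lace\circ\L$ and counit $\L\circ\Lace\Rightarrow\id$. Hence for any functor $F\colon\CatL\to\Ecal$ there are natural equivalences
\[
    \Nat_{\CatL}(\Klace,F)\simeq\Nat_{\CatEx}(\Kth,\L^{*}F)
    \qquad\text{and}\qquad
    \Nat_{\CatL}(\SpLaceEq,F)\simeq\Nat_{\CatEx}(\SigmaInftyP\core,\L^{*}F),
\]
compatibly, by naturality in the first variable, with the maps induced by $\SpLaceEq\Rightarrow\Klace$ on the left and by $\SigmaInftyP\core\Rightarrow\Kth$ on the right. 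Now take $F$ to be an additive invariant on $\CatL$. Then $\L^{*}F=F\circ\L$ is an additive invariant on $\CatEx$: the functor $\L$ preserves semi-orthogonal decompositions by Proposition~\ref{AdjPresSOD} (explicitly by Example~\ref{LpreservesSOD}) and is reduced, since $\L(0)=(0,\id_{0})$ is the zero object of $\CatL$. By Proposition~\ref{BGTMainThm} the right-hand map $\Nat_{\CatEx}(\Kth,\L^{*}F)\to\Nat_{\CatEx}(\SigmaInftyP\core,\L^{*}F)$ is then an equivalence, hence so is $\Nat_{\CatL}(\Klace,F)\to\Nat_{\CatL}(\SpLaceEq,F)$. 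Together with the additivity of $\Klace$, this is precisely the assertion that $\SpLaceEq\Rightarrow\Klace$ exhibits $\Klace$ as the initial additive invariant under $\SpLaceEq$.

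I do not expect a serious obstacle here: the hard work is already contained in Proposition~\ref{AdjPresSOD}, which was set up exactly so that both $\Lace^{*}$ and $\L^{*}$ respect (laced) additivity, and the rest is formal. The only points needing a little care are checking that the abstract adjunction $\Lace^{*}\dashv\L^{*}$ intertwines the two precomposition maps above (a short diagram chase with the unit and counit), and reading Proposition~\ref{BGTMainThm} with arbitrary additive targets $\Ecal$ allowed, so that no separate reduction to spectrum-valued invariants is needed. Equivalently, if one phrases additive invariants as the local objects of a Bousfield localization of $\Fun(\CatL,\Sp)$, the same computation shows that $\Lace^{*}$ preserves additive-local equivalences --- because its right adjoint $\L^{*}$ preserves local objects --- so that $\Klace$ is the additivization of $\SpLaceEq$.
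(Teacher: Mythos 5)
Your proof is correct and follows essentially the same route as the paper's: establish additivity of $\Klace$ via $\Lace$ preserving semi-orthogonal decompositions, then use the adjunction $\Lace^* \dashv \L^*$ on functor categories together with the fact that $\L^*$ preserves additivity (Proposition~\ref{AdjPresSOD}) to reduce to Blumberg--Gepner--Tabuada (Proposition~\ref{BGTMainThm}). The paper records the same equivalences in a commutative square of $\Nat$-spaces; your version only adds the (optional) Bousfield-localization rephrasing at the end.
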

\begin{proof}
    First, as we mention above, since $\Lace$ preserves semi-orthogonal decompositions and $\Kth\colon \CatEx\to\Sp$ is additive, so is $\Klace$. Second, since $L$ is left adjoint to $\Lace$ the associated precomposition functor $L^*$ is right adjoint to the pre-composition functor $\Lace^*$. For every functor $F\colon \CatL\to\Sp$ we may then consider the commutative square
\[
\begin{tikzcd}
        \Nat(\Klace,F) \ar[r]\ar[d,"\simeq"] & \Nat(\SpLaceEq, F) \ar[d,"\simeq"] \\
        \Nat(K,F \circ L)\ar[r] & \Nat(\SigmaInftyP\core, F\circ L)
\end{tikzcd}
\]
whose vertical legs are equivalences.
But if $F$ is additive then so is $F\circ L$ by Propositions \ref{AdjPresSOD}, and hence by Proposition \ref{BGTMainThm} the bottom horizontal map is an equivalence, so that the top horizontal map is an equivalence as well.
\end{proof}

Recall that the main result of \cite{BlumbergGepnerTabuadaMultiplicative} shows that the K-theory 
is the unit of the symmetric monoidal category $\Fun^{\omega,\add}(\CatEx, \Sp)^\otimes$ of finitary additive functors on $\CatEx$ (with the symmetric monoidal structure induced by Day convolution). Consequently, it carries a canonical lax symmetric monoidal structure under which it is the initial lax symmetric monoidal finitary additive functor $\CatEx\to\Sp$. In fact, this also holds if we drop the adjective ``finitary'', since the collection of finitary functors form a coreflective subcategory. In addition, the natural transformation
\[ \SigmaInftyP\core \Rightarrow \Kth ,\]
which can be viewed as the component at the unit of the unit of the multiplicative adjunction 
\[ \Fun^{\omega}(\CatEx,\Sp)^{\otimes} \adj \Fun^{\omega,\add}(\CatEx,\Sp)^{\otimes},\] 
is naturally compatible with the lax symmetric monoidal structures on both sides.

In Construction \ref{cons:sm-structure}, we have argued that the cotangent complex $L$ refines to a symmetric monoidal functor and therefore its right adjoint admits a lax symmetric monoidal structure. This endows laced K-theory with a lax symmetric monoidal structure of its own, and refines the natural transformation
\[ \SigmaInftyP\LaceEq \Rightarrow \Klace \]
to one of lax symmetric monoidal functors.

\begin{prop} \label{KlaceInitialLaxMonoidal}
    The functor $\Klace$ is the initial lax symmetric monoidal additive functor $\CatL\to\Sp$.
\end{prop}
\begin{proof}
    By the above discussion, the adjunction between $L$ and $\Lace$ induces, by means of pre-composition, an adjunction     \[ \Lace^*\colon \Fun^{\lax-\otimes,\add}(\CatEx,\Sp) \adj \Fun^{\lax-\otimes,\add}(\CatL,\Sp) \cocolon L^* \]
    on the level of lax symmetric monoidal additive functors to $\Sp$.
In particular, $\Lace^*$ preserves the initial object, which is K-theory, so that the claim follows.
\end{proof}

\begin{rmq} \label{RemarkLacedMotives}
    In the laced setting, the construction of non-commutative additive motives of \cite{BlumbergGepnerTabuada} can be adapted to give a presentable stable $\Mcal^{\lace}_{\add}$ and a universal finitary additive 
    $$
        U^{\lace}_{\add}\colon \CatL\to\Mcal^{\lace}_{\add}.
    $$ 
    As in \textit{loc. cit.}, this is done using Proposition 5.3.6.2 of \cite{HTT}, and then taking the Spanier-Whitehead stabilization. 
    Since this category of motives will not play a role in the present paper, we will refrain from pursuing these ideas here.
\end{rmq}


\section{Trace-like functors and topological Hochschild homology}
\label{sec:trace-like}%

In the previous sections, we have built a category $\CatL$ and extended algebraic K-theory into a functor $\Klace\colon \CatL\to\Sp$ which we have characterized by a universal property. In this section we add to our discussion the functor $\THH$ which associates to a stable category its Hochschild homology. Extending $\THH$ to laced categories, we show that this contexts affords $\THH$ a new universal property, expressed via the notion of trace invariance. Unlike the case of $\Kth$-theory, this universal property is only visible when passing to the laced setting. Using this universal property we show in Section~\ref{subsec:THH-derivative} that $\THH$ with coefficients is the first Goodwillie derivative of laced $\Kth$-theory.

\subsection{Topological Hochschlid homology and the cyclic bar construction}

In this section we recall Hochschild homology of stable categories and explain how to extend the construction to the laced setting. For this, let us first recall the unstable version of the construction.

\begin{defi}
    Let $\Ccal$ be an category. The \textit{unstable topological Hochschild homology} of $\Ccal$ is the space $\uTHH(\Ccal)$ given by the coend formula
    \[
        \uTHH(\Ccal) = \int^{X\in\Ccal} \Map_\Ccal(X,X) := \colim_{f\colon X\to Y\in\TwAr(\Ccal)} \Map_\Ccal(Y, X),
    \]
    where $\Map_\Ccal$ denote the mapping space of $\Ccal$ and $\TwAr(\Ccal)$ is the twisted arrow category of $\Ccal$. This construction defines a functor $\uTHH\colon \Cat\to\Spaces$. 
\end{defi}

\begin{rmq} \label{uTHHexplicit}
	If $\Ccal = \Gcal$ is an $\infty$-groupoid then $\TwAr(\Gcal) = \Ar(\Gcal) = \Gcal$ and we have a natural equivalence 
	\[ \uTHH(\Gcal) = \Map(S^1,\Gcal).\] 
	In particular, the following composite
	\[
    		 \Ccal^\simeq\to\Fun(S^1, \Ccal^\simeq)=\uTHH(\Ccal^\simeq)\to\uTHH(\Ccal)
	\]
	provides a natural transformation $\core\Rightarrow\uTHH$.
\end{rmq}

To pass from unstable to stable $\THH$, recall that any stable category $\Ccal$ is canonically enriched in spectra, and this enrichment, denoted $\map_{\Ccal}(-,-)$, is characterized to the unique functor exact in both variables such that
$$
    \Map_{\Ccal}(-,-) = \OmegaInfty\map_{\Ccal}(-,-)
$$
where the left hand side is the mapping space functor.
\begin{defi} \label{THHinCatEx}
    Let $\Ccal$ be a stable category.
    The \textit{topological Hochschild homology} of $\Ccal$ is the spectrum $\THH(\Ccal)$ given by the coend formula
    \[
        \THH(\Ccal) = \int^{X\in\Ccal} \map_\Ccal(X,X) = \colim_{f\colon X\to Y\in\TwAr(\Ccal)} \map_\Ccal(Y, X).
    \]
    This construction defines a functor $\THH\colon \CatEx\to\Sp$, and the above observation endows it with a canonical natural transformation $\uTHH\Rightarrow\LoopInfty\THH$ or equivalently, $\SigmaInftyP\uTHH\Rightarrow \THH$. 
\end{defi}

The above definitions can be upgraded to the laced setting in a straightforward manner.

\begin{defi}\label{defi:laced-thh}
    Let $(\Ccal, M)$ be a laced category. The \textit{topological Hochschild homology} of $(\Ccal, M)$ is the spectrum  $\THH(\Ccal, M)$ given by the coend formula
    \[
        \THH(\Ccal, M) = \int^{X\in\Ccal} M(X, X) = \colim_{f\colon X\to Y\in\TwAr(\Ccal)}M(Y, X).
    \]
    This construction defines a functor $\THH\colon \CatL\to\Sp$. Similarly, for pairs $(\Ical, F)$ with $I$ a category and $F$ a functor $F\colon \catop{\Ical}\times \Ical\to\Spaces$, we can also define the \textit{unstable topological Hochschild homology} by:
    \[
        \uTHH(\Ical, F) = \int^{X\in \Ical} F(X, X) = \colim_{f\colon X\to Y\in\TwAr(\Ical)}F(Y, X)
    \]
    The functor $\uTHH\colon \Catb\to\Spaces$ is such that, when restricted to $\CatL$, we have a natural transformation $\SigmaInftyP \uTHH\Rightarrow \THH$.
\end{defi}

\begin{ex}\label{ex:primitive-bimodules}
The simplest types of bimodules one can consider on a given stable $\infty$-category $\Ccal$ are of the form $M_{A,B}(X,Y) = \map(B,Y)\otimes\map(X,A)$ for some $A,B \in \Ccal$. We claim that $\THH(\Ccal,M_{A,B}) \simeq \map(B,A)$. In fact, there are comparison maps
\[ \map(B,A) \to \colim_{[f\colon X \to Y]}\map(B,X)\otimes\map(Y,A) \to \map(B,A) \]
in both directions: the left map is induced by the inclusion $\{\id_B\} \subseteq \TwAr(\Ccal)$ together with the map 
\[ \map(B,A) \xrightarrow{\{\id_B\}\otimes \id} \map(B,B) \otimes \map(B,A)  = M_{A,B}(B,B)\] 
while the one on the right is induced by the natural transformation to the constant diagram on $\map(B,A)$ which on $[f\colon X \to Y] \in \TwAr(\Ccal)$ implements the composite
\[ \map(B,X)\otimes\map(Y,A)  \xrightarrow{\id\otimes \{f\} \otimes \id} \map(B,X) \otimes \map(X,Y)\otimes \map(Y,A) \to \map(B,A) .\]
We claim that both comparisons are equivalences. Indeed, their composite is the identity, and so it is enough to show this for the first one. Mapping into a test object $E \in \Sp$ then yields, using the end formula for natural transformations, the map
\[ \Nat(\map(B,-),\map(\map(-,A),E)) \to \map(\map(B,A),E) \]
induced by the evaluation at $B$, and this map is an equivalence by the stable Yoneda lemma. 

Finally, note that the comparison map on the right is visibly natural in the pair $(A,B) \in \Ccal\times \Ccal\op$. In particular, this argument shows that the composite functor
\[ \Ccal \times \Ccal\op \xrightarrow{(A,B) \mapsto M_{A,B}} \BiMod(\Ccal) \xrightarrow{\THH(\Ccal,-)} \Sp \]
is equivalent to the functor $(A,B) \mapsto \map(B,A)$. Identifying $\BiMod(\Ccal)$ with $\Ind(\Ccal \otimes \Ccal\op)$, this completely characterizes $\THH(\Ccal,-)\colon \BiMod(\Ccal) \to \Sp$ as the unique colimit preserving functor whose restriction to $\Ccal \times \Ccal\op$ is $\map(-,-)$.
\end{ex}

\begin{ex}\label{ex:thh-of-spaf}
For the $\infty$-category $\Spaf$ of finite spectra, the map 
\[ M(\bS,\bS) \to \colim_{[X \to Y] \in \TwAr(\Spaf)}M(Y,X) = \THH(\Spaf,M)  \]
induced by the inclusion $\{[\id\colon \bS \to \bS]\} \subseteq \TwAr(\Spaf)$, is an equivalence. Indeed, every bimodule on $\Spaf$ is of the form $M(X,Y) = E \otimes \map(X,Y) =\map(\bS,Y) \otimes \map(X,E)$ for some $E \in \Sp$. When $E$ is a finite spectrum the claim is an instance of Example~\ref{ex:primitive-bimodules}, and the general case follows since both sides are colimit preserving in $M$.
\end{ex}

\begin{ex}\label{ex:thh-of-rings}
Let $R$ be an $\E_1$-ring spectrum, $M$ an $R$-bimodule and $F_M(-) = M \otimes_R -\colon \Perf(R) \to \Mod(R)$ the associated bimodule on $\Perf(R)$, see Example~\ref{ExampleRingSp1}. Then we have a natural equivalence
\[ \THH(\Perf(R),F_M) \simeq \THH(R;M)\]
where $\THH(R;M) = M \otimes_{R\otimes R\op} R$ is the algebraically defined topological Hochschild homology. Indeed, both are colimit preserving functors in $M$ whose restriction to bimodules of the form $M = P \otimes Q$ for $P \in \Perf(R)$ and $Q \in \Perf(R\op)$ is naturally equivalent to the underlying spectrum of $Q \otimes_R P$, see the characterization at the end of Example~\ref{ex:primitive-bimodules}.
\end{ex}

\begin{rmq}\label{rmq:unusual}
    Definition~\ref{defi:laced-thh} may appear different than some of the definitions for $\THH$ of stable or spectrally enriched categories previously considered in the literature, and which typically use some form of the cyclic bar construction, see, e.g., \cite[\S 10]{BlumbergGepnerTabuada}, \cite[Definition III.2.3]{NikolausScholze}, \cite{HoyoisScherotzkeSibilla}, or the more recent \cite{KrauseMcCandlessNikolaus} for higher categorical constructions, and \cite[Definition 1.3.6]{DundasMcCarthyTHHRingFunctors} or \cite{Bokstedt} for classical constructions (the former applies to spectrally enriched categories, referred to as ring functors in loc.\ cit., and the latter to rings, or ring spectra). Nonetheless, for all these construction it is typically standard to show that their value on representable bimodules of the form $M_{A,B}$ is given by $\map(B,A)$ naturally in $A,B$, and so the comparison with Definition~\ref{defi:laced-thh} is automatic from the characterization in the end of Example~\ref{ex:primitive-bimodules} (cf. Example~\ref{ex:thh-of-rings}).
\end{rmq}

We now seek to show that the $\THH$ we defined can also be realize as some version of a cyclic Bar construction, i.e., the geometric realization of a cyclic object which looks like the Bar construction where some terms have been modified to be cyclic.

\begin{defi}
    Let $(\Ccal, M)$ be a laced category, we denote $\baBar_{n}(\Ccal, M)$ the following space :
    \[
        \baBar_{n}(\Ccal, M):=\LaceEq((\Ccal, M)^{([n], *)})
    \]
    By functoriality of the cotensor, those spaces assemble into a functor $\baBar_\bullet\colon \CatL\to\Scal^{\catop{\Delta}}$. We call it the \textit{cyclic bar construction} of the functor $\LaceEq$.
\end{defi}

Unwinding the above definition, the objects of the cotensored category $(\Ccal, M)^{([n], *)}$ are chains $X_0\to ...\to X_n$ in $\Ccal$, while a lacing on such an object corresponds to a point in $\LoopInfty M(X_n, X_0)$, which we can equivalently see as an arrow $X_n\to M(X_0)$ in $\Ind(\Ccal)$. Hence, $\Lace((\Ccal, M)^{([n], *)})$ is the category of $n$-cycles where $n-1$ terms are simply arrows $X_i\to X_{i+1}$, i.e, points in $\Map(X_i, X_{i+1})$, and the last term is ``cycling back'', but in a twisted way, giving a point in $\Map(X_n, M(X_0))$.

\begin{prop} \label{CyclicBarInTCatEx}
    Let $(\Ccal, M)\in \CatL$. We have a natural equivalence
    \[
        \uTHH(\Ccal, M)\simeq |\baBar_{\bullet}(\Ccal, M)|
    \]
\end{prop}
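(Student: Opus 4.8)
The plan is to strip the definitions on both sides down to the classical ``cyclic bar construction with many objects'' of $\Ccal$ with coefficients in $\Omega^\infty M$, and then invoke the Bousfield--Kan formula identifying a coend with the geometric realisation of that bar construction. \emph{First} I would record a general formula: for any laced category $(\Dcal,N)$ there is a natural equivalence
\[
\LaceEq(\Dcal,N)\;\simeq\;\colim_{X\in\Dcal^{\simeq}}\Omega^{\infty}N(X,X).
\]
This follows by applying the core functor $\core$ — which, being a right adjoint, preserves pullbacks — to the defining pullback square of $\Lace(\Dcal,N)$ in Construction~\ref{cons:laced-objects}, together with the observation that the core of the two-sided Grothendieck construction $\int^{X}_{Y}\Omega^{\infty}N(X,Y)$, restricted over the \emph{groupoid} $\Dcal^{\simeq}\times\Dcal^{\simeq}$, is just its total space $\colim_{(X,Y)\in\Dcal^{\simeq}\times\Dcal^{\simeq}}\Omega^{\infty}N(X,Y)$; pulling back the resulting left fibration along the diagonal $\Dcal^{\simeq}\to\Dcal^{\simeq}\times\Dcal^{\simeq}$ yields the displayed colimit. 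Feeding in the description of the cotensor from Example~\ref{CotensorPointEx} (underlying category $\Fun([n],\Ccal)$, bimodule $(\phi,\psi)\mapsto M(\phi(n),\psi(0))$) together with the standard presentation of $\Fun([n],\Ccal)^{\simeq}$ as the total space of the left fibration over $(\Ccal^{\simeq})^{n+1}$ classified by $(X_{0},\dots,X_{n})\mapsto\prod_{i}\Map_{\Ccal}(X_{i},X_{i+1})$, one obtains
\[
\baBar_{n}(\Ccal,M)\;\simeq\;\colim_{(X_{0},\dots,X_{n})\in(\Ccal^{\simeq})^{n+1}}\Map_{\Ccal}(X_{0},X_{1})\times\cdots\times\Map_{\Ccal}(X_{n-1},X_{n})\times\Omega^{\infty}M(X_{n},X_{0}).
\]

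\emph{Second}, I would track the simplicial structure. The faces and degeneracies of $\baBar_{\bullet}(\Ccal,M)$ are induced by functoriality of the cotensor in $[\bullet]\in\Delta$, and under the equivalence above they must be identified with the usual structure maps of the Hochschild complex: composing adjacent arrows for the inner faces, dropping an outer object while transporting the coefficient in $\Omega^{\infty}M(X_{n},X_{0})$ along the adjacent arrow (via functoriality of $M$ in the relevant variable) for the outer faces $d_{0}$ and $d_{n}$, and inserting identities for the degeneracies. This identifies $\baBar_{\bullet}(\Ccal,M)$ with the cyclic bar construction $N^{\mathrm{cyc}}_{\bullet}(\Ccal;\Omega^{\infty}M)$, naturally in $(\Ccal,M)\in\CatL$.

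\emph{Finally}, I would apply the Bousfield--Kan formula: for any category $\Ical$ and any functor $F\colon\catop{\Ical}\times\Ical\to\Spaces$, the coend $\int^{X\in\Ical}F(X,X)=\colim_{f\colon X\to Y\in\TwAr(\Ical)}F(Y,X)$ is computed by the geometric realisation of the simplicial space $[n]\mapsto\colim_{X_{0}\to\cdots\to X_{n}}F(X_{n},X_{0})$ — this is the classical identification with the cyclic bar construction with many objects used e.g.\ in \cite[Section 10]{BlumbergGepnerTabuada}; if a self-contained proof is wanted, both sides identify with the classifying space of the unstraightening of $F$ over $\TwAr(\Ical)$ via the bar resolution of a colimit over a category. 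Taking $\Ical=\Ccal$ and $F=\Omega^{\infty}M$ and combining with the previous two steps yields $\uTHH(\Ccal,M)=\int^{X\in\Ccal}\Omega^{\infty}M(X,X)\simeq|\baBar_{\bullet}(\Ccal,M)|$, naturally in $(\Ccal,M)$.

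The step I expect to be the main obstacle is the bookkeeping in the second paragraph: checking that the simplicial operators produced abstractly by the cotensor functor $(-)^{([\bullet],*)}$ agree on the nose with the \emph{twisted} outer faces of the Hochschild complex, and in particular that the cyclic identification of $\Fun([n],\Ccal)^{\simeq}$ with strings $X_{0}\to\cdots\to X_{n}$ is compatible across the whole simplicial diagram. The invocation of Bousfield--Kan in the last step is standard, but a from-scratch proof also requires some care with the twisted arrow category and a cofinality argument.
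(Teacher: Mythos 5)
Your overall strategy coincides with the paper's: first establish the colimit formula $\LaceEq(\Dcal,N)\simeq\colim_{X\in\Dcal^\simeq}\Omega^\infty N(X,X)$ (the paper cites \cite[Lemma~3.7]{SaunierHeart} for this); then plug in the cotensor description of Example~\ref{CotensorPointEx} to identify $\baBar_n$ levelwise; then relate the geometric realisation to the coend via a Bousfield--Kan type argument. However, you have misjudged where the technical weight lies. The simplicial bookkeeping you flag as the ``main obstacle'' in your second step is essentially automatic: your levelwise identification of $\baBar_n(\Ccal,M)$ is obtained by applying a fixed formula to the cotensor $(\Ccal,M)^{([n],\ast)}$, and the cotensor is by construction a functor $\Delta^{\mathrm{op}}\to\CatL$, so naturality in $[n]$ comes for free; you never need to check faces against degeneracies by hand.

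The genuine subtlety is in your third step. Your displayed claim that $\int^{X}F(X,X)$ is ``computed by the geometric realisation of $[n]\mapsto\colim_{X_0\to\cdots\to X_n}F(X_n,X_0)$'' is true, but it is \emph{not} a direct instance of the Bousfield--Kan formula. Bousfield--Kan applied to the colimit over $\TwAr(\Ccal)$ defining $\uTHH$ produces a simplicial object whose $n$-th level is a colimit over $T_n:=\Map(\Delta^n,\TwAr(\Ccal))$ --- the space of $(2n+1)$-chains in $\Ccal$ --- not over the space of $n$-chains $X_0\to\cdots\to X_n$. Bridging the two simplicial objects is exactly what the edgewise subdivision $e\colon[n]\mapsto[2n+1]$ is for: one identifies $T_n$ with $\Map(\Delta^{e(n)},\Ccal)$, shows (via the cotensor formula and the identification $\Ccal^{\catop{[n]}\ast[n]}\simeq\TwAr(\Ccal)^{[n]}$) that the Bousfield--Kan $n$-th level equals $\baBar_{e(n)}(\Ccal,M)$ naturally in $[n]$, and then invokes cofinality of $\catop{e}\colon\catop{\Delta}\to\catop{\Delta}$. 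That cofinality step is the crux of the paper's proof, not a refinement of a self-contained argument, and is the content that the reference to a ``classical identification'' must supply. You do mention ``a cofinality argument'' in passing at the very end, so you are aware something is missing, but your write-up attributes the work to the wrong step.
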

\begin{proof}    
    By definition, $\uTHH(\Ccal, M)$ is given by the colimit of the composite $\LoopInfty M\circ p$, where 
    \[
        p: \TwAr(\Ccal)\to\Ccal\times\catop{\Ccal}
    \] 
    is the right fibration classifying the mapping space of $\Ccal$. 
    Let $T_n:=\Map(\Delta^n,\TwAr(\Ccal))$. Then the inclusion $T_0\simeq\TwAr(\Ccal)^\simeq\to\TwAr(\Ccal)$ yields a composed map 
    \[
        \begin{tikzcd}[column sep=huge]
           \phi_n: T_n\arrow[r, "d_0"] & T_0\arrow[r] & \TwAr(\Ccal)\arrow[r, "\LoopInfty M\circ\; p"] & \Scal.
        \end{tikzcd}
    \]
    By Corollary 12.5 of \cite{Shah} (the \textit{Bousfield-Kan formula}), taking the colimit produces a simplicial object $X_\bullet = \colim_{T_{\bullet}}\phi_\bullet$ whose geometric realization recovers $\uTHH(\Ccal, M)$. 
    
    We now identify each $X_n$ in terms of the cyclic bar construction. Recall from \cite[\S 2.2]{BarwickQConstr} that the edgewise subdivision is the functor $e\colon \Delta\to\Delta$ given by  
    $[n]\mapsto [n]\ast\catop{[n]}\simeq[2n+1]$.  
    The functor $\catop{e}\colon \catop{\Delta}\to\catop{\Delta}$ is cofinal, so that we are reduced to show that there is an equivalence $X_n\simeq\baBar_{e(n)}(\Ccal, M)$ natural in $[n]\in\Delta$. 
    
    By definition, $\baBar_{e(n)}(\Ccal, M)\simeq \LaceEq((\Ccal, M)^{(\catop{[n]}\ast[n], *)})$. Remark that on the underlying stable categories, we have $\Ccal^{\catop{[n]}\ast[n]}\simeq\TwAr(\Ccal)^{[n]}$; we can also rewrite the bimodule under this equivalence, and we claim the following holds:
    \[
        (\Ccal, M)^{(\catop{[n]}\ast[n], *)}\simeq(\TwAr(\Ccal), M\circ p)^{([n], *)}
    \]
    This is the case when $n=0$ by Example \ref{CotensorPointEx}. More generally, recall that if $I=\{i_0 < ... < i_\kappa\}$ is linearly ordered, the formula for $M^{(I, *)}$ is the evaluation $M(X(i_\kappa), Y(i_0))$, again by Example \ref{CotensorPointEx}. For $I=\catop{[n]}\ast [n]$, this is the following diagram where we colored in red the rightmost-vertical arrow, which corresponds to the first point when passing to the twisted-arrow-category point of view:
    \[
    \begin{tikzcd}
        \bullet\arrow[d]& \arrow[l]\bullet\arrow[d] & ...\arrow[l] & \bullet\arrow[l]\arrow[d] & \color{red}\bullet\arrow[l]\arrow[d, red] \\
        \bullet\arrow[r] & \bullet\arrow[r] & ...\arrow[r] & \bullet\arrow[r] & \color{red}\bullet
    \end{tikzcd}
    \]
    In particular, we deduce that $M^{(\catop{[n]}\ast [n], *)}\simeq (M\circ p)^{([n], *)}$ as wanted. Finally, to recover the colimit defining $X_n$, remark that there is an equivalence
    \[
        \LaceEq(\Dcal, N)\simeq \colim_{X\in\Dcal^\simeq}\LoopInfty N(X, X)
    \]
    since $\LaceEq(\Dcal, N)\to\Dcal^\simeq$ is the left fibration classifying $\LoopInfty N\circ\Delta$, where $\Delta\colon \Ccal^\simeq\to(\catop{\Ccal})^\simeq\times\Ccal^\simeq$ is the diagonal (see \cite[Lemma 3.7]{SaunierHeart} for a proof). Applied to the rewriting we produced, we get an equivalence
    \[
        \baBar_{e(n)}(\Ccal, M)\simeq \colim_{Y\in(\TwAr(\Ccal)^{[n]})^\simeq} \LoopInfty M\circ p(Y(0)) 
    \]
    where we recognize on the right hand side the definition of $X_n$. This concludes.
\end{proof}

Let us mention that Haugseng derived in a similar manner a general formula for (co)ends in \cite[Theorem 1.1]{HaugsengCoend}. We thank Fabian Hebestreit for pointing us to this reference. We also note that we will prove a variant of the above for $\THH$ as well (see Remark~\ref{rmq:thh-bar-construction}).

\subsection{Trace-like functors}
The inclusion of the 0-simplices in the cyclic Bar construction yields a natural transformation 
$\begin{tikzcd}[cramped]\LaceEq\arrow[r] & \uTHH\end{tikzcd}$.
Our goal is to show that this natural transformation exhibits its target as the initial functor under $\LaceEq$ for a certain property, which we call being trace-like. In fact, we will show more generally that this happens for the natural transformation $F\to\cyc(F)$, where $\cyc(F)$ is cyclic bar construction adapted to a general functor $F\colon \CatL\to\Sp$, taking the role of $\LaceEq$.

\begin{defi}\label{defi:trace-equivalence}
     Let $f, g\colon (\Ccal, M)\to(\Dcal, N)$ be two laced functors. A \textit{trace homotopy} from $f$ to $g$ is a functor $H\colon (\Ccal, M)\to(\Dcal, N)^{([1], *)}$ such that the following diagram commutes:
     \[
        \begin{tikzcd}
            & & (\Dcal, N) \\
            (\Ccal, M)\arrow[r, "H"]\arrow[rrd, bend right=15, "g"]\arrow[rru, bend left=15, "f"'] & (\Dcal, N)^{([1], *)}\arrow[ru, "d_0"']\arrow[rd, "d_1"] .\\
            & & (\Dcal, N)
        \end{tikzcd}
     \]
     A laced functor $f\colon (\Ccal, M)\to(\Dcal, N)$ is a \textit{strict trace equivalence} if there exists a laced functor $g\colon (\Dcal, N)\to(\Ccal, M)$ such that $g\circ f$ and $f\circ g$ are trace homotopic to the respective identities.
\end{defi}

By the adjunction of tensor and cotensor, a laced homotopy from $f$ to $g$ can equivalently by encoded by a laced functor $(\Ccal,M)_{([1],*)} \to (\Dcal,N)$ fitting into a commutative diagram
 \[
    \begin{tikzcd}
        & & (\Ccal, M) \arrow[lld, bend right=15, "g"]\arrow[ld, "\widehat{d_0}"] \\
        (\Dcal, N) & \arrow[l, "H"'](\Ccal, M)_{([1], *)}\\
        & & (\Ccal, M)\arrow[llu, bend left=15, "f"']\arrow[lu, "\widehat{d_1}"'] .
    \end{tikzcd}
 \]
Equivalently, this is the data of a 1-simplex in the simplicial space
\[
    \baBar_\bullet\FunInt((\Ccal, M), (\Dcal, N))\simeq\Map_{\CatL}((\Ccal, M), (\Dcal, N)^{([\bullet],*)}) = \Map_{\CatL}((\Ccal, M)_{([\bullet],*)}, (\Dcal, N)).
\]

\begin{ex}\label{ex:d-is-trace-equivalence}
    Both $d_0$ and $d_1$ are strict trace equivalences, with the same trace inverse $s\colon (\Ccal, M)\to(\Ccal, M)^{([1], *)}$. Indeed, since $s\circ d_0 = s\circ d_1 = \id$ already, it suffices to find a laced arrow
    \[
        \begin{tikzcd}
            H\colon (\Ccal, M)^{([1], *)}\arrow[r] & (\Ccal, M)^{([1]\times[1], *)}
        \end{tikzcd}
    \]
    such that $d_0\circ H = \id$ and $d_1\circ H = d_0\circ s$. We can simply pick $H$ to be induced by the first projection $[1]\times[1]\to[1]$. Similarly, using the dual characterization of laced homotopies we see that $\widehat{d_0},\widehat{d_1}\colon (\Ccal,M) \to (\Ccal,M)_{([1],*)}$ are both laced equivalences with the same trace inverse $\hat{s}\colon (\Ccal,M)_{([1],*)} \to (\Ccal,M)$.
\end{ex}

Let us give a non-trivial example of a strict trace equivalence:

\begin{lmm} \label{TraceEquivalencesFromAdjunction}
    Let $L\colon \Ccal \adj \Dcal \cocolon R$    
be an adjunction between stable categories, and let $M\colon \catop{\Dcal}\otimes\Ccal\to\Sp$ be exact. Then, the unit $\eta$ and the counit $\varepsilon$ of the adjunction promote $L$ and $R$ to laced functors $L_M:=(L, M\circ(\catop{\id}\times\eta))$ and $R_M:=(R, M\circ(\catop{\varepsilon}\times\id))$ such that
    \[
        \begin{tikzcd}[column sep=large]
            (\Ccal, M\circ(\catop{L}\times\id))\arrow[rr, shift left=2, "{L_M}"] & &(\Dcal, M\circ(\catop{\id}\times R))\arrow[ll, shift left=2, "{R_M}"]
        \end{tikzcd}
    \]
    are trace inverses to one another.
\end{lmm}
\begin{proof}
    What we have described is clearly a pair of laced functors, so it remains to check that both composite are trace equivalent to the identity. The arguments for the two composites are dual to each other, and so let us simply provide a commutative diagram
    \[
        \begin{tikzcd}
            & & (\Ccal, M\circ(\catop{L}\times\id)) \\
            (\Ccal, M\circ(\catop{L}\times\id))\arrow[r, "H"]\arrow[rrd, bend right=10, "R_M\circ L_M"]\arrow[rru, bend left=10, "\id"'] & (\Ccal, M\circ(\catop{L}\times\id))^{([1], *)}\arrow[ru, "d_0"']\arrow[rd, "d_1"] \\
            & & (\Ccal, M\circ(\catop{L}\times\id))
        \end{tikzcd}
    \]
    exhibiting $R_M\circ L_M$ as trace homotopic to $\id$. 
    
    On underlying stable categories, we let $H\colon \Ccal\to\Ccal^{[1]}$ be the functor sending $X$ to $\eta_X\colon (X\to RL(X))$, and unpacking the definitions via Example \ref{CotensorPointEx}, the natural transformation we have to supply is given on objects $X, Y\in\Ccal$ is of the form
    \[
        \begin{tikzcd}
            M(L(X), Y)\arrow[r] & M(LRL(X), Y),
        \end{tikzcd}
    \]
    and so we take the map induced by pulling back in the first argument along the counit 
    \[
        \begin{tikzcd}
        \varepsilon_{L(X)}\colon LRL(X) \arrow[r] & L(X)
        \end{tikzcd}
    \]
    of $L(X)$. 
    The commutativity of the lower triangle is then by construction and the commutativity of the upper triangle is provided by the triangle identities of the adjunction $L\dashv R$. 
\end{proof}

\begin{defi}
    A functor $F\colon \CatL\to\Ecal$ is \textit{trace-like} if it inverts strict trace equivalences.
\end{defi}

Our first result shows a simpler criterion to check that a functor is trace-like.

\begin{prop} \label{TestSampleTrLike}
    Let $F\colon \CatL\to\Ecal$ be such that either of the two following conditions is met:
    \begin{itemize}
        \setlength\itemsep{0em}
        \item For every laced category $(\Ccal, M)$, $F$ sends $(\Ccal, M)^{([1],*)}\xrightarrow{d_0}(\Ccal, M)$ to an equivalence.
        \item For every laced category $(\Ccal, M)$, $F$ sends $(\Ccal, M)\xrightarrow{\widehat{d_0}}(\Ccal, M)_{([1],*)}$ to an equivalence.
    \end{itemize}
    Then, $F$ is trace-like. In particular, both of the above are realized.
\end{prop}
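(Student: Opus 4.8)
The plan is to deduce the invertibility of an arbitrary trace equivalence under $F$ in a purely formal way, using only the simplicial identities relating the face and degeneracy maps $d_0,d_1,s$ of the cotensor $(\Ccal,M)^{([\bullet],*)}$ (respectively the cosimplicial identities relating $\widehat{d_0},\widehat{d_1},\hat s$ for the tensor $(\Ccal,M)_{([\bullet],*)}$) together with the definition of a trace homotopy. I will run the argument under the first hypothesis; the argument under the second is formally dual, using the description of trace homotopies as laced functors \emph{out of} $(\Ccal,M)_{([1],*)}$ recorded after Definition~\ref{defi:trace-equivalence}, with $\widehat{d_0},\widehat{d_1},\hat s$ of Example~\ref{ex:d-is-trace-equivalence} playing the roles of $d_0,d_1,s$.

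\emph{Step 1: $F$ inverts the degeneracy and identifies the two faces.} Assume $F$ sends $d_0\colon (\Ccal,M)^{([1],*)} \to (\Ccal,M)$ to an equivalence for every $(\Ccal,M)$. The simplicial identities give $d_0 \circ s \simeq \id_{(\Ccal,M)} \simeq d_1 \circ s$, where $s\colon (\Ccal,M) \to (\Ccal,M)^{([1],*)}$ is the degeneracy. Applying $F$ to the first relation and using that $F(d_0)$ is an equivalence shows that $F(s)$ is an equivalence with $F(s)\simeq F(d_0)^{-1}$; applying $F$ to the second relation then yields $F(d_1)\simeq F(s)^{-1}\simeq F(d_0)$. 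In particular $F$ also inverts every $d_1$.

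\emph{Step 2: $F$ collapses trace homotopies.} If $H\colon (\Ccal,M)\to(\Dcal,N)^{([1],*)}$ is a trace homotopy from a laced functor $f$ to a laced functor $g$, then $d_0\circ H\simeq f$ and $d_1\circ H\simeq g$ by definition, so applying $F$ and invoking $F(d_0)\simeq F(d_1)$ from Step~1 gives $F(f)\simeq F(d_0)\circ F(H)\simeq F(d_1)\circ F(H)\simeq F(g)$.

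\emph{Step 3: conclusion.} If $f\colon (\Ccal,M)\to(\Dcal,N)$ is a trace equivalence with trace inverse $g$, then $g\circ f$ and $f\circ g$ are trace homotopic to the respective identities, so Step~2 gives $F(g)\circ F(f)=F(g\circ f)\simeq F(\id)=\id$ and $F(f)\circ F(g)=F(f\circ g)\simeq\id$; hence $F(f)$ is an equivalence, i.e.\ $F$ is trace-like. The final assertion is then immediate: by Example~\ref{ex:d-is-trace-equivalence} the maps $d_0$ and $\widehat{d_0}$ are themselves trace equivalences, so any trace-like functor inverts all of them, which is exactly the content of the two displayed conditions. I do not anticipate any real obstacle; the only point needing (entirely routine) care is bookkeeping the simplicial and cosimplicial identities above as coherent equivalences of laced functors, which is built into the functoriality of the (co)tensor along the (co)simplicial object $[\bullet]$.
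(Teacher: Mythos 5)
Your proof is correct and follows essentially the same route as the paper's: use the degeneracy $s$ and the simplicial identities $d_0\circ s = d_1\circ s = \id$ to force $F$ to also invert $s$ and $d_1$, conclude that $F$ identifies any two trace-homotopic laced functors, and apply this to the homotopies $g\circ f \simeq \id$ and $f\circ g\simeq \id$. Your Step~1 usefully makes explicit a point the paper's proof leaves implicit, namely that $F(d_1)$ must also be an equivalence before one can conclude that $F(d_1\circ H)=F(g\circ f)$ is one.
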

\begin{proof}
    First note that both conditions are implied by being trace-like, since both $d_0$ and $\widehat{d_0}$ are strict trace equivalences (see Example~\ref{ex:d-is-trace-equivalence}). For the inverse implication, let us treat just the first case, the second being dual. Let $f\colon (\Ccal, M)\to(\Dcal, N)$ be a strict trace equivalence with trace inverse $g\colon (\Dcal, N)\to(\Ccal, M)$, and let $H$ and $H'$ be the two trace-homotopies to the identity. Since $F$ maps $d_0$ to an equivalence by assumption and $d_0\circ H =\id$ it follows that $F$ maps $H$ to an equivalence, and so also maps $d_1\circ H = g\circ f$ to an equivalence. The same argument applied to $H'$ shows $F$ also inverts $f\circ g$. It follows that $F(f)$ is an equivalence and so $F$ is trace-like. 
\end{proof}

\begin{lmm} \label{CycIsMadeOfTraceEq}
    Let $(\Ccal, M)$ be a laced category and $\rho : [n]\to[m]$ a map in $\Delta$. The following maps are strict trace equivalences:
    \begin{enumerate}
        \setlength\itemsep{0.3em}
        \item $\rho^*:(\Ccal, M)^{([m],*)}\longrightarrow(\Ccal, M)^{([n],*)}$
        \item $\rho_*:(\Ccal, M)_{([n],*)}\longrightarrow(\Ccal, M)_{([m],*)}$
    \end{enumerate}
\end{lmm}
\begin{proof}
    We treat the first case, the second is dual. Strict trace equivalences are stable by composition and if $f$ is a strict trace equivalence and $g\circ f=\id$ or $f\circ g=\id$, then so is $g$. Hence, given the structure of $\Delta$, it is sufficient to treat the case of the injective maps $i_n\colon [0] \to [n]$ and $i_0\colon [0] \to [n]$ sending $0 \in [0]$ to $n \in [n]$ and $0 \in [n]$, respectively.  
    We again treat just the first case, as the second is dual.
    Consider the one-sided inverse 
    $p^*\colon (\Ccal, M)^{([0],*)}\to(\Ccal, M)^{([n],*)}$ 
    induced by the unique map $p\colon [n]\to[0]$.
    It then suffices to find a trace homotopy between the composite $p^* \circ i_n^*$ and $\id$. Explicitly, this amounts a laced functor $(\Ccal, M)^{([n], *)}\to(\Ccal, M)^{([n]\times[1], *)}$ fitting in a commutative diagram
     \[
        \begin{tikzcd}
            & & (\Ccal, M)^{([n], *)} \\
            (\Ccal, M)^{([n], *)}\arrow[r]\arrow[rrd, bend right=10, "p^* \circ i_n^*"]\arrow[rru, bend left=10, "\id"'] & (\Ccal, M)^{([n]\times[1], *)}\arrow[ru, "d_0"']\arrow[rd, "d_1"] \\
            & & (\Ccal, M)^{([n], *)}.
        \end{tikzcd}
     \]
	But there is already such a commutative diagram at the level of posets: 
    \[
        \begin{tikzcd}
            {}[n]\arrow[rd]\arrow[rrd, bend left=15, "\id"] \\
             & {}[n]\times[1]\arrow[r, "H"] & {}[n] \\
            {}[n]\arrow[ru]\arrow[rru, bend right=15, "i_n\circ p"']
        \end{tikzcd}
    \]
    where $H\colon [n]\times[1]\to[n]$ maps a tuple $(k, i)$ to $k$ if $i=0$ and to $n$ if $i=1$. 
    The functoriality of the cotensor then concludes.
\end{proof}

As a consequence of Lemma \ref{CycIsMadeOfTraceEq}, the simplicial object $(\Ccal, M)^{([\bullet],*)}$ has its faces and degeneracies being strict trace equivalences.
Thus Proposition \ref{TestSampleTrLike} implies that $F$ is trace-like if and only if it sends the simplicial object $(\Ccal, M)^{([\bullet],*)}$ to a constant simplicial object. 

\begin{defi}\label{defi:cyclic-bar}
Let $\Ecal$ be a category admitting geometric realizations and $F\colon \CatL\to\Ecal$ a functor. We let $\cyc(F)$ denote the functor given pointwise by the geometric realization
    \[
        \cyc(F)(\Ccal, M):=\left|F((\Ccal, M)^{([\bullet],*)})\right|.
    \]
\end{defi}

The association $F\mapsto\cyc(F)$ upgrades to a functor $\cyc\colon \Fun(\CatL, \Ecal)\to \Fun(\CatL, \Ecal)$, which we call the \textit{cyclic bar construction functor}. The inclusion of 0-simplices provides a natural transformation $F\to\cyc(F)$, which is itself natural in $F$, hence providing a natural transformation $\id\Rightarrow\cyc$.

\begin{rmq}\label{bar-is-bar}
We may reformulate Proposition \ref{CyclicBarInTCatEx} as saying that $\uTHH\simeq\cyc(\LaceEq)$.
\end{rmq}

\begin{prop} \label{CycIsTRLike}
Let $\Ecal$ be a category admitting geometric realizations and $F\colon \CatL\to\Ecal$ a functor. 
Then $\cyc(F)$ is trace-like.
\end{prop}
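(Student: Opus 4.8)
The plan is to reduce to the criterion of Proposition~\ref{TestSampleTrLike} and then perform a short simplicial-homotopy computation. So first I would fix a laced category $(\Ccal,M)$ and aim to show that $\cyc(F)$ sends $d_0\colon (\Ccal,M)^{([1],*)}\to(\Ccal,M)$ to an equivalence; by Proposition~\ref{TestSampleTrLike} this will suffice. Recall from Example~\ref{ex:d-is-trace-equivalence} the common trace–inverse $s\colon (\Ccal,M)\to(\Ccal,M)^{([1],*)}$, induced by $[1]\to[0]$, which satisfies $d_0\circ s=\id_{(\Ccal,M)}$. Hence $\cyc(F)(s)$ is a section of $\cyc(F)(d_0)$, and it is enough to prove that the endomorphism $\cyc(F)(s\circ d_0)=\cyc(F)(s)\circ\cyc(F)(d_0)$ of $\cyc(F)((\Ccal,M)^{([1],*)})$ is the identity. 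Unwinding the definitions, $s\circ d_0$ is the endofunctor $c^*$ of $(\Ccal,M)^{([1],*)}$ induced by the constant poset map $c\colon [1]\to[1]$ with value the terminal vertex.

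Next I would unwind $\cyc(F)$ on the relevant objects. Since cotensors compose (Corollary~\ref{cor:internal}), $\widetilde{\St}$ is symmetric monoidal, and $\ast\boxtimes\ast\simeq\ast$, there is a natural identification $((\Ccal,M)^{([1],*)})^{([n],*)}\simeq (\Ccal,M)^{([1]\times[n],*)}$, compatibly with the simplicial structure in $[n]$, so that $\cyc(F)((\Ccal,M)^{([1],*)})\simeq |V_\bullet|$ with $V_n:=F((\Ccal,M)^{([1]\times[n],*)})$ and simplicial structure maps induced by $\id_{[1]}\times(-)$ applied to $[\bullet]$. By functoriality of the cotensor in $\CatL$ (and the compatibility $((-)^{(g,*)})^{([n],*)}\simeq (-)^{(g\times\id_{[n]},*)}$), the endomorphism $\cyc(F)(c^*)$ of $|V_\bullet|$ is the geometric realization of the simplicial endomorphism of $V_\bullet$ whose $n$-th component is $F\big((c\times\id_{[n]})^*\big)$.

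The heart of the argument is then to exhibit an explicit simplicial homotopy from the identity of $V_\bullet$ to this endomorphism; since simplicially homotopic maps of simplicial objects induce equivalent maps after geometric realization, this gives $\cyc(F)(c^*)\simeq\id$, hence that $\cyc(F)(d_0)$ is an equivalence, and, by Proposition~\ref{TestSampleTrLike}, that $\cyc(F)$ is trace-like. The homotopy is the one coming from the usual prism decomposition: for a simplex $\phi\colon [n]\to[1]$ put $\psi_\phi\colon [1]\times[n]\to[1]\times[n]$, $(i,k)\mapsto(\max(i,\phi(k)),k)$, which is a well-defined poset map with $\psi_\phi=\id$ when $\phi\equiv 0$ and $\psi_\phi=c\times\id_{[n]}$ when $\phi\equiv 1$. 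A direct check shows $\psi_\phi\circ(\id_{[1]}\times\rho)=(\id_{[1]}\times\rho)\circ\psi_{\phi\rho}$ for every $\rho$ in $\Delta$, and this is exactly the compatibility needed for the family $\{F(\psi_\phi^{\,*})\}_\phi$ to assemble into a map of simplicial objects $V_\bullet\times\Delta^1\to V_\bullet$ restricting to $\id_{V_\bullet}$ over one vertex and to the endomorphism above over the other. (For this step one uses finite coproducts in $\Ecal$; in all applications $\Ecal$ is cocomplete, and one can in any case run the prism subdivision directly.)

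The main obstacle, and the reason one cannot argue levelwise, is precisely that $F$ is \emph{not} assumed trace-like: the individual maps $F\big((c\times\id_{[n]})^*\big)$ are not equivalences — that would amount to trace-likeness of $F$ at $(\Ccal,M)^{([1],*)}$ — so the whole content lies in the fact that geometric realization over $\catop{\Delta}$ collapses the prism homotopy. I would also remark that this argument uses only the elementary interplay of the cotensors by the $([n],*)$ with the product posets $[1]\times[n]$, and in particular is independent of the edgewise-subdivision identification $\uTHH\simeq\cyc(\LaceEq)$ of Proposition~\ref{CyclicBarInTCatEx}.
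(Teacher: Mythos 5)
Your proof is correct and follows essentially the same approach as the paper: reduce via Proposition~\ref{TestSampleTrLike}, identify $((\Ccal,M)^{([1],*)})^{([n],*)}\simeq(\Ccal,M)^{([1]\times[n],*)}$, note that the projection $[1]\times[n]\to[n]$ furnishes a one-sided inverse, and then exhibit an explicit simplicial homotopy at the level of posets. The only cosmetic differences are your use of $d_0$ (constant map onto the terminal vertex) where the paper uses the other face, and your packaging of the homotopy as a family $\psi_\phi$ indexed by $\Delta^1_n$ rather than the combinatorial $h_i$-formulation the paper uses --- the latter avoiding the finite-coproducts caveat you rightly flag.
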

\begin{proof} 
    Using Proposition \ref{TestSampleTrLike}, we are reduced to showing that the simplicial map 
    \[
        \begin{tikzcd}
          F((\Ccal, M)^{([1]\times[\bullet],*)}) \ar[r]  &  F((\Ccal, M)^{([\bullet],*)}),
        \end{tikzcd}
    \]
    induced by the inclusions $[n] = \{0\} \times [n] \subseteq [1] \times [n]$, is an equivalence on geometric realizations.
    This map has a one-sided inverse induced by the projections $[1] \times [n] \to [n]$, and so it suffices to show that the associated composite
    \[
        \begin{tikzcd}
            F((\Ccal, M)^{([1]\times[\bullet],*)})\arrow[r] & F((\Ccal, M)^{([\bullet],*)})\arrow[r] & F((\Ccal, M)^{([1]\times[\bullet],*)})
        \end{tikzcd}
    \]
    is simplicially homotopic to the identity. Now note that any levelwise induced functor on simplicial objects sends simplicial homotopies to simplicial homotopies, and so it will suffice to verify that the corresponding composite map $[1] \times [\bullet] \to [\bullet] \to [1] \times [\bullet]$ of simplicial objects in $\Cat\op$ is simplicially homotopic to the identity.     
     Explicitly, this means providing maps $h_i\colon [1]\times[n+1] \to [1]\times[n]$ for $i=0,...,n$ satisfying suitable boundary relations. Unwinding the definitions, one can take 
     \[ h_i(\epsilon,k) = \left\{\begin{matrix} (0,k) & k \leq i \\ (\epsilon,k-1) & k > i, \end{matrix}\right. \]
     thus concluding the proof.
\end{proof}

Applying the above proposition to $F = \LaceEq$ and using Remark~\ref{bar-is-bar} we get
\begin{cor}
    The functor $\uTHH\colon \CatEx \to \Spaces$ is trace-like.
\end{cor}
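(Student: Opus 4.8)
The plan is to recognize $\uTHH$ as a cyclic bar construction and then appeal to the general trace-invariance of such constructions, so that essentially no new work is required beyond assembling the two immediately preceding results.

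The first step is to invoke Remark~\ref{bar-is-bar}: it records that Proposition~\ref{CyclicBarInTCatEx} provides a natural equivalence
\[
    \uTHH \;\simeq\; \cyc(\LaceEq)
\]
of functors $\CatL \to \Spaces$, where $\LaceEq$ sends $(\Ccal,M)$ to $\Lace(\Ccal,M)^{\simeq}$ and $\cyc$ is the cyclic bar construction functor of Definition~\ref{defi:cyclic-bar} (well-defined with values in $\Spaces$, since $\Spaces$ admits geometric realizations). The second step is to apply Proposition~\ref{CycIsTRLike} with $\Ecal = \Spaces$ and $F = \LaceEq$, which yields that $\cyc(\LaceEq)$ inverts trace equivalences; transporting this property along the equivalence above then shows that $\uTHH\colon \CatL \to \Spaces$ is trace-like.

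I do not expect a genuine obstacle here, as the substantive content has already been supplied upstream: the Bousfield--Kan comparison in Proposition~\ref{CyclicBarInTCatEx}, identifying the coend defining $\uTHH$ with the geometric realization of the cyclic bar object $\baBar_\bullet$, and the simplicial-homotopy argument of Proposition~\ref{CycIsTRLike}, which in turn rests on the reduction of Proposition~\ref{TestSampleTrLike} to checking that $d_0$ is inverted together with Lemma~\ref{CycIsMadeOfTraceEq}. The only hypothesis to keep in mind is that Proposition~\ref{CycIsTRLike} requires the target to admit geometric realizations --- which holds for $\Spaces$, and is in any case exactly what makes $\cyc(\LaceEq)$ meaningful in the first place.
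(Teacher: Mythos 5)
Your proof is correct and is exactly the paper's argument: the corollary is stated immediately after Proposition~\ref{CycIsTRLike} and deduced by taking $F = \LaceEq$ together with the identification $\uTHH \simeq \cyc(\LaceEq)$ from Remark~\ref{bar-is-bar}. Nothing is missing.
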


We now establish the universal property of the cyclic bar construction.

\begin{thm}\label{thm:universal-bar}
    Let $\Ecal$ be a category admitting geometric realizations and $F\colon \CatL\to\Ecal$ a functor. Then the natural transformation $\eta_F\colon F\Rightarrow\cyc(F)$ exhibits its target as the initial trace-like functor under $F$.
\end{thm}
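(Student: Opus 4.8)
The plan is to verify the two defining properties of ``initial trace-like functor under $F$'': first that $\cyc(F)$ is trace-like, and second that for every trace-like functor $G\colon \CatL \to \Ecal$ the restriction map
\[
    \Nat(\cyc(F), G) \longrightarrow \Nat(F, G)
\]
induced by $\eta_F$ is an equivalence. The first property is exactly Proposition~\ref{CycIsTRLike}, so it remains to establish the universal property of the restriction map. The strategy is the usual one for realizations of simplicial objects: a trace-like $G$ sends the simplicial object $(\Ccal,M)^{([\bullet],*)}$ to a constant simplicial object (by Lemma~\ref{CycIsMadeOfTraceEq} and Proposition~\ref{TestSampleTrLike}, since all faces and degeneracies of $(\Ccal,M)^{([\bullet],*)}$ are trace equivalences), and hence $G$ already ``sees'' $\cyc(F')$ as $F'$ for functors $F'$ it can be compared against; one then needs to turn this pointwise statement into a statement about natural transformations.

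Concretely, I would proceed as follows. Write $\ Cyc$ as the left Kan extension along $\Delta^{\mathrm{op}} \times \CatL \to \CatL$ of the bisimplicial-type functor $([n],(\Ccal,M)) \mapsto F((\Ccal,M)^{([n],*)})$ composed with geometric realization; equivalently, for a fixed target-valued functor $G$, use that $\Nat(\cyc(F), G)$ is computed as a totalization $\mathrm{Tot}_{[n]\in\Delta} \Nat\big(F((-)^{([n],*)}), G\big)$, using that $G$ need not preserve realizations but the mapping space out of a realization is a totalization of mapping spaces. Now for each $n$ the cotensoring functor $(-)^{([n],*)}$ has a left adjoint, namely the tensoring $(-)_{([n],*)}$, so by adjunction $\Nat\big(F((-)^{([n],*)}), G\big) \simeq \Nat\big(F(-), G((-)_{([n],*)})\big)$ — wait, that is not quite an adjunction on natural-transformation spaces; rather I should use that $(-)^{([n],*)}$ and precomposition interact via Lemma~\ref{CycIsMadeOfTraceEq}. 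The cleaner route: since $G$ is trace-like, Lemma~\ref{CycIsMadeOfTraceEq}(1) shows $G$ sends each $\rho^*\colon (\Ccal,M)^{([m],*)}\to(\Ccal,M)^{([n],*)}$ to an equivalence, so the cosimplicial space $[n] \mapsto \Nat\big(F((-)^{([n],*)}), G\big)$ is, after composing with $G$'s values being locally constant in the relevant sense, a \emph{constant} cosimplicial object with value $\Nat(F, G)$ — here one uses that precomposition with $(-)^{([0],*)} = \id$ is the identity and that $d_0\colon (-)^{([1],*)} \to (-)^{([\bullet],*)}$ induces the same map as the degeneracy after applying anything trace-like. Its totalization is then $\Nat(F,G)$, and one checks the resulting equivalence $\Nat(\cyc(F),G) \simeq \Nat(F,G)$ is precisely restriction along $\eta_F$ by tracking the $0$-simplices.

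The main obstacle I anticipate is the bookkeeping needed to assert that the cosimplicial space $[n]\mapsto \Nat\big(F((-)^{([n],*)}),\,G\big)$ is constant when $G$ is trace-like — this requires more than knowing each individual coface/codegeneracy is an equivalence; one needs the simplicial homotopies (in the spirit of the explicit $h_i$ constructed in Proposition~\ref{CycIsTRLike}) to witness that the whole cosimplicial diagram deformation-retracts onto its $0$-th term in a coherent way. Equivalently, one argues that $\cyc(G) \simeq G$ naturally for trace-like $G$ (the unit $G \Rightarrow \cyc(G)$ is an equivalence, since at each level $G$ inverts the comparison maps and the realization of a constant simplicial object is that constant value), and then computes
\[
    \Nat(\cyc(F), G) \simeq \Nat(F, \text{(something built from }G\text{)})
\]
by the adjunction-like behaviour of $\cyc$ as an endofunctor of $\Fun(\CatL,\Ecal)$: one shows $\cyc$ is a localization onto the full subcategory of trace-like functors (its essential image), with $\eta\colon \id \Rightarrow \cyc$ the localization unit. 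For that, the two things to check are that $\cyc(F)$ is always trace-like (Proposition~\ref{CycIsTRLike}) and that $\eta_{\cyc(F)}$ and $\cyc(\eta_F)$ are equivalences — the latter following because $\cyc(F)$ is already trace-like, hence $\cyc$ applied to it changes nothing up to the canonical equivalence $\cyc(G)\simeq G$ for $G$ trace-like. Once $\cyc$ is identified as such a localization, the universal property of $\eta_F$ is formal.
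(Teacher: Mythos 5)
Your overall strategy is the right one and, in the end, coincides with the paper's: identify $\cyc$ as a Bousfield localization onto the trace-like functors via the criterion of \cite[Proposition 5.2.7.4]{HTT}, which amounts to showing that $\cyc(F)$ is trace-like (Proposition~\ref{CycIsTRLike}) and that both $\eta_{\cyc(F)}$ and $\cyc(\eta_F)$ are equivalences. Your first totalization-based attempt is correctly flagged by you as problematic, and you rightly abandon it.

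There is, however, a genuine gap in your justification of the second localization condition. That $\eta_{\cyc(F)}$ is an equivalence does follow directly from $\cyc(F)$ being trace-like together with Lemma~\ref{CycIsMadeOfTraceEq}: the simplicial object $[m]\mapsto\cyc(F)((\Ccal,M)^{([m],*)})$ has all faces and degeneracies equivalences, so the inclusion of $0$-simplices into the realization is an equivalence. But ``$\cyc$ applied to $\cyc(F)$ changes nothing'' does not show that $\cyc(\eta_F)$ is an equivalence. The map $\cyc(\eta_F)$ is the realization over $[n]$ of the family $\eta_F\big((\Ccal,M)^{([n],*)}\big)\colon F((\Ccal,M)^{([n],*)})\to\big|F((\Ccal,M)^{([n]\times[k],*)})\big|_k$, and since $F$ is arbitrary (not trace-like), none of these level-wise maps need be equivalences; that they become so after realizing over $n$ is, essentially, what one is trying to prove. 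The paper closes this gap by a separate argument: writing $\cyc(\cyc(F))$ as the realization of the bisimplicial object $[m],[n]\mapsto F\big((\Ccal,M)^{([m]\times[n],*)}\big)$, using $\big((\Ccal,M)^{([m],*)}\big)^{([n],*)}\simeq(\Ccal,M)^{([m]\times[n],*)}$, and observing that $\eta_{\cyc(F)}$ and $\cyc(\eta_F)$ are the two edge inclusions, swapped by the flip involution of $\Delta\times\Delta$. Hence they are equivalent as objects of $\Ar(\Fun(\CatL,\Ecal))$, and since one of them is an equivalence so is the other. You should supply this (or an equivalent) argument rather than appealing to $\cyc(G)\simeq G$ for trace-like $G$, which only covers $\eta_{\cyc(F)}$.
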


\begin{cor} \label{uTHHUP}
    The natural transformation $\LaceEq\Rightarrow\uTHH$ exhibits the latter as the initial trace-like invariant under $\LaceEq$. 
\end{cor}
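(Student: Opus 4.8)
The plan is to obtain Corollary~\ref{uTHHUP} as the special case $F = \LaceEq$ of Theorem~\ref{thm:universal-bar}. The category $\Spaces$ admits all small colimits, and in particular geometric realizations, so Theorem~\ref{thm:universal-bar} applies to the functor $\LaceEq\colon \CatL \to \Spaces$ and tells us that the natural transformation $\eta_{\LaceEq}\colon \LaceEq \Rightarrow \cyc(\LaceEq)$, induced by the inclusion of $0$-simplices into the cyclic bar construction, exhibits $\cyc(\LaceEq)$ as the initial trace-like functor under $\LaceEq$ inside $\Fun(\CatL,\Spaces)$.

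It then remains to transport this statement along the natural equivalence $\cyc(\LaceEq) \simeq \uTHH$ supplied by Proposition~\ref{CyclicBarInTCatEx} and recorded in Remark~\ref{bar-is-bar}. Concretely, I would check that this equivalence is one of functors \emph{under} $\LaceEq$, i.e.\ that it identifies $\eta_{\LaceEq}$ with the canonical natural transformation $\LaceEq \Rightarrow \uTHH$ of Definition~\ref{defi:laced-thh} — the one induced by the inclusion $\Ccal^\simeq \hookrightarrow \TwAr(\Ccal)$ of the identity arrows into the twisted arrow category, under the identification $\LaceEq(\Ccal,M) \simeq \colim_{X \in \Ccal^\simeq}\Omega^\infty M(X,X)$. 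Both maps are by construction the inclusion of the ``constant part'' of a simplicial diagram, and the compatibility can be read off from the proof of Proposition~\ref{CyclicBarInTCatEx}: there the equivalence $\uTHH \simeq |\baBar_\bullet|$ is produced as a levelwise equivalence of edgewise-subdivided simplicial spaces, which one verifies respects the relevant vertex inclusions. Granting this, the two universal properties coincide and the corollary follows at once.

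The substantive work has already been carried out — in Theorem~\ref{thm:universal-bar}, whose proof hinges on the simplicial-homotopy argument underlying Proposition~\ref{CycIsTRLike}, and in the Bousfield–Kan identification of Proposition~\ref{CyclicBarInTCatEx} — so there is no genuine obstacle left for the corollary itself. If anything, the one point deserving care is precisely the bookkeeping of the previous paragraph: ensuring that the edgewise-subdivision and Bousfield–Kan manipulations of Proposition~\ref{CyclicBarInTCatEx} preserve the structure map out of $\LaceEq$, and not merely the underlying functor $\uTHH$.
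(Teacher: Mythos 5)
Your proposal is correct and is exactly the paper's argument: apply Theorem~\ref{thm:universal-bar} with $F = \LaceEq$, and use the identification $\cyc(\LaceEq) \simeq \uTHH$ from Proposition~\ref{CyclicBarInTCatEx} (Remark~\ref{bar-is-bar}). One small clarification renders your compatibility worry moot: Definition~\ref{defi:laced-thh} does not supply an independent transformation $\LaceEq \Rightarrow \uTHH$ — the paper introduces that transformation only afterwards, precisely as the inclusion of $0$-simplices into the cyclic bar construction (see the sentence preceding Definition~\ref{defi:trace-equivalence}), so it \emph{is} $\eta_{\LaceEq}$ under the identification of Remark~\ref{bar-is-bar} by fiat, and no extra bookkeeping with the edgewise subdivision or the twisted-arrow coend is required. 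That said, if one did insist on the coend presentation of $\uTHH$ as primary, your instinct to check that the Bousfield--Kan identification respects the structure map from $\LaceEq$ would be the right thing to verify.
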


\begin{rmq}\label{uTHHUP-2}
    Since $\SigmaInftyP\colon\Spaces\to\Sp$ commutes with the formation of the cyclic-bar construction, the same applies to natural transformation of spectrum valued functors $\SpLaceEq\Rightarrow\SigmaInftyP\uTHH$.
\end{rmq}

\begin{proof}[Proof of Theorem~\ref{thm:universal-bar}]
    We first claim that for any $F$, the two arrows $\eta_{\cyc(F)}$ and $\cyc(\eta_F)$ are equivalent in the arrow category of $\Fun(\CatL,\Ecal)$. For this, remark that 
    \[
        ((\Ccal, M)^{([m], *)})^{([n],*)}=(\Ccal, M)^{([m] \times [n],*)}
    \]
    so that $\cyc(\cyc(F))$ can be viewed as the colimit of the bisimplicial object $F(((\Ccal, M)^{([\bullet_1]\times[\bullet_2],*)})$, with the two maps $\eta_{\cyc(F)}$ and $\cyc(\eta_F)$ are simply induced by the horizontal and vertical inclusions in $\Delta\times\Delta$. Thus, post-composing with the self equivalence induced by the flip involution 
    $\Delta\times\Delta\to\Delta\times\Delta$     
    switches between $\eta_{\cyc(F)}$ and $\cyc(\eta_F)$, and so the two are equivalent objects in $\Ar(\Fun(\CatL,\Ecal))$.  
        
        Now by Proposition \ref{CycIsTRLike} we have that $\cyc(-)$ takes values in trace-like functors and from Proposition~\ref{CycIsMadeOfTraceEq} we know that $\eta_F$ is an equivalence for $F$ trace-like. This implies, on the one hand, that the image of $\cyc$ consists exactly of trace-like functors, and on the other hand that the two equivalent arrows  
        $\eta_{\cyc(F)}$ and $\cyc(\eta_F)$ 
        are equivalences for any $F$. The functor $\cyc(-)$ and the natural transformation $\id \Rightarrow \cyc(-)$ thus satisfy the localisation criterion of~\cite[Proposition 5.2.7.4]{HTT}, and so the desired result follows. 
\end{proof}

\begin{prop} \label{uTHHInitialLaxMonoidal}
    The natural transformation $\LaceEq \Rightarrow \uTHH$ refines to one of lax symmetric monoidal functors, making     $\uTHH\colon \CatL\to\Spaces$ the initial lax symmetric monoidal trace-like functor.
\end{prop}
\begin{proof}
    Let $p\colon \CatL \to \mathcal{T}$ be the localisation of $\CatL$ at strict trace equivalences. Comparing universal properties it then follows from Theorem~\ref{thm:universal-bar} that
    \[ \Map_{\mathcal{T}}(p(\Ccal,M),p(\Dcal,N)) = \cyc(\Map((\Ccal,M),-))(\Dcal,N) = \left|\Map((\Ccal, M),(\Dcal,N)^{([\bullet],*)})\right| .\]
    In particular, $\mathcal{T}$ is locally small. 
    Now by Proposition \ref{TestSampleTrLike} the functor $p$ also exhibits $\mathcal{T}$ as the localisation at the collection of arrows $\{(\Ccal, M)_{([1],*)}\to(\Ccal, M)\}$. Since this collection is closed under tensoring with arbitrary $(\Dcal,N) \in \CatL$ the localisation $p$ is multiplicative, so that $\mathcal{T}$ inherits a symmetric monoidal structure under which $p$ is symmetric monoidal. For any symmetric monoidal category $\Ecal$, restriction along $p$ then induces a full inclusion among the associated the Day convolution operads 
    \[ \Fun(\mathcal{T},\Ecal)^{\otimes} \subseteq \Fun(\CatL,\Ecal)^{\otimes},\] 
    whose essential image is spanned by the trace-like functors. Passing to algebra objects we hence obtain an equivalence between the category of lax symmetric monoidal functors $\mathcal{T} \to \Ecal$ and that of trace-like lax symmetric monoidal functors $\CatL \to \Ecal$. The desired result now follows from from \cite[Corollary 6.8]{Nikolaus}, since, as a functor $\mathcal{T} \to \Spaces$, $\uTHH$ is corepresented by the unit (as can be seen by plugging in $(\Ccal,M) = (\Spaf,\map)$ in the above mapping space formula), while $\LaceEq$ is corepresented by the unit on $\CatL$.
\end{proof}

\subsection{The universal property of THH}\label{sec:UPTHH}

In the previous section, we have shown that the natural transformation $\LaceEq\Rightarrow\uTHH$ exhibits $\uTHH$ as the initial trace-like functor under $\LaceEq$. In order to translate this knowledge to stable $\THH$ we need to understand the natural transformation $\SigmaInftyP\uTHH\Rightarrow\THH$. 

Let us first deal with a simple case: recall from~\cite[Section 6]{HA} that if $F\colon \Ccal\to\Dcal$ is a (not necessarily exact) functor between stable categories such that $\Dcal$ admits sequential colimits, then there is a natural transformation $F\to\nexcPart{1}F$ whose target $\nexcPart{1}F$ is exact, and which is initial amongst such transformations. We call $\nexcPart{1}F$ is the \textit{exact approximation} of $F$.

\begin{lmm} \label{StableSigmaLoop}
    Let $\Ccal$ be a stable category, $\Dcal$ and $\infty$-category with finite limits, and $F\colon \Ccal\to\Sp(\Dcal)$ an exact functor. Then the natural transformation $\eta\colon \SigmaInftyP\OmegaInfty F\to F$ exhibits its target as the exact approximation of its source.
\end{lmm}
\begin{proof}
    Recall that the universal property of $\OmegaInfty\colon \Sp(\Dcal)\to\Dcal$ implies that
    \[
        \begin{tikzcd}
            (\OmegaInfty)_*\colon \FunEx(\Ccal, \Sp(\Dcal))\arrow[r, "\simeq"] & \Fun^{\mathrm{LEx}}(\Ccal, \Dcal)
        \end{tikzcd}
    \]
    is an equivalence, where $\Fun^{\mathrm{LEx}}(-,-)$ denotes the $\infty$-category of left exact (that is, finite limit preserving) functors. In particular, for every exact $G\colon \Ccal\to\Sp(\Dcal)$, the top horizontal arrow in the diagram
    \[
        \begin{tikzcd}
            \Nat(F, G)\arrow[r, "\simeq"]\arrow[d, "\eta^*"] & \Nat(\LoopInfty F, \LoopInfty G)\arrow[d]\ar[dr,equal] &  \\
            \Nat(\SigmaInftyP\OmegaInfty F, G)\arrow[r]\ar[rr,bend right=15,"\simeq"'] & \Nat(\LoopInfty\SigmaInftyP\OmegaInfty F, \LoopInfty G) \ar[r] & \Nat(\LoopInfty F, \LoopInfty G)
        \end{tikzcd}
    \]
is an equivalence, where we note that the commutative triangle on the right is given by the triangle identity of the adjunction $\SigmaInftyP \dashv \OmegaInfty$ and the bottom horizontal composite is an equivalence by adjunction. We conclude that $\eta^*$ is an equivalence, yielding the desired result.
\end{proof}

\begin{cor}\label{cor:THH-exact-approx}
For a fixed stable category $\Ccal$, the natural transformation
\[
    \SigmaInftyP\uTHH(\Ccal, -)\Rightarrow\THH(\Ccal, -)
\]
of functors $\BiMod(\Ccal) \to \Sp$ exhibits its target as the exact approximation of its source.
\end{cor}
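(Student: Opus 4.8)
The plan is to reduce to Lemma~\ref{StableSigmaLoop}, applied ``pointwise over the twisted arrow category''. First I would record two structural observations about the functors involved, both living in $\Fun(\BiMod(\Ccal),\Sp)$ (restricted throughout to accessible functors --- everything in sight preserves filtered colimits). On the one hand, by its definition as a coend, $\THH(\Ccal,-)$ is the colimit of the diagram $\TwAr(\Ccal)\to\Fun(\BiMod(\Ccal),\Sp)$ sending $f\colon X\to Y$ to the evaluation functor $\ev_{(Y,X)}\colon M\mapsto M(Y,X)$; since $\BiMod(\Ccal)$, viewed inside $\Fun(\catop\Ccal\times\Ccal,\Sp)$, is closed under limits and colimits, each $\ev_{(Y,X)}$ is exact, and a colimit (in the functor category) of finite-colimit-preserving, zero-object-preserving functors into $\Sp$ is again exact; hence $\THH(\Ccal,-)$ is exact. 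On the other hand, $\SigmaInftyP$ preserves colimits, so $\SigmaInftyP\uTHH(\Ccal,-)$ is the colimit over the \emph{same} diagram of the functors $\SigmaInftyP\OmegaInfty\ev_{(Y,X)}$, and --- since the colimit-comparison transformation for $\OmegaInfty$ is, by construction, the mate of the colimit of the counits --- the natural transformation $\SigmaInftyP\uTHH(\Ccal,-)\Rightarrow\THH(\Ccal,-)$ of Definition~\ref{defi:laced-thh} is precisely the map of colimits induced by the counit maps $\SigmaInftyP\OmegaInfty\ev_{(Y,X)}\to\ev_{(Y,X)}$.

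Given this, the universal property is a formal consequence. For an exact functor $G\colon\BiMod(\Ccal)\to\Sp$, precomposition with the comparison map is the outer composite in
\[
  \Nat\big(\THH(\Ccal,-),G\big)=\lim_{f\in\catop{\TwAr(\Ccal)}}\Nat\big(\ev_{(Y,X)},G\big)\longrightarrow\lim_{f\in\catop{\TwAr(\Ccal)}}\Nat\big(\SigmaInftyP\OmegaInfty\ev_{(Y,X)},G\big)=\Nat\big(\SigmaInftyP\uTHH(\Ccal,-),G\big),
\]
the two equalities holding because mapping out of a colimit is the corresponding limit of mapping spaces. For each fixed $f\colon X\to Y$, Lemma~\ref{StableSigmaLoop}, applied with source $\BiMod(\Ccal)$ (stable) and with $\Dcal=\Spaces$ so that $\Sp(\Dcal)=\Sp$, to the exact functor $\ev_{(Y,X)}$, says exactly that precomposition with the counit $\SigmaInftyP\OmegaInfty\ev_{(Y,X)}\to\ev_{(Y,X)}$ induces an equivalence $\Nat(\ev_{(Y,X)},G)\xrightarrow{\simeq}\Nat(\SigmaInftyP\OmegaInfty\ev_{(Y,X)},G)$. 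Passing to the limit over $\catop{\TwAr(\Ccal)}$, the displayed map is an equivalence, which is precisely the assertion that the comparison map exhibits $\THH(\Ccal,-)$ as the exact approximation of $\SigmaInftyP\uTHH(\Ccal,-)$.

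The point that forces the argument above --- rather than a one-line invocation of Lemma~\ref{StableSigmaLoop} with $F=\THH(\Ccal,-)$ --- is that $\uTHH(\Ccal,-)$ is \emph{not} equal to $\OmegaInfty\THH(\Ccal,-)$: the colimit over $\TwAr(\Ccal)$ commutes neither with $\OmegaInfty$ nor with finite products, so the two already differ on underlying spaces. The role of the proof is to commute that twisted-arrow colimit past the exact-approximation functor, where it \emph{is} permitted, since exact approximation is a left adjoint and hence preserves colimits (equivalently, since mapping into an exact functor converts the colimit into a limit, as used above). The remaining points are bookkeeping: verifying that the colimit-comparison map of Definition~\ref{defi:laced-thh} unwinds to the colimit of counits (immediate from the mate description), and the usual size hygiene for Goodwillie calculus on the large presentable category $\BiMod(\Ccal)$.
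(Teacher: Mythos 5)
Your proof is correct and follows essentially the same route as the paper: both apply Lemma~\ref{StableSigmaLoop} pointwise to the evaluation functors $\ev_{(Y,X)}$ indexed by $\TwAr(\Ccal)$, and then commute the twisted-arrow colimit past exact approximation. Your version is somewhat more explicit — spelling out why $\THH(\Ccal,-)$ is itself exact and phrasing the colimit-commutation via the dual $\lim\Nat(-,G)$ description — whereas the paper simply invokes that exact approximation is a left adjoint and hence preserves colimits, but the substance is the same.
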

\begin{proof}
By Lemma~\ref{StableSigmaLoop}, for each fixed $X, Y\in\Ccal$, the map 
\[ \SigmaInftyP\OmegaInfty M(X, Y)\to M(X, Y),\]
considered as a natural transformation of functors in the $M$ entry, exhibits its target as the exact approximation of its source (indeed, $\ev_{(X, Y)}:M\mapsto M(X, Y)$ is exact and pre-composition with exact functors preserves exact approximations). 

By definition, the natural transformation $\SigmaInftyP\uTHH(\Ccal, -)\Rightarrow\THH(\Ccal, -)$ is obtained by forming the colimit ranging over $(X\to Y)\in\TwAr(\Ccal)$ of the above natural transformation. The formation of exact approximations commutes with colimits since it is left adjoint to the inclusion of exact functors into all functors, hence $\THH(\Ccal, -)$ is the exact approximation of $\SigmaInftyP\uTHH(\Ccal, -)$, as desired. 
\end{proof}

To get the universal property when $\Ccal$ varies, we need to introduce a fibered version of the exact approximation. More formally, let us give the following definition:

\begin{defi}
Let $\Ecal$ be a category and $\Fcal\colon \CatL\to\Ecal$ a functor. We say that $\Fcal$ is 
    \begin{itemize}
        \item \textit{tangentially reduced} if $\Ecal$ is pointed and for every stable $\Ccal$, the restriction $\Fcal_\Ccal\colon \BiMod(\Ccal) \to\Ecal$ is reduced, i.e., sends the zero bimodule to zero, and
        \item \textit{tangentially exact} if $\Ecal$ is stable and for every stable $\Ccal$, the restriction $\Fcal_\Ccal\colon \BiMod(\Ccal)\to\Ecal$ is exact. 
    \end{itemize}
\end{defi}

In an earlier draft, we had used the adjective \textit{fibrewise} (or sometimes \textit{fiberwise}) in place of \textit{tangentially}. To align better this manuscript with its sequels, we decided against this name, since in the generalizations of $\CatL$ we will consider will not quite have a \textit{fibrewise} direction but still retain a \textit{tangential}.

\begin{ex}
    The functor $\THH\colon \CatL \to \Sp$ is tangentially exact. 
\end{ex}

An important example of a tangentially reduced functor (which is not tangentially exact) is the following.
\begin{defi}
The \emph{cyclic K-theory functor} $\Kth^{\cyc}\colon \CatL \to \Sp$ is defined by  
\[ \Kth^{\cyc}(\Ccal,M) := \cofib[\Kth(\Ccal) \to \Klace(\Ccal,M)].\]
\end{defi}
In fact, the passage from $\Klace$ to $\Kth^{\cyc}$ is an instance of a general procedure which yields a universal tangentially reduced replacement of a given functor on $\CatL$.

\begin{defi}
    Let $\Ecal$ be a stable category and $\eta\colon \Fcal \Rightarrow \Fcal'$ a natural transformation of functors $\CatL \to \Ecal$. We say that $\eta$ exhibits $\Fcal'$ as the tangentially reduced (resp. tangentially exact) approximation of $\Fcal$ if $\Fcal'$ is tangentially reduced (resp. tangentially exact) and the initial such functor under $\Fcal$.
\end{defi}

In classical Goodwillie calculus, reduced approximations are realized by taking the complement $\overline{F}(X)$ of the direct summand $F(0)$ of $F(X)$, and exact approximation are realized by first performing the reduced approximation $\overline{F}$ and then taking excisive approximation via the formula
$\colim_n\Omega^n \overline{F}(\Sigma^nX)$. This requires the target to admit sequential colimits, and also works if the target $\Ecal$ is not stable, as long as sequential colimits commute with finite limits (what is known as being differentiable, see 
\cite[Definition 6.1.1.6]{HA}). This idea still works in the bundled version, as we now explain.

\begin{lmm}\label{lmm:fibrewise-reduced}
    Let $\Ecal$ be a stable category and $\Fcal\colon \CatL\to\Ecal$ a functor.  
    Then, $\Fcal$ admits a tangentially reduced approximation  
    $\Fcal\Rightarrow \overline{\Fcal}$.
    Moreover, this natural transformation splits in the following way, for every $(\Ccal, M)\in\CatL$:
    \[
        \Fcal(\Ccal, M)\simeq \overline{\Fcal}(\Ccal, M)\oplus \Fcal(\Ccal, 0)
    \]
\end{lmm}

\begin{lmm} \label{FiberwiseDerivative}
    Let $\Ecal$ be a stable category with sequential colimits and $\Fcal\colon \CatL\to\Ecal$ a functor. 
    Then, $\Fcal$ admits  
    a tangentially exact approximation  
    $\Fcal\Rightarrow \fbwnexcPart{1}\Fcal$.  
	Moreover, for each stable category $\Ccal$, the induced natural transformation $\Fcal|_{\BiMod(\Ccal)} \Rightarrow \fbwnexcPart{1}\Fcal|_{\BiMod(\Ccal)}$ exhibits $\fbwnexcPart{1}\Fcal|_{\BiMod(\Ccal)}$ as the exact approximation of $\Fcal$. 
\end{lmm}

\begin{cor}\label{cor:fiberwise-derivative}
The inclusion $\Fun^{\bex}(\CatL,\Ecal) \subseteq \Fun(\CatL,\Ecal)$ of tangentially exact functors admits a left adjoint
\[ \fbwnexcPart{1}\colon \Fun(\CatL,\Ecal) \to \Fun^{\bex}(\CatL,\Ecal), \]
and this left adjoint is compatible with restriction along $\BiMod(\Ccal) \subseteq \CatL$ for every stable category $\Ccal$.
\end{cor}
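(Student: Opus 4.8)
The plan is to read the corollary off Lemma~\ref{FiberwiseDerivative} by means of the standard recognition criterion for reflective subcategories. First I would observe that $\Fun^{\bex}(\CatL,\Ecal)$ is a full, replete subcategory of $\Fun(\CatL,\Ecal)$, since fibrewise exactness is a property of a functor invariant under equivalence. By~\cite[Proposition 5.2.7.4]{HTT}, the inclusion then admits a left adjoint as soon as every $\Fcal \in \Fun(\CatL,\Ecal)$ receives a map to a fibrewise exact functor which is initial among such maps, and this is exactly what Lemma~\ref{FiberwiseDerivative} provides, the initial such map being $\Fcal \Rightarrow \fbwnexcPart{1}\Fcal$. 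Hence the left adjoint exists and is canonically identified with $\fbwnexcPart{1}$ (here $\Ecal$ is assumed, as in Lemma~\ref{FiberwiseDerivative}, to be stable with sequential colimits).

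Next, for the compatibility with restriction, I would fix a stable category $\Ccal$, write $\iota_\Ccal\colon \BiMod(\Ccal) \hookrightarrow \CatL$ for the inclusion of the fibre over $\Ccal$, and record that restriction along $\iota_\Ccal$ fits into a commutative square
\[
\begin{tikzcd}
\Fun^{\bex}(\CatL,\Ecal) \ar[r,hook]\ar[d,"\iota_\Ccal^*"'] & \Fun(\CatL,\Ecal) \ar[d,"\iota_\Ccal^*"] \\
\FunEx(\BiMod(\Ccal),\Ecal) \ar[r,hook] & \Fun(\BiMod(\Ccal),\Ecal).
\end{tikzcd}
\]
Since $\Ecal$ is stable with sequential colimits, the lower inclusion is also reflective, with left adjoint the exact approximation $\nexcPart{1}$. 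By uniqueness of adjoints it then suffices to check that $\iota_\Ccal^*$ carries the unit $\Fcal \Rightarrow \fbwnexcPart{1}\Fcal$ to a unit map for $\nexcPart{1}$, i.e.\ that $\iota_\Ccal^*(\fbwnexcPart{1}\Fcal)$ together with the restricted transformation is the exact approximation of $\iota_\Ccal^*\Fcal = \Fcal|_{\BiMod(\Ccal)}$ --- which is precisely the ``moreover'' clause of Lemma~\ref{FiberwiseDerivative}. This gives the asserted compatibility, naturally in $\Fcal$.

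There is no substantial obstacle here: the corollary is a formal consequence of Lemma~\ref{FiberwiseDerivative}. The only points that need a little care are that $\Fun^{\bex}(\CatL,\Ecal)$ must be seen to be a full subcategory before applying the reflective-subcategory criterion, and that in establishing the compatibility one should track the unit natural transformations rather than merely their underlying objects, so that the equivalence $\iota_\Ccal^* \circ \fbwnexcPart{1} \simeq \nexcPart{1} \circ \iota_\Ccal^*$ is obtained as an equivalence of functors on $\Fun(\CatL,\Ecal)$.
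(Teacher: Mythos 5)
Your proof is correct and takes essentially the same approach as the paper: the corollary is treated there as a formal consequence of Lemma~\ref{FiberwiseDerivative}, and you have spelled out precisely the two formal steps involved (the reflective-subcategory criterion gives the left adjoint from the universal property asserted in the lemma, and the ``moreover'' clause identifies its restriction along $\iota_\Ccal$ with the pointwise exact approximation $\nexcPart{1}$ by uniqueness of adjoints). The only nit is that the precise HTT reference for the reflective-subcategory recognition criterion should be Proposition~5.2.7.8 rather than 5.2.7.4, but this does not affect the argument.
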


\begin{rmq}\label{rmq:summary-lemmas}
Lemmas~\ref{lmm:fibrewise-reduced} and \ref{FiberwiseDerivative} tell us in particular that if $\Fcal\colon \CatL \to \Ecal$ is a functor to a stable category $\Ecal$ with sequential colimits then $\Fcal$ admits a tangentially reduced approximation $\overline{\Fcal}$ and a tangentially exact approximation $\fbwnexcPart{1}\Fcal$ satisfying
\[ \Fcal(\Ccal,M) = \Fcal(\Ccal,0) \oplus \overline{\Fcal}(\Ccal,M) \]
and
\[
    \fbwnexcPart{1}\Fcal(\Ccal, M)=\colim_n \Omega^n \overline{\Fcal}(\Ccal, \Sigma^n M).
\]
\end{rmq}

\begin{proof}[Proof of Lemmas~\ref{lmm:fibrewise-reduced} and \ref{FiberwiseDerivative}]
    Consider the inclusion of the full subcategory spanned by tangentially reduced functors:
    \[
        \begin{tikzcd}
            \Fun_*(\CatL, \Ecal)\arrow[r, hook] & \Fun(\CatL, \Ecal)
        \end{tikzcd}
    \]
    We want to show this functor admits a left adjoint. 
    Since $\fgt\colon \CatL\to\CatEx$ is a cocartesian fibration classifying $\Bimod(-)$, it follows from combining \cite[Corollary 3.2.2.13]{HTT} and \cite[Proposition 7.3]{GepnerHaugsengNikolaus} that this inclusion can be rewritten
    \[
        \begin{tikzcd}
            \Gamma(\Un^{\cart}(\Fun_*(\Bimod(-), \Ecal)))\arrow[r] & \Gamma(\Un^{\cart}(\Fun(\Bimod(-), \Ecal)))
        \end{tikzcd}
    \]
    where $\Un^{\cart}(F)$ denotes the cartesian fibration classifying a functor $F$, and $\Gamma$ denotes the categories of sections of a given fibration. 
    Applying the dual version of \cite[Proposition 5.1]{HorevYanovski} for cartesian fibrations, we get that the existence of a left adjoint follows from the existence of a left adjoint for each fibre and that the restriction of the global left adjoint recovers the fiberwise adjoint. The claim now follows from the usual formula for the reduced approximation of a functor $F$ among stable categories, given by taking the cofibre of the split inclusion $F(0) \to F(-)$. 
        
    The proof of Lemma~\ref{FiberwiseDerivative} follow the exact same logic as the argument above, considering instead tangentially exact functors instead of tangentially reduced. Again, the pointwise claim and the formula are classical and can be found as the case $n=1$ of \cite[Lemma 6.1.1.33]{HA}.
\end{proof}

\begin{cor}\label{GDPreserveTRL}
    Let $\Fcal \colon \CatL \to \Ecal$ be a functor to a stable category (and for the statements involving $\fbwnexcPart{1}\Fcal$, assume in addition that $\Ecal$ has sequential colimits). Then
    \begin{itemize}
        \item If $\Fcal$ is additive then its tangentially reduced approximation $\overline{\Fcal}$ and its tangentially exact approximation $\fbwnexcPart{1}\Fcal$ are again additive.
        \item If $\Fcal$ is trace-like then its tangentially reduced approximation $\overline{\Fcal}$ and its tangentially exact approximation $\fbwnexcPart{1}\Fcal$ are again trace-like.
    \end{itemize}
\end{cor}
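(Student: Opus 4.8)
The plan is to argue directly from the explicit formulas recorded in Remark~\ref{rmq:summary-lemmas}. Write $Z\colon\CatL\to\CatL$ for the endofunctor $(\Ccal,M)\mapsto(\Ccal,0)$ (the composite of $\pi$ with the zero section $\CatEx\to\CatL$, which is a functor since restriction of bimodules preserves the zero object), equipped with the natural transformation $Z\Rightarrow\id$ induced by $0\Rightarrow M$ fibrewise, and write $S_n\colon\CatL\to\CatL$ for $(\Ccal,M)\mapsto(\Ccal,\Sigma^n M)$. Then Lemmas~\ref{lmm:fibrewise-reduced} and~\ref{FiberwiseDerivative} give natural equivalences $\overline{\Fcal}\simeq\cofib(\Fcal\circ Z\Rightarrow\Fcal)$ and $\fbwnexcPart{1}\Fcal\simeq\colim_n\Om^n(\overline{\Fcal}\circ S_n)$ for the relevant structure maps, so everything reduces to understanding how $Z$ and $S_n$ interact with trace equivalences and with laced semi-orthogonal decompositions. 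The key observation is that both commute with the cotensor by $([m],\ast)$: by Example~\ref{CotensorPointEx} the bimodule of $(\Ccal,M)^{([m],\ast)}$ is $(X_\bullet,Y_\bullet)\mapsto M(X_m,Y_0)$, which is $\Sp$-linear in $M$ and carries the zero bimodule to the zero bimodule, so $Z\bigl((\Ccal,M)^{([m],\ast)}\bigr)\simeq(\Ccal,0)^{([m],\ast)}$ and $S_n\bigl((\Ccal,M)^{([m],\ast)}\bigr)\simeq(\Ccal,\Sigma^n M)^{([m],\ast)}$, naturally in $[m]\in\Delta$ and hence compatibly with the faces $d_0,d_1$ and the degeneracy. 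In particular $Z$ and $S_n$ send each $d_0\colon(\Ccal,M)^{([1],\ast)}\to(\Ccal,M)$ to a map of the same form, hence preserve trace equivalences; and they preserve laced semi-orthogonal decompositions, since replacing a bimodule $M$ by $0$ or $\Sigma^n M$ leaves the underlying decomposition $\Acal\subseteq\Ccal\supseteq\Bcal$ untouched and preserves the vanishing $M(A,B)\simeq 0$.

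For the trace-like statement, Proposition~\ref{TestSampleTrLike} reduces us to checking that $\overline{\Fcal}$ and $\fbwnexcPart{1}\Fcal$ invert each $d_0\colon(\Ccal,M)^{([1],\ast)}\to(\Ccal,M)$. Using $Z\bigl((\Ccal,M)^{([1],\ast)}\bigr)\simeq(\Ccal,0)^{([1],\ast)}$, the map $\overline{\Fcal}(d_0)$ is the induced map on horizontal cofibres of the square obtained by applying $\Fcal$ to the naturality square of $Z\Rightarrow\id$ along $d_0$; both vertical arrows of that square are $\Fcal$ applied to a $d_0$-map, hence equivalences since $\Fcal$ is trace-like, so $\overline{\Fcal}(d_0)$ is an equivalence and $\overline{\Fcal}$ is trace-like. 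Then, using $S_n\bigl((\Ccal,M)^{([1],\ast)}\bigr)\simeq(\Ccal,\Sigma^n M)^{([1],\ast)}$, the map $\fbwnexcPart{1}\Fcal(d_0)$ is $\colim_n\Om^n$ of the maps $\overline{\Fcal}(d_0)\colon\overline{\Fcal}\bigl((\Ccal,\Sigma^n M)^{([1],\ast)}\bigr)\to\overline{\Fcal}(\Ccal,\Sigma^n M)$, each an equivalence by the step just proved, so $\fbwnexcPart{1}\Fcal$ is trace-like as well.

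For the additive statement, reducedness of $\overline{\Fcal}$ and $\fbwnexcPart{1}\Fcal$ is immediate — they are even fibrewise reduced by Lemmas~\ref{lmm:fibrewise-reduced} and~\ref{FiberwiseDerivative}, or one computes $\overline{\Fcal}(0,0)=\cofib(\id)=0$ and $\fbwnexcPart{1}\Fcal(0,0)=\colim_n\Om^n 0=0$ directly. Given a laced semi-orthogonal decomposition $(\Acal,N)\subseteq(\Ccal,M)\supseteq(\Bcal,P)$, the triples $(\Acal,0)\subseteq(\Ccal,0)\supseteq(\Bcal,0)$ and $(\Acal,\Sigma^n N)\subseteq(\Ccal,\Sigma^n M)\supseteq(\Bcal,\Sigma^n P)$ are again laced semi-orthogonal decompositions by the observation above, so an additive $\Fcal$ carries all of these to direct sum decompositions, compatibly with the structure maps of $Z\Rightarrow\id$ and of $S_n$; since $\cofib$, $\Om^n$ and $\colim_n$ preserve finite direct sums in the stable category $\Ecal$, the formulas of Remark~\ref{rmq:summary-lemmas} then exhibit $\overline{\Fcal}(\Ccal,M)$ and $\fbwnexcPart{1}\Fcal(\Ccal,M)$ as direct sums of the corresponding values on $(\Acal,\cdot)$ and $(\Bcal,\cdot)$, so both are additive. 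The only point that is not purely formal is naturality: one needs the formulas of Remark~\ref{rmq:summary-lemmas} to hold naturally in $(\Ccal,M)\in\CatL$ (the content of the cited lemmas) and the identifications $Z\bigl((\Ccal,M)^{([m],\ast)}\bigr)\simeq(\Ccal,0)^{([m],\ast)}$ and $S_n\bigl((\Ccal,M)^{([m],\ast)}\bigr)\simeq(\Ccal,\Sigma^n M)^{([m],\ast)}$ to be natural in $[m]\in\Delta$ (so compatible with $d_0$), which is where the $\Sp$-linearity of $M\mapsto M^{([m],\ast)}$ from Example~\ref{CotensorPointEx} enters; I expect this bookkeeping, and not any conceptual difficulty, to be the main obstacle.
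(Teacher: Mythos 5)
Your argument is correct and follows essentially the same route as the paper: both reduce to the explicit formulas of Remark~\ref{rmq:summary-lemmas} and observe that zeroing and shifting the bimodule preserve laced semi-orthogonal decompositions and (via the compatibility with the cotensor $(-)^{([m],*)}$, which you make explicit) trace equivalences. Your use of the $d_0$-criterion from Proposition~\ref{TestSampleTrLike} for the trace-like case, rather than arguing directly on arbitrary trace equivalences as the paper does, is a cosmetic variation, and the minor orientation slip (``vertical'' vs.\ ``horizontal'' in the naturality square of $Z\Rightarrow\id$ along $d_0$) does not affect correctness.
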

\begin{proof}
    In both cases this follows from the formulas in Remark~\ref{rmq:summary-lemmas}. 
    
    For (1), observe that if $(\Acal, N) \subseteq (\Ccal,M) \supseteq (\Bcal, P)$ is a laced semi-orthogonal decomposition then $(\Acal,0) \subseteq (\Ccal,0) \supseteq (\Bcal,0)$ is a laced semi-orthogonal decomposition and $(\Acal, \Sigma^n N) \subseteq (\Ccal,\Sigma^n M) \supseteq (\Bcal,\Sigma^n P)$ is a laced semi-orthogonal decomposition for every $n \in \mathbb{Z}$. 
    
    Similarly, for (2) we point out that if $(\Ccal,M) \to (\Dcal,M)$ is a strict trace equivalence then $(\Ccal,0) \to (\Dcal,0)$ is a strict trace equivalence and $(\Ccal,\Sigma^n M) \to (\Dcal,\Sigma^n N)$ is a strict trace equivalence for every $n \in \mathbb{Z}$.
\end{proof}

Combining Lemma~\ref{FiberwiseDerivative} with Corollary~\ref{cor:THH-exact-approx}, we deduce:

\begin{cor} \label{StableuTHHisTHH}
    The natural transformation $\SigmaInftyP\uTHH\Rightarrow\THH$ exhibits $\THH$ 
    as the tangentially exact approximation of $\SigmaInftyP\uTHH$. 
\end{cor}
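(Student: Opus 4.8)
The plan is to combine Lemma~\ref{FiberwiseDerivative} (together with Corollary~\ref{cor:fiberwise-derivative}) with the pointwise identification of Corollary~\ref{cor:THH-exact-approx}.

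First I would apply Lemma~\ref{FiberwiseDerivative} to the functor $\SigmaInftyP\uTHH\colon \CatL \to \Sp$ (noting that $\Sp$ is stable and admits sequential colimits): this yields a fibrewise exact functor $\fbwnexcPart{1}(\SigmaInftyP\uTHH)$ together with a natural transformation $\SigmaInftyP\uTHH \Rightarrow \fbwnexcPart{1}(\SigmaInftyP\uTHH)$ which is initial among transformations from $\SigmaInftyP\uTHH$ into a fibrewise exact functor, and whose restriction to each fibre $\BiMod(\Ccal)$ exhibits $\fbwnexcPart{1}(\SigmaInftyP\uTHH)|_{\BiMod(\Ccal)}$ as the (ordinary) exact approximation of $\SigmaInftyP\uTHH(\Ccal,-)$.

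Next, I would observe that $\THH\colon \CatL\to\Sp$ is fibrewise exact (as recorded in the Example above, since for fixed $\Ccal$ the coend defining $\THH(\Ccal,M)$ is computed in a stable category and is exact in the bimodule variable $M$). Hence, by the universal property of $\fbwnexcPart{1}$ from Corollary~\ref{cor:fiberwise-derivative}, the natural transformation $\SigmaInftyP\uTHH \Rightarrow \THH$ of Definition~\ref{defi:laced-thh} factors essentially uniquely as
\[
    \SigmaInftyP\uTHH \longrightarrow \fbwnexcPart{1}(\SigmaInftyP\uTHH) \overset{\theta}{\longrightarrow} \THH .
\]
It then remains to check that $\theta$ is an equivalence. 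Since a natural transformation of functors $\CatL \to \Sp$ is an equivalence exactly when it is a pointwise equivalence, and every object of $\CatL$ lies in some fibre $\BiMod(\Ccal)$, it suffices to verify that $\theta|_{\BiMod(\Ccal)}$ is an equivalence for each stable category $\Ccal$. By construction $\theta|_{\BiMod(\Ccal)}$ is a morphism under $\SigmaInftyP\uTHH(\Ccal,-)$ from $\fbwnexcPart{1}(\SigmaInftyP\uTHH)|_{\BiMod(\Ccal)}$ to $\THH(\Ccal,-)$, and both of these are exhibited as the exact approximation of $\SigmaInftyP\uTHH(\Ccal,-)$ — the former by Lemma~\ref{FiberwiseDerivative}, the latter by Corollary~\ref{cor:THH-exact-approx}. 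Since the exact approximation is characterized by an initiality property, any morphism under $\SigmaInftyP\uTHH(\Ccal,-)$ between two such objects is necessarily an equivalence, and the proof concludes.

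The argument is essentially formal once the two inputs are in hand; the only point requiring slight care is the passage from the fibrewise equivalences to an equivalence of functors on all of $\CatL$, which is exactly what the factorization through $\fbwnexcPart{1}(\SigmaInftyP\uTHH)$ combined with the pointwise criterion achieves. I do not expect a genuine obstacle here, as the substantive work has already been carried out in Lemma~\ref{FiberwiseDerivative} and Corollary~\ref{cor:THH-exact-approx}.
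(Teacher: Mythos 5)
Your argument is correct and follows the paper's approach exactly: the paper's proof of this corollary consists precisely of combining Lemma~\ref{FiberwiseDerivative} with Corollary~\ref{cor:THH-exact-approx}, which is what you spell out in detail.
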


Combining Corollary~\ref{uTHHUP-2}, Corollary~\ref{StableuTHHisTHH}, and Corollary~\ref{GDPreserveTRL}, we now conclude:
  
\begin{cor} \label{StableuTHHisTHH-2}
The tangentially exact functor $\THH$ is trace-like, and the natural transformation $\SpLaceEq\Rightarrow\THH$ exhibits $\THH$ as the initial tangentially exact trace-like functor under its source.
\end{cor}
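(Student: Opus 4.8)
The plan is to chain together the two universal properties already in hand, using the factorization
\[ \SpLaceEq \xrightarrow{\ a\ } \SigmaInftyP\uTHH \xrightarrow{\ b\ } \THH \]
of the canonical transformation, where $a$ is the inclusion of $0$-simplices into the cyclic bar construction (Corollary~\ref{uTHHUP-2}) and $b$ is the colimit comparison map of Corollary~\ref{StableuTHHisTHH}. Everything else is formal manipulation of mapping spaces of natural transformations.

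First I would check that $\THH$ actually belongs to the class of functors for which it is claimed to be initial. Fibrewise exactness is the example recorded just after the definition of fibrewise exactness. For trace-invariance, I would invoke Corollary~\ref{uTHHUP-2} to see that $\SigmaInftyP\uTHH$ is trace-like (being the initial trace-like functor under $\SpLaceEq$), then Corollary~\ref{StableuTHHisTHH} to see that $b$ realizes $\THH$ as the fibrewise exact approximation $\fbwnexcPart{1}(\SigmaInftyP\uTHH)$, and finally the trace-invariance clause of Corollary~\ref{GDPreserveTRL} to conclude that $\THH$ is itself trace-like.

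For the universal property proper, I would fix a functor $F\colon \CatL\to\Sp$ which is both fibrewise exact and trace-like, and factor precomposition with $\SpLaceEq\Rightarrow\THH$ as $a^*\circ b^*$. The map $b^*\colon \Nat(\THH,F)\to\Nat(\SigmaInftyP\uTHH,F)$ is an equivalence because $F$ is fibrewise exact and $b$ exhibits $\THH$ as the fibrewise exact approximation of $\SigmaInftyP\uTHH$ (Corollary~\ref{StableuTHHisTHH}); the map $a^*\colon \Nat(\SigmaInftyP\uTHH,F)\to\Nat(\SpLaceEq,F)$ is an equivalence because $F$ is trace-like and $a$ exhibits $\SigmaInftyP\uTHH$ as the initial trace-like functor under $\SpLaceEq$ (Corollary~\ref{uTHHUP-2}). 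Composing yields the desired equivalence $\Nat(\THH,F)\simeq\Nat(\SpLaceEq,F)$, which is exactly the assertion.

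As for obstacles: there is essentially no hard content left at this stage — all the weight has been carried by the earlier results (the universal property of $\cyc$, the identification of $\THH$ as a fibrewise exact approximation of $\SigmaInftyP\uTHH$, and the preservation of trace-invariance under fibrewise exact approximation). The only points requiring care are bookkeeping ones: confirming that the canonical transformation in the statement is indeed the composite $b\circ a$, and checking that $\THH$ genuinely lies in the class ``fibrewise exact and trace-like'' so that initiality is a meaningful claim — this last point is precisely why Corollary~\ref{GDPreserveTRL} is invoked rather than just the two universal properties on their own.
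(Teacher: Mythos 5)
Your proposal is correct and takes exactly the approach the paper does: the paper's proof of this corollary is simply "Combining Corollary~\ref{uTHradio}{uTHHUP-2}, Corollary~\ref{StableuTHHisTHH}, and Corollary~\ref{GDPreserveTRL}," and your argument spells out precisely how those three ingredients combine, both for membership of $\THH$ in the class of fibrewise exact trace-like functors and for the chaining of the two universal properties.
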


\begin{rmq}\label{rmq:thh-bar-construction}
Let $\Ecal$ be a stable category with sequential colimits.
By Corollary~\ref{GDPreserveTRL}, the tangentially exact approximation operation on $\Fun(\CatL,\Ecal)$ preserves trace-like functors. At the same time, it follow from directly from its explicit formula that the cyclic Bar construction $\cyc$ of Definition~\ref{defi:cyclic-bar} preserves tangentially exact functors. We conclude that, as two left Bousfield localisations of $\Fun(\CatL,\Ecal)$, tangentially exact approximation and cyclic Bar construction commute with each other, and their composite $\cyc \circ \fbwnexcPart{1} = \fbwnexcPart{1} \circ \cyc$ is a left Bousfield localisation of $\Fun(\CatL,\Ecal)$ whose local objects are the tangentially exact trace-like functors. 
By Corollary~\ref{StableuTHHisTHH-2} we then have that
\[ \THH = \fbwnexcPart{1}\cyc(\SpLaceEq) = \cyc\fbwnexcPart{1}(\SpLaceEq) .\]
Explicitly, $\fbwnexcPart{1}(\SpLaceEq)$ is given by 
\[ \fbwnexcPart{1}\SpLaceEq(\Ccal,M) = \colim_{X \in \Ccal^\simeq}M(X,X)\] 
and the identification $\THH = \cyc\fbwnexcPart{1}(\SpLaceEq)$ yields a bar construction type formula 
    \[
        \THH(\Ccal, M)\simeq \left|\begin{tikzcd}...\arrow[r, shift left=2]\arrow[r, shift right=2]\arrow[r] & \displaystyle\colim_{[f\colon X \to Y]\in(\Ccal^{\Delta^1})^\simeq} M(Y, X)\arrow[r, shift left=1]\arrow[r, shift right=1] & \displaystyle\colim_{X\in\Ccal^\simeq}M(X, X)\end{tikzcd}\right|
    \]
for $\THH$.
We may view $\fbwnexcPart{1}(\SpLaceEq)$ as some naive categorification of the trace of a matrix, whereas $\THH$ is an actual trace: the trace of $M$ seen as an endomorphism of the dualizable $\Ind(\Ccal)$ in $\PrExCat{L}$, by \cite[Proposition 4.5]{HoyoisScherotzkeSibilla}.
\end{rmq}

\subsection{The trace map and the Dundas-McCarthy theorem}\label{subsec:THH-derivative}

Our goal in this section is to construct a canonical trace map
\[ \Klace(\Ccal,M) \to \THH(\Ccal,M) \]
in the setting of laced categories and show that it exhibits $\THH(\Ccal,-)$ as the exact approximation (or first Goodwillie derivative) of $\Klace(\Ccal,-)$, thus generalizing the Dundas-McCarthy theorem from \cite{DundasMcCarthy} to the setting of stable categories.
 
As in the non-laced setting, the trace map can be constructed using the universal property of $\Klace$. More precisely, we have shown in Theorem~\ref{KlaceIsUnivAdditive} that the natural transformation $\SpLaceEq\Rightarrow\Klace$ exhibits laced K-theory as the initial additive invariant under $\SpLaceEq$. At the same time, we have a natural transformation $\SpLaceEq \Rightarrow \THH$, which we used in Corollary~\ref{StableuTHHisTHH-2} to endow $\THH$ with a universal property. To construct the trace map it will then suffice to verify that $\THH$ is additive (as a functor on $\CatL$). More generally, we show the following:

\begin{prop}\label{prop:trace-like-is-additive}
    Let $\Ecal$ be a stable category and $\Fcal\colon \CatL\to\Ecal$ a tangentially exact trace-like functor. Then $\Fcal$ is additive.
\end{prop}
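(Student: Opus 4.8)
The plan is to fix a laced semi-orthogonal decomposition and transport all the data it produces into the single bimodule category $\BiMod(\Ccal)$, where the only exactness at our disposal — fiberwise exactness of $\Fcal$ — is enough; the passage from the constituents $(\Acal,N),(\Bcal,P)$ to bimodules on $\Ccal$ will be carried out via the trace equivalences of Lemma~\ref{TraceEquivalencesFromAdjunction}. This is the laced form of the Kaledin--Keller argument recalled in the introduction. First, note that $\Fcal$ is automatically reduced: since $\Fcal(\Ccal,-)\colon\BiMod(\Ccal)\to\Ecal$ is exact it kills the zero bimodule, and taking $\Ccal=0$ gives $\Fcal(0,0)=0$.

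Let $(\Acal,N)\subseteq(\Ccal,M)\supseteq(\Bcal,P)$ be a laced semi-orthogonal decomposition, with $i\colon\Acal\hookrightarrow\Ccal$ and $j\colon\Bcal\hookrightarrow\Ccal$ the inclusions, $q\colon\Ccal\to\Acal$ the right adjoint of $i$ and $p\colon\Ccal\to\Bcal$ the left adjoint of $j$. Recall $qi\simeq\id$, $pj\simeq\id$, and that semi-orthogonality of the underlying decomposition forces $qj\simeq 0$ (indeed $\Map_\Ccal(iA,jB)\simeq 0$ for all $A$, so the coreflection of any object of $\Bcal$ vanishes). Write $a:=iq$ and $b:=jp$; the natural exact sequence $a\Rightarrow\id_{\Ccal}\Rightarrow b$ of endofunctors of $\Ccal$, applied to $M$ in the first variable, gives a fibre sequence $M(b-,-)\to M\to M(a-,-)$ in $\BiMod(\Ccal)$, and applied in the second variable to $M(b-,-)$ and to $M(a-,-)$ gives fibre sequences $M(b-,a-)\to M(b-,-)\to M(b-,b-)$ and $M(a-,a-)\to M(a-,-)\to M(a-,b-)$. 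The last term $M(a-,b-)$ vanishes by laced semi-orthogonality (as $aX\in\Acal$, $bY\in\Bcal$), so $M(a-,a-)\xrightarrow{\simeq}M(a-,-)$. Since $M(a-,a-)=q^{*}N$ and $M(b-,b-)=p^{*}P$ (pullbacks of $N$, resp. $P$, along $q$, resp. $p$), we conclude that in $\BiMod(\Ccal)$ the bimodule $M$ sits in a fibre sequence $M(b-,-)\to M\to q^{*}N$, whose fibre $M(b-,-)$ in turn sits in a fibre sequence $M(b-,a-)\to M(b-,-)\to p^{*}P$.

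The one genuinely substantial step is that $\Fcal(\Ccal,M(b-,a-))=0$ — ``the off-diagonal block is $\Fcal$-null'' — and this is where trace-invariance, as opposed to mere fiberwise exactness, is used. Indeed, applying Lemma~\ref{TraceEquivalencesFromAdjunction} to the adjunction $p\dashv j$ and the biexact functor $(B,Y)\mapsto M(jB,iqY)$ on $\catop{\Bcal}\otimes\Ccal$ yields a trace equivalence between $(\Ccal,M(b-,a-))$ and $(\Bcal,K)$ with $K(B,B')=M(jB,iq(jB'))\simeq 0$ since $qj\simeq 0$; thus $(\Ccal,M(b-,a-))$ is trace equivalent to $(\Bcal,0)$, and the vanishing follows since $\Fcal$ is trace-like and reduced. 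The same lemma, applied to $i\dashv q$ with $(Y,A)\mapsto N(qY,A)$ and to $p\dashv j$ with $(B,Y)\mapsto P(B,pY)$, produces trace equivalences $(\Acal,N)\simeq(\Ccal,q^{*}N)$ and $(\Bcal,P)\simeq(\Ccal,p^{*}P)$, with underlying functors $i,q$ and $p,j$ respectively. Now apply the exact functor $\Fcal(\Ccal,-)$: the second fibre sequence above, together with $\Fcal(\Ccal,M(b-,a-))=0$, shows $\Fcal(\Ccal,M(b-,-))\xrightarrow{\simeq}\Fcal(\Ccal,p^{*}P)$, and combining with the first fibre sequence and the three trace equivalences we obtain a fibre sequence in $\Ecal$
\[ \Fcal(\Bcal,P)\longrightarrow\Fcal(\Ccal,M)\longrightarrow\Fcal(\Acal,N). \]

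To finish, one identifies the two maps of this fibre sequence with, respectively, $\Fcal$ of the laced inclusion $(j,\beta)\colon(\Bcal,P)\hookrightarrow(\Ccal,M)$ and a retraction of $\Fcal$ of the laced inclusion $(i,\alpha)\colon(\Acal,N)\hookrightarrow(\Ccal,M)$; granting this, $\Fcal(i,\alpha)$ splits the fibre sequence, and since $\Fcal(p,\eta)\circ\Fcal(i,\alpha)=0$ (because $p\circ i\simeq 0$) it follows that $(\Fcal(j,\beta),\Fcal(i,\alpha))$ exhibits $\Fcal(\Ccal,M)$ as $\Fcal(\Acal,N)\oplus\Fcal(\Bcal,P)$, with the structure maps being $\Fcal(i,\alpha)$ and $\Fcal(p,\eta)$ — that is, $\Fcal$ carries the laced semi-orthogonal decomposition to a direct sum decomposition, so $\Fcal$ is additive, and in particular so is $\THH$. \textbf{The main obstacle} is precisely this last identification: one must track the laced structure maps through the trace equivalences of Lemma~\ref{TraceEquivalencesFromAdjunction} and through the equivalences $M(a-,a-)\simeq M(a-,-)=q^{*}N$ and $M(b-,b-)=p^{*}P$, checking that they match the canonical laced inclusions $(i,\alpha),(j,\beta)$ and the laced projection $(p,\eta)$ of Lemma~\ref{lmm:laced-adjoint}. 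This is routine bookkeeping; the only conceptual ingredient, the key vanishing $\Fcal(\Ccal,M(b-,a-))=0$, is already in place.
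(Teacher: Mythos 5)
Your proposal is correct in spirit and rests on the same conceptual core as the paper's proof: transport the constituents of a laced semi-orthogonal decomposition into the single fibre $\BiMod(\Ccal)$ via the trace equivalences of Lemma~\ref{TraceEquivalencesFromAdjunction}, and then invoke fibrewise exactness there. But your implementation is more roundabout than the paper's, and the step you set aside as ``routine bookkeeping'' is exactly the step that the paper's cleaner setup makes automatic. The paper decomposes $M$ in one variable only, via the exact sequence $iq \Rightarrow \id \Rightarrow jp$ applied in the second slot, producing the exact triangle $M(-,iq(-)) \to M \to M(-,jp(-))$ in $\BiMod(\Ccal)$; crucially, its two endpoints are \emph{directly} trace-equivalent to $(\Acal,N)$ and $(\Bcal,P)$ (no extra vanishing is needed), and the laced inclusion $(i,\alpha)$ and projection $(p,\eta)$ are shown to factor through these trace equivalences, yielding a single commutative $2\times 3$ diagram whose bottom row lies in $\BiMod(\Ccal)$ and whose vertical maps are trace equivalences. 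The conclusion is then immediate. You instead decompose in both variables, getting the four bimodules $M(a-,a-), M(a-,b-), M(b-,a-), M(b-,b-)$, and you need the extra (correct) observation that $(\Ccal,M(b-,a-))$ is trace-equivalent to $(\Bcal,0)$ via $p \dashv j$ and $qj\simeq 0$. This is sound, but it is extra machinery, and because you reach the final exact sequence $\Fcal(\Bcal,P) \to \Fcal(\Ccal,M) \to \Fcal(\Acal,N)$ by chaining separate equivalences rather than through a single commutative diagram, identifying its two arrows with $\Fcal(j,\beta)$ and a retraction of $\Fcal(i,\alpha)$ does genuinely require tracking laced structure maps across the trace equivalences. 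It is not a gap in the mathematical sense --- the identification can be done --- but calling it ``routine'' undersells the fact that the paper's argument was designed precisely to eliminate this bookkeeping.
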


\begin{rmq}
The proof of Proposition~\ref{prop:trace-like-is-additive} is an adaptation of a classical argument, notably found in \cite[Section 5.2]{Kaledin} who describes it as the gist of the proof of the localization theorem for Hochschild homology proven by \cite{Keller}. A close variant of this argument also appears in \cite[Theorem 3.4]{HoyoisScherotzkeSibilla}. 
\end{rmq}

\begin{cor}
The functor $\THH\colon \CatL \to \Sp$ is additive.
\end{cor}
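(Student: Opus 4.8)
The plan is to apply Proposition~\ref{prop:trace-like-is-additive} directly, with $\Ecal = \Sp$ and $\Fcal = \THH\colon \CatL \to \Sp$. Since $\Sp$ is stable, it then remains only to verify the two hypotheses of that proposition for $\THH$: that it is fiberwise exact and that it is trace-like.

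Fiberwise exactness is essentially immediate from the coend formula: for a fixed stable category $\Ccal$ the restriction $\THH(\Ccal,-)\colon \BiMod(\Ccal) \to \Sp$ is the colimit over $\TwAr(\Ccal)$ of the evaluation functors $M \mapsto M(Y,X)$, each of which is exact in $M$, and a colimit of exact functors is again exact; this is also recorded as the example following the definition of fiberwise exactness. For trace-likeness there is nothing new to do either: this is part of the content of Corollary~\ref{StableuTHHisTHH-2}, which exhibits $\THH$ as the initial fiberwise exact trace-like functor under $\SpLaceEq$ and in particular asserts that $\THH$ itself is trace-like. With both hypotheses in hand, Proposition~\ref{prop:trace-like-is-additive} yields that $\THH$ is additive (the conclusion of that proposition being additivity, which by definition already incorporates reducedness), completing the argument.

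There is no genuine obstacle at the level of this corollary: it is a formal specialization of Proposition~\ref{prop:trace-like-is-additive}. The weight of the argument sits in its two inputs. The assertion that $\THH$ is trace-like is assembled from Corollary~\ref{uTHHUP-2} (the cyclic bar comparison $\SpLaceEq \Rightarrow \SigmaInftyP\uTHH$ is the initial trace-like functor under $\SpLaceEq$), Corollary~\ref{StableuTHHisTHH} ($\THH$ is the fibrewise exact approximation of $\SigmaInftyP\uTHH$), and Corollary~\ref{GDPreserveTRL} (fibrewise exact approximation preserves trace-like functors). And the real technical heart is the proof of Proposition~\ref{prop:trace-like-is-additive} itself, which adapts Kaledin's argument for the localization theorem in Hochschild homology; but for the corollary as stated, invoking that proposition together with Corollary~\ref{StableuTHHisTHH-2} suffices.
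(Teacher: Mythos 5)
Your argument is exactly the paper's: the corollary is an immediate specialization of Proposition~\ref{prop:trace-like-is-additive}, with fibrewise exactness of $\THH$ supplied by the coend formula (as recorded in the example following the definition) and trace-likeness supplied by Corollary~\ref{StableuTHHisTHH-2}. The paper leaves the proof implicit for exactly this reason, and your write-up correctly identifies both hypotheses and where the actual work lives.
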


\begin{proof}[Proof of Proposition~\ref{prop:trace-like-is-additive}]
	Let $(\Acal, N) \subseteq (\Ccal,M) \supseteq (\Bcal, P)$ be a laced semi-orthogonal decomposition with $i\colon \Acal\to\Ccal$ and $j\colon \Bcal\to\Ccal$ denoting the inclusions.
    Denote by $q$ the right adjoint of $i$; Lemma \ref{TraceEquivalencesFromAdjunction} applied to $i \dashv q$ and the bimodule $M\circ(\id\times i)$ gives a strict trace equivalence 
    \[
        \begin{tikzcd}
            (\Acal, N)\arrow[r] & (\Ccal, M\circ(\id\times iq))
        \end{tikzcd}
    \]
    and from the triangle identities we see that the laced full inclusion $(i,\alpha)\colon (\Acal, N)\to (\Ccal, M)$ factors as follows:
    \[
        \begin{tikzcd}[column sep=large]
            (\Acal, N)\ar[r] & (\Ccal, M\circ(\catop{\id}\times iq))\arrow[r, "{(\id, \hat{\alpha})}"] & (\Ccal, M)
        \end{tikzcd}
    \]
    where $\hat{\alpha}$ is induced by the counit $iq\to\id$.  
    Dually, if we write $p\colon \Ccal \to \Bcal$ for the left adjoint of $j$ then by Lemma~\ref{lmm:laced-adjoint} it canonically refines to a laced functor $(p,\eta)\colon (\Ccal,M) \to (\Bcal,P)$, and we have a similar factorization of $(p,\eta)$ as a composite 
	\[
        \begin{tikzcd}[column sep=large]
            (\Ccal, M)\arrow[r, "{(\id, \hat{\beta})}"]  & (\Ccal, M\circ(\catop{\id}\times jp))) \ar[r]& (\Bcal, P)
        \end{tikzcd}
    \]
     where the second functor is obtain from the construction of Lemma \ref{TraceEquivalencesFromAdjunction} and $\hat{\beta}$ is induced by the unit $\id \Rightarrow jp$.
	Putting these two factorizations together yields a commutative diagram 
    \[
        \begin{tikzcd}[column sep=large, row sep=large]
        (\Acal, N)\arrow[r, "{(i, \alpha)}"]\arrow[d] & (\Ccal, M)\arrow[r, "{(p, \eta)}"]\arrow[d, equal] & (\Bcal, P) \\
        (\Ccal, M\circ(\catop{\id}\times iq))\arrow[r, "{(\id, \hat{\alpha})}"] & (\Ccal, M)\arrow[r, "{(\id, \hat{\beta'})}"] & (\Ccal, M\circ(\catop{\id}\times jp))\arrow[u]
        \end{tikzcd}
    \]
	where the bottom horizontal sequence is exact in $\BiMod(\Ccal) = \CatL \times_{\CatEx} \{\Ccal\}$ (since the sequence $iq\to\id\to jp$ is exact)
    and the vertical maps are strict trace equivalences. Since $\Fcal$ is exact on each fibre and inverts strict trace equivalences, it follows that the top row is also sent to an exact sequence, which concludes.
 \end{proof}

Having established that $\THH$ is additive, the universal property of Theorem\ref{KlaceIsUnivAdditive} implies that the natural transformation $\SpLaceEq \Rightarrow \THH$ extends essentially uniquely to a natural transformation
\[ \tr\colon \Klace \Rightarrow \THH ,\]
which we call the laced trace map.

\begin{rmq}
    The original trace map on the level of $\CatEx$ can recovered from the one above as the composite
    \[ \Kth(\Ccal) \to \Klace(\Ccal,\map) \to \THH(\Ccal,\map) = \THH(\Ccal) .\]
    Indeed, this follows from the uniqueness of both trace maps together with the fact that the composed natural map
    \[ \SigmaInftyP\Ccal^\simeq \to \SpLaceEq(\Ccal,\map) \to \THH(\Ccal,\map) = \THH(\Ccal) \]
    is the one used to construct the trace map in~\cite{BlumbergGepnerTabuada}. 
\end{rmq}

We now remark that under even less hypotheses on $\Fcal$, additivity implies trace-like.

\begin{prop}\label{prop:reduced-additive-is-trace-like}
    Let $\Ecal$ be an additive category and $\Fcal\colon \CatL \to \Ecal$ a functor which is tangentially reduced and additive. Then $\Fcal$ is trace-like.
\end{prop}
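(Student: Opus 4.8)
The plan is to reduce to the concrete criterion of Proposition~\ref{TestSampleTrLike}: for a functor to be trace-like it is enough that it invert the single laced functor $d_0\colon (\Ccal,M)^{([1],*)} \to (\Ccal,M)$ for every laced category $(\Ccal,M)$. So the whole proof comes down to checking that a fibrewise reduced additive $\Fcal$ sends each such $d_0$ to an equivalence, and then trace-likeness follows formally.

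The key geometric input is Example~\ref{CotensorPointSOD}, which exhibits $(\Ccal,M)^{([1],*)}$ as the middle term of a laced semi-orthogonal decomposition $(\Ccal,0) \hookrightarrow (\Ccal,M)^{([1],*)} \hookleftarrow (\Ccal,M)$, the right-hand inclusion being the functor $j\colon X \mapsto \id_X$. I would feed this decomposition into the additivity of $\Fcal$: by definition $\Fcal$ sends it to a direct sum decomposition of $\Fcal\big((\Ccal,M)^{([1],*)}\big)$ in which $\Fcal$ of the inclusion $j$ is the coprojection of one of the two summands and the complementary summand is $\Fcal(\Ccal,0)$. Here is where fibrewise reducedness enters: since $0$ is the zero object of $\BiMod(\Ccal)$, we get $\Fcal(\Ccal,0)=0$, and therefore $\Fcal(j)$ is an equivalence.

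To conclude I would identify $j$ with the cosimplicial degeneracy $s\colon (\Ccal,M)\to(\Ccal,M)^{([1],*)}$ of Example~\ref{ex:d-is-trace-equivalence} — both are given on underlying categories by $X\mapsto\id_X$ with the evident laced structure, $s$ being the one induced by $s_0\colon[1]\to[0]$ — so that, since $s_0\circ d^0 = \id_{[0]}$, functoriality of the cotensor gives $d_0\circ j = \id_{(\Ccal,M)}$ and hence $\Fcal(d_0)\circ\Fcal(j)=\id$. As $\Fcal(j)$ is an equivalence, so is $\Fcal(d_0)$, and Proposition~\ref{TestSampleTrLike} then yields that $\Fcal$ is trace-like. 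The only real obstacle is bookkeeping: one must be careful that the abstract splitting produced by additivity genuinely identifies $\Fcal$ of the subcategory inclusion $j$ with a summand coprojection (this is exactly the content of "sends semi-orthogonal decompositions to direct sum decompositions"), and that $j$ really is the degeneracy $s$ so that the relation $d_0\circ j = \id$ is at our disposal; neither point is deep, but both require tracking the laced structure maps carefully rather than just the underlying functors.
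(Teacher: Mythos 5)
Your proof is correct and is essentially the same as the paper's: both reduce to the criterion of Proposition~\ref{TestSampleTrLike}, feed the laced semi-orthogonal decomposition $(\Ccal,0)\subseteq(\Ccal,M)^{([1],*)}\supseteq(\Ccal,M)$ of Example~\ref{CotensorPointSOD} through additivity and fibrewise reducedness to conclude that $\Fcal(s)$ is an equivalence, and then invert $d_0$ via the section $s$. The identification of the inclusion $j\colon X\mapsto\id_X$ with the degeneracy $s$ that you flag as needing care is exactly what the paper asserts when it names the right-hand embedding $s$, so no gap.
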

\begin{proof}
    By Lemma \ref{CotensorPointSOD} we have a laced semi-orthogonal decomposition $(\Ccal,0) \subseteq (\Ccal, M)^{([1], *)}\supseteq (\Ccal,M)$, where the right hand category is embedded via the laced functor $s\colon (\Ccal,M) \to (\Ccal,M)^{([1],\ast)}$. In the composite
    \[ \Fcal(\Ccal,M) \to \Fcal(\Ccal,M) \oplus \Fcal(\Ccal,0) \to \Fcal((\Ccal,M)^{[1],\ast)}) \]
    the first map is an equivalence by the assumption that $\Fcal$ is tangentially reduced and the second an equivalence by the assumption that $\Fcal$ is additive. Hence the composite is an equivalence so that $\Fcal$ is trace-like by Proposition \ref{TestSampleTrLike}.
\end{proof}

Combining Proposition~\ref{prop:trace-like-is-additive} and Proposition~\ref{prop:reduced-additive-is-trace-like} we get:

\begin{cor}\label{cor:trace-like-iff-additive}
    Let $\Ecal$ be a stable category and $\Fcal\colon \CatL\to\Ecal$ a tangentially exact functor.  
    Then $\Fcal$ is additive if and only if $\Fcal$ is trace-like.
\end{cor}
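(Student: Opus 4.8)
The plan is to obtain the corollary as an immediate synthesis of the two preceding propositions, after making one trivial observation: the standing hypothesis of fiberwise exactness already supplies the fiberwise-reducedness that Proposition~\ref{prop:reduced-additive-is-trace-like} requires. Concretely, if $\Fcal\colon \CatL \to \Ecal$ is fiberwise exact then for every stable category $\Ccal$ the restriction $\Fcal_\Ccal\colon \BiMod(\Ccal) \to \Ecal$ is exact, hence preserves zero objects, so $\Fcal_\Ccal(0) \simeq 0$; thus $\Fcal$ is fiberwise reduced.

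Granting this, the two implications are one-line deductions. For ``$\Fcal$ additive $\Rightarrow$ $\Fcal$ trace-like'': since $\Fcal$ is additive and (by the observation above) fiberwise reduced, Proposition~\ref{prop:reduced-additive-is-trace-like} applies verbatim and yields that $\Fcal$ is trace-like. For the converse ``$\Fcal$ trace-like $\Rightarrow$ $\Fcal$ additive'': this is exactly the statement of Proposition~\ref{prop:trace-like-is-additive}, whose hypotheses (fiberwise exact, trace-like) are precisely what we have assumed. Combining the two gives the asserted equivalence.

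Since both directions reduce directly to results already established, there is no genuine obstacle remaining at this stage; the substantive work lies in the two propositions being combined — in particular in the Kaledin-style argument behind Proposition~\ref{prop:trace-like-is-additive}, which rewrites an arbitrary laced semi-orthogonal decomposition as an exact sequence internal to a single bimodule category $\BiMod(\Ccal)$ up to the trace equivalences produced by Lemma~\ref{TraceEquivalencesFromAdjunction}. (One could alternatively phrase the corollary without the separate observation by noting that in the definitions in play ``fiberwise exact'' is stronger than ``fiberwise reduced'', so no extra input is needed beyond the two propositions themselves.)
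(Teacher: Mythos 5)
Your proof is correct and matches the paper's own approach exactly: the paper obtains the corollary by combining Proposition~\ref{prop:trace-like-is-additive} with Proposition~\ref{prop:reduced-additive-is-trace-like}, and your explicit observation that fiberwise exactness implies fiberwise reducedness is the small bookkeeping point needed to apply the latter.
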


The following generalization of the Dundas-McCarthy theorem is one of the main results of the present paper:
\begin{thm}[Stable K-theory is THH] \label{DundasMcCarthy}
    The trace map 
    $\tr\colon \Klace\Rightarrow\THH$ 
    exhibits $\THH$ as the initial tangentially exact functor under $\Klace$. In particular, on the fibre over each stable $\Ccal$, the first Goodwillie derivative of the functor $\Kth(\Lace(\Ccal, -))$ is $\THH(\Ccal, -)$.
\end{thm}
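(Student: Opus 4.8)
The plan is to identify $\THH$ with the fibrewise-exact approximation $\fbwnexcPart{1}\Klace$ of laced K-theory, mirroring the way the trace map itself was built. By Corollary~\ref{cor:fiberwise-derivative} the operation $\fbwnexcPart{1}$ is left adjoint to the inclusion $\Fun^{\bex}(\CatL,\Ecal) \subseteq \Fun(\CatL,\Ecal)$, so the natural map $\Klace \Rightarrow \fbwnexcPart{1}\Klace$ is by definition the initial natural transformation from $\Klace$ into a fibrewise-exact functor. It therefore suffices to produce a canonical equivalence $\fbwnexcPart{1}\Klace \simeq \THH$ compatible with the trace map $\tr$.

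First I would note that $\fbwnexcPart{1}\Klace$ is fibrewise-exact by construction and additive by Corollary~\ref{GDPreserveTRL} (since $\Klace$ is additive), hence trace-like by Corollary~\ref{cor:trace-like-iff-additive}. Next, I would propagate the universal property of Theorem~\ref{KlaceIsUnivAdditive} through the localization $\fbwnexcPart{1}$: for any fibrewise-exact trace-like functor $\Fcal\colon \CatL \to \Ecal$ with $\Ecal$ stable, Corollary~\ref{cor:trace-like-iff-additive} makes $\Fcal$ additive, and precomposition gives
\[
    \Nat(\fbwnexcPart{1}\Klace,\Fcal) \simeq \Nat(\Klace,\Fcal) \simeq \Nat(\SpLaceEq,\Fcal),
\]
the first equivalence using that $\Fcal$ is fibrewise-exact together with the adjunction $\fbwnexcPart{1} \dashv \mathrm{incl}$, and the second using Theorem~\ref{KlaceIsUnivAdditive} and additivity of $\Fcal$. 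Hence the composite $\SpLaceEq \Rightarrow \Klace \Rightarrow \fbwnexcPart{1}\Klace$ exhibits $\fbwnexcPart{1}\Klace$ as the initial fibrewise-exact trace-like functor under $\SpLaceEq$. Since Corollary~\ref{StableuTHHisTHH-2} asserts exactly the same of $\SpLaceEq \Rightarrow \THH$, uniqueness of initial objects produces a canonical equivalence $\fbwnexcPart{1}\Klace \simeq \THH$ under $\SpLaceEq$.

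It remains to check that this equivalence intertwines $\Klace \Rightarrow \fbwnexcPart{1}\Klace$ with $\tr$. Both are natural transformations out of $\Klace$ landing in $\THH$ whose restriction along $\SpLaceEq \Rightarrow \Klace$ is the canonical map $\SpLaceEq \Rightarrow \THH$ (for the former by construction of the equivalence, for the latter by the very definition of the trace map). Since $\THH$ is additive (Proposition~\ref{prop:trace-like-is-additive}), the universal property of $\Klace$ as the initial additive functor under $\SpLaceEq$ forces such an extension to be essentially unique, so the two transformations coincide. Thus $\tr$ exhibits $\THH$ as $\fbwnexcPart{1}\Klace$, i.e.\ as the initial fibrewise-exact functor under $\Klace$. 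For the final clause, the last part of Lemma~\ref{FiberwiseDerivative} (equivalently Corollary~\ref{cor:fiberwise-derivative}) says $\fbwnexcPart{1}$ is compatible with restriction along $\BiMod(\Ccal) \subseteq \CatL$, where it restricts to the ordinary exact approximation; hence for each stable $\Ccal$ the functor $\THH(\Ccal,-)$ is the first Goodwillie derivative of $\Kth(\Lace(\Ccal,-))$, which recovers the classical Dundas--McCarthy statement via Example~\ref{SqZeroVSLace}.

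The one point demanding genuine care, rather than formal bookkeeping, is the compatibility of the abstract identification $\fbwnexcPart{1}\Klace \simeq \THH$ with the previously constructed trace map; everything else is a matter of lining up the two universal properties under $\SpLaceEq$ and invoking the localization property of $\fbwnexcPart{1}$, and that compatibility ultimately reduces to the uniqueness clause in Theorem~\ref{KlaceIsUnivAdditive}.
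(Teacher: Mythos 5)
Your argument is correct and relies on exactly the ingredients the paper uses: the universal properties of $\Klace$ (Theorem~\ref{KlaceIsUnivAdditive}) and $\THH$ (Corollary~\ref{StableuTHHisTHH-2}) under $\SpLaceEq$, the equivalence of additive and trace-like for fibrewise exact functors (Corollary~\ref{cor:trace-like-iff-additive}), and the preservation of additivity by $\fbwnexcPart{1}$ (Corollary~\ref{GDPreserveTRL}). The only cosmetic difference is that the paper first transfers $\THH$'s universal property from one under $\SpLaceEq$ to one under $\Klace$ (so $\tr$ exhibits $\THH$ as the initial fibrewise exact additive functor under $\Klace$) and then identifies that with $\fbwnexcPart{1}\Klace$ directly, which makes the compatibility of the two natural transformations automatic, whereas you compare both under $\SpLaceEq$ and then supply a separate (correct) check that the abstract equivalence intertwines the trace map via the uniqueness clause of Theorem~\ref{KlaceIsUnivAdditive}.
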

\begin{proof}
    Combining Proposition~\ref{prop:trace-like-is-additive} and Corollary~\ref{StableuTHHisTHH-2} and Theorem~\ref{KlaceIsUnivAdditive} we conclude that the map $\Klace \Rightarrow \THH$ exhibits $\THH$ as the initial tangentially exact additive functor under $\Klace$. This must coincide with the tangentially exact approximation of $\Klace$ since $\Kth$ is additive and tangentially exact approximation preserves additive functors, see Corollary~\ref{GDPreserveTRL}.
\end{proof}

One may also wonder if $\THH$ is also the universal trace-like functor under $\Kth$. This is false: the trace-like approximation of $\Kth$ actually coincides with its tangentially reduced approximation, namely, with cyclic K-theory.

\begin{prop}\label{prop:universal-K-cyc}
The cyclic K-theory functor $\Kth^{\cyc}$ is the initial trace-like functor under $\Klace$.
\end{prop}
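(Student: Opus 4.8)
The plan is to combine three facts: (i) the universal property of $\Klace$ as the initial additive invariant under $\SpLaceEq$ from Theorem~\ref{KlaceIsUnivAdditive}, (ii) the explicit formula for the fibrewise reduced approximation from Lemma~\ref{lmm:fibrewise-reduced}, which gives $\Klace(\Ccal,M) \simeq \Kth^{\cyc}(\Ccal,M)\oplus\Kth(\Ccal)$ and hence identifies $\Kth^{\cyc}$ with $\overline{\Klace}$, and (iii) the equivalence between trace-like and additive for well-behaved functors. The key observation is that $\Kth^{\cyc}$ is itself trace-like: it is fibrewise reduced by construction and additive (being the cofibre of the additive natural transformation $\Kth \circ \fgt \Rightarrow \Klace$, noting that $\Kth\circ\fgt$ is reduced and sends laced semi-orthogonal decompositions to direct sums since it only sees the underlying semi-orthogonal decomposition), so Proposition~\ref{prop:reduced-additive-is-trace-like} applies to show $\Kth^{\cyc}$ is trace-like.

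So the concrete steps are as follows. First, I would record that the natural transformation $\Klace \Rightarrow \Kth^{\cyc}$ is the fibrewise reduced approximation of $\Klace$: this is immediate from the splitting in Lemma~\ref{lmm:fibrewise-reduced} together with the fact that $\Klace(\Ccal,0) = \Kth(\Lace(\Ccal,0)) = \Kth(\Ccal)$, since $\Lace(\Ccal,0) = \Ccal$. Second, I would verify that $\Kth^{\cyc}$ is trace-like, as sketched above, via Proposition~\ref{prop:reduced-additive-is-trace-like}. Third, for the universality, suppose $\Fcal\colon \CatL \to \Ecal$ is any trace-like functor with a natural transformation $\Klace \Rightarrow \Fcal$; I must show this factors essentially uniquely through $\Kth^{\cyc}$. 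For this I would apply Proposition~\ref{prop:reduced-additive-is-trace-like}'s underlying mechanism in reverse: a trace-like functor need not be fibrewise reduced, but we can first pass to its fibrewise reduced approximation $\overline{\Fcal}$, which by Corollary~\ref{GDPreserveTRL} is still trace-like, and the splitting shows $\Nat(\Klace,\Fcal)$ relates to $\Nat(\Kth^{\cyc},\overline{\Fcal})$ — here one uses that any trace-like functor sends $s\colon (\Ccal,M)\to(\Ccal,M)^{([1],*)}$ to an equivalence, combined with the laced semi-orthogonal decomposition $(\Ccal,0)\subseteq (\Ccal,M)^{([1],*)}\supseteq(\Ccal,M)$ of Example~\ref{CotensorPointSOD}, which forces a trace-like functor to behave additively with respect to this particular decomposition.

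The cleanest route, and the one I would actually carry out, is to show directly that $\Klace \Rightarrow \Kth^{\cyc}$ exhibits the target as the initial trace-like functor, by a localisation argument in the style of the proof of Theorem~\ref{thm:universal-bar}. One shows: (a) $\Kth^{\cyc}$ is trace-like (done above); (b) for every trace-like $\Fcal$ the map $\Klace \Rightarrow \Fcal$ factors through $\Kth^{\cyc}$ — for this, note that since $\Fcal$ is trace-like it sends $(\Ccal,M)^{([1],*)}\to(\Ccal,M)$ to an equivalence, and the semi-orthogonal decomposition $(\Ccal,0)\subseteq (\Ccal,M)^{([1],*)}\supseteq (\Ccal,M)$ together with the additivity that will follow (after fibrewise exact or reduced approximation) pins down the factorization; and (c) uniqueness, which follows because $\Klace(\Ccal,0)=\Kth(\Ccal)$ maps to $\Fcal(\Ccal,0)$ compatibly and $\Kth^{\cyc}$ is precisely the cofibre. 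The main obstacle is step (b): one must argue carefully that a general trace-like functor (not assumed fibrewise exact or reduced) automatically "kills" the image of $\Kth\circ\fgt$ inside $\Klace$ in a way that is natural and homotopy-coherent, not just on homotopy categories. I expect the right tool here is again the section-category formalism of Lemma~\ref{lmm:fibrewise-reduced} (via \cite{HorevYanovski}), reducing the universal property to the fibres, where $\Lace(\Ccal,-)$ and the splitting make everything explicit; once that reduction is in place the argument is formal.
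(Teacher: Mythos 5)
Your step (a) — that $\Kth^{\cyc}$ is trace-like — is correct and matches the paper: you identify $\Kth^{\cyc}$ with the fibrewise reduced approximation $\overline{\Klace}$ using Lemma~\ref{lmm:fibrewise-reduced} and the identification $\Lace(\Ccal,0)=\Ccal$, and then invoke Proposition~\ref{prop:reduced-additive-is-trace-like}. The paper proves the same thing in the slightly more general form of Proposition~\ref{prop:universal-reduced}, which replaces $\Klace$ by any additive functor.

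The genuine gap is in your step (b), and you correctly flag it yourself as "the main obstacle" — but the resolution you propose (section-category formalism à la \cite{HorevYanovski}, reducing to fibres) does not go through, because the trace-like condition is not a fibrewise condition and cannot be checked fibrewise. The paper's actual argument is simpler and uses a specific observation you miss: for any stable $\Ccal$, the map $(0,0)\to(\Ccal,0)$ in $\CatL$ is a trace equivalence. This follows from Lemma~\ref{TraceEquivalencesFromAdjunction} applied to the adjunction between the zero category and $\Ccal$ (with the bimodule on $\catop{\Ccal}\otimes 0$ necessarily zero). Consequently, any trace-like functor $\Gcal_0$ satisfying $\Gcal_0(0,0)=0$ automatically satisfies $\Gcal_0(\Ccal,0)\simeq 0$ for all $\Ccal$, i.e., is fibrewise reduced. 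The paper then decomposes an arbitrary trace-like $\Gcal$ as $\Gcal_0\oplus E$ with $E:=\Gcal(0,0)$ constant, observes that $\overline{\Klace}$ and $\Klace$ map trivially to constant functors (they vanish on $(0,0)$), and reduces the universal property to maps into the fibrewise reduced trace-like functor $\Gcal_0$, where it follows immediately from the universal property of the fibrewise reduced approximation. Without the trace-equivalence $(0,0)\to(\Ccal,0)$ and the decomposition $\Gcal=\Gcal_0\oplus E$, your outline does not close; the semi-orthogonal decomposition of $(\Ccal,M)^{([1],*)}$ you invoke does not by itself give a splitting of $\Gcal$ for a general (not assumed additive) trace-like $\Gcal$.
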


As $\Kth^{\cyc}$ is the tangentially reduced approximation of $\Klace$ (see Lemma~\ref{lmm:fibrewise-reduced}) the last proposition is a special case of the following more general statement:

\begin{prop}\label{prop:universal-reduced}
Let $\Ecal$ be a stable category, $\Fcal\colon \CatL \to \Ecal$ an additive functor and $\overline{\Fcal}(\Ccal,M) = \cof[\Fcal(\Ccal,0) \to (\Ccal,M)]$ its tangentially reduced approximation. Then the map $\Fcal \Rightarrow \overline{\Fcal}$ exhibits $\overline{\Fcal}$ as the initial trace-like functor under $\Fcal$.
\end{prop}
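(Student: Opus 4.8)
The plan is to compare universal properties. By Theorem~\ref{thm:universal-bar} applied to $F = \Fcal$, the natural transformation $\Fcal \Rightarrow \cyc(\Fcal)$ exhibits $\cyc(\Fcal)$ as the initial trace-like functor under $\Fcal$, so it suffices to produce a natural equivalence $\cyc(\Fcal) \simeq \overline{\Fcal}$ compatible with the maps from $\Fcal$. Since $\overline{\Fcal}$ is trace-like by Proposition~\ref{prop:reduced-additive-is-trace-like} (it is fibrewise reduced by construction and additive by Corollary~\ref{GDPreserveTRL}), the universal property of $\cyc(\Fcal)$ gives a canonical comparison map $\cyc(\Fcal) \Rightarrow \overline{\Fcal}$ under $\Fcal$; the content is to check it is an equivalence.

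First I would reduce to the fibrewise reduced case. Write $\Fcal(\Ccal,M) = \Fcal(\Ccal,0) \oplus \overline{\Fcal}(\Ccal,M)$ as in Lemma~\ref{lmm:fibrewise-reduced}. Since $\cyc$ is computed by a geometric realization of the simplicial object $[n] \mapsto \Fcal((\Ccal,M)^{([n],*)})$, and the underlying stable category of $(\Ccal,M)^{([n],*)}$ is always $\Ccal^{[n]}$, the constant summand contributes $\Fcal(\Ccal^{[n]},0)$ in simplicial degree $n$; as $[n] \mapsto \Ccal^{[n]}$ with the simplicial structure coming from $([\bullet],*)$ is a constant (up to the relevant simplicial homotopy, by Lemma~\ref{CycIsMadeOfTraceEq} and Proposition~\ref{TestSampleTrLike}) diagram on $\Ccal$, this summand realizes back to $\Fcal(\Ccal,0)$. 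Hence $\cyc(\Fcal)(\Ccal,M) \simeq \Fcal(\Ccal,0) \oplus \cyc(\overline{\Fcal})(\Ccal,M)$, and we are reduced to showing $\cyc(\overline{\Fcal}) \simeq \overline{\Fcal}$, i.e.\ that a fibrewise reduced additive functor is already trace-like — which is exactly Proposition~\ref{prop:reduced-additive-is-trace-like}. More precisely, once we know $\overline{\Fcal}$ is trace-like, Theorem~\ref{thm:universal-bar} tells us the unit $\overline{\Fcal} \Rightarrow \cyc(\overline{\Fcal})$ is an equivalence, and then reassembling the constant summand gives $\Fcal \Rightarrow \cyc(\Fcal)$ equivalent to $\Fcal \Rightarrow \overline{\Fcal}$ after identifying targets.

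An alternative, cleaner route that avoids the summand bookkeeping: observe directly that $\overline{\Fcal}$ is trace-like (Proposition~\ref{prop:reduced-additive-is-trace-like}) and fits under $\Fcal$; and that $\overline{\Fcal}$ is \emph{terminal} among trace-like functors under $\Fcal$ in the sense needed, because any trace-like $\Gcal$ under $\Fcal$ is in particular fibrewise reduced (a trace-like functor sends the laced semi-orthogonal decomposition $(\Ccal,0) \subseteq (\Ccal,M)^{([1],*)} \supseteq (\Ccal,M)$ of Example~\ref{CotensorPointSOD}, together with additivity, to force vanishing on the zero bimodule — or more simply, $\Gcal(\Ccal,0) \to \Gcal(\Ccal,M)$ split-injects and trace-likeness applied to $d_0\colon (\Ccal,0)^{([1],*)} = (\Ccal,0)$... ) so factors through $\overline{\Fcal}$. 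I would phrase it as: the inclusion of fibrewise reduced functors has a left adjoint $\overline{(-)}$ (Lemma~\ref{lmm:fibrewise-reduced}); restrict the adjunction to trace-like functors — fibrewise reduced trace-like functors agree with all trace-like functors because a trace-like functor under an additive $\Fcal$ need not be reduced, but the map $\Fcal \Rightarrow \Gcal$ always factors through $\overline{\Fcal}$ since $\Fcal(\Ccal,0) \Rightarrow \Gcal(\Ccal,0)$ and the latter is a split summand that maps compatibly. The hard part will be making this factorization statement precise and natural: specifically, verifying that for a trace-like $\Gcal$, the composite $\Fcal \Rightarrow \Gcal$ kills the constant summand $\Fcal(\Ccal,0)$ in a way coherent over $\CatL$, so that it descends to $\overline{\Fcal} \Rightarrow \Gcal$ — this is where one must combine the cartesian-fibration/section formalism used in the proof of Lemma~\ref{lmm:fibrewise-reduced} with the trace-like condition rather than argue pointwise. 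Once that factorization is established together with its uniqueness, combining it with the trace-likeness of $\overline{\Fcal}$ immediately yields that $\Fcal \Rightarrow \overline{\Fcal}$ is initial among trace-like functors under $\Fcal$.
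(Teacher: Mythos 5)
There is a genuine gap, and it is the same missing ingredient in both of your routes: the laced functor $(0,0) \to (\Ccal,0)$ is a trace equivalence. This follows from Lemma~\ref{TraceEquivalencesFromAdjunction} applied to the adjunction $0 \adj \Ccal$ with the zero bimodule, and it is the single observation that makes the proof close.

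In the first route, the decomposition $\cyc(\Fcal)(\Ccal,M) \simeq \Fcal(\Ccal,0) \oplus \cyc(\overline{\Fcal})(\Ccal,M)$ is wrong. The first summand is $\cyc(\Fcal)(\Ccal,0) = |\Fcal(\Ccal^{[\bullet]},0)|$, and this is $0$, not $\Fcal(\Ccal,0)$: since $\cyc(\Fcal)$ is trace-like (Proposition~\ref{CycIsTRLike}) and $(0,0) \to (\Ccal,0)$ is a trace equivalence, we get $\cyc(\Fcal)(\Ccal,0) \simeq \cyc(\Fcal)(0,0) = \Fcal(0,0) = 0$, using that an additive $\Fcal$ is reduced. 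Your claim that the simplicial object $\Fcal(\Ccal^{[\bullet]},0)$ "realizes back to $\Fcal(\Ccal,0)$" cannot be right: for additive $\Fcal$ its $n$-th term is $\Fcal(\Ccal,0)^{\oplus(n+1)}$, so it is not constant, and Lemma~\ref{CycIsMadeOfTraceEq} gives only trace equivalences, which are of no help since $\Fcal$ itself is not trace-like. Indeed your formula would give $\cyc(\Fcal) \simeq \Fcal(\Ccal,0) \oplus \overline{\Fcal} \simeq \Fcal$, contradicting that $\cyc(\Fcal)$ is trace-like but $\Fcal$ generally is not. With the correct value $\cyc(\Fcal)(\Ccal,0) = 0$, the route does finish: $\cyc(\Fcal) \simeq \cyc(\overline{\Fcal}) \simeq \overline{\Fcal}$.

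In the second route you correctly identify that the issue is to show $\Fcal \Rightarrow \Gcal$ factors through $\overline{\Fcal}$ for trace-like $\Gcal$, but you explicitly leave open "the hard part". The paper's proof fills exactly this gap: decompose $\Gcal = \Gcal_0 \oplus E$ with $E := \Gcal(0,0)$ the constant part. Both $\Fcal$ and $\overline{\Fcal}$ are reduced, hence map trivially to the constant $E$, so the problem reduces to $\Gcal_0$. Then $\Gcal_0$ is trace-like and vanishes on $(0,0)$, and — the key point again — since $(0,0) \to (\Ccal,0)$ is a trace equivalence, $\Gcal_0(\Ccal,0) \simeq \Gcal_0(0,0) = 0$, i.e.\ $\Gcal_0$ is fibrewise reduced. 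The desired factorization is then immediate from the universal property of the fibrewise reduced approximation (Lemma~\ref{lmm:fibrewise-reduced}); no elaborate cartesian-fibration bookkeeping is required, since the direct sum decompositions involved are all natural in $(\Ccal,M)$.
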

\begin{proof}
First note that $\overline{\Fcal}$ is additive by Corollary~\ref{GDPreserveTRL} and hence 
trace-like by Proposition~\ref{prop:reduced-additive-is-trace-like}. Now let $\Gcal\colon \CatL \to \Ecal$ be some trace-like functor. We wish to show that the restriction map
\[ \Nat(\overline{\Fcal},\Gcal) \to \Nat(\Fcal,\Gcal) \]
is an equivalence. Since $\CatL$ admits a zero object $(0,0)$ given by the zero stable category equipped with the zero bimodule we may decompose $\Gcal$ canonically as a direct sum $\Gcal := \Gcal_0 \oplus E$ where $E := \Gcal(0,0) \in \Sp$ and $\Gcal_0$ vanishes on $(0,0)$. At the same time, both $\Fcal$ and $\overline{\Fcal}$ vanish on $(0,0)$, and hence map trivially to any constant functor. It will then suffice to show that the restriction map
\[ \Nat(\overline{\Fcal},\Gcal_0) \to \Nat(\Fcal,\Gcal_0) \]
is an equivalence. For this, note that since $\Gcal_0 \simeq \fib[\Gcal \Rightarrow \Gcal(0,0)]$ we have that $\Gcal_0$ is trace-like. But by Lemma~\ref{TraceEquivalencesFromAdjunction} any map in $\CatL$ of the form $(0,0) \to (\Ccal,0)$ is a strict trace equivalence, and since $\Gcal_0(0,0) = 0$ we conclude that $\Gcal_0$ is tangentially reduced, and so the desired result follows.
\end{proof}

\section{Laced theories}
\subsection{Flavours of Verdier sequences}

We dedicate this section to discussing some analogues of Verdier and Karoubi sequences in the laced setting. 

\begin{defi} \label{NaiveVerdierSequences}
    A sequence 
    $\begin{tikzcd}[cramped](\Acal, N)\arrow[r, "{(i, \alpha)}"]& (\Ccal, M)\arrow[r, "{(p, \beta)}"] &(\Bcal, P)\end{tikzcd}$ 
    is a \textit{naive Verdier sequence} if it is a fibre and a cofibre sequence in $\CatL$. 
\end{defi}

By Proposition~\ref{prop:limits-and-colimits} and Remark~\ref{rmq:explicit}, being a naive Verdier sequence amounts to the following conditions:
\begin{itemize}
    \item The sequence of stable categories $\Acal \to \Ccal \to \Bcal$
    is a Verdier sequence.
    \item The natural transformation $\alpha\colon  N\xrightarrow{\simeq} M\circ(\catop{i}\times i)$ and $\hat{\beta}\colon (\catop{p}\times p)_!M\xrightarrow{\simeq} P$ are equivalences, where $\hat{\beta}$ is the mate of $\beta$.
\end{itemize}

\begin{ex}\label{ex:map-is-naive}
    If $\Acal \to \Ccal \to \Bcal$ is a Verdier sequence in $\CatEx$ then 
    $$
        (\Acal,\map_{\Acal}) \longrightarrow (\Ccal,\map_{\Ccal}) \longrightarrow (\Bcal,\map_{\Bcal})
    $$
    is a naive Verdier sequence in $\CatL$.
\end{ex}
    
\begin{defi}
A functor $\CatL\to\Ecal$ with stable target is said to be \textit{Verdier localizing} if it is reduced and sends naive Verdier sequences to exact sequences in $\Ecal$.
\end{defi}    
    
Unlike in the split case, it need not be that $\Lace$ sends naive Verdier sequences to Verdier sequence, and so there is no reason to expect that $\Klace=K\circ\Lace$ would be Verdier localising.

\begin{defi} \label{VerdierSeqDef}
     A sequence $\begin{tikzcd}[cramped](\Acal, N)\arrow[r, "{(i, \alpha)}"]& (\Ccal, M)\arrow[r, "{(p, \beta)}"] &(\Bcal, P)\end{tikzcd}$
     is a \textit{fine Verdier sequence} if it is a naive Verdier sequence such that the following condition holds:
     \begin{itemize}[leftmargin=*]
         \item For every $n\geq 0$, the sequence 
         \[\begin{tikzcd}[cramped]\Lace(\Acal, \Sigma^nN)\arrow[r]& \Lace(\Ccal, \Sigma^nM)\arrow[r] &\Lace(\Bcal, \Sigma^nP)\end{tikzcd}\] is a Verdier sequence of stable categories.
     \end{itemize}
    A functor $\CatL\to\Ecal$ with stable target is said to be \textit{weakly Verdier localizing} if it is reduced and sends fine Verdier sequences to exact sequences in $\Ecal$.
\end{defi}

\begin{ex}\label{ex:orthogonal-is-fine-verdier}
    If $(\Acal, N) \subseteq (\Ccal,M) \supseteq (\Bcal, P)$ is a laced semi-orthogonal decomposition then the naive Verdier sequence 
    \[
        \begin{tikzcd}
            (\Acal, N)\arrow[r, "{(i, \alpha)}"] & (\Ccal, M)\arrow[r, "{(p, \eta)}"] & (\Bcal, P)
        \end{tikzcd}
    \]
    furnished by Lemma~\ref{lmm:laced-adjoint} is a fine Verdier sequence. Indeed, by Proposition~\ref{AdjPresSOD} (and its proof) this sequence is sent by $\Lace$ to a right-split Verdier sequence, and this persists under shifting since laced semi-orthogonal decompositions are stable under shifts. 
\end{ex}

By definition, every Verdier localising functor is weakly localising. In addition, by Example~\ref{ex:orthogonal-is-fine-verdier} any weakly Verdier localising functor is additive.

\begin{prop}
    The functor $\Klace\colon \CatL\to\Sp$ is weakly Verdier localizing. Consequently, by Theorem~\ref{KlaceIsUnivAdditive} it is also the initial such functor under $\SigmaInftyP\LaceEq$.
\end{prop}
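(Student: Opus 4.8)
The plan is to show that $\Klace$ sends fine Verdier sequences to exact sequences of spectra, using the definition of a fine Verdier sequence together with the localization theorem for algebraic $\Kth$-theory of stable categories. Let $(\Acal,N) \to (\Ccal,M) \to (\Bcal,P)$ be a fine Verdier sequence. First I would observe that $\Klace(\Ccal,M) = \Kth(\Lace(\Ccal,M))$, so that we are reduced to understanding the effect of $\Lace$ on the given sequence. By the very definition of a fine Verdier sequence (Definition~\ref{VerdierSeqDef}), applied with $n = 0$, the sequence of stable categories
\[
    \begin{tikzcd}
        \Lace(\Acal, N)\arrow[r]& \Lace(\Ccal, M)\arrow[r] &\Lace(\Bcal, P)
    \end{tikzcd}
\]
is a Verdier sequence in $\CatEx$. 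Applying the localization theorem for $\Kth$-theory — $\Kth\colon \CatEx \to \Sp$ sends Verdier sequences to fibre sequences, as established in \cite{BlumbergGepnerTabuada} — we conclude that $\Klace(\Acal,N) \to \Klace(\Ccal,M) \to \Klace(\Bcal,P)$ is a fibre sequence of spectra. Since $\Kth$ is reduced (it sends the zero category to the zero spectrum) and $\Lace(0,0) = 0$, the functor $\Klace$ is reduced as well, so this establishes that $\Klace$ is weakly Verdier localizing.

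For the consequence, I would invoke Theorem~\ref{KlaceIsUnivAdditive}, which already identifies $\Klace$ as the initial additive invariant under $\SpLaceEq$. As noted in the remark following Example~\ref{ex:orthogonal-is-fine-verdier}, every weakly Verdier localizing functor is in particular additive (since laced semi-orthogonal decompositions give rise to fine Verdier sequences by Example~\ref{ex:orthogonal-is-fine-verdier}, and an exact sequence of two laced subcategories exhibiting a semi-orthogonal decomposition is in particular a direct sum decomposition after applying a reduced functor — here one uses that the sequence is split on underlying categories). Hence the full subcategory of weakly Verdier localizing functors under $\SpLaceEq$ sits inside the full subcategory of additive functors under $\SpLaceEq$, and $\Klace$ lies in the former; being initial in the larger subcategory, it is a fortiori initial in the smaller one. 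This gives the final clause of the statement.

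The main obstacle here is essentially bookkeeping rather than a genuine difficulty: one must be careful that the definition of fine Verdier sequence has been set up precisely so that $\Lace$ applied to such a sequence (with $n=0$) lands on an honest Verdier sequence in $\CatEx$, at which point the localization theorem of \cite{BlumbergGepnerTabuada} does all the work. The shift-stability built into Definition~\ref{VerdierSeqDef} (the condition for all $n \geq 0$) is not needed for this particular proposition — it is there for the finer statements about weakly Verdier localizing invariants and their derivatives later — so the proof of this proposition only uses the $n=0$ instance. I would make sure to state explicitly that we only need the $n = 0$ case here, to avoid giving the impression that more is being used. The identification of $\Klace$ as \emph{initial} among weakly Verdier localizing functors under $\SpLaceEq$ is then immediate from Theorem~\ref{KlaceIsUnivAdditive} and the inclusion of full subcategories of functors just described, with no further argument required.
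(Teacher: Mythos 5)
Your proof is correct and takes essentially the same route as the paper: apply the $n=0$ case of the definition of a fine Verdier sequence to see that $\Lace$ sends it to an honest Verdier sequence of stable categories, then invoke the localization theorem for connective $\Kth$-theory, and finally deduce initiality by noting that weakly Verdier localizing functors form a full subcategory of additive functors under $\SpLaceEq$. The only small discrepancy is the citation for the localization theorem: the paper attributes the fact that connective $\Kth\colon\CatEx\to\Sp$ sends Verdier sequences to fibre sequences to \cite[Theorem 3.13]{SaunierFundamental} or \cite[Theorem 6.1]{HebestreitLachmannSteimle} rather than to \cite{BlumbergGepnerTabuada}, which is a more careful choice since one needs the statement specifically for connective $\Kth$-theory and for Verdier (as opposed to Karoubi) sequences.
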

\begin{proof}
    This follows directly from the fact that $\Kth\colon \CatEx\to\Sp$ is Verdier localizing (see for instance \cite[Theorem 3.13]{SaunierFundamental} or \cite[Theorem 6.1]{HebestreitLachmannSteimle}). 
\end{proof}

\begin{rmq}\label{rem:thh-is-loc}
    If $(\Acal,N) \to (\Ccal,M) \to (\Bcal,P)$ is a naive (resp. fine) Verdier sequence in $\CatL$ then $(\Acal,0) \to (\Ccal,0) \to (\Bcal,0)$ is a naive (resp. fine) Verdier sequence and $(\Acal,\Sig^mN) \to (\Ccal,\Sig^n M) \to (\Bcal,\Sig^nP)$ is a naive (resp. fine) Verdier sequence for every $n \in \mathbb{Z}$. It then follows from Remark~\ref{rmq:summary-lemmas} that the operations of tangentially reduced and tangentially exact approximation both preserve (weakly) Verdier localising functors.
\end{rmq}

\begin{cor}
    The functor $\THH\colon \CatL \to \Sp$ is weakly Verdier localising.
\end{cor}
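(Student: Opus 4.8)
The plan is to obtain this as an immediate consequence of the identification of $\THH$ as the fibrewise exact approximation of laced $\Kth$-theory, together with the facts that $\Klace$ is already known to be weakly Verdier localising and that fibrewise exact approximation preserves this property. So the proof will essentially be a short citation chain rather than a computation.

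First I would record that $\THH$ is reduced: by Definition~\ref{defi:laced-thh} the spectrum $\THH(0,0)$ is the coend $\int^{X\in 0}\map_0(X,X)$, a colimit indexed by the empty twisted arrow category, hence $0$, so $\THH$ sends the zero laced category to the zero spectrum. Next, recall from Theorem~\ref{DundasMcCarthy} (equivalently, from Corollary~\ref{StableuTHHisTHH} combined with Lemma~\ref{FiberwiseDerivative}) that the trace map $\tr\colon \Klace \Rightarrow \THH$ exhibits $\THH$ as the fibrewise exact approximation $\fbwnexcPart{1}\Klace$ of laced $\Kth$-theory; explicitly, by Remark~\ref{rmq:summary-lemmas}, $\THH(\Ccal,M) \simeq \colim_n \Omega^n\overline{\Klace}(\Ccal,\Sigma^n M)$, where $\overline{\Klace} = \Kth^{\cyc}$ is the fibrewise reduced approximation of $\Klace$. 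By the proposition preceding this corollary, $\Klace$ is weakly Verdier localising.

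It then remains to invoke Remark~\ref{rem:thh-is-loc}: if $(\Acal,N) \to (\Ccal,M) \to (\Bcal,P)$ is a fine Verdier sequence in $\CatL$, then so are $(\Acal,0) \to (\Ccal,0) \to (\Bcal,0)$ and $(\Acal,\Sigma^n N) \to (\Ccal,\Sigma^n M) \to (\Bcal,\Sigma^n P)$ for every $n$ --- the second because the defining condition of Definition~\ref{VerdierSeqDef} already quantifies over all shifts and is therefore manifestly shift-invariant. Feeding this into the formulas of Remark~\ref{rmq:summary-lemmas}: each shifted sequence is sent to an exact sequence of spectra by $\Klace$, hence by its fibrewise reduced approximation $\overline{\Klace}$ (which preserves these exact sequences since $\Klace$ does and $\overline{\Klace}$ is a direct summand complement), and then applying $\Omega^n$ and passing to the sequential colimit preserves exactness, so $\fbwnexcPart{1}\Klace = \THH$ sends fine Verdier sequences to exact sequences. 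Together with reducedness this gives exactly that $\THH$ is weakly Verdier localising.

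I do not expect a genuine obstacle here: the only point that needs to be articulated carefully is the shift-invariance of fine Verdier sequences used in Remark~\ref{rem:thh-is-loc}, and this is transparent from the shape of Definition~\ref{VerdierSeqDef}. The rest is a formal deduction from Theorem~\ref{DundasMcCarthy} and the stability of (weak) Verdier localisation under fibrewise exact approximation.
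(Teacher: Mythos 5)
Your proof is correct and follows exactly the implicit chain the paper intends: $\Klace$ is weakly Verdier localising, $\THH \simeq \fbwnexcPart{1}\Klace$ by Theorem~\ref{DundasMcCarthy}, and Remark~\ref{rem:thh-is-loc} shows fibrewise exact approximation preserves the weakly Verdier localising property via shift-invariance of fine Verdier sequences and the formula of Remark~\ref{rmq:summary-lemmas}. One small slip worth fixing: the zero stable category is not empty but has a single zero object, so its twisted arrow category is contractible rather than empty; the coend still vanishes, but because $\map_0(X,X) \simeq 0$ for all $X$, not because the indexing category is empty.
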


We finish this subsection by discussing an example of fine Verdier projections not arising from laced semi-orthogonal decompositions.

\begin{cons}\label{cons:prototypical}
    Let $\Acal,\Bcal$ be two stable categories and $N\colon \Acal\op \times \Bcal \to \Sp$, $M \colon \Bcal\op \times \Acal \to \Sp$ two bimodules. Write $T_M\colon \Ind(\Acal) \to \Ind(\Bcal)$ and $T_N\colon \Ind(\Bcal) \to \Ind(\Acal)$ for the associated colimit preserving functors,  
    so that $M(b,a) = \map(b,T_M(a))$ and $N(a,b) = \map(a,T_N(b))$. 
    Consider the pairing category $\Pair(\Acal,\Bcal;N)$, which sits in a fibre square 
    \[ 
        \begin{tikzcd}
            \Pair(\Acal,\Bcal;N) \ar[r]\ar[d] & \Ar(\Ind(\Acal)) \ar[d] \\
            \Acal \times \Bcal \ar[r, "{(j,T_N)}"] & \Ind(\Acal) \times \Ind(\Acal)
        \end{tikzcd}
    \]
    It fits in a bifibration made of two Verdier projections
    \[ \Acal \xleftarrow{q} \Pair(\Acal,\Bcal,N) \xrightarrow{p} \Bcal, \]
    where $q$ is a cartesian fibration and $p$ a cocoartesian one. 
    Write $\widetilde{M} := (p\op \times q)^*M$. Since both $p$ are $q$ are Verdier projections the associated comonads $p_!p^*$ and $q_!q^*$ are equivalent to the respective identities, and so  
    \[ (p\op \times p)_!\widetilde{M} = (\id\op \times p)_!(\id\op \times q)^*M = M \circ N \in \BiMod(\Bcal) \] 
    and 
    \[ (q\op \times q)_!\widetilde{M} = (q\op \times \id)_!(p\op \times \id)^*M = N \circ M \in \BiMod(\Acal).\] 
    The functors $p$ and $q$ then upgrade to a pair of laced functors 
    \begin{equation}\label{eq:proto}
        (\Acal,N \circ M) \xleftarrow{\widetilde{q}} (\Pair(\Acal,\Bcal;N),\widetilde{M}) \xrightarrow{\widetilde{p}} (\Bcal,M \circ N) .
    \end{equation}
\end{cons}

\begin{prop}\label{prop:proto-fine}
    Both projections in~\eqref{eq:proto} are fine Verdier projections.
\end{prop}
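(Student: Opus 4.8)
The plan is to prove that $\widetilde p$ is a fine Verdier projection for all data $\Acal,\Bcal,N,M$ and then deduce the case of $\widetilde q$ by a symmetry. Indeed, interchanging $(\Acal,N)\leftrightarrow(\Bcal,M)$ carries $\Pair(\Acal,\Bcal;N)$ equipped with $\widetilde M=(p\op\times q)^*M$ to $\Pair(\Bcal,\Acal;M)$ equipped with $(p'\op\times q')^*N$, and under the resulting identification of laced categories it exchanges the two projections $\widetilde p$ and $\widetilde q$; applying the statement for $\widetilde p$ to the swapped data thus yields it for $\widetilde q$. Moreover $\Sigma^n\widetilde M=(p\op\times q)^*(\Sigma^n M)$ and $\Sigma^n(M\circ N)=(\Sigma^n M)\circ N$, so the family of conditions indexed by $n$ in Definition~\ref{VerdierSeqDef} collapses to the single assertion, for every bimodule $M\colon\Bcal\op\times\Acal\to\Sp$, that
\[ (\Acal,0)\xrightarrow{\ \iota\ }\big(\Pair(\Acal,\Bcal;N),\widetilde M\big)\xrightarrow{\ \widetilde p\ }(\Bcal,M\circ N) \]
is a naive Verdier sequence which $\Lace(-)$ sends to a Verdier sequence of stable categories; here $\iota$ is the laced functor sending $a$ to the laced object $(a,0)$ with zero lacing.

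First I would dispatch the naive part. By Construction~\ref{cons:prototypical}, $p$ is a Verdier projection whose kernel is the full subcategory $\Acal\hookrightarrow\Pair(\Acal,\Bcal;N)$, $a\mapsto(a,0)$; restricting $\widetilde M=(p\op\times q)^*M$ along it yields the bimodule $(a,a')\mapsto M(p(a,0),q(a',0))=M(0,a')\simeq 0$, and the mate $(p\op\times p)_!\widetilde M\xrightarrow{\simeq}M\circ N$ is exactly the equivalence produced in Construction~\ref{cons:prototypical}. By the characterization of naive Verdier sequences following Definition~\ref{NaiveVerdierSequences}, the displayed sequence is therefore a naive Verdier sequence, in particular a fibre sequence in $\CatL$. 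As $\Lace$ is a right adjoint (Proposition~\ref{prop:left-adj-to-lace}) it preserves limits, so $\Lace(\widetilde p)$ is a fibre sequence of stable categories with fibre $\Lace(\Acal,0)=\Acal$. What remains — and this is the real content — is to show that $\Lace(\widetilde p)$ is a Verdier projection, equivalently that the canonical functor $\Lace\big(\Pair(\Acal,\Bcal;N),\widetilde M\big)\big/\Acal\to\Lace(\Bcal,M\circ N)$ is an equivalence.

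To attack this I would unwind the fibre squares defining the two $\Lace$'s: an object of $\Lace\big(\Pair(\Acal,\Bcal;N),\widetilde M\big)$ is a quadruple $(a,b,\phi\colon a\to T_N(b),\psi\colon b\to T_M(a))$ (with $\phi$, $\psi$ arrows in $\Ind(\Acal)$, resp.\ $\Ind(\Bcal)$), an object of $\Lace(\Bcal,M\circ N)$ is a pair $(b,\beta\colon b\to T_M T_N(b))$, and $\Lace(\widetilde p)$ sends the quadruple to $(b,T_M(\phi)\circ\psi)$. I would then verify (a) essential surjectivity and (b) full faithfulness of the quotient functor. For (a): given $(b,\beta)$, write $T_N(b)=\colim_i a_i$ as a filtered colimit of objects of $\Acal$; since $b$ is compact in $\Ind(\Bcal)$ and $T_M$ preserves colimits, $\beta$ factors through $T_M(a_{i_0})$ for some $i_0$ as $T_M(\phi_{i_0})\circ\psi$ with $\phi_{i_0}\colon a_{i_0}\to T_N(b)$ a structure map and $\psi\colon b\to T_M(a_{i_0})$, so $(a_{i_0},b,\phi_{i_0},\psi)$ maps to $(b,\beta)$. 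For (b) I would use the calculus of fractions: since $\Acal$ is the kernel of $\Lace(\widetilde p)$, one has $\map_{\Lace/\Acal}(y_1,y_2)\simeq\colim\,\map_{\Lace(\Pair(\Acal,\Bcal;N),\widetilde M)}(y_1,y_2')$ over arrows $y_2\to y_2'$ with fibre in $\Acal$, which — fibres being computed componentwise — are precisely those inducing an equivalence on the $\Bcal$-component; the indexing category is cofinal to the comma category recording the $\Acal$-approximations $a\to T_N(b_2)$ of $T_N(b_2)\in\Ind(\Acal)$ under $a_2\xrightarrow{\phi_2}T_N(b_2)$, and running the colimit of $\map_{\Lace(\Pair(\Acal,\Bcal;N),\widetilde M)}(y_1,-)$ over it replaces $a_2$ by $T_N(b_2)$ and $\phi_2$ by the identity, producing exactly $\map_{\Lace(\Bcal,M\circ N)}\big((b_1,\beta_1),(b_2,\beta_2)\big)$; the compactness of $b_1,b_2$ and the compact generation of $\Ind(\Acal)$ by $\Acal$ make the required interchange of this filtered colimit with the finite limits defining the mapping spectra legitimate.

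The step I expect to be the main obstacle is exactly this mapping-spectrum identification in (b): pinning down the indexing diagram of the Verdier-quotient colimit, recognizing it as cofinal to a diagram of $\Acal$-approximations of $T_N(-)$, and checking that the colimit matches $\Lace(\Bcal,M\circ N)$ on the nose rather than merely after idempotent completion. A convenient way to organize it — which I would try first — is to pass to Ind-completions and exhibit a fully faithful right adjoint to $\Ind(\Lace(\widetilde p))$ obtained by transporting through $\Lace$ the fully faithful right adjoint $p^{*}$ of $\Ind(p)$ (heuristically $(b,\beta)\mapsto(T_N(b),b,\id_{T_N(b)},\beta)$), and to identify its kernel with $\Ind(\Acal)$; this presents $\Ind(\Lace(\widetilde p))$ as a Bousfield localization with kernel $\Ind(\Acal)$ and hence $\Lace(\widetilde p)$ as a Verdier projection. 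Once $\Lace(\widetilde p)$ is a Verdier projection with fibre $\Acal$, the identical argument with $M$ replaced by $\Sigma^n M$ gives fineness of $\widetilde p$, and the symmetry of the first paragraph transfers everything to $\widetilde q$.
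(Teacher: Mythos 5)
Your overall structure is sound and matches the paper's: dispose of the naive-Verdier part by observing that $\widetilde{p}$ is a cocartesian lift of a left-split Verdier projection with kernel $(\Acal,0)$, reduce the fineness condition to a single $n$ by absorbing shifts into $M$, note essential surjectivity of $\Lace(\widetilde{p})$ via compactness of $b$, and then attack full faithfulness of the quotient by passing to Ind-completions and producing a fully faithful right adjoint. The paper does exactly this, and your explicit formula $(b,\beta)\mapsto(T_N(b),b,\id_{T_N(b)},\beta)$ is the adjoint $r$ the paper writes down.

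There are, however, two issues. First, the opening symmetry claim is wrong: swapping $(\Acal,N)\leftrightarrow(\Bcal,M)$ does \emph{not} carry $\Pair(\Acal,\Bcal;N)$ to $\Pair(\Bcal,\Acal;M)$. The pairing category is defined by pulling back $\Ar(\Ind(\Acal))$ along $(j,T_N)\colon\Acal\times\Bcal\to\Ind(\Acal)\times\Ind(\Acal)$, so objects are triples $(a,b,\phi\colon a\to T_N(b))$ with the comparison arrow living in $\Ind(\Acal)$, whereas $\Pair(\Bcal,\Acal;M)$ has objects $(b,a,\psi\colon b\to T_M(a))$ with the arrow in $\Ind(\Bcal)$. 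These are genuinely different categories, and there is no ``resulting identification''. What \emph{is} true is that the two projections $\widetilde{p}$ and $\widetilde{q}$ are handled by formally dual arguments (exchanging the roles of the cartesian and cocartesian legs and of the source and target projections of the arrow category), which is what the paper means by ``dual''. If you want a literal symmetry, you must pass to opposite categories, not merely swap labels.

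Second, and more seriously, the route you ``would try first'' stops exactly where the work begins. The right adjoint $r$ is naturally a functor $\Lace(\Ind(\Bcal),M_{\Ind}\circ N_{\Ind})\to\Lace(\Pair(\Ind\Acal,\Ind\Bcal;N_{\Ind}),\widetilde{M}_{\Ind})$; it is \emph{not} a priori a right adjoint to $\Ind(\Lace(\widetilde{p}))$. To promote it to one, and to deduce the Karoubi sequence on the small level, one must show that $r$ sends the (fully faithful) image of $\Ind(\Lace(\Bcal,M\circ N))$ into the (fully faithful) image of $\Ind(\Lace(\Pair(\Acal,\Bcal;N),\widetilde{M}))$. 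Concretely: for $(b,\beta)$ with $b$ compact, one must write $r(b,\beta)=((T_N(b),b,\id),\beta)$ as a filtered colimit of objects of the small $\Lace(\Pair(\Acal,\Bcal;N),\widetilde{M})$ in a way that keeps the image in $\Lace(\Bcal,M\circ N)$ constantly $b$. This is precisely the cofinality argument the paper carries out with the categories $\Zcal_0\subseteq\Zcal$ and the weak contractibility of $(\Zcal_0)_{/z}$; it is the technical heart of the proof. Your calculus-of-fractions variant hits the same wall: the filtered colimit you set up has to be identified with $\map_{\Lace(\Bcal,M\circ N)}$, and the cofinality of the index category into $\Acal_{/T_N(b_2)}$ is exactly what needs justification. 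So the plan is the right plan, but the gap you flag as ``the main obstacle'' is not an obstacle to be organised around --- it \emph{is} the proposition.
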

\begin{proof}
    We prove the claim fro $\widetilde{p}$, the proof for $\widetilde{q}$ is dual. To begin, note that the underlying exact functor $p$ is a left-split Verdier projection and that its laced upgrade $\widetilde{p}$ is by construction a cocartesian arrow of $\CatL$.
    Extending $\widetilde{p}$ by its kernel $(\Acal,0)$ hence yields a naive Verdier sequence in $\CatL$.
    Up to shifting $M$, it will now suffice to show that the sequence
    \begin{equation}\label{eq:fine-verdier}
        \Acal = \Lace(\Acal,0) \to \Lace(\Pair(\Acal,\Bcal;N),\widetilde{M})) \to \Lace(\Bcal,M \circ N) 
    \end{equation}
    is a Verdier sequence for any choice of input data $\Acal,\Bcal,M,N$. Write 
    \[ N_{\Ind} = \map_{\Ind(\Bcal)}(-,T_N(-))\colon \Ind(\Acal)\op \times \Ind(\Bcal) \to \Sp \]
    and
    \[ M_{\Ind} = \map_{\Ind(\Bcal)}(-,T_M(-))\colon \Ind(\Bcal)\op \times \Ind(\Acal) \to \Sp \]
    for the corresponding Ind extensions of $M$ and $N$, so that we have a bifibration 
    \[ \Ind(\Acal) \xleftarrow{q_{\Ind}} \Pair(\Ind(\Acal),\Ind(\Bcal),N_{\Ind}) \xrightarrow{p_{\Ind}} \Ind(\Bcal) \]
    as above.
    Consider the commutative diagram
    \begin{equation}\label{eq:big-diagram}
        \begin{tikzcd}
            \Ind(\Acal) \ar[r]\ar[d,equal] & \Ind(\Lace(\Pair(\Acal,\Bcal;N),\widetilde{M})) \ar[r, "\Ind(\widetilde{p})"]\ar[d] & \Ind(\Lace(\Bcal,M \circ N)) \ar[d] \\
            \Ind(\Acal) \ar[r] & \Lace(\Pair(\Ind(\Acal),\Ind(\Bcal);N_{\Ind}),\widetilde{M}_{\Ind}) \ar[r,"\widetilde{p}_{\Ind}"] & \Lace(\Ind(\Bcal),M_{\Ind} \circ N_{\Ind})
        \end{tikzcd}
    \end{equation}
    where $\widetilde{M}_{\Ind} = (p^{\mathrm{op}}_{\Ind} \times q_{\Ind})^*M_{\Ind}$ is obtained from $M_{\Ind}$ in the same manner that 
    $\widetilde{M}$ was obtained from $M$ above.
    Here, both rows are fibre sequences and the vertical arrows are all fully-faithful embeddings (since underlying-compact laced objects are compact, and similarly for paired objects). In addition, the bottom row in this diagram is a right-split Verdier sequence: indeed, a fully-faithful right adjoint to $\widetilde{p}_{\Ind}$ is given by the functor 
    \[
        \begin{tikzcd}[row sep=5pt]
            \Lace(\Ind(\Bcal),M_{\Ind} \circ N_{\Ind}) \ar[r,"r"] & \Lace(\Pair(\Ind(\Acal),\Ind(\Bcal);N_{\Ind}),\widetilde{M}_{\Ind}) \\ 
            (b,\beta\colon b \to T_M(T_N(b))) \ar[r,mapsto] & ((T_N(b),b,\id),\beta).
        \end{tikzcd}
    \]
    We claim that $r$ sends the image of the right most vertical inclusion in~\eqref{eq:big-diagram} 
    to the image of the middle vertical inclusion, thus yielding a fully-faithful right adjoint to the top right horizontal map. This will imply that the sequence~\eqref{eq:fine-verdier}
    is a Karoubi sequence by~\cite[Appendix A]{HermKII}, and hence a Verdier sequence once we verify in addition that $\widetilde{p}$ is essentially surjective. 
        
    We now address the above claims about $r$ and $\widetilde{p}$. We establish both of them at the same time, by showing that for $(b,\beta\colon b \to T_M(T_N(b))) \in \Lace(\Bcal,M \circ N)$, the object $r(b) = ((T_N(b),b,\id),\beta)$ can be written as a filtered colimits of objects in $\Lace(\Pair(\Acal,\Bcal;N),\widetilde{M})$, in such a way that the image in $\Lace(\Bcal,M \circ N)$ of the corresponding filtered diagram is constant on $b$. For this, note that the pair $(T_N(b),\beta)$ determines an object $z \in \Zcal := \Ind(\Acal) \times_{\Ind(\Bcal)} \Ind(\Bcal)_{b/}$, and that to obtain the type of diagram we want it will suffice to write $z$ as a filtered colimit of objects in $\Zcal_0 := \Acal \times_{\Ind(\Bcal)} \Ind(\Bcal)_{b/}$. More precisely, we claim that the canonical map
    \[ \colim_{[z' \to z] \in (\Zcal_0)_{/z}} z' \to z \]
    is an equivalence. 
    Now since the projection $\Zcal \to \Ind(\Acal)$ is conservative and preserves filtered colimits and since every object in $\Ind(\Acal)$ is the colimit of its corresponding canonical diagram in $\Acal$ the desired result will now follow once we show that the projection $\pi\colon (\Zcal_0)_{/z} \to \Acal_{/T_N(b)}$ is cofinal. Now since $\Acal_{/T_N(b)}$ is filtered the projection $\Acal_{a//T_N(b)} \to \Acal_{/T_N(b)}$ is cofinal for every $a \to T_N(b) \in \Acal_{/T_N(b)}$, and since in addition $\pi$ is a left fibration the cofinality of $\pi$ is equivalence to the weak contractibility of $(\Zcal_0)_{/z}$. This, in turn, is equivalent to the statement that the inclusion $(\Zcal_0)_{/z} \subseteq \Zcal_{/z}$ is a weak homotopy equivalence, since $\Zcal_{/z}$ is itself weakly contractible. This inclusion fits in pullback square
    \[
        \begin{tikzcd}
            (\Zcal_0)_{/z} \ar[r]\ar[d] & \Zcal_{/z} \ar[d] \\
            \Acal_{/T_N(b)} \ar[r] & \Ind(\Acal)_{/T_N(b)}
        \end{tikzcd}
    \]
    where the vertical maps are left fibrations, both classified by the functor 
    \[ [a \to T_N(b)] \mapsto  \Map(b,T_N(a)) \times_{\Map(b,T_M(T_N(b)))} \{\beta\},\] 
    where for the left vertical map $a$ is understood to be in $\Acal$, while in the right vertical map it is taken in $\Ind(\Acal)$. Since this formula is compatible with filtered colimits in $a$ we see that the functor classifying the right vertical left fibration is left Kan extended from the one classifying the left vertical fibration. We hence conclude that the top horizontal map is a weak equivalence, as desired.
\end{proof}

\begin{rmq}
Proposition~\ref{prop:proto-fine} means in particular that the laced functors in~\eqref{eq:proto} induce Verdier projections after applying $\Lace$. Inspecting the proof shows something slightly more precise about the relation between these Verdier projections and the corresponding Verdier projections on underlying unlaced $\infty$-categories: in the commutative diagram
\[
\begin{tikzcd}
\Lace(\Acal,N \circ M) \ar[d] & \ar[l] \Lace(\Pair(\Acal,\Bcal;N),\widetilde{M}) \ar[r]\ar[d] & \Lace(\Bcal,M \circ N) \ar[d] \\
\Acal & \Pair(\Acal,\Bcal;N)\ar[l]\ar[r] & \Bcal
\end{tikzcd}
\]
whose horizontal arrows are all Verdier projections, the left square is horizontally Pro-adjointable and the right square is horizontally Ind-adjointable.
\end{rmq}

\subsection{\texorpdfstring{$\THH$}{THH} as a laced theory}

We have shown that $\THH$ is trace-like and from \cite{HoyoisScherotzkeSibilla}, we know that it is actually the trace in the category $\PrExCat{L}$ of presentable stable categories and left adjoint functors. Note however that being trace-like need not imply the usual trace invariance property. In this last section, we introduce and study the latter notion.

\begin{defi}\label{def:trace-invariant}
    Let $\Ecal$ be a category and $\Fcal\colon \CatL \to \Ecal$ a functor. We say that $\Fcal$ is a \emph{laced theory} if it sends the two laced functors in~\eqref{eq:proto} to equivalences for every $\Acal,\Bcal,N$ and $M$.
\end{defi}

In an earlier version of this draft, we had used the terms \emph{trace-invariant} to describe the above property, but while preparing the sequel of this paper, we decided against this name to facilitate unify the naming schemes within our work. If $\Fcal\colon \CatL \to \Ecal$ is a laced theory functor then for any $\Acal,\Bcal,M,N$ as in Construction~\ref{cons:prototypical} we have a distinguished equivalence
\[ 
    \Fcal(\Acal, N \circ M) \simeq \Fcal(\Bcal, M \circ N)
\]
More generally, by composing the $\Pair$ constructions, one gets distinguished equivalences showing that laced theories $\Fcal$ are invariant under cyclic permutations in the bimodule coordinate. Note that bimodules only depend of the idempotent completion of $\Ccal$, which we denote $\mathrm{Idem}(\Ccal)$; in consequence, in the situation where $\Acal=\Ccal$, $\Bcal=\mathrm{Idem}(\Ccal)$, $M$ being any $\Ccal$-bimodule and $N=\id$, the above unravels to an equivalence
$$
	\Fcal(\Ccal, M) \simeq \Fcal(\mathrm{Idem}(\Ccal), M)
$$
which shows that laced theories are invariant under passage to the idempotent completion.
 
\begin{rmq}\label{rem:invariant-is-like}
Taking $N$ to be the identity module in Construction~\ref{cons:prototypical}, 
it follows from Proposition~\ref{TestSampleTrLike} that every laced theory is trace-like.
\end{rmq}

\begin{defi}\label{def:coarse-equiv}
We will say that an arrow in $\CatL$ is a \emph{trace equivalence} if it is sent to an equivalence by any laced theory. 
\end{defi}

By Remark~\ref{rem:invariant-is-like} we have that any strict trace equivalence (in the sense of Definition~\ref{defi:trace-equivalence}) is a trace equivalence in the above sense.

\begin{rmq}\label{rem:trace-invariant-proto}
In the situation of Construction~\ref{cons:prototypical}, if the bimodule $N$ is left representable by an exact functor $f\colon \Acal \to \Bcal$ (that is, $N = \hat{Q}_f = \map_{\Bcal}(f(-),-)$), then the laced functor $\widetilde{q}$ in~\eqref{eq:proto} admits a section of the form 
\[ (\Acal,\hat{Q}_f \circ M) \to (\Pair(\Acal,\Bcal,\hat{Q}_f),\widetilde{M}) \quad\quad X \mapsto (X,f(X),\id) .\]
The composite of this section with $\widetilde{p}$ then yields a laced functor
\begin{equation}\label{eq:tilde-f} 
\widetilde{f}\colon (\Acal,\hat{Q}_f \circ M) \to (\Bcal,M \circ \hat{Q}_f) 
\end{equation}
whose underlying exact functor is $f$. We conclude that the resulting $\widetilde{f}$ is a trace equivalence.
Dually, if the bimodule $N$ is right representable by an exact functor $g\colon \Bcal \to \Acal$ (that is, $N = Q_g = \map_{\Acal}(-,g(-))$), then the laced functor $\widetilde{p}$ in~\eqref{eq:proto} admits a section of the form
\[ (\Bcal,M \circ Q_g) \to (\Pair(\Acal,\Bcal,Q_g),\widetilde{M}) \quad\quad Y \mapsto (g(Y),Y,\id) .\]
The composite of this section with $\widetilde{q}$ then yields a laced functor
\begin{equation}\label{eq:tilde-g} 
\widetilde{g}\colon (\Bcal,M \circ Q_g) \to (\Acal, Q_g \circ M) 
\end{equation}
whose underlying exact functor is $g$. We then similarly have that $\widetilde{g}$ is a trace equivalence. 
\end{rmq}

\begin{ex}\label{ex:primitive}
For a stable category $\Ccal$ and objects $A,B \in \Ccal$ let us write $M_{A,B} \in \BiMod(\Ccal)$ for the bimodule given by the formula $M_{A,B}(X,Y) = \map_{\Ccal}(X,A)\otimes \map_{\Ccal}(B,Y)$. Then $A$ and $B$ determine exact functors $(-)\otimes A, (-) \otimes B$ from $\Spaf$ to $\Ccal$ and we have $M_{A,B} = Q_A \circ \hat{Q}_B$ in the notation of Remark~\ref{rem:trace-invariant-proto}. Applying that remark we hence get that the associated laced functor
\[ ((-)\otimes B,\eta) \colon (\Spaf,\hat{Q}_B \circ Q_A) \to (\Ccal,Q_A \circ \hat{Q}_B) \]
is a trace equivalence, where $\eta$ is the natural transformation 
\begin{align*}
(\hat{Q}_B \circ Q_A)(E,E') = \map(E\otimes B,  E'\otimes A) =& \map(E\otimes B,  A)\otimes E' \\
\to& \map(E \otimes B,A)\otimes \map(B, E'\otimes B) \\
=& M_{A,B}(E \otimes B, E'\otimes B) 
\end{align*} 
induced by the unit $E'\to \map(B,E'\otimes B)$ of the adjunction $(-) \otimes B \dashv \map(B,-)$. At the same time, note that tensoring with bimodule $\map$ induces an equivalence $\Sp \simeq \BiMod(\Spaf)$, so that in the present situation the $\Spaf$-bimodule $\hat{Q}_B \circ Q_A$ corresponds to the spectrum $\map_{\Ccal}(B,A)$. We hence conclude that if $\Fcal\colon \CatL \to \Ecal$ is any laced theory then the above laced functor determines an equivalence
\[ \Fcal(\Spaf,\map_{\Ccal}(B,A)) \xrightarrow{\simeq} \Fcal(\Ccal,M_{A,B}) .\]
\end{ex}

\begin{rmq}\label{rmq:THH-of-primitive}
Combining Example~\ref{ex:primitive} with Example~\ref{ex:thh-of-spaf} we obtain another perspective on the equivalence
\[ \THH(\Ccal,M_{A,B}) \simeq \map_{\Ccal}(B,A) \]
of Example~\ref{ex:primitive-bimodules}, as arising from $\THH$ being a laced theory whose restriction to $\CatL \times_{\CatEx} \{\Spaf\} = \BiMod(\Spaf) = \Sp$ is the identify.
\end{rmq}

We now show that for tangentially exact invariants, the difference between weakly Verdier localising and Verdier localising vanish.

\begin{prop}\label{prop:loc-is-trace}
    Let $\Ecal$ be a stable category and $\Fcal\colon \CatL \to \Ecal$ a tangentially exact functor. Then the following are equivalent:
    \begin{enumerate}
        \item $\Fcal$ is Verdier localising.
        \item $\Fcal$ is weakly Verdier localising.
        \item $\Fcal$ is a laced theory.
    \end{enumerate}
    In fact, the implication $(2) \Rightarrow (3)$ holds for any tangentially reduced $\Fcal$.
\end{prop}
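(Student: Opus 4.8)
The plan is to establish the cycle $(1) \Rightarrow (2) \Rightarrow (3) \Rightarrow (1)$, noting along the way that the argument for $(2) \Rightarrow (3)$ uses only fibrewise reducedness. The implication $(1) \Rightarrow (2)$ is immediate from the definitions, since a fine Verdier sequence (Definition~\ref{VerdierSeqDef}) is in particular a naive Verdier sequence (Definition~\ref{NaiveVerdierSequences}).

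For $(2) \Rightarrow (3)$, suppose $\Fcal$ is fibrewise reduced and sends fine Verdier sequences to exact sequences. Given data $\Acal,\Bcal,N,M$ as in Construction~\ref{cons:prototypical}, Proposition~\ref{prop:proto-fine} tells us that both laced functors in~\eqref{eq:proto} are fine Verdier projections, and its proof identifies their kernels as $(\Bcal,0)$ for $\widetilde{q}$ and $(\Acal,0)$ for $\widetilde{p}$. Applying $\Fcal$ to the two resulting fine Verdier sequences and using $\Fcal(\Acal,0) = \Fcal(\Bcal,0) = 0$, we conclude that $\Fcal(\widetilde{q})$ and $\Fcal(\widetilde{p})$ are equivalences, i.e.\ that $\Fcal$ is trace-invariant. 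This also proves the last sentence of the proposition.

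For $(3) \Rightarrow (1)$, assume $\Fcal$ is fibrewise exact and trace-invariant; it is then trace-like by Remark~\ref{rem:invariant-is-like}. Since $\Fcal$ restricts to an exact functor on the fibre $\BiMod(0)$, which is the zero category, $\Fcal(0,0) = 0$, so $\Fcal$ is reduced. Now fix a naive Verdier sequence $(\Acal,N) \xrightarrow{(i,\alpha)} (\Ccal,M) \xrightarrow{(p,\beta)} (\Bcal,P)$. Using the equivalences $\alpha\colon N \simeq i^*M$ and $\hat{\beta}\colon (p^{\mathrm{op}}\times p)_!M \simeq P$, which are part of the datum by Remark~\ref{rmq:explicit}, we may precompose $(i,\alpha)$ with a laced equivalence over $\Acal$ and postcompose $(p,\beta)$ with a laced equivalence over $\Bcal$ to reduce to the case where $(i,\alpha)$ is the $\pi$-cartesian lift $(\Acal,i^*M) \to (\Ccal,M)$ of $i$ and $(p,\beta)$ is the $\pi$-cocartesian lift $(\Ccal,M) \to (\Bcal,p_!M)$ of $p$; this replacement affects neither the statement nor its conclusion. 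On Ind-completions, the underlying Verdier sequence makes $\Ind(i)\colon \Ind(\Acal) \to \Ind(\Ccal)$ fully faithful with colimit-preserving right adjoint $\Ind(i)^{\ad}$, and $\Ind(p)\colon \Ind(\Ccal) \to \Ind(\Bcal)$ a Bousfield localisation with fully faithful right adjoint $\Ind(p)^{\ad}$. Writing $A := \Ind(i)\circ\Ind(i)^{\ad}$ and $L := \Ind(p)^{\ad}\circ\Ind(p)$ for the associated colimit-preserving endofunctors of $\Ind(\Ccal)$, there is a fibre sequence $A \to \id \to L$; precomposing it with the endofunctor $T_M$ of $M$ gives an exact sequence of $\Ccal$-bimodules $A\circ T_M \to T_M \to L\circ T_M$, to which the exact functor $\Fcal(\Ccal,-)\colon \BiMod(\Ccal) \to \Ecal$ applies to yield an exact sequence
\[ \Fcal(\Ccal, A\circ T_M) \xrightarrow{\Fcal(\iota)} \Fcal(\Ccal, M) \xrightarrow{\Fcal(\ell)} \Fcal(\Ccal, L\circ T_M) \]
in $\Ecal$, where $\iota$ and $\ell$ are the laced functors with underlying functor $\id_\Ccal$ induced by $A \to \id$ and $\id \to L$.

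To finish, the outer terms are to be identified via trace invariance. Since $i^*M = \Ind(i)^{\ad}\circ T_M\circ\Ind(i)$ and $A\circ T_M = \Ind(i)\circ\Ind(i)^{\ad}\circ T_M$, the pair $(\Acal,i^*M)$, $(\Ccal,A\circ T_M)$ arises from Construction~\ref{cons:prototypical} applied to $\Acal$, $\Ccal$, the bimodule $Q_i$ right-representable by $i$, and the bimodule $\Ind(i)^{\ad}\circ T_M$, so Remark~\ref{rem:trace-invariant-proto} furnishes a weak trace equivalence $\widetilde{i}\colon (\Acal,i^*M) \to (\Ccal,A\circ T_M)$ with underlying functor $i$. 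Dually, $L\circ T_M = \Ind(p)^{\ad}\circ\Ind(p)\circ T_M$ and $p_!M = \Ind(p)\circ T_M\circ\Ind(p)^{\ad}$ arise from Construction~\ref{cons:prototypical} with the bimodule $\hat{Q}_p$ left-representable by $p$ and the bimodule $\Ind(p)\circ T_M$, yielding a weak trace equivalence $\widetilde{p}\colon (\Ccal,L\circ T_M) \to (\Bcal,p_!M)$ with underlying functor $p$. As $\Fcal$ is trace-invariant, $\Fcal(\widetilde{i})$ and $\Fcal(\widetilde{p})$ are equivalences, so precomposing the displayed exact sequence with $\Fcal(\widetilde{i})$ and postcomposing with $\Fcal(\widetilde{p})$ exhibits $\Fcal(\Acal,N) \to \Fcal(\Ccal,M) \to \Fcal(\Bcal,P)$ as exact. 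The point that requires genuine care — and which I expect to be the main obstacle — is to check that, under the reductions above, the composite $\iota\circ\widetilde{i}$ is the $\pi$-cartesian lift $(i,\alpha)$ and $\widetilde{p}\circ\ell$ is the $\pi$-cocartesian lift $(p,\beta)$, so that the transported exact sequence carries precisely the prescribed maps; this amounts to tracking the structure maps through the pairing construction of Construction~\ref{cons:prototypical}, using the triangle identities together with the equivalences $\Ind(i)^{\ad}\circ\Ind(i) \simeq \id$ and $\Ind(p)\circ\Ind(p)^{\ad} \simeq \id$.
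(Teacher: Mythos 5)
Your proof is essentially the paper's: both reduce the statement to the cycle $(1)\Rightarrow(2)\Rightarrow(3)\Rightarrow(1)$, establish $(2)\Rightarrow(3)$ by observing that the two projections in~\eqref{eq:proto} are fine Verdier projections with zero-bimodule kernels, and prove $(3)\Rightarrow(1)$ by replacing a naive Verdier sequence with a fibre sequence of bimodules over the middle category, the replacement being implemented by the weak trace equivalences of Remark~\ref{rem:trace-invariant-proto} and killed by trace-invariance. The only substantive difference is a bookkeeping choice in $(3)\Rightarrow(1)$: where the paper realises the outer terms as $N\circ Q_i\circ\hat Q_i$ and $N\circ\hat Q_p\circ Q_p$ (post-composing the projection endofunctors onto the bimodule, cyclically rotating the brackets $\hat Q_i(NQ_i)\hat Q_iQ_i$ and so on), you use $A\circ T_M$ and $L\circ T_M$ (pre-composing). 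Both choices invoke Remark~\ref{rem:trace-invariant-proto}, once with a left-representable bimodule and once with a right-representable one, and both lead to an exact sequence of bimodules on the middle category to which fibrewise exactness applies.

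The caveat you flag at the end — that the replacement diagram must actually commute, i.e.\ that $\iota\circ\widetilde i$ and $\widetilde p\circ\ell$ reproduce the prescribed maps $(i,\alpha)$ and $(p,\beta)$ — is a real point, and you are right that it reduces to a triangle-identity calculation together with the facts that $\Ind(i)^{\ad}\Ind(i)\simeq\id$ (from fully-faithfulness of $i$) and $\Ind(p)\Ind(p)^{\ad}\simeq\id$ (from $p$ being a Verdier projection). The paper passes over this check in silence, simply asserting the commutative diagram, so your version is if anything more careful; you should, however, actually carry it out rather than leave it as an expectation, as you have identified the key tools and the verification is short once one writes the structure maps of $\widetilde i$ and $\iota$ explicitly in terms of the unit and counit.
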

\begin{proof}
    Clearly $(1) \Rightarrow (2)$ for any $\Fcal$, and $(2) \Rightarrow (3)$ for any tangentially reduced $\Fcal$ since the two arrows in~\eqref{eq:proto} are fine Verdier projections (Proposition~\ref{prop:proto-fine}) whose kernels have zero bimodules. 
    We now show that $(3) \Rightarrow (1)$. 
    Let $F\colon \CatL \to \Ecal$ be a tangentially exact laced theory  
    and 
    \[ (\Acal,M) \xrightarrow{i} (\Bcal,N) \xrightarrow{p} (\Ccal,P)\] 
    a naive Verdier sequence in $\CatL$. 
    Then $P = (p\op \times p)_!N = Q_p \circ N \circ \hat{Q}_p$ and $M = (i\op \times i)^*N = \hat{Q}_i \circ N \circ Q_i$, where $Q_p = \map_{\Acal}(-,p(-)), \hat{Q}_p = \map_{\Acal}(p(-),-)$ are the two bimodules left and right represented by $p$, and similarly for $Q_i$ and $\hat{Q}_i$. We may then place this sequence in a commutative diagram of the form
    \[ 
        \begin{tikzcd}
            (\Acal,\hat{Q}_i \circ N \circ Q_i) \ar[r,"i"]\ar[d] & (\Bcal,N) \ar[d,equal]\ar[r, "p"] & (\Ccal,Q_p \circ N \circ \hat{Q}_p)  \\
            (\Bcal,N\circ Q_i \circ \hat{Q}_i) \ar[r] & (\Bcal,N) \ar[r] & (\Bcal,N \circ \hat{Q}_p \circ Q_p) \ar[u]
        \end{tikzcd}
    \]
    where the vertical arrows are trace equivalences by Remark~\ref{rem:trace-invariant-proto}, and are hence sent to equivalences by $\Fcal$. Since $\Fcal$ sends the lower sequence to an exact sequence by virtue of being tangentially exact we conclude that $\Fcal$ sends the top sequence to an exact sequence, as desired.
\end{proof}

\begin{cor}\label{cor:thh-is-trace-inv}
The functor $\THH\colon \CatL \to \Sp$ is a laced theory.
\end{cor}
\begin{proof}
Combine Proposition~\ref{prop:loc-is-trace} and Remark~\ref{rem:thh-is-loc}.
\end{proof}

By the universal property of Corollary~\ref{StableuTHHisTHH} we thus obtain:

\begin{cor}\label{cor:universal-thh-2}
The composed map 
\[ \Sig^{\infty}_+\iota\Lace \Rightarrow \Sig^{\infty}_+\uTHH \Rightarrow \THH\] 
exhibits $\THH$ as initial among tangentially exact laced theories under $\Sig^{\infty}\iota\Lace_+$.
\end{cor}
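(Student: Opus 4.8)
The plan is to deduce this formally from the universal property of Corollary~\ref{StableuTHHisTHH-2}, together with the trace-invariance of $\THH$ recorded in the discussion immediately preceding the statement. Recall that Corollary~\ref{StableuTHHisTHH-2} says precisely that $\THH$, which is fibrewise exact and trace-like, comes equipped with a natural transformation $\SpLaceEq \Rightarrow \Sig^{\infty}_+\uTHH \Rightarrow \THH$ such that, for every fibrewise exact trace-like functor $\Gcal\colon \CatL \to \Sp$, restriction along this transformation induces an equivalence $\Nat(\THH,\Gcal) \xrightarrow{\ \sim\ } \Nat(\SpLaceEq,\Gcal)$.

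First I would note that $\THH$ is itself fibrewise exact and trace-invariant: the former is recorded earlier, and the latter follows by combining Proposition~\ref{prop:loc-is-trace} with Remark~\ref{rem:thh-is-loc}. Hence $\THH$, together with its map from $\SpLaceEq$, is a legitimate candidate for being ``the initial fibrewise exact trace-invariant functor under $\SpLaceEq$''. Second, since every trace-invariant functor is trace-like by Remark~\ref{rem:invariant-is-like}, a fibrewise exact trace-invariant functor is in particular a fibrewise exact trace-like functor $\Gcal$, so the equivalence $\Nat(\THH,\Gcal) \xrightarrow{\ \sim\ } \Nat(\SpLaceEq,\Gcal)$ from the previous paragraph applies to it verbatim.

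These two observations together say exactly that the composite $\SpLaceEq \Rightarrow \Sig^{\infty}_+\uTHH \Rightarrow \THH$ exhibits $\THH$ as initial among fibrewise exact trace-invariant functors under its source, which is the claim. Phrased differently: the category of fibrewise exact trace-invariant functors under $\SpLaceEq$ is a full subcategory of that of fibrewise exact trace-like functors under $\SpLaceEq$, it contains the initial object $\THH$ of the latter, and an initial object that lies in a full subcategory is initial there as well. I do not anticipate any real obstacle; all the substantive work has already been done in establishing Corollary~\ref{StableuTHHisTHH-2} and the trace-invariance of $\THH$, and the only point requiring care is that the class of trace-invariant functors is genuinely contained in the class of trace-like functors, which is precisely Remark~\ref{rem:invariant-is-like}.
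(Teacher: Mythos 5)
Your proof is correct and follows essentially the same route as the paper: you observe that $\THH$ is trace-invariant (via Proposition~\ref{prop:loc-is-trace} and Remark~\ref{rem:thh-is-loc}), that trace-invariant implies trace-like (Remark~\ref{rem:invariant-is-like}), and then deduce initiality in the full subcategory from the universal property already established for fibrewise exact trace-like functors. The paper compresses this into one sentence, but your unwinding via ``an initial object that lies in a full subcategory is initial there'' is precisely the implicit argument.
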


Similarly, combining Proposition~\ref{prop:loc-is-trace}, Remark~\ref{rem:thh-is-loc} and Proposition~\ref{prop:universal-K-cyc}, we conclude:
\begin{cor}
The cyclic K-theory functor $\Kth^{\cyc}$ is a laced theory and the map $\Klace \Rightarrow \Kth^{\cyc}$ exhibits it as the initial laced theory under $\Kth$.
\end{cor}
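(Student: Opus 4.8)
The plan is to deduce both assertions formally from results already in place: the universal property of $\Kth^{\cyc}$ among trace-like functors (Proposition~\ref{prop:universal-K-cyc}), the equivalence between weak Verdier localisation and trace-invariance for fibrewise-reduced functors (the implication $(2)\Rightarrow(3)$ of Proposition~\ref{prop:loc-is-trace}), and the stability of weak Verdier localisation under fibrewise-reduced approximation (Remark~\ref{rem:thh-is-loc}). No new geometric input is needed; the work is bookkeeping with these statements.

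\emph{Step 1: $\Kth^{\cyc}$ is trace-invariant.} I would first record that $\Lace(\Ccal,0)\simeq\Ccal$, so that $\Klace(\Ccal,0)=\Kth(\Ccal)$ and hence, by Lemma~\ref{lmm:fibrewise-reduced}, the functor $\Kth^{\cyc}(\Ccal,M)=\cofib[\Kth(\Ccal)\to\Klace(\Ccal,M)]$ is precisely the fibrewise-reduced approximation $\overline{\Klace}$ of $\Klace$. Since $\Klace\colon\CatL\to\Sp$ is weakly Verdier localising, Remark~\ref{rem:thh-is-loc} gives that $\overline{\Klace}=\Kth^{\cyc}$ is again weakly Verdier localising; being in addition fibrewise reduced, the implication $(2)\Rightarrow(3)$ of Proposition~\ref{prop:loc-is-trace} — asserted there for arbitrary fibrewise-reduced functors — shows that $\Kth^{\cyc}$ is trace-invariant. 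Concretely, that implication says that the two laced functors in~\eqref{eq:proto} are fine Verdier projections (Proposition~\ref{prop:proto-fine}) whose kernels carry the zero bimodule, hence are inverted by any fibrewise-reduced weakly Verdier localising functor.

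\emph{Step 2: the universal property.} Here I would restrict the universal property of Proposition~\ref{prop:universal-K-cyc} to a smaller class of test functors. By Remark~\ref{rem:invariant-is-like} every trace-invariant functor $\Gcal\colon\CatL\to\Ecal$ is trace-like, so the restriction map $\Nat(\Kth^{\cyc},\Gcal)\to\Nat(\Klace,\Gcal)$ induced by $\Klace\Rightarrow\Kth^{\cyc}$ is an equivalence by Proposition~\ref{prop:universal-K-cyc}. Combined with Step 1 — which places $\Kth^{\cyc}$ itself among the trace-invariant functors — this is exactly the statement that $\Klace\Rightarrow\Kth^{\cyc}$ exhibits $\Kth^{\cyc}$ as the initial trace-invariant functor under $\Klace$ (equivalently, under $\Kth$ in the loose notation of the corollary).

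I do not anticipate a genuine obstacle; the content lies in the cited Propositions~\ref{prop:loc-is-trace} and~\ref{prop:universal-K-cyc} and Remark~\ref{rem:thh-is-loc}. The only slightly delicate points are the two bookkeeping facts used in Step 1 — that the fibrewise-reduced approximation of $\Klace$ is computed by the stated cofibre (Lemma~\ref{lmm:fibrewise-reduced}, using $\Lace(\Ccal,0)\simeq\Ccal$), and that fibrewise-reduced approximation preserves weak Verdier localisation (Remark~\ref{rem:thh-is-loc}) — both of which are already established, so the proof amounts to assembling them in the right order.
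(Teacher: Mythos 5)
Your proposal is correct and follows exactly the route the paper intends: it unpacks the one-line instruction "combine Proposition~\ref{prop:loc-is-trace}, Remark~\ref{rem:thh-is-loc} and Proposition~\ref{prop:universal-K-cyc}" into the two natural steps (trace-invariance via weak Verdier localisation plus the fibrewise-reduced case of $(2)\Rightarrow(3)$, then the universal property by restricting Proposition~\ref{prop:universal-K-cyc} along the inclusion of trace-invariant functors among trace-like ones). You also correctly note the slight abuse in the corollary's phrasing "under $\Kth$", which really means "under $\Klace$".
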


\subsection{Trace maps for arbitrary Verdier localising functors}

In this section we generalize the construction of the trace map
$\tr\colon \Kth\Rightarrow\THH$ 
to arbitrary Verdier localising functors $\Fcal\colon \CatEx \to \Ecal$ with stable presentable targets.

\begin{defi}
Let $\Ecal$ be a stable presentable category and $\Fcal\colon \CatEx \to \Ecal$ a functor. We let $\Fcal^{\lace} \colon \CatL \to \Ecal$ be the functor given by $\Fcal^{\lace}(\Ccal,M) = \Fcal(\Lace(\Ccal,M))$, and write
\[ d\Fcal\colon \CatL \to \Ecal \]
for the tangentially exact approximation of $\Fcal^{\lace}$.
\end{defi}

\begin{prop}\label{prop:derivative-is-trace-invariant}
Let $\Ecal$ be a stable presentable category. If $\Fcal\colon \CatEx \to \Ecal$ is a Verdier localising functor then $d\Fcal\colon \CatL \to \Ecal$ is a laced theory. On the other hand, if $\Gcal\colon \CatL \to \Ecal$ is a tangentially exact laced theory then the functor $\Ccal \mapsto \Gcal(\Ccal,\map)$ is Verdier localising.
\end{prop}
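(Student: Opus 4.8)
The plan is to prove each of the two assertions separately, using the characterization of trace-invariance in terms of the laced functors $\widetilde p$ and $\widetilde q$ of~\eqref{eq:proto}, together with the already-established relations between fibrewise exactness, (weak) Verdier localisation, and trace-invariance from Proposition~\ref{prop:loc-is-trace}, Remark~\ref{rem:thh-is-loc}, and the behaviour of $\Lace$ on fine Verdier sequences.

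For the first assertion, suppose $\Fcal\colon \CatEx \to \Ecal$ is Verdier localising. First I would observe that $\Fcal^{\lace} = \Fcal \circ \Lace$ is weakly Verdier localising: indeed, by Definition~\ref{VerdierSeqDef} a fine Verdier sequence $(\Acal,N) \to (\Ccal,M) \to (\Bcal,P)$ is sent by $\Lace$ to a Verdier sequence in $\CatEx$, which $\Fcal$ then sends to an exact sequence; reducedness is clear since $\Lace(0,0) = 0$. Next, since the fine Verdier sequences are stable under shifting the bimodule coordinate (Remark~\ref{rem:thh-is-loc}) and the fibrewise exact approximation is computed by the formula of Remark~\ref{rmq:summary-lemmas}, $d\Fcal$ is again weakly Verdier localising; it is of course fibrewise exact by construction (Lemma~\ref{FiberwiseDerivative}). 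Now $d\Fcal$ being fibrewise exact and weakly Verdier localising, Proposition~\ref{prop:loc-is-trace} — specifically the implication $(2)\Rightarrow(3)$, which holds for any fibrewise reduced functor — shows $d\Fcal$ is trace-invariant. This completes the first half.

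For the second assertion, let $\Gcal\colon \CatL \to \Ecal$ be trace-invariant and fibrewise exact, and set $\Gcal^{\map}(\Ccal) := \Gcal(\Ccal,\map_{\Ccal})$. By Proposition~\ref{prop:loc-is-trace} the hypotheses on $\Gcal$ imply that $\Gcal$ is Verdier localising as a functor on $\CatL$. I would then combine this with Example~\ref{ex:map-is-naive}: a Verdier sequence $\Acal \to \Ccal \to \Bcal$ in $\CatEx$ is sent by $\Ccal \mapsto (\Ccal,\map_{\Ccal})$ to a naive Verdier sequence in $\CatL$, which $\Gcal$ then sends to an exact sequence in $\Ecal$; reducedness of $\Gcal^{\map}$ follows from reducedness of $\Gcal$ together with $\map_0 = 0$ on the zero category. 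Hence $\Gcal^{\map}$ is Verdier localising.

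The main subtlety — though it is minor given the machinery already in place — is making sure the bookkeeping with the fibrewise exact approximation is compatible with the characterization of trace-invariance via~\eqref{eq:proto}. Concretely, one must check that the ``shifted'' fine Verdier sequences $(\Acal,\Sigma^nN)\to(\Ccal,\Sigma^nM)\to(\Bcal,\Sigma^nP)$ arising in Construction~\ref{cons:prototypical}'s pairing categories are genuinely fine (Proposition~\ref{prop:proto-fine} handles this), and that the colimit defining the fibrewise exact approximation commutes with the finite limits needed to detect exact sequences; this is precisely the content of the formula in Remark~\ref{rmq:summary-lemmas} together with the fact that in a stable category $\Ecal$ sequential colimits commute with finite limits, so no differentiability hypothesis beyond stability of $\Ecal$ is needed. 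With these points addressed, both halves follow formally.
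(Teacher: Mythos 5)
Your proof is correct and follows essentially the same route as the paper's: both halves reduce directly to Proposition~\ref{prop:loc-is-trace}, using Remark~\ref{rem:thh-is-loc} to pass to the fibrewise exact approximation in the first half and Example~\ref{ex:map-is-naive} in the second. The concluding paragraph about commutation of colimits and finite limits is unnecessary, since Remark~\ref{rem:thh-is-loc} already packages exactly the statement that fibrewise exact approximation preserves (weakly) Verdier localising functors.
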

\begin{proof}
We begin with the first claim. Since $\Fcal$ is Verdier localising we have that $\Fcal^{\lace}$ is weakly Verdier localising. By Remark~\ref{rem:thh-is-loc} we then have that $d\Fcal$ is weakly Verdier localising, and so also a laced theory by Proposition~\ref{prop:loc-is-trace}.

Now let $\Gcal\colon \CatL \to \Ecal$ be a tangentially exact laced theory. Then by Proposition~\ref{prop:loc-is-trace} we have that $\Gcal$ is Verdier localising, and so $\Gcal(-,\map)$ is Verdier localising by Example~\ref{ex:map-is-naive}.
\end{proof}

Proposition~\ref{prop:derivative-is-trace-invariant} implies that the composite
\[ \Fun(\CatEx,\Ecal) \stackbin[\L^*]{\Lace^*}{\adj}
 \Fun(\CatL,\Ecal) \stackrel{\fbwnexcPart{1}}{\adj} \Fun^{\bex}(\CatL,\Ecal)\]
of the adjunction induced via pre-composition from the adjunction $\L \dashv \Lace$ of Proposition~\ref{prop:left-adj-to-lace}, and the adjunction of Corollary~\ref{cor:fiberwise-derivative},
restricts to an adjunction
\[
d\colon \Fun^{\vloc}(\CatEx,\Ecal) \adj \Fun^{\tr,\bex}(\CatL,\Ecal) \cocolon \L^* 
\]
where on the left we have the category of Verdier localising functors $\CatEx \to \Ecal$ and on the right the category of  tangentially exact laced theories $\CatL \to \Ecal$. We then write
\[ \partial := \L^*\circ d\colon \Fun^{\vloc}(\CatEx,\Ecal) \to \Fun^{\vloc}(\CatEx,\Ecal) \]
for the composite of the adjoint pair. By construction, $\partial$ is a monad on $\Fun^{\vloc}(\CatEx,\Ecal)$.

\begin{defi}
Given a Verdier localising functor $\Fcal\colon \CatEx \to \Ecal$, we call the unit 
\[ \tr_{\Fcal}\colon \Fcal \Rightarrow \partial\Fcal \]
the \emph{trace map} associated to $\Fcal$.
\end{defi}

Finally, given a Verdier localising functor $\Fcal\colon \CatEx \to \Ecal$, let us write $\Fcal^{\cyc}(\Ccal,M) := \cof[\Fcal^{\lace}(\Ccal,0) \to \Fcal^{\lace}(\Ccal,M)]$ for the tangentially reduced approximation of $\Fcal^{\lace}$.
 
\begin{prop}
The natural transformations $\Fcal^{\lace} \Rightarrow \Fcal^{\cyc} \Rightarrow d\Fcal$ exhibits their targets respectively as the initial laced theory and initial tangentially exact laced theory under $\Fcal^{\lace}$.
\end{prop}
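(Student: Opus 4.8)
The plan is to bootstrap from the universal property of the cyclic Bar construction (Theorem~\ref{thm:universal-bar}) and the universal properties of fibrewise reduced/exact approximation, combined with the characterizations of trace-invariance proved in Proposition~\ref{prop:loc-is-trace}. Concretely, I would argue in two stages: first that $\Fcal^{\lace} \Rightarrow \Fcal^{\cyc}$ is the initial trace-invariant functor under $\Fcal^{\lace}$, and then that $\Fcal^{\cyc} \Rightarrow d\Fcal$ is the initial fibrewise exact trace-invariant functor under $\Fcal^{\cyc}$ (and hence, by composition of left adjoints, that $\Fcal^{\lace} \Rightarrow d\Fcal$ is the initial fibrewise exact trace-invariant functor under $\Fcal^{\lace}$).

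For the first stage, recall that $\Fcal$ is Verdier localising, so $\Fcal^{\lace}$ is weakly Verdier localising; by Remark~\ref{rem:thh-is-loc} the fibrewise reduced approximation $\Fcal^{\cyc}$ is again weakly Verdier localising, hence fibrewise reduced and additive (via Example~\ref{ex:orthogonal-is-fine-verdier}), hence trace-invariant by the "$(2)\Rightarrow(3)$ for any fibrewise reduced functor" clause of Proposition~\ref{prop:loc-is-trace}. So $\Fcal^{\cyc}$ is a trace-invariant functor receiving a map from $\Fcal^{\lace}$. To see it is initial among such, I would follow the template of the proof of Proposition~\ref{prop:universal-reduced} almost verbatim: for a trace-invariant target $\Gcal$, split off the constant part $\Gcal(0,0)$ and reduce to the case where $\Gcal$ is fibrewise reduced; then use that any map $(0,0) \to (\Ccal,0)$ is a (weak) trace equivalence (Lemma~\ref{TraceEquivalencesFromAdjunction}, or directly Remark~\ref{rem:trace-invariant-proto}) to see that a trace-invariant $\Gcal$ vanishing on $(0,0)$ is automatically fibrewise reduced, so that $\Nat(\Fcal^{\cyc},\Gcal) \to \Nat(\Fcal^{\lace},\Gcal)$ is an equivalence by the universal property of the fibrewise reduced approximation (Lemma~\ref{lmm:fibrewise-reduced}). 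The only genuinely new point over Proposition~\ref{prop:universal-reduced} is checking that trace-invariant functors are closed under the fibre/summand operations used, which is immediate.

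For the second stage, $d\Fcal$ is trace-invariant by the first half of Proposition~\ref{prop:derivative-is-trace-invariant}, and it is fibrewise exact by construction, so it is a fibrewise exact trace-invariant functor receiving a map from $\Fcal^{\cyc}$ (factoring $\Fcal^{\cyc} \Rightarrow \fbwnexcPart{1}\Fcal^{\cyc} = \fbwnexcPart{1}\Fcal^{\lace} = d\Fcal$, using that fibrewise exact approximation kills the constant summand anyway). For initiality, let $\Gcal$ be any fibrewise exact trace-invariant functor under $\Fcal^{\cyc}$; since fibrewise exact functors are in particular fibrewise reduced, a natural transformation from $\Fcal^{\cyc}$ to $\Gcal$ is the same data as a natural transformation from $\Fcal^{\lace}$ to $\Gcal$, and by the universal property of $\fbwnexcPart{1}$ (Corollary~\ref{cor:fiberwise-derivative}) this factors uniquely through $d\Fcal = \fbwnexcPart{1}\Fcal^{\lace}$. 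That is, $\Nat(d\Fcal,\Gcal) \to \Nat(\Fcal^{\cyc},\Gcal)$ is an equivalence. Composing the two stages and noting that a functor is fibrewise exact trace-invariant iff it lies in the image of the right adjoint in the displayed adjunction $d \dashv \L^*$ (so that initiality composes), we obtain the claim.

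The main obstacle, I expect, is not any of the individual universal-property manipulations — those are formal given the results already in the excerpt — but rather making sure the classes "trace-invariant functors" and "fibrewise exact trace-invariant functors" are genuinely (accessible) localizations/reflective in $\Fun(\CatL,\Ecal)$ so that "initial object under $\Fcal^{\lace}$" is well-posed and transitive along the tower $\Fcal^{\lace} \Rightarrow \Fcal^{\cyc} \Rightarrow d\Fcal$. This is essentially bookkeeping: Proposition~\ref{prop:loc-is-trace} identifies fibrewise exact trace-invariant functors with weakly Verdier localising fibrewise exact functors, and Remark~\ref{rem:thh-is-loc} together with the adjunction displayed just before the statement (restricting $\fbwnexcPart{1}\circ\Lace^*$ to the relevant subcategories) packages this cleanly; one should just verify that the composite of the two reflections $\Fun(\CatL,\Ecal) \to \Fun_*(\CatL,\Ecal) \to \Fun^{\bex}(\CatL,\Ecal)$ restricts to the subcategory of trace-invariant functors, which follows since $\cyc$ preserves fibrewise exact functors and fibrewise exact approximation preserves trace-invariance (the analogue of Corollary~\ref{GDPreserveTRL} for trace-invariance, proved the same way via the formula in Remark~\ref{rmq:summary-lemmas} and the shift-stability of the input data in Construction~\ref{cons:prototypical}).
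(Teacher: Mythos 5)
Your proof is correct, but for the $\Fcal^{\cyc}$ part it is more roundabout than the paper's argument. The paper's proof of both halves follows the same two-step pattern: first, the approximation in question is already known to be initial in a \emph{larger} class of functors under $\Fcal^{\lace}$ (trace-like functors for $\Fcal^{\cyc}$, via Proposition~\ref{prop:universal-reduced}, since $\Fcal^{\lace}$ is additive when $\Fcal$ is Verdier localising; fibrewise exact functors for $d\Fcal$, by the defining adjunction of $\fbwnexcPart{1}$ in Corollary~\ref{cor:fiberwise-derivative}); second, one observes via Proposition~\ref{prop:loc-is-trace} and Remark~\ref{rem:thh-is-loc} (resp.\ Proposition~\ref{prop:derivative-is-trace-invariant}) that this initial object in fact lies in the \emph{smaller} full subcategory of trace-invariant (resp.\ trace-invariant fibrewise exact) functors — at which point initiality in the subclass is automatic, using that trace-invariant implies trace-like (Remark~\ref{rem:invariant-is-like}). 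Your stage 2 is essentially this argument, merely routed through $\Fcal^{\cyc}$, which is fine. Your stage 1 instead re-derives the universal property of $\Fcal^{\cyc}$ among trace-invariant targets from scratch, adapting the proof of Proposition~\ref{prop:universal-reduced} with weak trace equivalences in place of trace equivalences. That works, and is slightly more self-contained, but it is strictly more work than citing Proposition~\ref{prop:universal-reduced} together with the full-subcategory inclusion; it also buys you nothing in generality since the two notions coincide for the arguments involved. Finally, your concluding paragraph about accessibility and reflectivity is not needed: ``initial object in a full subcategory of $\Fun(\CatL,\Ecal)_{\Fcal^{\lace}/}$'' is well-posed independently of any reflectivity hypothesis, and the composability of the two initiality statements along $\Fcal^{\lace}\Rightarrow\Fcal^{\cyc}\Rightarrow d\Fcal$ is an immediate consequence once both have been established against the common comparison to $\Nat(\Fcal^{\lace},-)$.
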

\begin{proof}
For $d\Fcal$ this follows from Proposition~\ref{prop:derivative-is-trace-invariant}. For $\Fcal^{\cyc}$ combine Proposition~\ref{prop:loc-is-trace} and Remark~\ref{rem:thh-is-loc} to deduce that $\Fcal^{\cyc}$ is a laced theory and then apply Proposition~\ref{prop:universal-reduced}.
\end{proof}

\subsection{Multiplicativity of \texorpdfstring{$\THH$}{THH}}

Assembling Theorem~\ref{KlaceIsUnivAdditive}, Remark~\ref{uTHHUP-2}, Corollary~\ref{cor:universal-thh-2} and we obtain a commutative square 
\begin{equation}\label{eq:square-uni}
\begin{tikzcd}
\SigmaInftyP\LaceEq\ar[r,Rightarrow]\ar[d,Rightarrow] & \Klace \ar[d,Rightarrow] \\
\SigmaInftyP\uTHH \ar[r,Rightarrow] & \THH
\end{tikzcd}
\end{equation}
of functors $\CatL \to \Sp$, where the maps out of the top left corner exhibit the other three corners as the initial additive, initial trace-like, and initial trace-like tangentially exact functors under $\SigmaInftyP\LaceEq$, respectively. In addition, by Proposition~\ref{KlaceInitialLaxMonoidal} and Proposition~\ref{uTHHInitialLaxMonoidal}, the top horizontal and left vertical maps refine to ones of lax symmetric monoidal functors from the top left corner, which is the initial lax symmetric monoidal functor $\CatL \to \Sp$ by \cite[Corollary 6.8]{Nikolaus}, to $\Klace$ and $\SigmaInftyP\uTHH$ which are, respectively, the initial lax symmetric monoidal additive and trace-like such functors.

\begin{prop} \label{THHInitialLaxMonoidal}
    The lax symmetric monoidal refinement of the top left span in~\eqref{eq:square-uni} extends to a refinement of the entire square to one of lax symmetric monoidal functors, 
rendering $\THH\colon \CatL\to\Sp$ the initial lax symmetric monoidal tangentially exact trace-like functor.
\end{prop}
\begin{proof}
Let $\widehat{\Sp}$ denote the category of possibly large spectra.
    We begin by proving that the left Bousfield localisation 
    \[ \Fun(\CatL, \widehat{\Sp})\locadjL{\fbwnexcPart{1}}\Fun^{\bex}(\CatL, \widehat{\Sp})\]
    is multiplicative with respect to Day convolution, 
	and hence induces a left Bousfield localisation
    \[ \Fun^{\otimes-\lax}(\CatL,\widehat{\Sp}) \locadjL{(\fbwnexcPart{1})_*} \Fun^{\otimes-\lax,\bex}(\CatL,\widehat{\Sp}) \]
    on the level of lax symmetric monoidal functors by passing to algebra objects. To avoid confusion, let us explain that working with $\widehat{\Sp}$ is simply a technical solution to the fact that $\CatL$ is not small, and hence Day convolution with coefficients in $\Sp$ is only an operad; a different would be to work with multiplicative localisations of operads, see \cite[Appendix A.1]{HermKIV}.
    Now since $\fbwnexcPart{1}$ is a localization at the collection of arrows $\eta\colon F(-)\Rightarrow T_1F\colon =\Omega F(\Sigma -)$ , it suffices to check that $\eta\otimes_{\mathrm{Day}}G$ is again inverted by the localization for any $G\colon \CatL\to\widehat{\Sp}$, where $\otimes_{\mathrm{Day}}$ denotes the Day convolution. For this, it suffices to note that $\eta\otimes_{\mathrm{Day}}G$ fits in the following commutative diagram:
    \[
        \begin{tikzcd}[column sep=large]
            F\otimes_{\mathrm{Day}}G\arrow[rd, "\eta_{F\otimes_{\mathrm{Day}}G}"']\arrow[r, "\eta\otimes_{\mathrm{Day}}G"] & T_1(F)\otimes_{\mathrm{Day}}G \arrow[rd, "\eta_{T_1(F)\otimes_{\mathrm{Day}}G}"]\arrow[d] \\
            & T_1(F\otimes_{\mathrm{Day}}G)\arrow[r] & T_1(T_1(F)\otimes_{\mathrm{Day}}G)
        \end{tikzcd}
    \]
    Hence, since arrows inverted by a localization satisfy 2-out-of-6, we have the wanted claim. 
    
    Since $\SigmaInftyP\uTHH$ is the initial trace-like lax symmetric monoidal functor $\CatL \to \Sp$ by Proposition \ref{uTHHInitialLaxMonoidal}, applying $(\fbwnexcPart{1})_*$ to $\SigmaInftyP\uTHH$ now yields a lax symmetric monoidal refinement of $\THH$ which, given that $\THH$ is also trace-like, exhibits $\THH$ as the initial lax symmetric monoidal trace-like and tangentially exact functor. In addition, the unit $\SigmaInftyP\uTHH \Rightarrow \THH$ is now a map of lax symmetric monoidal functors. In particular, the composite
    \[ \SigmaInftyP\LaceEq \Rightarrow \SigmaInftyP\uTHH \Rightarrow \THH \]
    is also a map of lax symmetric monoidal functors. Since $\SigmaInftyP\LaceEq$ is the initial lax symmetric monoidal functor and $\Klace$ is initial both as an additive functor under $\SigmaInftyP\LaceEq$ and as a lax symmetric monoidal additive functor the map $\Klace \Rightarrow \THH$ uniquely refines to a map of lax symmetric monoidal functors completing the square~\eqref{eq:square-uni}.
    \end{proof}

\begin{prop}\label{prop:THH-is-symmetric-monoidal}
The lax symmetric monoidal functor $\THH\colon \CatL \to \Sp$ is actually symmetric monoidal.
\end{prop}
\begin{proof}
It is enough to show that the lax symmetric monoidal structure maps
\[ \bS \to \THH(\Spaf,\map) \quad\text{and}\quad \THH(\Ccal,M) \otimes \THH(\Dcal,N) \to \THH((\Ccal,M) \otimes (\Dcal,N)) \]
are equivalences. For the unit, this is just Example~\ref{ex:thh-of-spaf} for $M=\map$.
As for the second structure map, note that for fixed $\Ccal$ and $\Dcal$ this map is a natural transformation between two colimit preserving functors in $M$ and $N$, while, at the same time, $\BiMod(\Ccal)$ is generated under colimits by the primitive bimodules of the form $M_{A,B}(X,Y) = \map_{\Ccal}(X,A)\otimes \map_{\Ccal}(B,Y)$, and similarly for $\BiMod(\Dcal)$. It will hence suffice to prove the claim for $M=M_{A,B}$ and $N =N_{C,D}$ for $A,B \in \Ccal$ and $C,D \in \Dcal$. By
Example~\ref{ex:primitive-bimodules} the map in question then becomes the map
\[ \map_{\Ccal}(B,A) \otimes \map_{\Dcal}(D,C) \to \map_{\Ccal \otimes \Dcal}(B \otimes D, A \otimes C) ,\]
which is indeed an equivalence, essentially by the construction of the tensor product in $\CatEx$.
\end{proof}

\small
\bibliographystyle{alpha}
\bibliography{bibliographie}

@book{HTT,
    title={Higher Topos Theory},
    author={Lurie, Jacob},
    year={2008},
    publisher={Princeton University Press},
    ignore_note={arXiv:math/0608040v4}
}

@book{HA,
    title={Higher Algebra},
    author={Lurie, Jacob},
    year={2017},
    publisher={Princeton University Press},
    ignore_note={\url{http://people.math.harvard.edu/~lurie/papers/HA.pdf}}
}

@book{SAG,
    title={Spectral Algebraic Geometry},
    author={Lurie, Jacob},
    year={2018},
    publisher={Available at the author's webpage},
    ignore_note={\url{https://math.harvard.edu/~lurie/papers/SAG-rootfile.pdf}}
}

@book{DGMBook,
    title={The local structure of algebraic K-theory},
    author={Dundas, Bjorn and Goodwillie, Tom and McCarthy, Randy},
    year={2013},
    volume={18},
    note={Algebra and Applications},
    publisher={Springer-Verlag London}
}

@misc{KrauseNikolausPuetzstueckSheavesOnManifolds,
    title = {Sheaves on manifolds},
    author={Krause, Achim and Nikolaus, Thomas and Pützstück, Phil},
    year={2025},
    note = {Accessible at one of \href{https://www.uni-muenster.de/IVV5WS/WebHop/user/nikolaus/Papers/sheaves-on-manifolds.pdf}{the author's webpage}}
}

@article{AntieauKrauseNikolaus,
    author = {Antieau, Benjamin and Krause, Achim and Nikolaus, Thomas},
    title = {On the {K}-theory of {$\Z/p^n$}},
    journal = {Preprint},
    year = {2024}
}

@article{AyalaFrancis,
    title={Fibrations of $\infty$-categories},
    author={Ayala, David and Francis, John},
    journal={Higher structures},
    volume={4},
    number={1},
    pages={168 -- 265},
    year={2020}
}

@article{Barkan,
    author = {Barkan, Shaul},
    title = {Explicit Square Zero Obstruction Theory},
    journal = {Preprint},
    year = {2022}
}

@article{BarwickHigher,
    title={On the algebraic {K}-theory of higher categories},
    author={Barwick, Clark},
    month={March},
    year={2016},
    volume={Volume 9},
    pages={245 -- 357},
    journal={Journal of Topology},
    doi={\url{https://doi.org/10.1112/jtopol/jtv042}},
    arxiv={arXiv:1204.3607v6}
}

@article{BarwickQConstr,
    title={On the {Q} construction for exact quasicategories},
    author={Barwick, Clark},
    year={2013},
    journal={Available on the author's webpage},
    ignore_note={arXiv:1301.4725}
}

@unpublished{MFO,
  title={Topological Hochschild homology and cyclic K-theory},
  author={Nikolaus, Thomas},
  year={2018},
  note={Available at \url{https://www.uni-muenster.de/IVV5WS/WebHop/user/nikolaus/Papers/THH\%20MFO.pdf}}
}

@article{Bokstedt,
    author={Bökstedt, M.},
    title={Topological {H}ochschild homology},
    journal={Preprint},
    year={1985}
}

@article{BlumbergGepnerTabuada,
    title={A universal characterization of higher algebraic {K}-theory},
    author={Blumberg, Andrew J. and Gepner, David and Tabuada, Goncalo},
    year={2013},
    journal={Geometry \& Topology},
    volume = {17},
    number = {2},
    publisher = {MSP},
    pages = {733 -- 838},
    ignore_note={arXiv:1001.2282}
}

@article{BlumbergGepnerTabuadaMultiplicative,
    author={Blumberg, Andrew J. and Gepner, David and Tabuada, Goncalo},
    title = {Uniqueness of the multiplicative cyclotomic trace},
    journal = {Advances in Mathematics},
    volume = {260},
    pages = {191--232},
    year = {2014},
    ignore_note={arXiv:1103.3923}
}

@article{BurklundHahnLevySchlank,
    author={Burklund, Robert and Hahn, Jeremy and Levy, Ishan and Schlank, Tomer},
    title={K-theoretic counterexamples to {R}avenel's telescope conjecture},
    journal={Preprint},
    year={2023}
}

@article{DundasHistory,
    title={Applications of topological cyclic homology to algebraic K-theory},
    author = {Dundas, Bjørn Ian},
    journal={Proceedings of Symposia in Pure Mathematics},
    year={2022}
}

@article{DundasMcCarthy,
    author = {Dundas, Bjørn Ian and McCarthy, Randy},
    journal = {Annals of Mathematics},
    number = {3},
    pages = {685--701},
    publisher = {Annals of Mathematics},
    title = {Stable {K}-Theory and Topological {H}ochschild Homology},
    volume = {140},
    year = {1994}
}

@article{DundasMcCarthyTHHRingFunctors,
    author = {Dundas, Bjørn Ian and McCarthy, Randy},
    journal = {Journal of Pure and Applied Algebra},
    pages = {231--294},
    title = {Topological {H}ochschild homology of ring functors and exact categories},
    volume = {109},
    year = {1996}
}

@article{GagnaHarpazLanariGray,
    title={Gray tensor products and lax functors of $(\infty, 2)$-categories},
    author={Gagna, Andrea and Harpaz, Yonatan and Lanari, Edoardo},
    journal={Advances in Mathematics},
    volume={391},
    pages={107986},
    year={2021}
}

@article{GagnaHarpazLanariFibrations,
    title={Fibrations and lax limits of $(\infty, 2)$-categories},
    author={Gagna, Andrea and Harpaz, Yonatan and Lanari, Edoardo},
    journal={arXiv preprint arXiv:2012.04537},
    year={2020}
}

@article{GepnerHaugsengNikolaus,
    title={Lax Colimits and Free Fibrations in $\infty$-Categories},
    author={Gepner, David and Haugseng, Rune and Nikolaus, Thomas},
    year={2017},
    journal={Documenta Mathematica},
    volume={22},
    pages={1225 -- 1266},
    doi={10.25537/dm.2017v22.1225-1266}
}

@article{HarpazNikolausSaunier,
    title={Trace methods for stable categories II: Goodwillie approximation of localising invariants},
    author={Harpaz, Yonatan and Nikolaus, Thomas and Saunier, Victor},
    year={2024},
    journal={In preparation}
}

@article{HarpazNuitenPrasmaTanModCat,
    title={The tangent bundle of a model category},
    author={Harpaz, Yonatan and Nuiten, Joost and Prasma, Matan},
    journal={Theory and Applications of Categories},
    volume={34},
    number={33},
    pages={1039--1072},
    year={2019}
}

@article{HaugsengLax,
    title={On lax transformations, adjunctions, and monads in $(\infty, 2)$-categories},
    author={Haugseng, Rune},
    journal={Higher Structures},
    volume={5},
    pages={244--281},
    year={2021}
}

@article{HaugsengCoend,
    author={Haugseng, Rune},
    title = {On (co)ends in $\infty$-categories},
    journal = {Journal of Pure and Applied Algebra},
    volume={226},
    number={2},
    year = {2021}
}

@article{HaugsengHebestreitLinskensNuiten,
    title={Two-variable fibrations, factorisation systems and $\infty$-categories of spans},
    author={Haugseng, Rune and Hebestreit, Fabian and Linskens, Sil and Nuiten, Joost},
    year={2020},
    journal={Preprint}
}

@article{HebestreitLachmannSteimle,
    title={The localisation theorem for the $\mathbb{K}$-theory of stable $\infty$-categories},
    author={Hebestreit, Fabian and Lachmann, Andrea and Steimle, Wolfgang},
    year={2023}, 
    pages={1–37},
    journal={Proceedings of the Royal Society of Edinburgh}
}

@article{HermKI,
    title={Hermitian {K}-theory for stable $\infty$-categories {I}: {F}oundations},
    author={Calmès, Baptiste and Dotto, Emanuele and Harpaz, Yonatan and Hebestreit, Fabian and Land, Markus and Moi, Kristian and Nardin, Denis and Nikolaus, Thomas and Steimle, Wolfgang},
    year={2023},
    journal={Selecta Mathematica},
    volume={29},
    number={1},
    pages={1 -- 269}, 
    ignore_note={arXiv:2009.07223}
}

@article{HermKII,
    title={Hermitian {K}-theory for stable $\infty$-categories {II}: {C}obordism categories and additivity},
    author={Calmès, Baptiste and Dotto, Emanuele and Harpaz, Yonatan and Hebestreit, Fabian and Land, Markus and Moi, Kristian and Nardin, Denis and Nikolaus, Thomas and Steimle, Wolfgang},
    year={2023},
    journal={Preprint},
    ignore_note={arXiv:2009.07224}
}

@article{HermKIV,
    title={Hermitian {K}-theory for stable $\infty$-categories {IV}: {P}oincaré motives and {K}aroubi-{G}rothendieck-{W}itt groups},
    author={Calmès, Baptiste and Dotto, Emanuele and Harpaz, Yonatan and Hebestreit, Fabian and Land, Markus and Moi, Kristian and Nardin, Denis and Nikolaus, Thomas and Steimle, Wolfgang},
    year={2023},
    journal={Upcoming},
    ignore_note={NotYetOut}
}

@article{HorevYanovski,
    author = {Horev, Asaf and Yanovski, Lior},
    title = {On conjugates and adjoint descent},
    journal = {Topology and its Applications},
    volume = {232},
    pages = {140--154},
    year = {2017}
}

@article{HoyoisScherotzkeSibilla,
    title={Higher traces, noncommutative motives and the categorified {Chern} character},
    author={Hoyois, Marc and Scherotzke, Sarah and Sibilla, Nicolò},
    year={2017},
    journal={Advances in Mathematics},
    volume={309},
    pages={97 -– 154},
    ignore={309:97–154, 2017}
}

@article{Kaledin,
    author={Kaledin, Dmitry},
    title={Trace theories and localization},
    journal={Stacks and categories in geometry, topology, and algebra}, 
    volume={643},
    pages={227 –- 262}, 
    year={2015},
    arxiv={arxiv:1308.3743}
}

@article{Keller,
    author={Keller, Bernhard},
    title={Invariance and Localization for Cyclic Homology of DG algebras},
    journal={Journal of Pure and Applied Algebra}, 
    volume={123},
    year={1998}, 
    pages={223 -– 273}
}

@article{KrauseMcCandlessNikolaus,
    author={Krause, Achim and McCandless, Jonas and Nikolaus, Thomas},
    title={Polygonic spectra and {TR} with coefficients},
    year={2023},
    journal={Preprint}
}

@article{LindenstraussMcCarthyTaylorTower,
    author={Lindenstrauss, Ayelet and McCarthy, Randy},
    title={On the {Taylor} Tower of Relative {K}-theory},
    journal={Geometry \& Topology},
    volume={16},
    number={2},
    pages={685 -- 750},
    year={2012}
}

@article{Nikolaus,
    title={Stable $\infty$-Operads and the multiplicative {Y}oneda lemma},
    author={Nikolaus, Thomas},
    journal={Preprint},
    year={2016},
    arxiv={1608.02901}
}

@article{NikolausScholze,
    title={On topological cyclic homology},
    author={Nikolaus, Thomas and Scholze, Peter},
    year={2017},
    volume = {221},
    journal = {Acta Mathematica},
    doi = {10.4310/ACTA.2018.v221.n2.a1},
    arxiv={arxiv:1707.01799v2}
}

@article{QuillenFiniteFields,
    author = {Daniel Quillen},
    journal = {Annals of Mathematics},
    number = {3},
    pages = {552--586},
    title = {{On the Cohomology and K-Theory of the General Linear Groups Over a Finite Field}},
    volume = {96},
    year = {1972}
}

@article{Raskin,
  title={On the {D}undas-{G}oodwillie-{M}c{C}arthy theorem},
  author={Raskin, Sam},
  journal={arXiv preprint arXiv:1807.06709},
  year={2018}
}

@article{RamziDM,
    title={Separability in homotopy theory and topological Hochschild homology},
    author={Ramzi, Maxime},
    note={Ph.D Thesis},
    year={2024}
}

@article{SaunierFundamental,
    title={The fundamental theorem of localizing invariants},
    author={Saunier, Victor},
    year={2023},
    journal={Annals of K-theory},
    volume={8},
    number={4},
    pages={609--643}
}

@article{SaunierHeart,
    title={A Theorem of the Heart for the {K}-theory of Endomorphisms},
    author={Saunier, Victor},
    year={2023},
    journal={Preprint}
}

@article{SchlichtkrullCyclotomicTrace,
	title={The cyclotomic trace for symmetric ring spectra},
	author={Schlichtkrull, Christian},
	journal={Geometry \& Topology Monographs},
	volume={16},
	year={2009},
	pages={545–-592} 
}

@article{SchwanzlStaffeldtWaldhausen,
    title={Stable {K}-theory and topological {H}ochschild homology of {$A_\infty$} rings},
    author={Schwänzl, Roland and Staffeldt, Ross and Waldhausen, Friedhelm},
    year={1996},
    journal={Contemp. Math.},
    volume={199},
    pages={161 -- 173}
}

@article{Shah,
    title={Parametrized higher category theory},
    author={Shah, Jay},
    year={2023},
    journal={Algebraic \& Geometric Topology}, 
    volume={23},
    number={2},
    pages={509-–644}
}

@article{Suslin,
    author = {Suslin, Andrei},
    journal = {Inventiones mathematicae},
    pages = {241--246},
    title = {On the {K-T}heory of {A}lgebraically {C}losed {F}ields.},
    volume = {73},
    year = {1983},
}

@article{Yanovski,
    author = {Yanovski, Lior},
    year = {2021},
    title = {The Monadic Tower for $\infty$-Categories},
    volume = {226},
    journal = {Journal of Pure and Applied Algebra},
}

\end{document}